\definecolor{darkblue}{rgb}{0.0,0,0.7}
\newcommand{\newword}[1]{\textcolor{darkblue}{\textbf{\emph{#1}}}}
\newcommand{\domzz}
{\raisebox{-0.25\height}{ \begin{tikzpicture}[x=0.75 em,y=0.75 em]
\draw[step=1,gray, thin] (0,0) grid (1,2);
\draw[color=black, thick](0,0)rectangle(1,2);
\draw[thick,rounded corners,color=blue] (0,1.5)--(0.5,1.5)--(0.5,2);
\draw[thick,rounded corners,color=blue] (0,0.5)--(0.5,0.5)--(0.5,1.5)--(1,1.5);
\draw[thick,rounded corners,color=blue] (0.5,0)--(0.5,0.5)--(1,0.5);
\end{tikzpicture}}}
\newcommand{\domzp}
{\raisebox{-0.25\height}{ \begin{tikzpicture}[x=0.75 em,y=0.75 em]
\draw[step=1,gray, thin] (0,0) grid (1,2);
\draw[color=black, thick](0,0)rectangle(1,2);
\draw[thick,rounded corners,color=blue] (0,1.5)--(0.5,1.5)--(0.5,2);
\draw[thick,rounded corners,color=blue] (0,0.5)--(1,0.5);
\draw[thick,rounded corners,color=blue] (0.5,0)--(0.5,1.5)--(1,1.5);
\end{tikzpicture}}}
\newcommand{\dompz}
{\raisebox{-0.25\height}{ \begin{tikzpicture}[x=0.75 em,y=0.75 em]
\draw[step=1,gray, thin] (0,0) grid (1,2);
\draw[color=black, thick](0,0)rectangle(1,2);
\draw[thick,rounded corners,color=blue] (0,1.5)--(1,1.5);
\draw[thick,rounded corners,color=blue] (0,0.5)--(0.5,0.5)--(0.5,2);
\draw[thick,rounded corners,color=blue] (0.5,0)--(0.5,0.5)--(1,0.5);
\end{tikzpicture}}}
\newcommand{\dompp}
{\raisebox{-0.25\height}{ \begin{tikzpicture}[x=0.75 em,y=0.75 em]
\draw[step=1,gray, thin] (0,0) grid (1,2);
\draw[color=black, thick](0,0)rectangle(1,2);
\draw[thick,rounded corners,color=blue] (0,1.5)--(1,1.5);
\draw[thick,rounded corners,color=blue] (0,0.5)--(1,0.5);
\draw[thick,rounded corners,color=blue] (0.5,0)--(0.5,2);
\end{tikzpicture}}}
\DeclareMathOperator{\Spec}{Spec}
\DeclareMathOperator\codim{codim}
\newcommand{\Flags}{\mathsf{Flags}}
\newcommand{\id}{\mathrm{id}}
\newcommand{\CC}{\mathbb{C}}
\newcommand{\ZZ}{\mathbb{Z}}
\newcommand{\bx}{\boldsymbol{x}}
\newcommand{\by}{\boldsymbol{y}}
\newcommand{\fS}{\mathfrak{S}}
\newcommand{\fG}{\mathfrak{G}}
\newcommand{\fR}{\mathfrak{R}}
\newcommand{\fCM}{\mathfrak{CM}}
\newcommand{\inv}{\mathrm{inv}}
\newcommand{\maj}{\mathrm{maj}}
\newcommand{\raj}{\mathrm{raj}}
\newcommand{\rajc}{\raj{\mathrm{code}}}
\newcommand{\invc}{\inv{\mathrm{code}}}
\DeclareMathOperator{\im}{\mathrm{im}}
\newcommand{\word}{{\sf word}}
\newcommand{\pipes}{{\sf Pipes}}
\newcommand{\Rothe}{{\sf RD}}
\newcommand{\rN}{\widehat{\partial}}
\newtheorem{Theorem}{Theorem}[section]
\newtheorem{cor}[Theorem]{Corollary}
\newtheorem{corollary}[Theorem]{Corollary}
\newtheorem{definition}[Theorem]{Definition}
\newtheorem{proposition}[Theorem]{Proposition}
\newtheorem{prop}[Theorem]{Proposition}
\newtheorem{theorem}[Theorem]{Theorem}
\newtheorem{lem}[Theorem]{Lemma}
\newtheorem{lemma}[Theorem]{Lemma}
\newtheorem*{claim}{Claim}
\theoremstyle{remark}
\newenvironment{example}
  {\pushQED{\qed}\examplex}
  {\popQED\endexamplex}
\newtheorem{remark}[Theorem]{Remark}
\newtheorem{rem}[Theorem]{Remark}
\title{Castelnuovo--Mumford regularity of matrix Schubert varieties}
\author{Oliver Pechenik}
\address[OP]{Department of Combinatorics \& Optimization, University of Waterloo, Waterloo ON N2L3G1}
\email{oliver.pechenik@uwaterloo.ca}
\author{David E Speyer}
\address[DES]{Department of Mathematics, University of Michigan, Ann Arbor MI 48109}
\email{speyer@umich.edu}
\author{Anna Weigandt}
\address[AW]{Department of Mathematics, Massachusetts Institute of Technology, Cambridge MA 02139}
\email{weigandt@mit.edu}
\begin{document}

\begin{abstract}
    Matrix Schubert varieties are affine varieties arising in the Schubert calculus of the complete flag variety. We give a formula for the Castelnuovo--Mumford regularity of matrix Schubert varieties, answering a question of Jenna Rajchgot. We follow her proposed strategy of studying the highest-degree homogeneous parts of Grothendieck polynomials, which we call Castelnuovo--Mumford polynomials. In addition to the regularity formula, we obtain formulas for the degrees of all Castelnuovo--Mumford polynomials and for their leading terms, as well as a complete description of when two Castelnuovo--Mumford polynomials agree up to scalar multiple. 
    The degree of the Grothendieck polynomial is a new permutation statistic which we call the Rajchgot index; we develop the properties of Rajchgot index and relate it to major index and to weak order.
\end{abstract}

\maketitle

\section{Introduction}

The \emph{flag variety} $\Flags_n$, the parameter space for complete flags of nested vector subspaces of $\CC^n$, has a complex cell decomposition given by its \emph{Schubert varieties}. The geometry and combinatorics of this cell decomposition are of central importance in Schubert calculus. 
These Schubert varieties are closely related to certain generalized determinantal varieties $X_w$ of $n \times n$ matrices called  \emph{matrix Schubert varieties} (see~\cite{Fulton} and Section~\ref{sec:matrixSchubertvarieties} for the definition).
 These varieties have been heavily studied from various perspectives (see, e.g., \cite{Escobar.Meszaros, Fink.Rajchgot.Sullivant, Fulton, Hamaker.Pechenik.Weigandt, Hsiao, Knutson.Miller, Knutson.Miller.Yong, Weigandt.Yong}).  It is natural to desire a measure of the algebraic complexity of matrix Schubert varieties. One such measure is the \emph{Castelnuovo--Mumford regularity} of $X_w$, a commutative-algebraic invariant determining the extent to which the defining ideal of $X_w$ can be resolved by low-degree polynomials.

Jenna Rajchgot (cf.\ \cite{RRRSW}) noted that, since matrix Schubert varieties are Cohen--Macaulay \cite{Fulton, Knutson.Miller, Ramanathan}, the regularity of $X_w$ is given by the difference between the highest-degree and lowest-degree homogeneous parts of the \emph{$K$-polynomial} for $X_w$. These particular $K$-polynomials have been much studied. They were introduced by Lascoux and Sch\"utzenberger \cite{Lascoux.Schutzenberger}, under the name of \emph{Grothendieck polynomials} $\fG_w(\bx)$, as polynomial representatives for structure sheaf classes in the $K$-theoretic Schubert calculus of $\Flags_n$ (see also, \cite{Fulton.Lascoux}). Grothendieck polynomials are inhomogeneous polynomials $\fG_w(\bx)$ in $n$ variables $\bx = x_1, x_2, \ldots, x_n$, indexed by permutations $w$ in the symmetric group $S_n$. Later, Knutson and Miller \cite{Knutson.Miller} showed that Grothendieck polynomials coincide, up to convention choices, with the $K$-polynomials of matrix Schubert varieties.

The lowest-degree homogeneous part of $\fG_w(\bx)$ is the \emph{Schubert polynomial} $\fS_w(\bx)$ \cite{Lascoux.Schutzenberger:Schubert}; Schubert polynomials are well-understood from a combinatorial perspective, and the degree of $\fS_w(\bx)$ equals the codimension of $X_w$ or equivalently the Coxeter length $\inv(w)$ of the permutation $w$. Hence, determining the regularity of $X_w$ reduces to answering the following question of Rajchgot:
\begin{quotation}
``What is the degree of a Grothendieck polynomial?" 
\end{quotation}

In light of these observations, we term the highest-degree part of $(-1)^{\deg \fG_w(\bx) - \inv(w)} \fG_w(\bx)$ the \newword{Castelnuovo--Mumford polynomial} and write it $\fCM_w(\bx)$.
(The power of $-1$ makes $\fCM_w(\bx)$ have positive coefficients.)
 The goal of this paper is to answer Rajchgot's question by understanding these homogeneous polynomials and in particular their degrees, thereby obtaining a formula for the Castelnuovo--Mumford regularity of $X_w$. In the special case of \emph{symmetric} Grothendieck polynomials, corresponding to \emph{Grassmannian permutations} $w$ and Schubert varieties in a complex Grassmannian, a formula for the degree of $\fCM_w(\bx)$ was recently given in \cite{RRRSW}. 
 Formulas for \emph{vexillary} and \emph{$1432$-avoiding} permutations (and related objects) appear in forthcoming work \cite{Rajchgot.Robichaux.Weigandt}.
 Our first main result is a degree formula for arbitrary $\fCM_w(\bx)$, answering Rajchgot's question in full generality. 

Write a permutation $w \in S_n$ in one-line notation as $w(1) w(2) \cdots w(n)$. For each $k$, find an increasing subsequence of $w(k) w(k+1) \cdots w(n)$ containing $w(k)$ and of greatest length among such subsequences. Let $r_k$ be the number of terms from $w(k) w(k+1) \cdots w(n)$ omitted from this subsequence. We call the sequence $(r_1, \dots, r_n) = \rajc(w)$ the \newword{Rajchgot code} of $w$ and its sum $\raj(w)$ the \newword{Rajchgot index} of $w$.

\begin{Theorem}\label{thm:degree}
For $w \in S_n$, we have $\deg \fCM_w(\bx) = \raj(w)$. Moreover, for any term order satisfying $x_1 < x_2 < \cdots < x_n$, the leading term of $\fCM_w(\bx)$ is a scalar  multiple of the monomial $\bx^{\rajc(w)} =  x_1^{r_1} x_2^{r_2} \cdots x_n^{r_n}$.  

In particular, the Castelnuovo--Mumford regularity of the matrix Schubert variety $X_w$ is $\raj(w) - \inv(w)$.
\end{Theorem}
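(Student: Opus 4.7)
The plan is to separate the regularity statement, which follows formally, from the degree and leading-term claims, which are the combinatorial heart of the theorem. Since matrix Schubert varieties are Cohen--Macaulay, $\mathrm{reg}(X_w)$ equals the gap between the top- and bottom-degree homogeneous parts of $\fG_w(\bx)$. The bottom part is $\fS_w$, homogeneous of degree $\inv(w) = \codim X_w$, so $\mathrm{reg}(X_w) = \raj(w) - \inv(w)$ will follow from $\deg \fCM_w = \raj(w)$. It therefore suffices to prove the degree equality and the leading-term description.

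For these, I would work with a signed combinatorial formula
\[ \fG_w(\bx) \;=\; \sum_{D \in \cD(w)} (-1)^{|D| - \inv(w)} \bx^{\wt(D)} \]
indexed by some diagram family $\cD(w)$ (for instance pipe dreams or bumpless pipe dreams), with $\wt(D)_i$ counting marks in row $i$ and $|D| = |\wt(D)|$. In such models, every $D$ of a given weight vector has the same size, so the coefficient of a monomial $\bx^c$ equals, up to sign, $|\{D \in \cD(w) : \wt(D) = c\}|$, with no internal cancellation. Next, one observes that a monomial $\bx^a$ is simultaneously the leading monomial in every term order with $x_1 < \cdots < x_n$ iff its exponent vector dominates all competitors in \emph{right-tail partial sums}, i.e., $\sum_{i \geq k} a_i \geq \sum_{i \geq k} (\text{competitor})_i$ for every $k$, which is precisely the cone dual to the facet $\{x_1 < \cdots < x_n\}$ in the space of term orders. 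The theorem thus reduces to showing: (i) some $D^\star \in \cD(w)$ has $\wt(D^\star) = \rajc(w)$; and (ii) every $D \in \cD(w)$ with $\wt(D) \neq \rajc(w)$ satisfies $\sum_{i \geq k} \wt(D)_i \leq \sum_{i \geq k} r_i$ for all $k$, with strict inequality for at least one $k$.

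The construction of $D^\star(w)$ should read the definition of $\rajc(w)$ directly: in row $k$, mark the columns corresponding to the values $w(j)$ (with $j > k$) that are \emph{skipped} by the chosen longest increasing subsequence of $w(k), w(k+1), \ldots, w(n)$ starting at $w(k)$; one then verifies that the resulting diagram represents $w$. The dominance step (ii) is where I expect the main obstacle, and I would attack it by induction (on $n$, or on weak order), using a transition-type recursion that peels off the first row of a diagram while modifying $w$ by an appropriate cycle; the base case $w = w_0$ is immediate since $\fG_{w_0} = x_1^{n-1} x_2^{n-2} \cdots x_{n-1}$ and $\rajc(w_0) = (n-1, n-2, \ldots, 0)$. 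Specializing (ii) to $k = 1$ yields $\deg \fG_w \leq \raj(w)$, which combined with (i) gives the degree equality; combining (i) with the strict-dominance refinement of (ii) then identifies $\bx^{\rajc(w)}$ as the unique simultaneous leading monomial, completing the proof of Theorem~\ref{thm:degree}.
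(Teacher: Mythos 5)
Your overall reduction is the right one and matches the architecture of the paper: the regularity claim does follow formally from Cohen--Macaulayness (Corollary~\ref{cor:reg}), there is no cancellation in the pipe dream formula, and the leading-term claim is exactly equivalent to the two facts you label (i) and (ii) (these are Theorems~\ref{thm:exponentachieved} and~\ref{thm:exponentbound} in the paper, where the right-tail-sum characterization is used in just the way you describe). However, both of your substantive steps have genuine gaps. For (i), the recipe ``in row $k$, mark the columns of the values skipped by the chosen longest increasing subsequence'' does not produce a pipe dream: the marked cells need not even lie in the staircase region $\{(i,j): i+j \leq n\}$. For $w=132$, row $2$ skips the value $2$, so you would mark $(2,2)$, which is outside the staircase for $n=3$; the actual top-degree pipe dream has crosses at $(1,2)$ and $(2,1)$. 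The same failure occurs for $w=14523$ at cell $(3,3)$. This is not a fixable detail: the paper explicitly remarks (after the statement of Theorem~\ref{thm:exponentachieved}, with $w = 14523$ as the test case) that no direct recipe for the maximal pipe dream is known and that greedy row-by-row procedures get stuck at local maxima. The paper instead builds the maximal pipe dream inductively, starting from the unique maximal pipe dream of the layered permutation $e_\alpha$ and repeatedly applying the ``pushing pluses'' surgery of Section~\ref{sec:pushingPluses} along a chain in weak order; this induction is the content of Lemmas~\ref{lem:leftcolumn}--\ref{lem:pushpluses} and is where most of the work lies.

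For (ii), ``a transition-type recursion that peels off the first row'' with base case $w_0$ is not an argument, and the natural downward induction in weak order does not close: applying $\overline{\partial}_i$ only gives the inequality $\deg \fG_{ws_i} \leq \deg \fG_w$ (Lemma~\ref{lem:divided_diffs}), and the degree genuinely drops along some covers, so you cannot propagate the exact value of $\raj$ or of the tail sums of $\rajc$ from $w_0$ downward without knowing when the drop occurs. The paper's route is different: it first proves $\deg \fCM_w = \raj(w)$ (Theorem~\ref{thm:raj=deg}) by sandwiching $w$ in two-sided weak order between the layered permutation $e_\alpha$ and a dominant permutation of the same shape (Lemma~\ref{lem:fireworks_factorizations} and Proposition~\ref{prop:dominantMaxShape}), computing the degree exactly at both endpoints (Lemmas~\ref{lem:deg_dominant} and~\ref{lem:deg_ealpha}) and using that both $\raj$ and $\deg\fCM$ are monotone along $\leq_{LR}$. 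Only then does it deduce the tail-sum bound for all $k$, by deleting the top $k$ pipes of a pipe dream, applying the degree formula to the resulting sub-permutation $w^k$, and invoking monotonicity of $\raj$ under $\leq_L$ for the permutation $u^k$ of the truncated diagram. None of this machinery --- the shape/blob combinatorics, the sandwich, the monotonicity lemmas --- is present in your proposal, so both the degree equality and the dominance bound remain unproved as written.
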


\begin{example}\label{ex:main}
Consider the permutation $w = 293417568 \in S_9$. A longest increasing subsequence starting from $2$ is $2\bullet 34\bullet\bullet568$, which omits three terms, so $r_1 = 3$. In full,  \[\rajc(w) = (r_1, r_2, \ldots, r_9) = (3,7,2,2,1,2,0,0,0).\] 
Hence, by Theorem~\ref{thm:degree}, the leading term of $\fCM_{w}(\bx)$ is a scalar multiple of the monomial $x_1^3 x_2^7 x_3^2 x_4^2 x_5 x_6^2$ and the degree of $\fCM_{w}(\bx)$ is $\raj(w) = 3+7+2+2+1+2+0+0+0 = 17$. Since $\inv(w) = 12$, it follows that the Castelnuovo--Mumford regularity of the matrix Schubert variety $X_{w}$ is $\raj(w) - \inv(w) = 17-12 = 5$.
\end{example}

\begin{proof}[Proof of Theorem~\ref{thm:degree}]
The verification that $\deg \fCM_w(\bx) = \raj(w)$ is Theorem~\ref{thm:raj=deg}.
The claim about the leading term follows from Theorems~\ref{thm:exponentbound} and~\ref{thm:exponentachieved}.
The consequence for Castelnuovo--Mumford regularity is explained in Corollary~\ref{cor:reg}.
\end{proof}

\begin{remark}
The theorems in this introductory section are stated in what the authors hope is the clearest order to explain the results, not in the order of their proof. 
In the main body of the paper, the results appear in their logical order, and we reference those results from the introduction, as in the proof above. 
The reader can thus see that the remaining sections of the paper have no forward citations, and the results in this section are not referenced in the proofs of the following sections, so there is no circularity.
\end{remark}

Our remaining results explore the combinatorics of Castelnuovo--Mumford polynomials and the associated permutation statistics.

 While Schubert polynomials are all distinct and have distinct leading monomials, we observe that many Castelnuovo--Mumford polynomials differ only by a scalar multiple. In fact, we will show that $\fCM_u(\bx)$ and $\fCM_v(\bx)$ differ by a scalar precisely if $\rajc(u) = \rajc(v)$. This phenomenon is best understood in the context of \emph{double} Castelnuovo--Mumford polynomials, as we now explain.
The \emph{double Grothendieck polynomials} are certain polynomials $\fG_w(x_1,\ldots, x_n; y_1, \ldots, y_n)$ in $2n$ variables, also indexed by $w \in S_n$. They represent Schubert classes in the torus-equivariant $K$-theory of $\Flags_n$, and obey the relations $\fG_w(\bx; 0) = \fG_w(\bx)$ and $\fG_w(\bx;\by) = \fG_{w^{-1}}(\by; \bx)$. 
We define the \newword{double Castelnuovo--Mumford polynomial} $\fCM_w(\bx;  \by)$ to be the highest-degree part of $\fG_w(\bx; \by)$.
We will show (Corollary~\ref{cor:high_degree_xy}) that $\fG_w(\bx, \by)$ has terms whose $\bx$-degree and $\by$-degree are simultaneously maximal, so $\fCM_w(\bx, \by)$ is homogeneous in both $\bx$ and $\by$.

Remarkably, we find that double Castelnuovo--Mumford polynomials factor as a polynomial in $\bx$ times a polynomial in $\by$. We identify a special family of single Castelnuovo--Mumford polynomials, which we refer to as the \newword{Rajchgot polynomials} $\fR_{\pi}(\bx)$, indexed by set partitions of $\{1, \dots, n\}$. For each $w \in S_n$, we associate a set partition $\pi(w)$ so that the following holds.

\begin{Theorem} \label{thm:main}
Double Castelnuovo--Mumford polynomials factor into Rajchgot polynomials as \[\fCM_{w}(\bx;  \by) = \fR_{\pi(w)}(\bx) \fR_{\pi(w^{-1})}(\by).\] 
Moreover, for any term order satisfying 
$x_1 < \cdots < x_n$ and $y_1 < \cdots < y_n,$ the leading term of the double Castelnuovo--Mumford polynomial $\fCM_w(\bx;\by)$ is exactly $\bx^{\rajc(w)} \by^{\rajc(w^{-1})}$. 
In particular, the single Castelnuovo--Mumford polynomial $\fCM_w(\bx)$ is $\fR_{\pi(w^{-1})}(1, \dots, 1) \fR_{\pi(w)}(\bx)$ and has leading term $\fR_{\pi(w^{-1})}(1, \dots, 1) \bx^{\rajc(w)}$.
\end{Theorem}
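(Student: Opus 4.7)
My plan unfolds in three stages. First, I would establish that $\fCM_w(\bx;\by)$ is bi-homogeneous of bi-degree $(\raj(w), \raj(w^{-1}))$. The bi-homogeneity itself is the content of the cited Corollary~\ref{cor:high_degree_xy}. For the bi-degree, Theorem~\ref{thm:degree} identifies the top $\bx$-degree of $\fG_w(\bx;\by)$ as $\raj(w)$ (since setting $\by=0$ recovers $\fG_w(\bx)$ and adjoining $\by$ monomials cannot raise $\bx$-degree), while the symmetry $\fG_w(\bx;\by) = \fG_{w^{-1}}(\by;\bx)$ combined with Theorem~\ref{thm:degree} gives the top $\by$-degree as $\raj(w^{-1})$.

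The main step is proving the factorization $\fCM_w(\bx;\by) = f(\bx)\,g(\by)$, splitting the bi-homogeneous polynomial into pure-$\bx$ and pure-$\by$ parts. This is the genuinely novel algebraic content and, in my view, the principal obstacle. Two approaches seem promising. Combinatorially, one models $\fG_w(\bx;\by)$ via pipe dreams or bumpless pipe dreams, where each diagram contributes a bi-monomial $\bx^\alpha \by^\beta$; the terms surviving in $\fCM_w(\bx;\by)$ correspond to bi-extremal configurations. I would aim to show that the bi-extremal configurations decompose as a Cartesian product of an ``\emph{$\bx$-side}'' parametrized by $\pi(w)$ and a ``\emph{$\by$-side}'' parametrized by $\pi(w^{-1})$, so that the bi-weight separates and factorization is immediate. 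Algebraically, one may try induction via the isobaric divided-difference recursions satisfied by $\fG_w(\bx;\by)$, passing to top-total-degree parts to induce a recursion on $\fCM_w$; preservation of the product form under this recursion would reduce the statement to base cases such as dominant permutations, where the Grothendieck polynomial is a single bi-monomial and the factorization is trivial.

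Once factorization is in hand, the remaining claims follow smoothly. By Theorem~\ref{thm:degree}, the single CM polynomial is determined up to scalar by $\rajc(w)$, and Rajchgot polynomials are precisely the normalized representatives of these scalar-equivalence classes; hence $f(\bx)$ is a scalar multiple of $\fR_{\pi(w)}(\bx)$, and by symmetry $g(\by)$ is a scalar multiple of $\fR_{\pi(w^{-1})}(\by)$. With the monic normalization of Rajchgot polynomials, Theorem~\ref{thm:degree} forces the overall scalar to be $1$, yielding $\fCM_w(\bx;\by) = \fR_{\pi(w)}(\bx)\fR_{\pi(w^{-1})}(\by)$ with leading term exactly $\bx^{\rajc(w)}\by^{\rajc(w^{-1})}$. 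Finally, the single-variable consequence would be established by showing the specialization identity $\fCM_w(\bx;\by)\big|_{\by=(1,\ldots,1)} = \fCM_w(\bx)$, which I would verify either combinatorially from the pipe-dream model above, or by a direct leading-coefficient computation comparing $\fCM_w(\bx)$ against the factorization. Substituting into the factorization then gives $\fCM_w(\bx) = \fR_{\pi(w^{-1})}(1,\ldots,1)\,\fR_{\pi(w)}(\bx)$ with the claimed leading term, completing the proof.
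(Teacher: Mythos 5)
Your plan correctly handles the easy pieces (bi-homogeneity, the specialization at $\by=(1,\ldots,1)$, and the reduction of the leading-term claim to the factorization), but the central factorization step, which you rightly identify as the principal obstacle, is left unresolved, and neither of the two approaches you sketch matches the paper's argument or would obviously substitute for it.

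The paper proves the factorization (its Theorem~\ref{thm:factorization}) by starting from the Cauchy identity for double Grothendieck polynomials,
\[
\fG_w(\bx;\by) \;=\; \sum_{q\ast p = w}(-1)^{\inv(w)-\inv(p)-\inv(q)}\,\fG_p(\bx)\,\fG_{q^{-1}}(\by),
\]
which already separates the $\bx$- and $\by$-variables into distinct factors term by term. The work is then to show that passing to top degree kills every summand except the single one $(p,q)=(\Phi_{\rm inv}(w),\Phi(w))$; this uses the length-additive factorization $w=ue_\alpha v$, the idempotence of $e_\alpha$ under the Demazure product, and Lemma~\ref{lem:ealpha_lemma} on $\raj(e_\alpha * x * e_\alpha)$. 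This is a genuinely different mechanism from either of your two sketches, and you do not mention the Cauchy identity at all. Your combinatorial route would require showing that the bi-extremal pipe dreams for $w$ form a Cartesian product whose two coordinates independently record the $\bx$- and $\by$-weights; the paper gives no such product structure, and it is not clear one exists -- indeed Section~\ref{sec:pushingPluses} has to work hard just to exhibit a \emph{single} pipe dream achieving $\bx^{\rajc(w)}\by^{\rajc(w^{-1})}$, and proves its uniqueness only for that particular weight. Your algebraic route via isobaric divided differences also has unresolved difficulties: $\overline{\partial_i}$ moves strictly downward in right weak order and touches only the $\bx$-variables, so you need an initial case at the top and must control what happens when $\raj$ drops; some of the needed structure exists in the paper (Proposition~\ref{prop:dominantMaxShape}, Lemma~\ref{lem:interval_structure}), but you have not assembled it, and it is not obvious that this program closes.

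Your third stage also contains a circularity: you invoke that the single Castelnuovo--Mumford polynomial ``is determined up to scalar by $\rajc(w)$,'' but that is a \emph{consequence} of Theorem~\ref{thm:main}, not an input to it -- Theorem~\ref{thm:degree} only pins down the leading \emph{monomial}, not the entire polynomial modulo scalars. The paper avoids this by having the Cauchy-identity argument directly identify the surviving term as $\fG_{\Phi_{\rm inv}(w)}(\bx)\,\fG_{\Phi(w)^{-1}}(\by)$, whose top-degree parts are $\fR_{\pi(w)}(\bx)$ and $\fR_{\pi(w^{-1})}(\by)$ by definition; the leading coefficient being exactly $1$ is then a separate uniqueness statement about maximal pipe dreams, established as Theorem~\ref{thm:coeff1}.
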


\begin{proof}
The factorization is established in Theorem~\ref{thm:factorization}. 
The claim about the leading monomial follows from Theorems~\ref{thm:exponentbound} and~\ref{thm:exponentachieved}; the fact that this monomial has coefficient $1$ is Theorem~\ref{thm:coeff1}.
The remaining statements follow from the equality $\fCM_w(\bx) = \fCM_w(\bx; 1,1,\ldots,1)$.
\end{proof}

In particular, Theorem~\ref{thm:main} shows that, up to scalar multiple, the number of distinct Castelnuovo--Mumford polynomials for $w \in S_n$ is not $n!$, but rather the number of set partitions of $n$, which is also known as the  $n$-th \emph{Bell number}.

 The Rajchgot index is related to the classical \emph{major index} statistic. In particular, we prove the following:
 
 \begin{theorem}\label{thm:main_mm}
 	For all $w \in S_n$, we have 
 	\[
 	\raj(w) = \max \{\maj(v) : v \leq_R w \} = \max \{\maj(u^{-1}) : u \leq_L w \} = \deg \fCM_w(\bx),
 	\]
 	 where $\leq_L$ and $\leq_R$ denote the \emph{left and right weak orders}, respectively.  
 	 \end{theorem}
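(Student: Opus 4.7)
The last equality in the statement is Theorem~\ref{thm:degree}, and the second equality reduces to the first: the anti-isomorphism $u \mapsto u^{-1}$ between left and right weak orders rewrites $\max\{\maj(u^{-1}) : u \leq_L w\}$ as $\max\{\maj(v) : v \leq_R w^{-1}\}$, so this reduces the chain to the first equality applied to $w^{-1}$, combined with the symmetry $\raj(w) = \raj(w^{-1})$. To establish this symmetry, I would interpret $L_k(w) := (n-k+1) - r_k(w)$ as the length of the longest northeast chain of dots in the permutation matrix of $w$ beginning at the dot $(k, w(k))$. Transposing this matrix realizes $w \leftrightarrow w^{-1}$ and gives $L_k(w^{-1}) = L_{w^{-1}(k)}(w)$; reindexing yields $\sum_k L_k(w^{-1}) = \sum_k L_k(w)$, hence $\raj(w) = \raj(w^{-1})$.

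The heart of the argument is the first equality $\raj(w) = \max\{\maj(v) : v \leq_R w\}$. For the upper bound, my plan is to prove (a) the pointwise inequality $\maj(v) \leq \raj(v)$, and (b) weak monotonicity of $\raj$ along right weak order cover relations; together these yield $\maj(v) \leq \raj(v) \leq \raj(w)$ whenever $v \leq_R w$. Claim (a) follows from the reformulation $\maj(w) = \sum_{j=1}^{n-1} d_j(w)$ with $d_j(w) := |\{i \geq j : i \in D_R(w)\}|$, together with the run-based bound $r_j(w) \geq d_j(w)$: an increasing subsequence of $w(j), w(j+1), \ldots, w(n)$ starting at $w(j)$ uses at most one element from each maximal decreasing run in the tail, so $L_j(w)$ is bounded by the number of such runs, which equals $(n-j+1) - d_j(w)$. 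For claim (b), consider a cover $v \lessdot_R w$ where $w = v s_i$ with $v(i) = a < b = v(i+1)$: the values $L_k$ for $k \geq i+2$ are unchanged; for $k < i$, any increasing subsequence of $w$'s tail from $k$ uses at most one of positions $i, i+1$ and lifts to a subsequence of $v$'s tail of equal length, giving $L_k(w) \leq L_k(v)$; and the identity $L_i(w) = L_{i+1}(v)$ combined with $L_i(v) \geq L_{i+1}(w)$ (both arising from the LIS analysis at positions $i, i+1$) yields $L_i(v) + L_{i+1}(v) \geq L_i(w) + L_{i+1}(w)$, so $\sum_k r_k$ is weakly increasing from $v$ to $w$.

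The matching lower bound requires exhibiting some $v^* \leq_R w$ with $\maj(v^*) = \raj(w)$, which I expect to be the main obstacle. My proposed approach is a recursive construction: at each step $j$, select a decreasing run from $w$'s remaining tail that witnesses the bound $r_j(w) \leq (n-j+1) - L_j(w)$ tightly, and append its values to the one-line notation of $v^*$ in decreasing order so as to concentrate descents at high positions and realize $\maj(v^*) = \sum_j r_j(w)$. The critical step is to verify that the resulting $v^*$ satisfies $v^* \leq_R w$, equivalently that every value-inversion introduced into $v^*$ by the constructed descents already appears in $w$. I anticipate this will require a careful inductive analysis tying the choice of decreasing run at each step to the structure of the Rajchgot code, together with a consistent tie-breaking rule ensuring that no spurious value-inversions are generated.
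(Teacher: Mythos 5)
Your argument for the upper bound $\raj(w) \geq \max\{\maj(v) : v \leq_R w\}$ is correct and essentially matches the paper: you recover Lemma~\ref{lem:rajmaj_inequality} ($\maj(v) \leq \raj(v)$, via counting distinct decreasing runs hit by an increasing subsequence) and a weak-order monotonicity for $\raj$ that is a special case of Corollary~\ref{cor:LR_raj_ineq}. Your detailed $L_k$ bookkeeping for the cover $v \lessdot_R w$ checks out, and your inversion-symmetry argument for $\raj(w)=\raj(w^{-1})$ (transpose the permutation matrix and re-index) gives the reduction from the second equality to the first correctly. One minor note: the inequality ``$r_j(w) \leq (n-j+1) - L_j(w)$'' you mention is in fact an equality by definition of $r_j$, so that phrasing is off, but it doesn't affect the structure.

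The genuine gap is the lower bound. You must produce a specific $v^* \leq_R w$ with $\maj(v^*) = \raj(w)$, and your proposal only sketches a recursive construction without defining it precisely or proving either (i) that it achieves $\maj(v^*) = \raj(w)$ or (ii) that $v^* \leq_R w$. You flag this yourself as ``the main obstacle'' and leave both pieces unresolved; in its current form the proof is incomplete at exactly the step that requires the new idea. The paper's solution is the \emph{fireworks map} $\Phi$, built from the blob diagram: repeatedly lasso the southeast-maximal antichain in the permutation matrix of $w$ to get blobs $B_n, B_{n-1}, \ldots$; the columns of the blobs give a set partition $\pi(w)$; and $\Phi(w)$ is the unique \emph{fireworks} permutation (initial entries of descending runs increase, equivalently $3$-$12$-avoiding) whose descending runs are the blocks of $\pi(w)$ written backwards. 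Three facts close the argument: $\Phi(w) \leq_R w$ (Lemma~\ref{lem:fireworks_down}, proved by checking that noninversions of $w$ remain noninversions of $\Phi(w)$, using the characterization of right weak order by containment of inversion sets), $\raj(\Phi(w)) = \raj(w)$ (Corollary~\ref{cor:fireworksraj}, since $\Phi$ preserves the shape and $\raj$ depends only on shape), and $\maj(\Phi(w)) = \raj(\Phi(w))$ (Proposition~\ref{prop:raj=maj}, the characterization that $\raj = \maj$ exactly for fireworks permutations). Your intuition of ``select a decreasing run \ldots\ append in decreasing order \ldots\ concentrate descents'' is pointing at exactly this construction, but the blob diagram is what makes the choice of runs canonical and the verification $\Phi(w) \leq_R w$ tractable.
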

	 
To the best of our knowledge, none of the equalities in Theorem~\ref{thm:main_mm} have been observed previously.
 
 \begin{proof}
 The equality between $\raj(w)$ and $\deg \fCM_w(\bx)$ was proved above; the formulas in terms of  the major index statistic appear in Theorem~\ref{thm:raj=mm}.
 \end{proof}

 As an application of these ideas, we also determine the maximum Castelnuovo--Mumford regularity for matrix Schubert varieties $X_w$ with $w \in S_n$.
\begin{theorem} \label{thm:MaxRegIntro}
Let $n$ be a positive integer and define $k$ by $\binom{k}{2} \leq n \leq \binom{k+1}{2}$; if $n$ is a triangular number, then we may choose either value for $k$.
For matrix Schubert varieties $X_w$ with $w \in S_n$, the largest Castelnuovo--Mumford regularity is $\binom{n+1}{2} - kn + \binom{k+1}{3}$. 
\end{theorem}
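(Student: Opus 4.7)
By Theorem~\ref{thm:degree}, $\mathrm{reg}(X_w) = \raj(w)-\inv(w)$, so the task is to compute $M_n := \max_{w\in S_n}[\raj(w)-\inv(w)]$. My plan is first to reduce this to a cleaner statistic via Theorem~\ref{thm:main_mm}, which gives $\raj(w) = \max\{\maj(v):v\le_R w\}$. Since $v\le_R w$ is defined by containment of inversion sets, it forces $\inv(v)\le\inv(w)$, so for the optimizing $v$ one has $\raj(w)-\inv(w)\le \maj(v)-\inv(v)$; the reverse inequality comes from the choice $v=w$ (using $\maj(w)\le \raj(w)$). Hence
\[
M_n \;=\; \max_{v\in S_n}\bigl[\maj(v)-\inv(v)\bigr].
\]

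Next I would reduce to layered permutations. For $v\in S_n$ let $(b_1,\ldots,b_m)$ be the sizes of the maximal decreasing runs, a composition of $n$ determined by $\mathrm{Des}(v)$. Let $\tilde v$ be the layered permutation with the same descent set, i.e., the $j$-th decreasing run of $\tilde v$ holds the values $\{B_{j-1}+1,\ldots,B_j\}$ in decreasing order, where $B_j = b_1+\cdots+b_j$. Then $\maj(v)=\maj(\tilde v)$, and both have identical within-run inversion count $\sum_j\binom{b_j}{2}$; but $\tilde v$ has no between-run inversions (its run values are consecutive integers arranged in increasing order across runs), while $v$ may, giving $\inv(\tilde v)\le \inv(v)$. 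Hence $\maj(v)-\inv(v)\le \maj(\tilde v)-\inv(\tilde v)$, so the maximum is attained by a layered permutation. A direct descent calculation then gives the closed form
\[
\maj(\tilde v)-\inv(\tilde v) \;=\; \sum_{j=1}^m(b_j-1)B_{j-1} \;=\; \binom{n}{2} - \sum_{j=1}^m\Bigl[\binom{b_j}{2}+b_j(m-j)\Bigr].
\]

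The remaining task is an integer optimization: maximize this quantity over all compositions $(b_1,\ldots,b_m)$ of $n$ with $b_i\ge 1$ and arbitrary $m\ge 1$. For the \emph{lower bound} I would exhibit an explicit near-triangular composition: when $n=\binom{k+1}{2}$, take $m=k$ and $b_j=j$, whereupon the formula simplifies to $\sum_{j=1}^k(j-1)\binom{j}{2} = k(k+1)(k-1)(3k-2)/24$, which equals $f(n,k) := \binom{n+1}{2}-kn+\binom{k+1}{3}$ by a short algebraic check; for general $n$ with $\binom{k}{2}\le n\le\binom{k+1}{2}$, a composition of $n$ obtained from $(1,2,\ldots,k)$ by small integer adjustments achieves the same value. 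The main obstacle will be the matching \emph{upper bound}. The second form of the objective displays the maximization as minimization of a separable convex function $\sum_j[\binom{b_j}{2}+b_j(m-j)]$ on the simplex. For each fixed $m$, a Lagrangian analysis with boundary truncation at $b_j\ge 1$ yields the minimum explicitly; I expect the resulting upper bound, viewed as a function of $m$, to be maximized at $m=k$ (tying with $m=k+1$ precisely when $n$ is triangular), with maximal value equal to $f(n,k)$. The envelope analysis across different choices of $m$ is where the bulk of the case-work will lie.
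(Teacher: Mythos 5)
Your reduction is correct and takes a genuinely different route from the paper. The paper reduces to layered permutations via the length-additive factorization $w = u e_\alpha v$ of Lemma~\ref{lem:fireworks_factorizations} (which gives $\raj(w)-\inv(w) \le \raj(e_\alpha)-\inv(e_\alpha)$ directly), whereas you pass through $\max_v[\maj(v)-\inv(v)]$ using Theorem~\ref{thm:main_mm} and then replace $v$ by the layered permutation with the same descent set. Both reductions are valid and land on the same optimization problem; I checked that your identity $\maj(\tilde v)-\inv(\tilde v)=\sum_j(b_j-1)B_{j-1}=\binom{n}{2}-\sum_j\bigl[\binom{b_j}{2}+b_j(m-j)\bigr]$ agrees with the paper's expression $(\ast)$ in the proof of Theorem~\ref{thm:MaxReg}. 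Your route is arguably more elementary in that it avoids the uniqueness of the factorization $w=ue_\alpha v$, at the cost of invoking Theorem~\ref{thm:main_mm}; it also loses the paper's characterization of \emph{which} permutations attain the maximum (the paper's factorization argument shows equality forces $u=v=\id$, i.e., $w$ layered, while your argument only shows the maximum \emph{value} is attained at some layered permutation).

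However, there is a genuine gap: the optimization over compositions, which is the bulk of the paper's proof, is not carried out. You exhibit the lower bound only for triangular $n$ (the "small integer adjustments" for general $n$ still require verifying that every composition with $j-1\le b_j\le j$ gives exactly $f(n,k)$, which is the content of the paper's rewriting of $(\ast)$ as $\binom{n+1}{2}-\ell n+\binom{\ell+1}{3}-\sum_j\binom{b_j-j+1}{2}$ with vanishing final sum). More seriously, the matching upper bound is stated only as an expectation. The difficulty you correctly anticipate is that the number of parts $m$ is itself a free variable, so "convex minimization for fixed $m$, then optimize over $m$" requires a nontrivial envelope analysis. The paper handles this by local perturbations of an optimal composition: replacing $\alpha$ by $(\alpha_1-1,\ldots,\alpha_i+1,\ldots)$ and by $(1,\alpha_1,\ldots,\alpha_i-1,\ldots)$ forces $i-1\le\alpha_i\le i+1$, a further exchange argument rules out having both $\alpha_i=i-1$ and $\alpha_j=j+1$, and only then does one conclude that the optimal $\alpha$ is (after possibly prepending a $0$) a near-staircase with $\ell=k$. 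Something of that nature, or a completed version of your Lagrangian analysis, is needed before the claim is proved.
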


In Theorem~\ref{thm:MaxReg}, we will characterize the permutations that achieve this maximum.

\begin{proof}
In Theorem~\ref{thm:MaxReg}, we will show that $\raj(w) - \inv(w) \leq \binom{n+1}{2} - kn + \binom{k+1}{3}$ (and find permutations that achieve equality).
As noted above, $\raj(w) - \inv(w)$ is the Castelnuovo--Mumford regularity of $X_w$.
\end{proof}
 
 This paper is organized as follows. In Section~\ref{sec:background}, we review necessary background. In Section~\ref{sec:rajchgot}, we begin to develop the combinatorics of Rajchgot index and introduce \newword{fireworks permutations}. In Section~\ref{sec:fireworks}, we introduce the key tools of the \newword{blob diagram} and the \newword{fireworks map}.  In Section~\ref{sec:degrees}, we apply these results to establish most of Theorem~\ref{thm:degree}; we show that $\deg \fCM_w(\bx) = \raj(w)$ and therefore that the Castelnuovo--Mumford regularity of $X_w$ is $\raj(w) - \inv(w)$. Section~\ref{sec:rajchgotpolynomials} introduces Rajchgot polynomials, establishes the factorization result for double Castelnuovo--Mumford polynomials $\fCM_w(\bx; \by)$ of Theorem~\ref{thm:main} and shows that the leading term of $\fCM_w(\bx, \by)$ is at most as large as predicted by Theorem~\ref{thm:main}. In Section~\ref{sec:pushingPluses}, we complete our proofs by constructing a pipe dream whose degree is $\bx^{\rajc(w)} \by^{\rajc(w^{-1})}$.

\section{Background}
\label{sec:background}

\subsection{Permutations}

Let $[n] \coloneqq \{1, 2, \dots, n\}$.
Let $S_n$ denote the symmetric group of permutations of $[n]$. We consider $w \in S_n$ as a map $w \colon [n] \to [n]$ and write $w$ in \newword{one-line notation} as the string $w(1) w(2) \cdots w(n)$. We will often write $w_i \coloneqq w(i)$.  We identify $w \in S_n$ with the permutation matrix having a $1$ in each position $(i, w_i)$ and $0$s elsewhere. We will also often identify $S_{n-1}$ with the subgroup $\{ w \in S_n : w_n = n \} \subset S_n$ fixing $n$. We write $\id$ for the identity permutation $12 \cdots n$ and $w_0$ for the reverse permutation $n(n-1) \cdots 1$.

Let $s_i \coloneqq (i \; i+1)$ denote the simple transposition that exchanges $i$ and $i+1$, and recall that $s_1, \dots, s_{n-1} $ together generate $S_n$.  We note that, because we write a permutation $w$ in one-line notation as $w(1) w(2) \cdots w(n)$, multiplying $w$ on the \emph{left} by $s_k$ switches the \emph{values} $k$ and $k+1$ and multiplying on the \emph{right} by $s_k$ switches the values in positions $k$ and $k+1$.

An \newword{inversion} of $w \in S_n$ is a pair $i,j \in [n]$ such that $i<j$ and $w_i > w_j$. We write $\inv(w)$ for the number of inversions in $w$ and call this quantity the \newword{Coxeter length} of $w$. Note that $\inv(w)$ is the length of the shortest expression for $w$ as a product of the generators $s_i$. A factorization $w = s_{i_1} \cdots s_{i_{\inv(w)}}$ is called a \newword{reduced expression} for $w$, and the sequence of subscripts $i_1 \cdots i_{\inv(w)}$ is called a \newword{reduced word} for $w$.

We will have need of four different partial orders on the set $S_n$. If $w = uv$ with $\inv(w) = \inv(u) + \inv(v)$, then we say that $v \leq_L w$ and $u \leq_R w$; the relations $\leq_L$ and $\leq_R$ are known as \newword{left} and \newword{right weak order}, respectively. We write $\leq_{LR}$ for the partial order obtained as the transitive closure of the union of left and right weak orders and call this the \newword{two-sided weak order} (see \cite{Petersen}). For $u \leq_R v$, we write $[u,v]_R$ for the interval from $u$ to $v$ in right weak order. Similarly, we define the notations $[u,v]_L$ and $[u,v]_{LR}$ in the analogous ways. Finally, we define \newword{Bruhat order} on $S_n$ by $v \leq w$ if some reduced word for $v$ is a substring of some reduced word for $w$. The various weak orders are ``weak'' in the sense that they are proper subrelations of Bruhat order.

A \newword{descent} of $w \in S_n$ is a value $i$ such that $w_i > w_{i+1}$. Equivalently, $i$ is a descent of $w$ if $\inv(w) > \inv(ws_i)$. A permutation $w$ is \newword{Grassmannian} if it has at most one descent.

The \newword{$0$-Hecke monoid} $\mathcal{H}_n$ is the free monoid on generators $\tau_1, \dots, \tau_{n-1}$ subject to the ``idempotent braid relations''
\[
\tau_i^2 = \tau_i, \quad \tau_i \tau_j = \tau_j \tau_i,  \; \text{for $j \neq i \pm 1$,} \quad \text{and} \quad  \tau_i  \tau_{i+1}  \tau_i =  \tau_{i+1}  \tau_i  \tau_{i+1}.
\]
There is a natural action of $\mathcal{H}_n$ on $S_n$ induced by
\[
\tau_i \ast w \coloneqq \begin{cases}
	s_i w, & \text{if} \; \inv(s_i w) > \inv(w);  \\
	w, & \text{otherwise.}
\end{cases}
\]
For every permutation $w$ in $S_n$, there is a unique element $\bar{w}$ in $\mathcal{H}_n$ with $\bar{w} \ast \id = w$; for example, $\bar{s}_i = \tau_i$. 
We define the \newword{Demazure product} on $S_n$ to be the binary operation $u \ast v$ given by $u \ast v = \bar{u} \ast \bar{v} \ast \id$. 

The \newword{graph} of the permutation $w \in S_n$ is obtained by plotting bullets $\bullet$ in the $n\times n$ grid in positions $(i,w_i)$ for $i\in [n]$ (in matrix coordinates).  
The \newword{Rothe diagram} $\Rothe(w)$ of $w$ is constructed from its graph as follows. From each $\bullet$, fire a laser beam directly to the right and another straight down. The cells of the $n \times n$ grid that are hit by no laser beam are the Rothe diagram $\Rothe(w)$. It is not hard to see that the number of cells in $\Rothe(w)$ is $\inv(w)$. Write $\ell_i$ for the number of cells in row $i$. We call the sequence $\invc(w) \coloneqq (\ell_1, \dots, \ell_n)$ the \newword{inv code} of $w$. (This is also often referred to as the \emph{Lehmer code}, but we won't use this terminology.) Note that $\inv(w)$ is the sum of $\invc(w)$.

\begin{example}\label{ex:Rothe}
	Suppose $w = 42153 \in S_5$. Then the Rothe diagram of $w$ is the gray cells of the diagram
\[
  \begin{tikzpicture}[x=1.5em,y=1.5em]
      \draw[color=black, thick](0,1)rectangle(5,6);
     \filldraw[color=black, fill=gray!30, thick](0,5)rectangle(1,6);
     \filldraw[color=black, fill=gray!30, thick](0,4)rectangle(1,5);
      \filldraw[color=black, fill=gray!30, thick](2,5)rectangle(3,6);
     \filldraw[color=black, fill=gray!30, thick](1,5)rectangle(2,6);
     \filldraw[color=black, fill=gray!30, thick](2,2)rectangle(3,3);
     \draw[thick,darkblue] (5,5.5)--(3.5,5.5)--(3.5,1);
     \draw[thick,darkblue] (5,4.5)--(1.5,4.5)--(1.5,1);
     \draw[thick,darkblue] (5,3.5)--(.5,3.5)--(.5,1);
     \draw[thick,darkblue] (5,2.5)--(4.5,2.5)--(4.5,1);
     \draw[thick,darkblue] (5,1.5)--(2.5,1.5)--(2.5,1);
     \filldraw [black](3.5,5.5)circle(.1);
     \filldraw [black](1.5,4.5)circle(.1);
     \filldraw [black](.5,3.5)circle(.1);
     \filldraw [black](4.5,2.5)circle(.1);
     \filldraw [black](2.5,1.5)circle(.1);
     \end{tikzpicture}.
\]
Hence we have $\invc(w) = (3,1,0,1,0)$ and $\inv(w) = 5$.
\end{example}

Suppose $v \in S_k$ and $w \in S_n$ with $k \leq n$. We say $w$ \newword{contains} $v$ if there is a subsequence $w_{i_1}, w_{i_2}, \dots, w_{i_k}$ such that we have $v_p < v_q \Longleftrightarrow w_{i_p} < w_{i_q}$ for all $1 \leq p,q \leq k$. If $w$ does not contain $v$, we say $w$ \newword{avoids}  $v$. A \newword{dominant} permutation is one that avoids $132$.
 
 \subsection{Matrix Schubert varieties}\label{sec:matrixSchubertvarieties}
The \newword{matrix Schubert variety} $X_w$ is an affine variety cut out by certain determinants.  
Let $Z = (z_{ij})_{1 \leq i,j \leq n}$ be a matrix of distinct indeterminants. Then $X_w$ is a subvariety of the $n^2$-dimensional affine space $\Spec \mathbb{C}[Z]$. 

Consider the Rothe diagram $\Rothe(w)$. For each cell $(i,j)$ of $\Rothe(w)$, let $r_{i,j}$ denote the number of $1$s appearing in the permutation matrix $w$ northwest of the cell $(i,j)$. 
Let $I_w$ be the ideal generated by, for each $(i,j)$, the $(r_{i,j} + 1) \times (r_{i,j} + 1)$ minors of the matrix northwest of $(i,j)$. 
(If $r_{i,j} = \min(i,j)$, so that no such minor fits in the matrix, then we obtain an empty list of generators in this case.)
The matrix Schubert variety $X_w$ is the subvariety of $n \times n$ matrices defined by the ideal $I_w$. By work of Fulton \cite{Fulton}, the ideal $I$ is prime, so $X_w$ is a reduced and irreducible affine variety; a $n \times n$ matrix $A$ lies in $X_w$ if the rank of each northwest submatrix of $A$ is less than or equal to the rank of the same submatrix of $w$.

\subsection{Castelnuovo--Mumford regularity} \label{sec:CM-regularity}

Let $R \coloneqq \mathbb{C}[Z]$ be a standard-graded polynomial ring, so $\deg(z_{ij}) = 1$, and let $I \subseteq R$ be a homogeneous ideal. We write $R(-i)$ for $R$ with all degrees shifted by $i$. A \newword{free resolution} of $R/I$ is a diagram of graded $R$-modules
\[
0 \to \bigoplus_{i \in \mathbb{Z}} R(-i)^{b^k_i} \to \dots \to \bigoplus_{i \in \mathbb{Z}} R(-i)^{b^0_i} \to R/I \to 0
\] that is \newword{exact}, that is, such that the image of each map is the kernel of the next. By Hilbert's Basis and Syzygy Theorems, there always exists such a free resolution with $k \leq n^2$. Up to isomorphism there is a unique free resolution simultaneously minimizing all $b_i^j$. We call this the \newword{minimal free resolution} of $R/I$. In this case, the dimensions $b_i^j$ are invariants of $R/I$. The \newword{Castelnuovo--Mumford regularity} ${\rm reg}(R/I)$ of $R/I$ is the greatest $i - j$ such that $b_i^j \neq 0$. Conflating affine varieties with their coordinate rings, we also refer to this number as the Castelnuovo--Mumford regularity of $\Spec R/I$. In the case that $R/I$ is Cohen--Macaulay, it is known that the projective dimension of $R/I$ equals the height of the ideal $I$ as well as the codimension of $\Spec R/I$ in $\Spec R$.

Write $(R/I)_a$ for the degree $a$ piece of $R/I$.  The \newword{Hilbert series} of $R/I$ is the formal power series \[H(R/I;t)=\sum_{a\in \mathbb N} \dim_{\mathbb C}(R/I)_a t^a.\]
If we write the Hilbert series as a rational expression \[H(R/I;t)=\frac{K(R/I;t)}{(1-t)^{n^2}}\] the numerator $K(R/I;t)$ is the \newword{K-polynomial} of $R/I$.

The \newword{height} ${\rm ht}(I)$ of a prime ideal $I$ is the maximum $k$ so that there exists a nested chain of prime ideals \[I_0\subsetneq I_1\subsetneq \cdots \subsetneq I_k=I.\] If $I$ is prime, ${\rm ht}(I)$ is the codimension of $\Spec R/I$ in $\Spec R$.

The following lemma is well known to experts, see e.g.\ \cite{Bruns.Herzog,Benedetti.Varbaro,RRRSW}.
\begin{lemma}\label{lem:Benedetti}
Suppose $R/I$ is Cohen--Macaulay.  Then ${\rm reg}(R/I)= \deg(K(R/I;t))-{\rm ht}(I)$.
\end{lemma}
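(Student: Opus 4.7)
The approach is to reduce to the Artinian case by modding out a linear system of parameters and then read off the regularity directly from the Hilbert series. To begin, let $c \coloneqq {\rm ht}(I)$ and $d \coloneqq \dim(R/I)$. Since $R$ is a polynomial ring in $n^2$ variables and $R/I$ is Cohen--Macaulay, we have $c + d = n^2$ and the Hilbert series $H(R/I;t)$ has a pole of order exactly $d$ at $t=1$. Thus we may write $K(R/I;t) = (1-t)^c Q(t)$ for a unique polynomial $Q(t)$ with $Q(1) \neq 0$, and in particular $\deg K(R/I;t) = c + \deg Q(t)$.

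Because $\CC$ is infinite, I would next choose a linear system of parameters $\theta_1, \dots, \theta_d$ on $R/I$; the Cohen--Macaulay hypothesis guarantees that this is a regular sequence. The Artinian quotient $M \coloneqq (R/I)/(\theta_1, \dots, \theta_d)$ then has Hilbert series
\[ H(M; t) = (1-t)^d H(R/I; t) = \frac{K(R/I; t)}{(1-t)^c} = Q(t). \]
It remains to establish (i) ${\rm reg}(R/I) = {\rm reg}(M)$ and (ii) ${\rm reg}(M) = \deg H(M;t)$, for then
\[ {\rm reg}(R/I) = {\rm reg}(M) = \deg Q(t) = \deg K(R/I;t) - {\rm ht}(I). \]

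For (i), by induction on $d$ it suffices to show that killing a single linear non-zero-divisor $\theta$ on a graded module $N$ preserves regularity. Tensoring the short exact sequence $0 \to N(-1) \xrightarrow{\theta} N \to N/\theta N \to 0$ with $k \coloneqq R/\fm$ makes $\theta$ act as $0$, splitting the long exact $\Tor$ sequence into short exact sequences
\[ 0 \to \Tor^R_i(N,k)_j \to \Tor^R_i(N/\theta N, k)_j \to \Tor^R_{i-1}(N,k)_{j-1} \to 0, \]
and reading off Betti numbers yields ${\rm reg}(N/\theta N) = {\rm reg}(N)$. For (ii), an Artinian graded $R$-module has vanishing higher local cohomology and $H^0_\fm(M) = M$, so by the local-cohomological characterization of regularity, ${\rm reg}(M)$ equals the top nonvanishing degree of $M$, which coincides with $\deg H(M;t)$. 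The main obstacle is (i), the regularity-preservation property under killing a linear non-zero-divisor; everything else amounts to direct manipulation of Hilbert series using $\dim R/I = d$.
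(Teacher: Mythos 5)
The paper does not prove this lemma: it records it as ``well known to experts'' and cites \cite{Bruns.Herzog,Benedetti.Varbaro,RRRSW} in lieu of an argument, so there is no internal proof to compare against. Your argument is the standard one and it is correct. The Hilbert-series bookkeeping is right: Cohen--Macaulayness gives $\mathrm{ht}(I)+\dim(R/I)=n^2$, the pole of $H(R/I;t)$ at $t=1$ has order exactly $d=\dim(R/I)$, and modding out the linear system of parameters (available since $\CC$ is infinite, and a regular sequence precisely by the Cohen--Macaulay hypothesis) multiplies the Hilbert series by $(1-t)^d$, leaving exactly $Q(t)$. Step (i) via the split long exact $\Tor$ sequence after tensoring with the residue field is the textbook proof that killing a linear non-zero-divisor preserves regularity, and both the $\leq$ and $\geq$ directions fall out of the resulting short exact sequences. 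Step (ii) is also fine: for $M$ of finite length, $H^0_{\fm}(M)=M$ and $H^i_{\fm}(M)=0$ for $i>0$, so $\mathrm{reg}(M)$ equals the top nonvanishing degree of $M$, which is $\deg H(M;t)=\deg Q(t)$. Chaining these gives $\mathrm{reg}(R/I)=\deg K(R/I;t)-\mathrm{ht}(I)$ as required. One small remark for polish: you invoke two different characterizations of regularity --- the graded-Betti one in step (i) and the local-cohomological one in step (ii) --- while the paper defines regularity only via minimal free resolutions. This is mathematically harmless since these characterizations are classical equivalences, but if you wanted the argument to live strictly inside the paper's definitions you would either cite that equivalence explicitly or derive the Artinian case from the Betti description directly (e.g.\ via the Koszul resolution and graded local duality). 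As written, the proof is complete and sound.
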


Matrix Schubert varieties are Cohen--Macaulay \cite{Fulton} and the codimension of $X_w$ is the inversion statistic $\inv(w)$. Hence, computing the regularity of matrix Schubert varieties amounts to finding the degree of their K-polynomials.

\subsection{Schubert and Grothendieck Polynomials} \label{GrothBackground}

Consider the polynomial ring $\mathbb Z[\bx;\by]=\mathbb Z[x_1,x_2,\ldots,x_n;y_1,y_2,\ldots,y_n]$.  There is a natural action of $S_n$ on $\mathbb Z[\bx;\by]$ defined by \[w\cdot f=f(x_{w_1},x_{w_2},\ldots,x_{w_n};y_1,\ldots,y_n).\] Given $f\in \mathbb Z[\bx;\by]$ and $1 \leq i < n$, we define the action of the \newword{divided difference operator} $\partial_{i}$ by \[\partial_i(f)=\frac{f-s_{i}\cdot f}{x_i-x_{i+1}}.\] Note that $\partial_i(f)$ is a polynomial, symmetric under interchanging the variables $x_i$ and $x_{i+1}$.
Let $\overline{\partial_i}$ be defined by 
\[\overline{\partial_i}(f)=\partial_i((1-x_{i+1})f).\]
The operators $\overline{\partial_i}$ satisfy the braid relations 
\[
\overline{\partial}_i \overline{\partial}_j = \overline{\partial}_j \overline{\partial}_i,  \; \text{for $j \neq i \pm 1$,} \quad \text{and} \quad \overline \partial_i \overline \partial_{i+1} \overline \partial_i = \overline \partial_{i+1} \overline \partial_i \overline \partial_{i+1}.
\]
and the idempotent relation 
\[ 
\overline{\partial_i}^2 = \overline{\partial_i}.
\]
 The double Grothendieck polynomials can be defined by the recursion:
\[ \overline{\partial_i} \fG_w(\bx, \by) = \begin{cases} \fG_{w s_i}(\bx, \by) & ws_i <_R w \\ \fG_{w}(\bx, \by) & ws_i >_R w \\ \end{cases} \]
together with the initial condition
\[ \fG_{w_0}(\bx, \by) = \prod_{i+j \leq n} (x_i+y_j - x_i y_j) . \] 
We obtain the \newword{(single) Grothendieck polynomials} by specializing $\by$ to 0, that is to say, \ \[\fG_w(\bx) \coloneqq \fG_w(\bx;\mathbf 0).\]

The \newword{double Schubert polynomial} $\fS_w(\bx, \by)$ is the lowest-degree homogeneous part of the double Grothendieck polynomial $\fG_w(\bx, \by)$ substituting $y_j \mapsto -y_j$, while the \newword{(single) Schubert polynomial} $\fS_w(\bx)$ is the lowest-degree homogeneous part of $\fG_w(\bx)$. 
The degree of the Schubert polynomial $\fS_w(\bx)$ is $\inv(w)$.

We now recall an explicit combinatorial formula for (double) Schubert polynomials and (double) Grothendieck polynomials.  A \newword{pipe dream} is a subset $P$ of the cells in the strictly upper left triangular part of the $n\times n$ grid, i.e.\ \[P\subseteq \{(i,j):1<i+j\leq n\}.\]  We represent this subset pictorially by placing a \newword{crossing tile} \begin{tikzpicture}[x=1em,y=1em]
	\draw[color=black, thick](0,1)rectangle(1,2);
	\draw[thick,rounded corners, color=darkblue] (0,1.5)--(1,1.5);
	\draw[thick,rounded corners,color=darkblue] (.5,1)--(.5,2);
	\end{tikzpicture}
in each cell of $P$ and \newword{bumping tiles}
\begin{tikzpicture}[x=1em,y=1em]
	\draw[color=black, thick](0,1)rectangle(1,2);
	\draw[thick,rounded corners,color=darkblue] (.5,1)--(.5,1.5)--(1,1.5);
	\draw[thick,rounded corners,color=darkblue] (.5,2)--(.5,1.5)--(0,1.5);
	\end{tikzpicture}
 in the other cells.

If there is a crossing tile in cell $(i,j)$, we associate to it the simple transposition $s_{i+j-1}$.  
We then associate a reading word $\word(P)$ to $P$ by reading these simple transpositions within rows from right to left, working from the top row downwards.  We say $P$ is a pipe dream for $w$ if $w$ is the Demazure product of $\word(P)$.
Write $\pipes(w)$ for the set of pipe dreams for $w$. We say $P \in \pipes(w)$ is \newword{reduced} if $\word(P)$ is a reduced word for $w$ and write $\pipes_0(w)$ for the subset of such reduced pipe dreams $P$.

The following theorem first appeared in this form in \cite{Knutson.Miller}; however, special cases and essentially equivalent formulations were known earlier, e.g.\ in \cite{Bergeron.Billey, Fomin.Kirillov}.	
\begin{theorem}[{\cite{Knutson.Miller}}]\label{thm:pipedreams}
For any $w \in S_n$, we have
\[ \fG_w(\bx)=\sum_{P\in \pipes(w)} (-1)^{|P|-\inv(w)} \prod_{(i,j)\in P}  x_i  \]
\[ \fG_w(\bx;\by)=\sum_{P\in \pipes(w)}(-1)^{|P|-\inv(w)} \prod_{(i,j)\in P} (x_i+y_j-x_iy_j) \]
and 
\[ \fS_w(\bx;\by)=\sum_{P\in \pipes_0(w)}\prod_{(i,j)\in P} (x_i-y_j)
\]
\end{theorem}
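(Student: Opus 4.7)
The plan is to prove the double Grothendieck formula first, then deduce the other two as corollaries. Setting $\by = \mathbf{0}$ in the double formula immediately yields the single Grothendieck formula, since $x_i + y_j - x_i y_j$ specializes to $x_i$. To obtain the Schubert formula, extract the lowest-degree homogeneous component of $\fG_w(\bx;\by)$ and substitute $y_j \mapsto -y_j$: each factor $x_i + y_j - x_i y_j$ has linear part $x_i + y_j$, and a pipe dream $P$ contributes monomials of degree at least $|P|$; since any pipe dream satisfies $|P| \geq \inv(w)$ with equality exactly on $\pipes_0(w)$, the lowest-degree part of $\fG_w(\bx;\by)$ is $\sum_{P \in \pipes_0(w)} \prod_{(i,j)\in P} (x_i + y_j)$, which becomes the stated formula after $y_j \mapsto -y_j$. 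The sign $(-1)^{|P|-\inv(w)}$ is trivial on reduced pipe dreams, so it disappears.

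For the double Grothendieck formula itself, I would use downward induction on $\inv(w)$ via the divided-difference recursion. For the base case $w = w_0$, the only pipe dream is the fully-crossed staircase $P_0 = \{(i,j) : i+j \leq n\}$, since Demazure-multiplying the reading word of any subset of this staircase only loses crossings if the configuration allows a braid-related reduction, and one checks by direct counting that no proper subset of $P_0$ has Demazure product $w_0$; the resulting weight $\prod_{i+j \leq n}(x_i + y_j - x_i y_j)$ matches the initial condition. For the inductive step, fix $w < w_0$ and choose $i$ with $w_i < w_{i+1}$, so that $ws_i >_R w$. By induction the formula holds for $ws_i$, and since $(ws_i)s_i = w <_R ws_i$, the recursion gives $\fG_w(\bx;\by) = \overline{\partial}_i \fG_{ws_i}(\bx;\by)$. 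Everything is thus reduced to verifying the identity
\[
\overline{\partial}_i \sum_{P \in \pipes(ws_i)} (-1)^{|P|-\inv(ws_i)} \prod_{(i,j) \in P} (x_i + y_j - x_i y_j) = \sum_{Q \in \pipes(w)} (-1)^{|Q|-\inv(w)} \prod_{(i,j) \in Q} (x_i + y_j - x_i y_j).
\]

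The crux is to realize $\overline{\partial}_i = \partial_i \circ (1 - x_{i+1})$ as an explicit weighted correspondence on pipe dreams. I would partition $\pipes(ws_i)$ into small blocks in which only the crossings of rows $i$ and $i+1$ vary, related by local modifications (reminiscent of ladder or chute moves). On each block, the sum of weights should be symmetric in $x_i, x_{i+1}$ except for one controlled factor, so that dividing by $x_i - x_{i+1}$ produces an honest polynomial that one can identify with the contribution of a matching block in $\pipes(w)$. The main obstacle is handling non-reduced pipe dreams: naive local bijections work cleanly only on $\pipes_0$, but the non-reduced contributions are essential — they carry the signs $(-1)^{|P|-\inv(w)}$ and the higher-degree corrections in $x_i + y_j - x_i y_j$ that distinguish the Grothendieck from the Schubert case. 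A careful bookkeeping of how local moves affect $|P|$ (possibly by a ``drift parameter'' tracking $|P| - \inv(\text{Demazure product})$) and the corresponding sign changes should reconcile these contributions. An alternative I would keep in reserve is Knutson--Miller's geometric approach: degenerate the matrix Schubert variety to a Stanley--Reisner-like union of coordinate subspaces indexed by pipe dreams, and read off the $K$-polynomial from the multidegrees of its components; this avoids the combinatorial bijection entirely at the cost of importing geometric machinery.
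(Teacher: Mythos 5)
First, note that the paper does not prove this theorem: it is quoted from Knutson--Miller (with antecedents in Fomin--Kirillov and Bergeron--Billey), so there is no in-paper argument to compare yours against. Your reductions are fine as far as they go: setting $\by=\mathbf{0}$ gives the single Grothendieck formula; the lowest-degree extraction for $\fS_w(\bx;\by)$ is correct because every $P\in\pipes(w)$ satisfies $|P|\geq \inv(w)$ (the Demazure product of a word of length $|P|$ has Coxeter length at most $|P|$), with equality exactly on $\pipes_0(w)$, and the degree-$\inv(w)$ contributions $\prod(x_i+y_j)$ cannot cancel. The base case is also correct, and in fact easier than you make it: the allowed region $\{(i,j): 1<i+j\leq n\}$ has exactly $\binom{n}{2}=\inv(w_0)$ cells, so the full staircase is the only subset whose Demazure product can have length $\binom{n}{2}$; no analysis of braid reductions is needed.

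The genuine gap is the inductive step, which is the entire content of the theorem, and which you describe only as a plan. You need to prove
\[
\overline{\partial}_i \sum_{P \in \pipes(ws_i)} (-1)^{|P|-\inv(ws_i)} \prod_{(a,b) \in P} (x_a + y_b - x_a y_b) \;=\; \sum_{Q \in \pipes(w)} (-1)^{|Q|-\inv(w)} \prod_{(a,b) \in Q} (x_a + y_b - x_a y_b),
\]
but the "blocks," the local moves relating rows $i$ and $i+1$, and the bookkeeping of $|P|-\inv(\cdot)$ under those moves are never defined --- the argument is carried by "I would partition," "should be symmetric," and "should reconcile." This is precisely where every known proof does its real work: Fomin--Kirillov establish the identity via Yang--Baxter relations in the $0$-Hecke algebra; Knutson--Miller prove it by Gr\"obner-degenerating the matrix Schubert variety and running the "mitosis" recursion on the resulting subword complex; direct ladder-move arguments exist but require a careful, fully specified involution-with-exceptions on non-reduced pipe dreams to track the sign $(-1)^{|P|-\inv(w)}$. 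A particular danger in your sketch: the operator $\overline{\partial}_i$ acts on the row variables $x_i, x_{i+1}$, while the local structure of a pipe dream in rows $i$ and $i+1$ interacts with all columns simultaneously and with the $\by$-weights, so the "controlled factor" you hope to divide by $x_i-x_{i+1}$ does not localize to a single column; without an explicit description of the correspondence, the proof is not there. As written, this is a research outline rather than a proof; either execute one of the cited arguments in full or cite the result, as the paper does.
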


\begin{example}
Pictured below are the reduced pipe dreams for $w=42153$.
\[\begin{tikzpicture}[x=1.5em,y=1.5em]
\draw[step=1,gray, thin] (0,0) grid (5,5);
\draw[color=black, thick](0,0)rectangle(5,5);
\draw[thick,rounded corners,color=blue](0,4.5)--(3.5,4.5)--(3.5,5);
\draw[thick,rounded corners,color=blue] (0,3.5)--(1.5,3.5)--(1.5,5);
\draw[thick,rounded corners,color=blue] (0,2.5)--(.5,2.5)--(.5,5);
\draw[thick,rounded corners,color=blue] (0,1.5)--(1.5,1.5)--(1.5,2.5)--(2.5,2.5)--(2.5,3.5)--(3.5,3.5)--(3.5,4.5)--(4.5,4.5)--(4.5,5);
\draw[thick,rounded corners,color=blue] (0,.5)--(.5,.5)--(.5,2.5)--(1.5,2.5)--(1.5,3.5)--(2.5,3.5)--(2.5,5);
\end{tikzpicture}\quad
\begin{tikzpicture}[x=1.5em,y=1.5em]
\draw[step=1,gray, thin] (0,0) grid (5,5);
\draw[color=black, thick](0,0)rectangle(5,5);
\draw[thick,rounded corners,color=blue](0,4.5)--(3.5,4.5)--(3.5,5);
\draw[thick,rounded corners,color=blue] (0,3.5)--(1.5,3.5)--(1.5,5);
\draw[thick,rounded corners,color=blue] (0,2.5)--(.5,2.5)--(.5,5);
\draw[thick,rounded corners,color=blue] (0,1.5)--(.5,1.5)--(.5,2.5)--(2.5,2.5)--(2.5,3.5)--(3.5,3.5)--(3.5,4.5)--(4.5,4.5)--(4.5,5);
\draw[thick,rounded corners,color=blue] (0,.5)--(.5,.5)--(.5,1.5)--(1.5,1.5)--(1.5,3.5)--(2.5,3.5)--(2.5,5);
\end{tikzpicture}\quad
\begin{tikzpicture}[x=1.5em,y=1.5em]
\draw[step=1,gray, thin] (0,0) grid (5,5);
\draw[color=black, thick](0,0)rectangle(5,5);
\draw[thick,rounded corners,color=blue](0,4.5)--(3.5,4.5)--(3.5,5);
\draw[thick,rounded corners,color=blue] (0,3.5)--(1.5,3.5)--(1.5,5);
\draw[thick,rounded corners,color=blue] (0,2.5)--(.5,2.5)--(.5,5);
\draw[thick,rounded corners,color=blue] (0,1.5)--(.5,1.5)--(.5,2.5)--(1.5,2.5)--(1.5,3.5)--(3.5,3.5)--(3.5,4.5)--(4.5,4.5)--(4.5,5);
\draw[thick,rounded corners,color=blue] (0,.5)--(.5,.5)--(.5,1.5)--(1.5,1.5)--(1.5,2.5)--(2.5,2.5)--(2.5,5);
\end{tikzpicture}
\]
Pictured below are the non-reduced pipe dreams for $w$.
\[
\begin{tikzpicture}[x=1.5em,y=1.5em]
\draw[step=1,gray, thin] (0,0) grid (5,5);
\draw[color=black, thick](0,0)rectangle(5,5);
\draw[thick,rounded corners,color=blue](0,4.5)--(3.5,4.5)--(3.5,5);
\draw[thick,rounded corners,color=blue] (0,3.5)--(1.5,3.5)--(1.5,5);
\draw[thick,rounded corners,color=blue] (0,2.5)--(.5,2.5)--(.5,5);
\draw[thick,rounded corners,color=blue] (0,1.5)--(1.5,1.5)--(1.5,3.5)--(2.5,3.5)--(2.5,3.5)--(2.5,5);
\draw[thick,rounded corners,color=blue] (0,.5)--(.5,.5)--(.5,2.5)--(2.5,2.5)--(2.5,3.5)--(3.5,3.5)--(3.5,4.5)--(4.5,4.5)--(4.5,5);
\end{tikzpicture}\quad
\begin{tikzpicture}[x=1.5em,y=1.5em]
\draw[step=1,gray, thin] (0,0) grid (5,5);
\draw[color=black, thick](0,0)rectangle(5,5);
\draw[thick,rounded corners,color=blue](0,4.5)--(3.5,4.5)--(3.5,5);
\draw[thick,rounded corners,color=blue] (0,3.5)--(1.5,3.5)--(1.5,5);
\draw[thick,rounded corners,color=blue] (0,2.5)--(.5,2.5)--(.5,5);
\draw[thick,rounded corners,color=blue] (0,1.5)--(1.5,1.5)--(1.5,2.5)--(2.5,2.5)--(2.5,3.5)--(2.5,5);
\draw[thick,rounded corners,color=blue] (0,.5)--(.5,.5)--(.5,2.5)--(1.5,2.5)--(1.5,3.5)--(3.5,3.5)--(3.5,4.5)--(4.5,4.5)--(4.5,5);
\end{tikzpicture}\quad
\begin{tikzpicture}[x=1.5em,y=1.5em]
\draw[step=1,gray, thin] (0,0) grid (5,5);
\draw[color=black, thick](0,0)rectangle(5,5);
\draw[thick,rounded corners,color=blue](0,4.5)--(3.5,4.5)--(3.5,5);
\draw[thick,rounded corners,color=blue] (0,3.5)--(1.5,3.5)--(1.5,5);
\draw[thick,rounded corners,color=blue] (0,2.5)--(.5,2.5)--(.5,5);
\draw[thick,rounded corners,color=blue] (0,1.5)--(.5,1.5)--(.5,2.5)--(2.5,2.5)--(2.5,3.5)--(2.5,5);
\draw[thick,rounded corners,color=blue] (0,.5)--(.5,.5)--(.5,1.5)--(1.5,1.5)--(1.5,3.5)--(3.5,3.5)--(3.5,4.5)--(4.5,4.5)--(4.5,5);
\end{tikzpicture}\quad
\begin{tikzpicture}[x=1.5em,y=1.5em]
\draw[step=1,gray, thin] (0,0) grid (5,5);
\draw[color=black, thick](0,0)rectangle(5,5);
\draw[thick,rounded corners,color=blue](0,4.5)--(3.5,4.5)--(3.5,5);
\draw[thick,rounded corners,color=blue] (0,3.5)--(1.5,3.5)--(1.5,5);
\draw[thick,rounded corners,color=blue] (0,2.5)--(.5,2.5)--(.5,5);
\draw[thick,rounded corners,color=blue] (0,1.5)--(1.5,1.5)--(1.5,3.5)--(2.5,3.5)--(3.5,3.5)--(3.5,4.5)--(4.5,4.5)--(4.5,5);
\draw[thick,rounded corners,color=blue] (0,.5)--(.5,.5)--(.5,2.5)--(2.5,2.5)--(2.5,3.5)--(2.5,5);
\end{tikzpicture}
\]
\end{example}

Theorem~\ref{thm:pipedreams} has the following corollary, which makes it clear that there is no ambiguity in talking about the ``highest degree part" of $\fG_w(\bx; \by)$. 

\begin{cor}\label{cor:high_degree_xy}
Let $w$ be a permutation and let $d$ be the degree of $\fG_w(\bx)$. Then $\fG_w(\bx; \by)$ has terms which are of bidegree $(d,d)$ in the $\bx$ and $\by$ variables, and no term in $\fG_w(\bx; \by)$ has $\bx$-degree or $\by$-degree higher than $d$. 
\end{cor}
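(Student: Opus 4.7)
The plan is to exploit the pipe dream formula from Theorem~\ref{thm:pipedreams}. Observe that in the factor $x_i + y_j - x_i y_j$ attached to each cell $(i,j) \in P$, the three monomials have $(\bx,\by)$-bidegrees $(1,0)$, $(0,1)$, and $(1,1)$ respectively. Consequently, when we expand $\prod_{(i,j) \in P}(x_i + y_j - x_i y_j)$, every resulting monomial has $\bx$-degree at most $|P|$ and $\by$-degree at most $|P|$, with the unique maximal-bidegree contribution being $(-1)^{|P|} \prod_{(i,j) \in P} x_i y_j$, of bidegree $(|P|, |P|)$.

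Set $d^* := \max \{\, |P| : P \in \pipes(w) \,\}$. I would first show that the bidegree $(d^*, d^*)$ component of $\fG_w(\bx; \by)$ is nonzero. Indeed, by the remark above, no pipe dream with $|P| < d^*$ contributes anything in bidegree $(d^*,d^*)$, so gathering the top contributions from pipe dreams with $|P| = d^*$ gives
\[
\sum_{\substack{P \in \pipes(w) \\ |P| = d^*}} (-1)^{|P| - \inv(w)} \cdot (-1)^{|P|} \prod_{(i,j) \in P} x_i y_j \;=\; (-1)^{\inv(w)} \sum_{\substack{P \in \pipes(w) \\ |P| = d^*}} \prod_{(i,j) \in P} x_i y_j.
\]
All terms on the right share the same sign, so no cancellation occurs; the expression is a nonzero homogeneous polynomial of bidegree $(d^*, d^*)$.

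Next I would identify $d^*$ with $d = \deg \fG_w(\bx)$. From the single-variable pipe dream formula, the terms of top $\bx$-degree in $\fG_w(\bx)$ are
\[
(-1)^{d^* - \inv(w)} \sum_{\substack{P \in \pipes(w) \\ |P| = d^*}} \prod_{(i,j) \in P} x_i,
\]
again a sum of monomials with a single common sign, hence nonzero. Therefore $\deg \fG_w(\bx) = d^*$, and the bidegree $(d,d) = (d^*,d^*)$ terms of $\fG_w(\bx;\by)$ are nonzero.

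Finally, the bounds on $\bx$- and $\by$-degree follow immediately: any monomial in $\fG_w(\bx;\by)$ comes from some $P \in \pipes(w)$, and therefore has $\bx$-degree at most $|P| \leq d^* = d$ and $\by$-degree at most $|P| \leq d$. The only potential subtlety is making sure the top-bidegree contributions do not cancel, but the uniform sign argument above handles this; no additional input is needed.
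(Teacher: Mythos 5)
Your proof is correct and follows essentially the same route as the paper's: both arguments expand via the pipe dream formula of Theorem~\ref{thm:pipedreams}, observe that a pipe dream with $c$ crosses contributes only bidegrees $(a,b)$ with $a,b \leq c$, identify $\deg \fG_w(\bx)$ with the maximal number of crosses $d^*$ by a no-cancellation argument, and then note that the $(d^*,d^*)$ contributions all carry the common sign $(-1)^{\inv(w)}$, so they survive. You have merely made the sign bookkeeping explicit where the paper leaves it implicit.
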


\begin{proof}
Let $P$ be a pipe dream for $w$ with $|P|=c$. We see that $P$ contributes a monomial of degree $c$ to $\fG_w(\bx)$, and that all monomials of the same degree occur with the same sign, so there is no cancellation. 
Thus, the degree of $\fG_w(\bx)$ is the maximum number of crosses in any pipe dream for $w$.

Then, from the formula for $\fG_w(\bx; \by)$, we see that a pipe dream with $c$ crosses contributes terms in bidegrees $(a,b)$ for $a$, $b \leq c$. So no term in $\fG_w(\bx; \by)$ can have $\bx$-degree or $\by$-degree larger than $d$.
Moreover, those pipe dreams with the maximal number of crosses all contribute monomials in bidegree $(d,d)$ with the same sign, so there can be no cancellation, and we see that $\fG_w(\bx; \by)$ has terms of bidegree $(d,d)$. 
\end{proof}

Define the  \newword{Castlenuovo-Mumford polynomial} $\fCM(\bx)$ to be $(-1)^{\deg \fG_w(\bx) - \inv(w)}$ times the highest degree part of $\fG_w(\bx)$ and define the \newword{double Castlenuovo-Mumford polynomial}, $\fCM_w(\bx; \by)$, to be $(-1)^{\inv(w)}$ times the highest degree part of $\fG_w(\bx; \by)$. 
These sign factors make $\fCM_w(\bx; \by)$ and $\fCM_w(\bx)$ have positive coefficients, as can be seen from Theorem~\ref{thm:pipedreams}.
So the degree of $\fCM_w(\bx)$, and the bidegree of $\fCM_w(\bx; \by)$, are both given by the maximal number of crosses in any pipe dream for $w$.

If we specialize the single Grothendieck polynomial $\fG_w(\bx)$ by setting $x_i \mapsto 1-t$, we obtain the K-polynomial of the matrix Schubert variety $X_w$ \cite{Knutson.Miller}. Note that this specialization does not affect the degrees of the polynomials, since all top-degree terms of $\fG_w(\bx)$ have the same sign. Moreover, $\deg \fS_w(\bx) = \codim X_w = \inv(w)$ \cite{Fulton}. Thus, we can rewrite Lemma~\ref{lem:Benedetti} as follows.

\begin{corollary}\label{cor:reg}
	Let $X_w$ be a matrix Schubert variety. Then ${\rm reg} \; X_w = \deg \fG_w(\bx) - \inv(w)$.
\end{corollary}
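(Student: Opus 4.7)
The plan is to apply Lemma~\ref{lem:Benedetti} to $R/I_w$, where $I_w$ is the matrix Schubert ideal, and then identify each of the three quantities in that formula with the corresponding ingredients on the right-hand side of the corollary. Since matrix Schubert varieties are Cohen--Macaulay by Fulton's theorem, the hypothesis of Lemma~\ref{lem:Benedetti} is satisfied, so ${\rm reg}(R/I_w) = \deg K(R/I_w;t) - {\rm ht}(I_w)$.

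Next, I would identify ${\rm ht}(I_w)$ with $\inv(w)$. Since $I_w$ is prime (also by Fulton), its height equals the codimension of $X_w$ in the ambient affine space $\Spec R$, and this codimension is well known to equal $\inv(w)$, e.g.\ since $X_w$ has dimension $n^2 - \inv(w)$.

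The main step is then to show that $\deg K(R/I_w;t) = \deg \fG_w(\bx)$. Here I would invoke the Knutson--Miller identification of the K-polynomial of $X_w$ with the specialization of $\fG_w(\bx)$ obtained by setting $x_i \mapsto 1-t$ for every $i$. Under this substitution, a monomial $x_1^{a_1} \cdots x_n^{a_n}$ of degree $d$ maps to $(1-t)^{a_1+\cdots+a_n}$, whose leading term in $t$ is $(-1)^d t^d$. The potential worry is that top-degree contributions to $\fG_w(\bx)$ could cancel after specialization, which would drop the degree. However, by Theorem~\ref{thm:pipedreams} and Corollary~\ref{cor:high_degree_xy}, all top-degree monomials of $\fG_w(\bx)$ appear with the common sign $(-1)^{d-\inv(w)}$, so their images $(-1)^d t^d$ (times positive coefficients) all share the sign $(-1)^{d-\inv(w)}(-1)^d = (-1)^{\inv(w)}$ in top degree, and therefore sum without cancellation. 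Thus $\deg K(R/I_w;t) = \deg \fG_w(\bx)$.

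Combining these three observations gives ${\rm reg}(X_w) = \deg \fG_w(\bx) - \inv(w)$, as desired. The only genuinely nontrivial step is the no-cancellation argument for the degree of the specialized polynomial; every other ingredient is a direct invocation of a result already recorded in the excerpt.
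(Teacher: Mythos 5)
Your proof is correct and takes essentially the same route as the paper: apply Lemma~\ref{lem:Benedetti} (using Cohen--Macaulayness from Fulton), identify ${\rm ht}(I_w)$ with $\inv(w)$ via primeness and codimension, and observe that the specialization $x_i \mapsto 1-t$ preserves degree because all top-degree terms of $\fG_w(\bx)$ share a sign. Your elaboration of the no-cancellation step—tracking the sign $(-1)^{d-\inv(w)}(-1)^d = (-1)^{\inv(w)}$ explicitly via Theorem~\ref{thm:pipedreams} and Corollary~\ref{cor:high_degree_xy}—is just a more detailed rendering of the one-sentence justification the paper gives for the same fact.
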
 

\section{Simple properties of Rajchgot index}
\label{sec:rajchgot}

In this section, we develop some of the basic combinatorial properties of the Rajchgot index of a permutation, which we can prove without introducing new tools.
In the next section, we will build more complex tools and prove stronger results.

\subsection{Rajchgot index and the inversion statistic}
\begin{lemma}\label{lem:rajinv_inequality}
	Let $w$ be a permutation and let $(\ell_1(w), \ell_2(w), \ldots, \ell_n(w))$ be its inversion code. Then $r_i(w) \geq \ell_i(w)$. Hence, $\raj(w) \geq \inv(w)$.
\end{lemma}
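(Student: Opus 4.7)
The plan is to compare both statistics on a row-by-row basis by giving a direct upper bound on the length of the longest increasing subsequence of $w_i w_{i+1} \cdots w_n$ starting at $w_i$. Recall that $\ell_i(w)$ equals the number of cells of $\Rothe(w)$ in row $i$, which is exactly the number of indices $j > i$ with $w_j < w_i$ (these are the ``lasers'' that fail to block row $i$ past column $w_i$). Equivalently, among the $n-i$ entries to the right of position $i$, precisely $\ell_i$ are smaller than $w_i$ and $(n-i) - \ell_i$ are larger than $w_i$.

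Next, I would observe that any increasing subsequence of $w_i w_{i+1} \cdots w_n$ containing $w_i$ must consist of $w_i$ followed by a selection of entries strictly greater than $w_i$ and occurring to its right. By the count above, at most $(n-i) - \ell_i$ such entries are available, so the length of any such increasing subsequence is bounded above by $1 + (n-i) - \ell_i$. Since the whole string $w_i w_{i+1} \cdots w_n$ has length $n-i+1$, the number $r_i$ of omitted terms satisfies
\[
r_i \;\geq\; (n-i+1) - \bigl(1 + (n-i) - \ell_i\bigr) \;=\; \ell_i,
\]
which is the pointwise inequality claimed.

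Summing the pointwise bound over $i \in [n]$ yields $\raj(w) = \sum_i r_i \geq \sum_i \ell_i = \inv(w)$, completing the proof. There is no real obstacle here: the argument is purely a counting comparison, and the only thing to be careful about is the identification of $\ell_i$ with the number of right-neighbors of $w_i$ that are smaller than $w_i$, which is immediate from the Rothe diagram construction (as illustrated in Example~\ref{ex:Rothe}).
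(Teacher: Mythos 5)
Your proof is correct and follows essentially the same approach as the paper's: both arguments hinge on the observation that any increasing subsequence of $w_i w_{i+1}\cdots w_n$ starting at $w_i$ must omit all $\ell_i(w)$ entries to the right of position $i$ that are smaller than $w_i$. The paper states this in one line by counting forced deletions, whereas you phrase it as an upper bound on the number of admissible survivors; the two are the same counting argument.
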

\begin{proof}
	To make an increasing sequence starting at $w_i$, we must at least delete all following letters that are less than $w_i$. There are $\ell_i(w)$ such letters.
\end{proof}

In light of Lemma~\ref{lem:rajinv_inequality}, it is natural to ask when we have equality.

\begin{proposition}\label{prop:raj=inv}
	Let $w$ be a permutation. Then $r_i(w) = \ell_i(w)$ if $w$ has no $132$ pattern starting at position $i$. In particular, $\raj(w) = \inv(w)$ if and only if $w$ avoids the pattern $132$, i.e.\ if $w$ is dominant.
\end{proposition}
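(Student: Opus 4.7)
The proof plan is to analyze the structure of a longest increasing subsequence starting at $w_i$ and relate its length to the elements of $w$ to the right of position $i$. The key observation is that if the subsequence starts at $w_i$, then all later terms must exceed $w_i$, so the subsequence is $w_i$ followed by some increasing subsequence of the restricted word $S_i := (w_j : j > i,\ w_j > w_i)$, read in positional order. Thus the maximal length is $1 + \mathrm{LIS}(S_i)$, where $\mathrm{LIS}$ denotes the longest increasing subsequence length.

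From this, I would compute both sides of the desired equality explicitly. The total number of positions $j > i$ is $n - i$; among these, exactly $\ell_i$ satisfy $w_j < w_i$, so $|S_i| = (n-i) - \ell_i$. The definition of $r_i$ then gives
\[
r_i = (n - i + 1) - (1 + \mathrm{LIS}(S_i)) = (n - i) - \mathrm{LIS}(S_i),
\]
and therefore $r_i - \ell_i = |S_i| - \mathrm{LIS}(S_i) \geq 0$, which also reproves Lemma~\ref{lem:rajinv_inequality}. Equality $r_i = \ell_i$ holds exactly when $S_i$ is itself an increasing sequence.

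The next step is to translate "$S_i$ is increasing" into a pattern-avoidance condition. An inversion in $S_i$ is a pair of positions $i < j < k$ with $w_j, w_k > w_i$ but $w_j > w_k$; this is precisely a $132$ pattern with $w_i$ playing the role of the $1$. Conversely, any $132$ pattern starting at position $i$ gives such an inversion in $S_i$. Hence $S_i$ is increasing if and only if no $132$ pattern begins at position $i$, which proves the "if" direction stated in the proposition (in fact, an "iff" for each $i$).

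For the final "in particular" clause, I would sum over $i$: since $r_i \geq \ell_i$ termwise, we have $\raj(w) = \inv(w)$ if and only if $r_i = \ell_i$ for every $i$, which by the above holds if and only if no $132$ pattern begins at any position of $w$, i.e.\ $w$ is $132$-avoiding (dominant). No step here appears to present real difficulty; the only conceptual point to get right is noticing that an increasing subsequence of $w_i w_{i+1} \cdots w_n$ starting with $w_i$ is determined by choosing an increasing subword of $S_i$, which reduces the problem to a clean statement about inversions in $S_i$.
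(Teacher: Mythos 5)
Your proof is correct and follows the same essential idea as the paper's proof: identifying the condition $r_i = \ell_i$ with the absence of a $132$ pattern starting at position $i$. The paper's version is terser (a two-sentence argument by contrapositive); your version makes the same point explicit by introducing the word $S_i$ and computing $r_i - \ell_i = |S_i| - \mathrm{LIS}(S_i)$, which is a clean way to organize the reasoning but not a genuinely different route.
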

\begin{proof}
	If $r_i(w) > \ell_i(w)$, then there exist $j,k$ such that $i<j<k$ and $w_i < w_k < w_j$, which is exactly a $132$ pattern. Conversely, if there is a $132$ pattern starting at position $i$, then $r_i(w) > \ell_i(w)$.
\end{proof}

The next proposition summarizes standard results about Schubert and Grothendieck polynomials of dominant permutations.   
\begin{proposition}\label{prop:dominant_background}
Let $w$ be a permutation in $S_n$.
Then $w$ is dominant if and only if $\ell_1(w) \geq \ell_2(w) \geq \cdots \geq \ell_n(w)$. In this case, we can view $(\ell_1(w), \ldots, \ell_n(w))$ as a partition, and the Rothe diagram $\Rothe(w)$ is the Young diagram of that partition.
This gives a bijection between dominant permutations in $S_n$ and partitions fitting inside the staircase $(n-1, n-2, \ldots, 2,1)$.

If $w$ is a dominant permutation, then 
\[ \begin{array}{rcl}
\fG_w(\bx; \by)&=& \displaystyle\prod_{(i,j)\in \Rothe(w) }(x_i+y_j-x_iy_j)\\
\fS_w(\bx; \by)&=&\displaystyle \prod_{(i,j)\in \Rothe(w) }(x_i-y_j)\\
\fG_w(\bx) = \fS_w(\bx) &=&\displaystyle  \prod_{(i,j)\in \Rothe(w) }x_i= \prod_{i=1}^n x_i^{\ell_i(w)} . \\
\end{array}
\]
\end{proposition}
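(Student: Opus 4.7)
The proof decomposes naturally into three parts: (a) the equivalence of dominance, weakly decreasing code, and $\Rothe(w)$ being a Young diagram; (b) the resulting bijection with staircase partitions; (c) the product formulas. I would handle these in that order.

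For (a), the plan is to show directly that $w$ avoids $132$ if and only if $\Rothe(w)$ is a Young diagram (left-justified rows, top-justified columns). The forward direction is by contrapositive on both types of ``gap''. If row $i$ of $\Rothe(w)$ has a missing cell $(i,j')$ strictly between two present cells $(i,j_1)$ and $(i,j_2)$ with $j_1 < j' < j_2$, then the only way for $(i,j')$ to be excluded is $j' = w_p$ for some $p < i$; combined with $(i,j_2) \in \Rothe(w)$, which forces $w^{-1}(j_2) > i$, the triple of positions $p < i < w^{-1}(j_2)$ has values $j' < j_2 < w_i$, a $132$-pattern. A symmetric analysis handles a column gap: missing row $i'$ between present rows $i_1 < i_2$ in column $j$ forces $w_{i'} < j$, and setting $i_3 = w^{-1}(j) > i_2$ yields a $132$-pattern at positions $i' < i_2 < i_3$ with values $w_{i'} < j < w_{i_2}$ and $w_{i_3} = j$. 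Conversely, a $132$-pattern $a<b<c$ with $w_a < w_c < w_b$ immediately shows $(b, w_c) \in \Rothe(w)$ while $(a, w_c) \notin \Rothe(w)$ (since $w_a < w_c$), violating top-justification of column $w_c$. Since the row lengths of $\Rothe(w)$ are exactly the code entries $\ell_i(w)$, being a Young diagram is equivalent to the code being weakly decreasing, completing (a).

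For (b), I would invoke the standard bijection $w \mapsto \invc(w)$ between $S_n$ and sequences $(\ell_1, \ldots, \ell_n)$ with $0 \le \ell_i \le n-i$. Restricting to weakly decreasing sequences gives precisely partitions fitting in the staircase $(n-1, n-2, \ldots, 1)$, which by (a) correspond bijectively to dominant permutations.

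For (c), the plan is to show $\pipes(w) = \{\Rothe(w)\}$ for dominant $w$ and then apply Theorem~\ref{thm:pipedreams} directly. Given uniqueness, and noting that this unique pipe dream is reduced (its word is a reduced word for $w$, since $|\Rothe(w)| = \inv(w)$), Theorem~\ref{thm:pipedreams} immediately yields the product formulas for $\fG_w(\bx;\by)$ and $\fS_w(\bx;\by)$. Setting $\by = \mathbf{0}$ in the former gives $\fG_w(\bx) = \prod_{(i,j) \in \Rothe(w)} x_i = \prod_i x_i^{\ell_i(w)}$, a single monomial of degree $\inv(w)$; since this is already homogeneous of the minimum possible degree, it equals its own lowest-degree part, so $\fS_w(\bx) = \fG_w(\bx)$.

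The main obstacle is the uniqueness of the pipe dream for dominant $w$. This is classical (going back to Fomin--Kirillov and Bergeron--Billey) but does require argument beyond the combinatorics in part (a). The cleanest approach I would use is induction on $|\Rothe(w)|$: choose a removable corner $(i, \ell_i)$ of the Young diagram $\Rothe(w)$ (i.e., with $\ell_i > \ell_{i+1}$), verify that $w \cdot s_{i+\ell_i-1}$ is again dominant with Rothe diagram obtained by deleting that corner, and argue that any pipe dream $P \in \pipes(w)$ must contain a crossing at $(i,\ell_i)$ so that peeling it off reduces to the inductive case. Everything else in the proposition follows mechanically from the code bijection and the pipe-dream formula.
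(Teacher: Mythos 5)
You fill in substantially more detail than the paper's own two-sentence proof, which cites Manivel for the dominance--decreasing-code equivalence and simply asserts that a dominant permutation has a single pipe dream whose crossing tiles form $\Rothe(w)$. Your parts (a) and (b) are essentially correct; in (a) the case description is slightly too narrow (a row or column of $\Rothe(w)$ can be nonempty without starting at position $1$, which is not literally ``a missing cell strictly between two present cells''), but the argument you give never actually uses the cell $(i,j_1)$ --- it only needs a missing cell with a present cell to its right (respectively, below it), which covers every failure of the Young-diagram property once restated.

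Part (c), however, contains a genuine gap. The inductive step asserts that if $(i,\ell_i)$ is a removable corner of $\Rothe(w)$ then $w\cdot s_{i+\ell_i-1}$ is dominant with Rothe diagram $\Rothe(w)\setminus\{(i,\ell_i)\}$. This is false. Take $w=3412$, which is dominant with $\Rothe(w)$ equal to the $2\times 2$ square $\{(1,1),(1,2),(2,1),(2,2)\}$. Its unique removable corner is $(2,2)$, so your recipe gives $k=i+\ell_i-1=3$; but $w s_3 = 3421$ satisfies $\inv(3421)=5>\inv(w)=4$, so it cannot have the smaller diagram (left-multiplication $s_3 w = 4312$ is also longer). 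The dominant permutation whose Rothe diagram is $\Rothe(w)\setminus\{(2,2)\}$, namely the Young diagram of $(2,1)$, is $w'=3214$, and $w'=w\cdot t$ for the \emph{non-simple} transposition $t=(2\,4)$; no simple reflection carries $w$ to $w'$ on either side. So deleting a corner of the Young diagram does not correspond in general to a weak-order cover, and the induction as written does not go through. The underlying idea is salvageable --- removing the crossing at $(2,2)$ from the reduced reading word $s_2 s_1 s_3 s_2$ of $\Rothe(w)$ does produce a reduced word $s_2 s_1 s_2$ for $3214$, consistent with the exchange condition --- but the connection is via a not-necessarily-simple reflection, and the companion claim that \emph{every} pipe dream for $w$ must contain a cross at the chosen corner is also left unjustified. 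Both points need repair before the uniqueness claim, and hence the product formulas, are established.
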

\begin{proof}
	That $w$ is dominant if and only if $\ell_1(w) \geq \ell_2(w) \geq \cdots \geq \ell_n(w)$ follows from \cite[Section~2.2.1]{Manivel}.
	When $w$ is dominant, it has a single pipe dream whose crossing tiles are exactly the elements of $\Rothe(w)$.
\end{proof}

\subsection{Rajchgot index and major index}

Let $w \in S_n$ be a permutation and let $m_i(w)$ equal the number of $j \geq i$ such that $w_j > w_{j+1}$.  The \newword{major index} of $w$ is \[
\maj(w) = \sum_{i=1}^n m_i(w).
\]

We remind the reader that Theorem~\ref{thm:raj=mm} will establish a formula for $\raj$ as a maximum of many values of $\maj$. At the moment, we can prove an inequality:

\begin{lemma}\label{lem:rajmaj_inequality}
	Let $w$ be a permutation and let $m_i(w)$ equal the number of $j \geq i$ such that $w_j > w_{j+1}$. 
	Then $r_i(w) \geq m_i(w)$. Hence,  $\raj(w) \geq \maj(w)$.
\end{lemma}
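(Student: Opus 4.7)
The plan is to establish a more general sublemma: for any sequence $\sigma = \sigma_1 \sigma_2 \cdots \sigma_m$ with $d$ descents, every increasing subsequence of $\sigma$ has length at most $m - d$. Granting this, we apply it to the sequence $w(i) w(i+1) \cdots w(n)$, which has length $n-i+1$ and exactly $m_i(w)$ descents; the longest increasing subsequence of this sequence that starts at $w(i)$ has length $(n-i+1) - r_i(w)$, so the sublemma immediately yields $r_i(w) \geq m_i(w)$. Summing over $i$ then gives $\raj(w) \geq \maj(w)$.

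To prove the sublemma, I would partition the positions $\{1, 2, \ldots, m\}$ of $\sigma$ into \emph{maximal decreasing runs}---maximal blocks of consecutive positions on which the values strictly decrease. Adjacent runs are separated precisely by the ascent positions, so if $\sigma$ has $k$ such runs, then $k - 1$ equals the number of ascents, i.e., $k - 1 = (m - 1) - d$, giving $k = m - d$. The key observation is that an increasing subsequence of $\sigma$ can contain at most one entry from any single decreasing run; hence its length is bounded by the number of runs, which is $k = m - d$.

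There is essentially no obstacle here once one sees that decreasing runs provide the natural combinatorial bookkeeping: a run of length $\ell$ accounts for exactly $\ell - 1$ descents while forcing the omission of at least $\ell - 1$ positions from any increasing subsequence, so summing over runs balances the budgets perfectly. Notice that we gain crucial flexibility by \emph{not} requiring the increasing subsequence to begin at a specified position; that extra generality is what makes the counting clean, and the case we actually need (subsequences containing $w(i)$, used to define $r_i$) then comes for free as a special case.
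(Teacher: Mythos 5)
Your proposal is correct and is essentially the paper's argument: the paper also bounds $r_i(w)$ by observing that an increasing subsequence of $w_i w_{i+1}\cdots w_n$ must omit all but one letter from each maximal descending run, which is exactly your one-entry-per-run observation; you have merely packaged the run-versus-descent counting as an explicit sublemma. No gaps.
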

\begin{proof}
	To make an ascending sequence from $w_i w_{i+1} \dots w_n$, we must delete from each consecutive descending sequence at least all but one letter. Hence $r_i(w) \geq m_i(w)$ for all $i$. Since, $\maj(w) = \sum_{i=1}^n m_i(w)$, we then have $\raj(w) \geq \maj(w)$.
\end{proof}

\subsection{Fireworks permutations}
We previously explained that dominant permutations have equality between Rajchgot index and the inversion statistic; we now discuss when we have equality between Rajchgot index and major index.
 In order to state this characterization, we must define a new class of permutations.

\begin{definition}
	The permutation $w$ is \newword{fireworks} if the initial elements of its decreasing runs are in increasing order. (These permutations are sometimes called $3-12$ avoiding.)  Write $\mathcal F_n$ for the set of fireworks permutations in $S_n$.
We define $w$ to be \newword{inverse fireworks} if $w^{-1}$ is fireworks.
\end{definition}

\begin{example}
	The permutation $41|62|853|97$ is fireworks because $4 < 6 < 8 < 9$.
\end{example}

\begin{remark}
We imagine each descending run in a permutation as a firework dropping to earth. In a fireworks show, each new explosive is launched higher than the previous one.
\end{remark}

\begin{proposition}\label{prop:raj=maj}
	Let $w$ be a permutation. Then $\raj(w) = \maj(w)$ if and only if $w$ is fireworks.
\end{proposition}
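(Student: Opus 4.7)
My plan is to sharpen the bound $r_i(w) \geq m_i(w)$ from Lemma~\ref{lem:rajmaj_inequality} into an exact criterion for equality, and then check that criterion suffix-by-suffix. Writing $u^{(i)} \coloneqq w_i w_{i+1} \cdots w_n$, the suffix $u^{(i)}$ has exactly $(n-i+1)-m_i(w)$ maximal descending runs, one more than its number of ascents. Since any strictly increasing subsequence uses at most one entry per descending run, $r_i(w)=m_i(w)$ holds if and only if there is an increasing subsequence of $u^{(i)}$ containing $w_i$ that picks exactly one entry from every descending run of $u^{(i)}$. Summing over $i$ reduces the proposition to showing this equality holds for every $i$ precisely when $w$ is fireworks.

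For the forward direction, I would first observe that every suffix of a fireworks permutation is itself fireworks. Indeed, if the descending runs of $w$ have starting values $s_1 < s_2 < \cdots < s_N$ and position $i$ lies inside the $p$-th descending run, then the descending runs of $u^{(i)}$ have starting values $w_i, s_{p+1}, \ldots, s_N$, and $w_i \leq s_p < s_{p+1}$ keeps the sequence strictly increasing. Once this is established, in any fireworks suffix the subsequence consisting of $w_i$ followed by the starts of the remaining descending runs hits every descending run exactly once and is increasing, which yields $r_i(w)=m_i(w)$ for all $i$ and hence $\raj(w)=\maj(w)$.

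For the converse, suppose $w$ is not fireworks, and choose $p$ with $s_p > s_{p+1}$; such a $p$ exists since the $s_k$ are not all increasing. Set $i \coloneqq a_p$, the starting position of the $p$-th descending run, so $w_i = s_p$. Let $T$ be any increasing subsequence of $u^{(i)}$ containing $w_i$. Since $w_i$ is the leftmost entry of $u^{(i)}$, it must appear first in $T$, and all later entries of $T$ must exceed $s_p$. However, every entry of the descending run $R_{p+1}$ is bounded above by $s_{p+1} < s_p$, so $T$ contains no element of $R_{p+1}$ whatsoever. Thus $T$ misses at least one descending run of $u^{(i)}$, giving $r_i(w) > m_i(w)$ and $\raj(w)>\maj(w)$.

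The main obstacle is the arithmetic of the converse: one has to see that missing a single descending run, regardless of its length, translates into a deficit of exactly one in the relevant count, because the bound governing $r_i$ compares the length of the suffix to the \emph{number} of descending runs, not to their sizes. The greedy/upper-bound packaging in the first paragraph is what makes this clean.
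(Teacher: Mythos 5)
Your proof is correct and follows essentially the same strategy as the paper's: both establish $r_i(w)=m_i(w)$ suffix-by-suffix by comparing the number of descending runs of $u^{(i)}$ against the length of the longest increasing subsequence starting at $w_i$, using that such a subsequence can hit each descending run at most once, and both handle the converse by exhibiting a descending run whose every element lies below $w_i$. Your write-up is somewhat more explicit than the paper's in two respects—you state and use that suffixes of fireworks permutations are fireworks, and you pick a specific witness ($i$ at the start of the run where $s_p > s_{p+1}$) rather than an arbitrary $3$-$12$ pattern—but these are refinements of the same argument rather than a different route.
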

\begin{proof}
	If $w$ is fireworks, we can delete all but the first letter of each descending run to get an ascending sequence, so $r_i(w) \leq m_i(w)$. However, we already know the reverse inequality by Lemma~\ref{lem:rajmaj_inequality}. Thus $r_i(w) = m_i(w)$ for all $i$ and so $\raj(w) = \maj(w)$.
	
	Now suppose $w$ is not fireworks. Then we have a $3-12$ pattern, i.e.\ there are $i<j$ such that $w_j < w_{j+1} < w_i$. Now consider $w_i w_{i+1} \dots w_n$. It does not suffice to delete $m_i(w)$ letters because we must delete both $w_j$ and $w_{j+1}$, so $m_i(w) < r_i(w)$, and so $\maj(w) < \raj(w)$.
\end{proof}

A \newword{set partition} of $[n]$ is a collection $\pi$ of pairwise-disjoint nonempty subsets of $[n]$ whose union is $[n]$; the subsets are called the \newword{blocks} of $\pi$. We canonically order the blocks of $\pi$ according to their largest elements and index them as $(\pi_t, \pi_{t+1}, \ldots, \pi_n)$, where $\max(\pi_t) < \max(\pi_{t+1}) < \cdots < \max(\pi_n)$. 
(This apparently odd decision to number ending at $n$ rather than starting at $1$ will pay off in simpler notation later on; see Remark~\ref{rem:WhyEndAtN}.) 

The number of set partitions of $[n]$ is called the $n$th \newword{Bell number} \cite[A000110]{OEIS}. For brevity, we generally omit commas and braces from the individual parts of a set partition, for example, writing $( 1,\ 3,\ 45,\ 67,\ 8,\ 29 )$ rather than $\{ \{ 1 \},\ \{ 3 \},\ \{ 4,\ 5 \},\ \{ 6,\ 7 \},\ \{ 8 \},\ \{ 2,9 \} \}$.

\begin{proposition}[\cite{Claesson}]\label{prop:bell}
	Fireworks permutations are enumerated by the Bell numbers.
\end{proposition}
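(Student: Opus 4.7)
I would prove this by establishing an explicit bijection between fireworks permutations in $S_n$ and set partitions of $[n]$; since the latter are enumerated by the Bell number, the result follows.

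The plan is as follows. Given a set partition $\pi = (\pi_t, \pi_{t+1}, \ldots, \pi_n)$ of $[n]$, where the blocks are indexed so that $\max(\pi_t) < \max(\pi_{t+1}) < \cdots < \max(\pi_n)$, I would construct a permutation $w = w(\pi)$ by writing the elements of each block in decreasing order and concatenating these decreasing strings in the indexed order of the blocks. Conversely, given a fireworks permutation $w$, I would let $R_1, R_2, \ldots, R_k$ be its maximal decreasing runs read left-to-right, and define $\pi(w)$ to be the set partition whose blocks are the underlying sets of these runs.

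The key step is to check that these two maps are well-defined and mutually inverse. For $w(\pi)$, the first letter of the substring for block $\pi_i$ is $\max(\pi_i)$, so the first letters of these substrings form the strictly increasing sequence $\max(\pi_t) < \max(\pi_{t+1}) < \cdots < \max(\pi_n)$. Moreover, the transition from block $\pi_i$ to block $\pi_{i+1}$ is an ascent because $\min(\pi_i) \leq \max(\pi_i) < \max(\pi_{i+1})$, so each substring is actually a maximal decreasing run of $w(\pi)$. Hence $w(\pi)$ is fireworks. In the other direction, a fireworks $w$ has its maximal decreasing runs $R_1, \ldots, R_k$ with strictly increasing leading entries, and these leading entries are the maxima of the runs; the underlying sets of the runs therefore partition $[n]$ with the required ordering by maxima.

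The main (mild) obstacle is making sure that the decreasing substrings in the construction of $w(\pi)$ are indeed \emph{maximal} decreasing runs, not just decreasing substrings that might accidentally merge with a neighboring run; the computation $\min(\pi_i) < \max(\pi_{i+1})$ above handles exactly this point. Once that is verified, the two constructions are visibly inverse to each other, giving a bijection $\mathcal{F}_n \leftrightarrow \{\text{set partitions of } [n]\}$ and hence $|\mathcal{F}_n| = B_n$.
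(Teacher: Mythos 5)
Your proof is correct and is essentially the same bijection the paper uses (decreasing runs of a fireworks permutation $\leftrightarrow$ blocks of a set partition ordered by their maxima), just spelled out in more detail; the verification that the blocks' decreasing strings form \emph{maximal} runs via $\min(\pi_i) \leq \max(\pi_i) < \max(\pi_{i+1})$ is the same point the paper leaves implicit.
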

\begin{proof}
	The decreasing runs give blocks of a set partition of $[n]$. Conversely, if we order the blocks of a set partition $\pi$ according to their largest elements, and write each block backwards, we get a fireworks permutation.
\end{proof}

\begin{example}
	The fireworks permutation $416285397$ corresponds under this bijection to the set partition $(14,26,358,79)$.
\end{example}

This is a convenient time to prove the following lemma, which we will want later.

\begin{lemma} \label{lem:fireworks_delete_position}
Let $w$ be a permutation in $S_n$ for $n \geq 2$ and let $w'$ be the permutation in $S_{n-1}$ such that $w'(1)$, $w'(2)$, \dots, $w'(n-1)$ are linearly ordered in the same way as $w(2)$, $w(3)$, \dots, $w(n)$. If $w$ is fireworks, then $w'$ is fireworks and, if $w$ is inverse fireworks, then $w'$ is inverse fireworks.
\end{lemma}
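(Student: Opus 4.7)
The plan is to exploit the pattern-avoidance characterization of fireworks permutations established in the proof of Proposition~\ref{prop:raj=maj}: $w$ is fireworks if and only if $w$ avoids the pattern $3$-$12$, meaning there are no positions $i<j$ with $w_j<w_{j+1}<w_i$.

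For the fireworks direction I argue by contrapositive. Since $w'$ is the standardization of $w(2),\ldots,w(n)$, and standardization preserves the relative order of values, any $3$-$12$ pattern at positions $i' < j' < j'+1$ in $w'$ lifts directly to a $3$-$12$ pattern at positions $i'+1<j'+1<j'+2$ in $w$. Hence, if $w$ avoids $3$-$12$ then so does $w'$.

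For the inverse fireworks direction I first establish how $(w')^{-1}$ relates to $\sigma:=w^{-1}$. A short computation (using that the $k$-th smallest element of $\{1,\ldots,n\}\setminus\{w(1)\}$ is $k$ if $k<w(1)$ and $k+1$ otherwise) shows that $(w')^{-1}$ is obtained from $\sigma$, viewed as a sequence, by deleting the entry at position $w(1)$ (which has value $\sigma(w(1))=1$) and then subtracting $1$ from each remaining entry. So it suffices to show: if $\sigma\in S_n$ avoids $3$-$12$, then the sequence $\sigma'$ obtained from $\sigma$ by deleting its unique $1$-entry and standardizing also avoids $3$-$12$.

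Again arguing by contrapositive, suppose $\sigma'$ has a $3$-$12$ pattern at positions $i'<j'<j'+1$, and let $p$ be the position in $\sigma$ of the deleted entry. Case analysis on the location of $p$ yields three routine subcases ($p\leq i'$, $i'<p\leq j'$, and $p>j'+1$) in which the three pattern positions lift to three positions of $\sigma$ with the same relative spacing, producing a $3$-$12$ pattern in $\sigma$ directly. The main obstacle is the single subtle subcase $p=j'+1$: here the removed $1$-entry slots \emph{between} the ``ascending pair'' of the pattern in $\sigma'$, so the lifted positions $i' < j' < j'+1 < j'+2$ in $\sigma$ have $\sigma(j'+1)=1$ and $\sigma(j')<\sigma(j'+2)<\sigma(i')$. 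Because $\sigma(j'+1)=1$ is smaller than every other value, the triple $(i',\,j'+1,\,j'+2)$ forms a $3$-$12$ pattern in $\sigma$, since $\sigma(j'+1) = 1 < \sigma(j'+2) < \sigma(i')$; the minimality of the removed value is precisely what supplies the needed ascending pair.
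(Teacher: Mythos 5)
Your proof is correct, and it takes a genuinely different route from the paper's. The paper argues directly from the definition in terms of descending runs: for the fireworks direction it observes that removing $w(1)$ simply shortens (or removes) the first descending run without disturbing the initial elements of the others, and for the inverse-fireworks direction it does casework on whether $w(1)=1$ (so $1$ is its own run in $w^{-1}$) or $w(1)>1$ (so $1$ is a non-initial, hence terminal, element of some run in $w^{-1}$), concluding in either case that deleting $1$ leaves the initial elements of the remaining runs unchanged. Your proof instead works through the $3$-$12$ pattern-avoidance characterization. The fireworks direction is especially clean this way: since $3$-$12$ requires the ``$12$'' to occupy consecutive positions, any such pattern in the suffix-standardization $w'$ lifts verbatim (shifting indices by one) to a pattern in $w$. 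For the inverse-fireworks direction you first make explicit that $(w')^{-1}$ is obtained from $\sigma=w^{-1}$ by deleting the unique $1$-entry (at position $w(1)$) and standardizing; your verification of this is correct. The subsequent four-way case analysis on where $p=w(1)$ falls relative to the pattern positions $i'<j'<j'+1$ is complete, and your handling of the one interesting case $p=j'+1$ --- replacing $j'$ by the deleted position $j'+1$, which works precisely because the deleted value is the global minimum $1$ --- is the correct fix for the fact that $j'$ and $j'+2$ are no longer adjacent in $\sigma$. What the paper's approach buys is brevity (no explicit computation of $(w')^{-1}$, no casework on pattern positions); what yours buys is that the fireworks direction becomes a one-line consequence of the general principle that consecutive-type patterns persist under taking a suffix and standardizing, and the inverse-fireworks casework isolates exactly the one place where the minimality of the removed value matters.
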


\begin{proof}
We first consider the case that $w$ is fireworks. The descending runs of $w'$ are the same as those of $w$, with the initial letter missing.

We now consider the case that $w$ is inverse fireworks, so we must consider descending runs of $w^{-1}$. 
If $w(1) = 1$, then $1$ is in a descending run of $w^{-1}$ by itself, and deleting it will not change the initial elements of the other descending runs.
If $w(1) > 1$, then $1$ is not the initial element of its descending run in $w^{-1}$ and deleting it does not change the initial elements of the descending runs.
\end{proof}

\subsection{Valley and inverse valley permutations} \label{sec:valley}

A permutation $w$ is called a \newword{valley permutation} if there is some index $a$ for which $w(1) > w(2) > \cdots > w(a) < w(a+1) < \cdots < w(n)$. 
There are $2^{n-1}$ valley permutations in $S_n$, because a valley permutation is uniquely determined by the subset $\{ w(1), w(2),\ldots, w(a-1) \}$ of $\{ 2,3,\ldots, n \}$. 
We define a permutation $w$ to be a \newword{inverse valley permutation} if $w^{-1}$ is a valley permutation.

\begin{lemma} \label{lem:dominant_and_fireworks_is_inverse_valley}
A permutation $w$ is a valley permutation if and only if $w$ is both dominant and inverse fireworks; $w$ is inverse valley if and only if $w$ is both dominant and fireworks.
\end{lemma}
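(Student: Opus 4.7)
I plan to prove the first biconditional directly; the second follows by applying the first to $w^{-1}$, using that dominance is preserved under inverse, that the inverse-fireworks condition on $w$ is by definition the fireworks condition on $w^{-1}$, and that $w$ is inverse valley iff $w^{-1}$ is valley by definition.

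For the forward direction, suppose $w$ is a valley with $w(a)=1$, and set $D=\{w(1),\dots,w(a-1)\}$ and $A=\{w(a+1),\dots,w(n)\}$. I would verify dominance by a case analysis on which of three positions $i<j<k$ lie in the descending prefix versus the increasing tail, ruling out a $132$-pattern in each case. For inverse fireworks, I would first describe $w^{-1}$ structurally: values in $D$ land at positions in $[1,a-1]$ in strictly decreasing order as the value increases, value $1$ lands at position $a$, and values in $A$ land at positions in $[a+1,n]$ in strictly increasing order. Scanning consecutive values $v,v+1$ then shows that the descending runs of $w^{-1}$ begin at $v=1$ (with initial $a$) and at each $v\in A$ (with initial $w^{-1}(v)\in[a+1,n]$, strictly increasing in $v$), so the run-initials form a strictly increasing sequence.

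For the reverse direction, assume $w$ is dominant and inverse fireworks, and set $a=w^{-1}(1)$. I plan a two-step direct argument without induction. Step~1 uses only dominance: a descent $w(i)>w(i+1)$ with $i\geq a$ must have $i\geq a+1$ (else $1=w(a)>w(a+1)$ is impossible), and then positions $a<i<i+1$ carry values $1<w(i+1)<w(i)$, a $132$-pattern. Hence $w(a)<w(a+1)<\cdots<w(n)$. Step~2 uses both hypotheses: suppose for contradiction $w(i)<w(i+1)=q$ with $i+1\leq a$; since $w(a)=1$, necessarily $i+1<a$, so $a\geq i+2$, and $w(i)\geq 2$ yields $q\geq 3$. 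Let $p$ be the position of $q-1$ in $w$. If $p\leq i$, the consecutive values $q-1, q$ appear in ascending order at positions $p<i+1<a$ while the smaller value $1$ sits at position $a$ to their right, giving $w^{-1}$ a $3$-$12$ pattern and contradicting inverse fireworks. Otherwise $p>i+1$; since $q-1\neq 1$ we have $p\neq a$, and $w(i)<q$ together with $w(i)\neq q-1$ forces $w(i)<q-1$, so positions $i<i+1<p$ carry values $w(i)<q-1<q$, a $132$-pattern contradicting dominance. Combining Steps~1 and~2, $w$ is a valley.

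The key insight and main delicate point is the case split in Step~2 on the position of $q-1$: tracking this neighbor of the ``ascent value'' $q$ is what couples the two hypotheses, with inverse fireworks ruling out the ``early $p$'' case and dominance ruling out the ``late $p$'' case. Everything else in the proof is routine case analysis once the structure of a valley (and of $w^{-1}$ when $w$ is a valley) is in hand.
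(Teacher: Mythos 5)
Your proof is correct, and while it is in the same elementary spirit as the paper's (both directions come down to chasing forbidden patterns), the case decomposition in the hard direction is genuinely different. The paper works with the inverse-valley form of the statement: assuming $w^{-1}$ is not a valley, it locates a \emph{peak} of $w^{-1}$ at some position $p$, reduces to the configuration $w^{-1}(p+1)<w^{-1}(p-1)<w^{-1}(p)$, and then splits on whether $w(k-1)$ is smaller or larger than $w(k)=p$ (where $k=w^{-1}(p)$), producing a $3\mbox{-}12$ or a $132$ violation respectively. You instead anchor at $a=w^{-1}(1)$, observe that dominance alone forces the suffix $w(a)<w(a+1)<\cdots<w(n)$, and then kill an ascent $w(i)<w(i+1)=q$ in the prefix by tracking the position $p$ of the \emph{value} $q-1$: an early $p$ yields a $3\mbox{-}12$ pattern in $w^{-1}$ (using the consecutive values $q-1,q$ together with $1$ at position $a$), a late $p$ yields a $132$ pattern in $w$. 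Your version makes visible exactly which hypothesis controls which half of the valley shape, at the cost of a slightly longer forward direction (the paper disposes of that direction in two lines via the pattern-avoidance characterizations: valley $=$ $\{132,231\}$-avoiding, inverse valley $=$ $\{132,312\}$-avoiding, and $312$-avoidance implies $3\mbox{-}12$-avoidance). Two small remarks: your structural verification that the descending runs of $w^{-1}$ start exactly at $1$ and at the values of the increasing tail is complete and correct, and in your Step~2 the observation that $p\neq a$ is harmless but not actually needed for the $132$ pattern at positions $i<i+1<p$.
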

\begin{proof}
We note that the valley permutations are the permutations which simultaneously avoid $132$ and $231$; and that the inverse valley permutations are those which simultaneously avoid $132$ and $312$. By symmetry, it is sufficient to show that the inverse valley permutations are exactly the dominant permutations that are fireworks.

The easy direction is that a inverse valley permutation is both dominant and fireworks. Indeed, let $w$ be a inverse valley permutation. Since $w$ avoids $132$, it is dominant. 
Since $w$ avoids $312$, it avoids $3-12$, and therefore it is fireworks. 

Now suppose that $w^{-1}$ is not a valley permutation. Then, there is some $p$ with $w^{-1}(p-1) <w^{-1}(p)$ and $w^{-1}(p+1) <w^{-1}(p)$. If $w^{-1}(p-1) < w^{-1}(p+1) < w^{-1}(p)$, then $w^{-1}$, and hence $w$, is not dominant, so we may assume that $w^{-1}(p+1) < w^{-1}(p-1) < w^{-1}(p)$. Put $i=w^{-1}(p+1)$, $j = w^{-1}(p-1)$ and $k = w^{-1}(p)$, so that $i<j<k$. We now consider two cases. If $w(k-1) < w(k)=p$, then we have $w(k-1) < w(k) < w(j)$, showing that $w$ is not fireworks. If $w(k-1)>w(k)=p$, then $w(i) < w(k) < w(k-1)$, showing that $w$ is not dominant. Either way, the lemma follows.
\end{proof}

\begin{remark}
There is a standard bijection between dominant permutations in $S_n$, and partitions $\lambda$ contained the staircase $(n-1, n-2, \ldots, 1,0)$: The Rothe diagram of a dominant permutation is such a partition. 
Under this bijection, valley permutations correspond to partitions with distinct parts, and inverse valley permutations correspond to partitions whose transposes have distinct parts.
\end{remark}

\section{The shape of a permutation and associated ideas} \label{sec:fireworks}

\subsection{The blob diagram} We begin by giving a pictorial description of the Rajchgot code. 
Start by drawing the graph of $w$, i.e. by plotting dots in the $n\times n$ grid in positions $(i, w_i)$ for $i\in [n]$.  
(To be clear, we are using matrix coordinates, so $(i,j)$ is in row $i$, numbered from the top, and is in column $j$, numbered from the left.)
Draw a lasso around the dots which are maximally southeast in the grid.  Call this set of dots $B_n(w)$. Then draw another lasso around the maximally southeast non-lassoed dots.  This next set of dots is $B_{n-1}(w)$.  Continue in this way until all dots have been lassoed. We refer to this picture as the \newword{blob diagram} of $w$ (see Figure~\ref{fig:Davids_picture} for an example).

\begin{figure}[ht]
\[
\begin{tikzpicture}[x=2em,y=2em]
\draw[step=1,gray, thin] (0,0) grid (9,9);
\draw[color=black, thick](0,0)rectangle(9,9);
\node at (3.5,8.5) {$\bullet$};
\node at (5.5,7.5) {$\bullet$};
\node at (1.5,6.5) {$\bullet$};
\node at (2.5,5.5) {$\bullet$};
\node at (4.5,4.5) {$\bullet$};
\node at (6.5,3.5) {$\bullet$};
\node at (8.5,2.5) {$\bullet$};
\node at (0.5,1.5) {$\bullet$};
\node at (7.5,0.5) {$\bullet$};
\node at (-0.5,8.5) {$1$};
\node at (-0.5,7.5) {$2$};
\node at (-0.5,6.5) {$3$};
\node at (-0.5,5.5) {$4$};
\node at (-0.5,4.5) {$5$};
\node at (-0.5,3.5) {$6$};
\node at (-0.5,2.5) {$7$};
\node at (-0.5,1.5) {$8$};
\node at (-0.5,0.5) {$9$};
\node at (0.5,9.5) {$1$};
\node at (1.5,9.5) {$2$};
\node at (2.5,9.5) {$3$};
\node at (3.5,9.5) {$4$};
\node at (4.5,9.5) {$5$};
\node at (5.5,9.5) {$6$};
\node at (6.5,9.5) {$7$};
\node at (7.5,9.5) {$8$};
\node at (8.5,9.5) {$9$};
\node at (8.0,1.5) {$B_9$};
\node at (3.5,2.5) {$B_8$};
\node at (5.0,6.0) {$B_7$};
\node at (3.0,7.0) {$B_6$};
\node at (1.0,6.5) {$B_5$};
\draw[blue,thick, rotate around={-30:(1,6.5)}] (1,6.5) ellipse (.75 and .75);
\draw[blue,thick, rotate around={-20:(3,7)}] (3,7) ellipse (.75 and 2);
\draw[blue,thick, rotate around={-20:(5,6)}] (5,6) ellipse (.75 and 2);
\draw[blue,thick, rotate around={-70:(3.5,2.5)}] (3.5,2.5) ellipse (.75 and 3.5);
\draw[blue,thick, rotate around={-30:(8,1.5)}] (8,1.5) ellipse (.7 and 1.5);
\end{tikzpicture}
\]	
\caption{For $w=462357918$, we plot $w$ in the $9 \times 9$ grid as shown. The blobs are found by repeatedly lassoing the maximally southeast unlassoed $\bullet$s. 
} \label{fig:Davids_picture}
\end{figure}
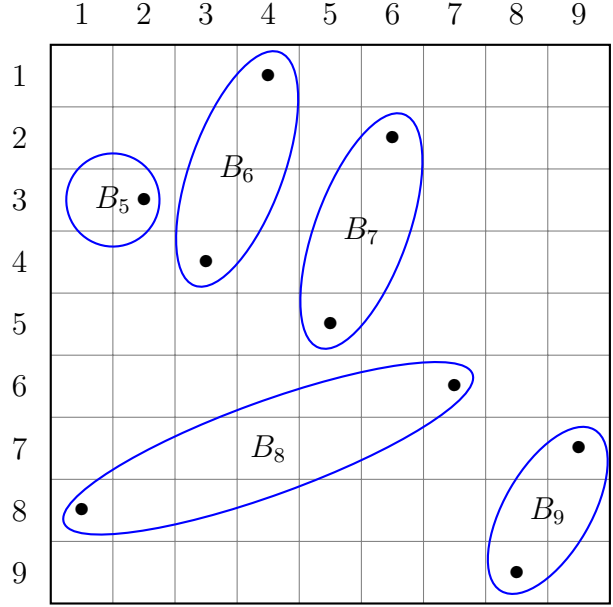

\begin{lemma}\label{lem:blob_LIS}
Let $(i,w_i)$ be in blob $B_k$. Then the longest increasing subsequence starting at $(i,w_i)$ contains $n+1-k$ elements.
\end{lemma}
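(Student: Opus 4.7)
The plan is to recast the blob decomposition as a greedy antichain decomposition and apply standard chain-antichain duality. Let $P$ denote the set $\{(i, w_i) : i \in [n]\}$ equipped with the partial order $(i_1, j_1) \preceq (i_2, j_2)$ defined by $i_1 \leq i_2$ and $j_1 \leq j_2$. An increasing subsequence of $w$ starting at $w_i$ is exactly the data of a chain in $P$ with minimum element $(i, w_i)$, so the lemma amounts to showing that the maximum length of such a chain equals $n+1-k$ whenever $(i, w_i) \in B_k$. The key observation is that the ``maximally southeast unlassoed dots'' in the construction of the blob diagram are precisely the maximal elements of the corresponding induced subposet of $P$; hence the blob decomposition is given by the iterative rule $B_n = \max P$ and $B_k = \max\bigl(P \setminus (B_n \cup \cdots \cup B_{k+1})\bigr)$ for $k < n$.

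For the upper bound, consider any chain $(i, w_i) = x_0 \prec x_1 \prec \cdots \prec x_t$ and write $x_s \in B_{k_s}$. I would argue that $k_0 < k_1 < \cdots < k_t$; since $k_0 = k$ and $k_t \leq n$, this forces $t \leq n-k$, so the chain has at most $n+1-k$ elements. The monotonicity follows because $x_s \in B_{k_s}$ means $x_s$ is maximal in the subposet $B_1 \cup \cdots \cup B_{k_s}$, so any element of $P$ strictly above $x_s$ must lie in $B_{k_s+1} \cup \cdots \cup B_n$; applied to $x_{s+1} \succ x_s$, this yields $k_{s+1} > k_s$.

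For the lower bound, I will construct a chain $(i, w_i) = y_k \prec y_{k+1} \prec \cdots \prec y_n$ inductively with $y_j \in B_j$. Given $y_j$ with $j < n$, the element $y_j$ lies in the subposet $B_1 \cup \cdots \cup B_{j+1}$ but is not in the set $B_{j+1}$ of its maximal elements, so there exists some $z$ strictly above $y_j$ in this subposet; by the monotonicity observation above, such a $z$ must in fact lie in $B_{j+1}$, and we set $y_{j+1} = z$. I expect no serious obstacle once the identification between the pictorial peeling procedure and the order-theoretic iterated-maximal description is spelled out; after that step the two bounds are essentially Mirsky's theorem applied to a very concrete poset.
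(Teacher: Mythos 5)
Your proposal is correct and is essentially the paper's argument: the paper proves the same two bounds by reverse induction on $k$ (each successor in a chain lies in a strictly higher-numbered blob, and each dot of $B_k$ has a dot of $B_{k+1}$ to its southeast), which is exactly your unrolled chain-monotonicity plus explicit chain construction. Your repackaging via the greedy antichain (Mirsky-type) decomposition is a clean way to phrase it, but the mathematical content is identical.
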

\begin{proof}
We prove this claim by reverse induction on $k$. If $(i,w_i)$ is in $B_n$, then there are no dots to the southeast of $(i, w_i)$, so the longest increasing subsequence starting at $(i, w_i)$ is just the singleton $\{ w_i \}$. 

Now, suppose that $(i, w_i)$ is in $B_k$. Let a longest increasing subsequence starting at $w_i$ continue $w_i w_j \cdots$. Then $w_j$ is in $B_{\ell}$ for $\ell>k$, so the length of the sequence starting at $w_j$ is at most $n-\ell+1 \leq n-(k+1)+1=n-k$. So the length of the sequence starting at $w_i$ is at most $n-k$. This shows that any  increasing subsequence starting at $w_i$ has length at most $n-k$.

Conversely, since $(i,w_i)$ is in blob $B_k$, there is some element $(j, w_j)$ in blob $B_{k+1}$ to the southeast of $(i,w_i)$, and by induction there is an  increasing subsequence of length $n-k-1$ starting at $w_j$. Prepending $w_i$ to this subsequence gives an  increasing subsequence of length $n-k$ starting at $w_i$.
\end{proof}

\begin{cor} \label{cor:raj_code_from_blobs}
Let $(r_1(w), r_2(w), \ldots, r_n(w))$ be the Rajchgot code of $w$, and let $(i, w_i)$ be in blob $B_k$. Then $r_i(w) = k - i$.
\end{cor}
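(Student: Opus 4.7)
The plan is to observe that Corollary~\ref{cor:raj_code_from_blobs} follows almost immediately from Lemma~\ref{lem:blob_LIS} by a one-line count of positions. The Rajchgot code entry $r_i(w)$ is, by definition, the total number of entries in $w(i)w(i+1)\cdots w(n)$ minus the length of a longest increasing subsequence of that tail starting at $w(i)$.

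The tail $w(i)w(i+1)\cdots w(n)$ has exactly $n-i+1$ entries. By Lemma~\ref{lem:blob_LIS}, if $(i,w_i)\in B_k$, the longest increasing subsequence starting at $w_i$ has length $n+1-k$. Subtracting,
\[
r_i(w)=(n-i+1)-(n+1-k)=k-i,
\]
which is exactly the claim. The main (and only) obstacle is just making sure that the longest increasing subsequence of $w(i)w(i+1)\cdots w(n)$ starting at $w_i$ is indeed the one counted in Lemma~\ref{lem:blob_LIS}; but this is automatic, since increasing subsequences starting at $w_i$ in the tail are the same as those in all of $w$, because every entry later than $w_i$ appears in the tail.
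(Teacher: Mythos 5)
Your proof is correct and is essentially identical to the paper's: both invoke Lemma~\ref{lem:blob_LIS} to get that the longest increasing subsequence starting at $w_i$ has $n+1-k$ elements, and subtract this from the $n-i+1$ entries of the tail to obtain $r_i(w)=k-i$. The extra remark about subsequences of the tail versus of all of $w$ is harmless and correct.
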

\begin{proof}
By Lemma~\ref{lem:blob_LIS}, the longest increasing subsequence starting at $w_i$ has $n-k+1$ elements. There are $n-i+1$ elements in the word $w_i w_{i+1} \cdots w_n$, so $k-i$ of them are omitted. 
\end{proof}

\subsection{The set partition and shape of a permutation} 
First, let $\epsilon(w) = \epsilon_1 \epsilon_2 \cdots \epsilon_n$ be the word where $(i, w_i)$ is in blob $B_{\epsilon_i}$. In other words, we project the blob numbers left onto the rows of the blob diagram. 
So, for the permutation in Figure~\ref{fig:Davids_picture}, we have $\epsilon(w) = 675678989$. 
From Corollary~\ref{cor:raj_code_from_blobs}, the Rajchgot code of $w$ is $\epsilon(w) - (1,2,\ldots, n)$ so, in this case, the Rajchgot code of $462357918$ is $(6,7,5,6,7,8,9,8,9) - (1,2,3,4,5,6,7,8,9) = (5,5,2,2,2,2,2)$.
Note that, if we projected the blob numbers onto the columns instead, we would obtain $\epsilon(w^{-1})$. 
We note that the words $\epsilon(w)$ and $\epsilon(w^{-1})$ are anagrams of each other. In the example of Figure~\ref{fig:Davids_picture}, $\epsilon(w^{-1})$ is $856677899$.

Define $\pi_k(w)$ to be the set of column labels of the dots in $B_k(w)$.  (Note that by symmetry, $\pi_k(w^{-1})$ is the row labels of the dots in $B_k(w)$.) Then $\pi(w)$ is the set partition of $w$. Note that $i$ and $j$ are in the same block of the set partition exactly if $(w^{-1}(i), i)$ and $(w^{-1}(j), j)$ are in the same blob.
In our running example, $\pi(w)$ is the set partition $\{ \{ 2 \},\ \{ 3,4 \},\ \{ 5,6 \},\ \{ 1,7 \},\ \{ 8,9 \} \}$. We will generally shorten this to $\{ 2,\ 34,\ 56,\ 17,\ 89 \}$. 
(Of course, we would obtain $\pi(w^{-1})$ by recording the row labels of the entries in each blob.)
We note that the ordering of the blocks of $\pi(w)$ is recoverable from the set partition $\pi(w)$, because the maximum elements of the blocks occur in increasing order.
We index the blocks of $\pi(w)$ as $(\pi_t, \pi_{t+1}, \ldots, \pi_n)$, so that $\pi_k$ corresponds to block $B_k$, and we set $\alpha_k(w) = \# \pi_k(w)$. 
We define the composition $(\alpha_t(w), \alpha_{t+1}(w), \ldots, \alpha_n(w))$ to be the \newword{shape} of $w$. 
We note that $\alpha_k$ is the number of times the letter $k$ occurs in the word $\epsilon(w^{-1})$. 
In particular, since $\epsilon(w)$ and $\epsilon(w^{-1})$ are anagrams, we see that $w$ and $w^{-1}$ have the same shape. 
We can express the Rajchgot index in terms of the shape:

\begin{lemma}\label{lem:set_partition_to_raj}
For $w\in S_n$, we have 
\[ \raj(w)=\sum_{k=1}^n k\alpha_k - \binom{n+1}{2} = \sum_{k=1}^n (\alpha_k + \alpha_{k+1} + \cdots + \alpha_n) - \binom{n+1}{2} . \] 
\end{lemma}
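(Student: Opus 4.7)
The plan is to reduce everything to Corollary~\ref{cor:raj_code_from_blobs}, which does all the real work; once we have that, the lemma becomes a bookkeeping exercise.

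By definition, $\raj(w) = \sum_{i=1}^n r_i(w)$. Corollary~\ref{cor:raj_code_from_blobs} identifies $r_i(w) = \epsilon_i - i$, where $\epsilon_i$ is the index of the blob $B_{\epsilon_i}$ containing the dot $(i, w_i)$. Summing over $i$ and using $\sum_{i=1}^n i = \binom{n+1}{2}$, the first equality reduces to checking
\[ \sum_{i=1}^n \epsilon_i \;=\; \sum_{k=1}^n k\,\alpha_k. \]
This is immediate by regrouping the sum over positions $i$ into a sum over blob labels $k$: each term $\epsilon_i = k$ appears once for each of the $\alpha_k$ dots lying in $B_k$. (The indices $k < t$ contribute nothing since $\alpha_k = 0$ for those $k$.)

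The second equality $\sum_{k=1}^n k\alpha_k = \sum_{k=1}^n (\alpha_k + \alpha_{k+1} + \cdots + \alpha_n)$ is a standard double-counting: on the right-hand side, the term $\alpha_k$ appears in the inner sums indexed by $j = 1, 2, \ldots, k$, contributing $k\alpha_k$. There is no genuine obstacle in the proof; the content of the lemma is carried entirely by the blob-diagram interpretation of the Rajchgot code, and what remains is only reindexing.
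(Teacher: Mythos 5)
Your proof is correct and follows the same route as the paper: both reduce to Corollary~\ref{cor:raj_code_from_blobs}, sum $\epsilon_i - i$ over $i$, regroup by blob label using the fact that $k$ occurs $\alpha_k$ times among the $\epsilon_i$, and note that the second equality is a formal rearrangement. No issues.
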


\begin{remark} \label{rem:WhyEndAtN}
Lemma~\ref{lem:set_partition_to_raj} is the first of several places where our indexing convention simplifies formulas.
\end{remark}

\begin{proof}[Proof of Lemma~\ref{lem:set_partition_to_raj}]
Let $\epsilon(w) = (\epsilon_1, \epsilon_2, \ldots, \epsilon_n)$. From Corollary~\ref{cor:raj_code_from_blobs}, we have $\raj(w) = \sum_{i=1}^n \epsilon_i - \binom{n+1}{2}$. 
By definition, $\alpha_k$ is the number of times that $k$ occurs in this sum. This proves the first formula for $\raj(w)$; the second is a formal rearrangement of it.
\end{proof}

\begin{cor}
We have $\raj(w) = \raj(w^{-1})$. 
\end{cor}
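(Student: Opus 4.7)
The plan is to combine Lemma~\ref{lem:set_partition_to_raj} with the observation, already emphasized in the text immediately preceding the corollary, that the shape of a permutation is invariant under inversion. Specifically, Lemma~\ref{lem:set_partition_to_raj} expresses $\raj(w)$ as $\sum_{k=1}^n k \alpha_k(w) - \binom{n+1}{2}$, which depends on $w$ only through its shape. So it suffices to verify $\alpha_k(w) = \alpha_k(w^{-1})$ for every $k$.

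To verify this, I would argue that the blob diagrams of $w$ and $w^{-1}$ are related by transposition (reflection across the main diagonal), since the graph of $w^{-1}$ is precisely the transpose of the graph of $w$. The southeast partial order on dots used to define the blobs is preserved by this transposition, so the decomposition into blobs $B_k$ is carried to the corresponding decomposition for $w^{-1}$; in particular, $|B_k(w)| = |B_k(w^{-1})|$. Since $\alpha_k(w)$ is the number of dots in $B_k(w)$, this gives $\alpha_k(w) = \alpha_k(w^{-1})$. (Equivalently, one may invoke the already-observed fact that $\epsilon(w)$ and $\epsilon(w^{-1})$ are anagrams, and note that $\alpha_k(w)$ is the multiplicity of the letter $k$ in $\epsilon(w^{-1})$, while $\alpha_k(w^{-1})$ is the multiplicity of $k$ in $\epsilon(w)$.)

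There is no real obstacle here: once Lemma~\ref{lem:set_partition_to_raj} is in hand and the symmetry of the blob construction under transposition is noted, the corollary is immediate. The proof is effectively a one-liner citing the lemma and the shape-invariance.
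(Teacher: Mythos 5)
Your proof is correct and matches the paper's argument exactly: both invoke Lemma~\ref{lem:set_partition_to_raj} together with the previously observed fact that $w$ and $w^{-1}$ have the same shape (via the anagram relation between $\epsilon(w)$ and $\epsilon(w^{-1})$, equivalently the transposition symmetry of the blob diagram). No differences worth noting.
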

\begin{proof}
	We have seen that $w$ and $w^{-1}$ have the same shape. By Lemma~\ref{lem:set_partition_to_raj}, Rajchgot index only depends on shape.
\end{proof}

Given two compositions, $(\alpha_j, \alpha_{j+1}, \ldots, \alpha_n)$ and $(\beta_k, \beta_{k+1}, \ldots, \beta_n)$ of $n$, we say that $\alpha$ \newword{dominates} $\beta$ and write $\alpha \succeq \beta$ if $\alpha_m + \alpha_{m+1} + \cdots + \alpha_n \geq \beta_m + \beta_{m+1} + \cdots + \beta_n$ for all $m$. We write $\alpha \succ \beta$ to mean $\alpha \succeq \beta$ and $\alpha \neq \beta$.

\begin{cor} \label{cor:dominates_raj}
Let $u$ and $v$ be permutations of shapes $\alpha$ and $\beta$. If $\alpha \succeq \beta$, then $\raj(u) \geq \raj(v)$; if $\alpha \succ \beta$, then $\raj(u) > \raj(v)$.
\end{cor}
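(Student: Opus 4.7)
The plan is to read the conclusion off directly from Lemma~\ref{lem:set_partition_to_raj}, whose second formula expresses
\[
\raj(w) \;=\; \sum_{m=1}^n \bigl(\alpha_m + \alpha_{m+1} + \cdots + \alpha_n\bigr) - \binom{n+1}{2}
\]
as a sum of the tail sums of the shape of $w$, up to an additive constant depending only on $n$. (Here we extend a shape by zeros outside its natural index range so that every composition of $n$ sits inside the same ambient sequence indexed by $\{1,2,\dots,n\}$.) The hypothesis $\alpha \succeq \beta$ is by definition exactly the termwise inequality of these tail sums, so summing the inequalities over $m = 1, 2, \ldots, n$ immediately yields $\raj(u) \geq \raj(v)$.

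For the strict version, I would introduce $S_m \coloneqq \alpha_m + \cdots + \alpha_n$ and $T_m \coloneqq \beta_m + \cdots + \beta_n$ (with $S_{n+1} = T_{n+1} = 0$). Dominance gives $S_m \geq T_m$ for all $m$. If every such inequality were actually an equality, then by the telescoping identity $\alpha_m = S_m - S_{m+1} = T_m - T_{m+1} = \beta_m$ one would conclude $\alpha = \beta$, contradicting $\alpha \succ \beta$. Hence at least one of the summands in the formula is strictly larger for $\alpha$ than for $\beta$, and the two sums differ strictly, giving $\raj(u) > \raj(v)$.

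No real difficulty is anticipated: the corollary is essentially a translation of Lemma~\ref{lem:set_partition_to_raj} into the language of the dominance order, with the second formulation of the lemma (tail sums rather than $\sum k\alpha_k$) doing all the work. The only point requiring even brief justification is the telescoping observation that $\alpha \neq \beta$ combined with $\alpha \succeq \beta$ forces a strict inequality in some tail sum; this is the one substantive step in the argument but is entirely elementary.
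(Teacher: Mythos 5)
Your proof is correct and follows exactly the route the paper intends: the paper's entire proof is ``This is clear from the second formula for $\raj$ in Lemma~\ref{lem:set_partition_to_raj},'' and your argument simply spells out the termwise comparison of tail sums and the telescoping observation needed for strictness. Nothing to add.
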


\begin{proof}
This is clear from the second formula for $\raj$ in Lemma~\ref{lem:set_partition_to_raj}.
\end{proof}

We return to our discussion of valley permutations, from Section~\ref{sec:valley}. 
\begin{lemma} \label{lem:valley_shape}
For each composition $\alpha$ of $n$, there is exactly one valley permutation, $f_{\alpha}$ of shape $\alpha$, and likewise one inverse valley permutation of shape $\alpha$, which is $f_{\alpha}^{-1}$.
\end{lemma}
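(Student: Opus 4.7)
The plan is to prove the valley case by constructing an explicit bijection between valley permutations in $S_n$ and compositions of $n$ via the shape map, and then deduce the inverse valley case by inversion. For a valley permutation $w \in S_n$ with valley at position $a$, Lemma~\ref{lem:blob_LIS} identifies the blob containing point $(i, w_i)$ via the length of the longest increasing subsequence of $w_i w_{i+1} \cdots w_n$ starting at $w_i$. A short computation, using that the tail is already increasing and all head values exceed $w_a = 1$, shows that tail points $(i, w_i)$ with $i \geq a$ lie in blob $B_i$, while head points $(i, w_i)$ with $i < a$ lie in blob $B_{w_i + i - 1}$. In particular, the smallest blob index is $a$, and the shape of $w$ is a composition of the form $(\alpha_a, \alpha_{a+1}, \ldots, \alpha_n)$ of $n$.

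For injectivity, the shape determines $a$ as $n+1$ minus its length, and each tail point contributes one to each blob $B_k$ for $k \in \{a, a+1, \ldots, n\}$, leaving a completely determined multiset of head blob indices. Writing $S = \{w_1, \ldots, w_{a-1}\}$ in decreasing order as $s_1 > s_2 > \cdots > s_{a-1}$, the values $s_i + i - 1$ form a weakly decreasing sequence in $i$ (since $s_{i+1} < s_i$), so after sorting the head multiset in decreasing order as $b_1 \geq b_2 \geq \cdots \geq b_{a-1}$ one must have $b_i = s_i + i - 1$, and $s_i = b_i - i + 1$ uniquely recovers $S$ and hence $w$. For existence, given an arbitrary composition $\alpha = (\alpha_a, \ldots, \alpha_n)$ of $n$, I would reverse the construction: form the multiset with $\alpha_k - 1$ copies of $k$ for each $k \in \{a, a+1, \ldots, n\}$ (total size $a - 1$), sort it decreasingly as $b_1 \geq \cdots \geq b_{a-1}$, and set $s_i \coloneqq b_i - i + 1$. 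Then $s_i - s_{i+1} = b_i - b_{i+1} + 1 \geq 1$ makes the $s_i$ strictly decreasing, and since all elements of the multiset are $\geq a$ and $i \leq a - 1$, we get $s_i \geq a - i + 1 \geq 2$. This defines a valley permutation $f_\alpha$ with head set $\{s_1, \ldots, s_{a-1}\}$ and shape $\alpha$. Combined with the matching count $2^{n-1}$ for valley permutations and compositions of $n$, this establishes the bijection.

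For the inverse valley case, inversion sends valley permutations to inverse valley permutations, and the discussion preceding Lemma~\ref{lem:set_partition_to_raj} shows that $\epsilon(w)$ and $\epsilon(w^{-1})$ are anagrams, so $w$ and $w^{-1}$ have the same shape. Hence inversion restricts to a shape-preserving bijection between the two classes, and $f_\alpha^{-1}$ is the unique inverse valley permutation of shape $\alpha$.

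The main obstacle is the reconstruction step: because the head blob indices $s_i + i - 1$ are only weakly decreasing (equalities occur exactly when consecutive integers both belong to $S$), one cannot read $s_i$ off the multiset of blob indices in any direct way, and some care is needed in setting up the sort-then-shift recipe and verifying both that $s_i \geq 2$ holds in general and that the $s_i$ so produced are strictly decreasing.
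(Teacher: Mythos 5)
Your proof is correct and follows essentially the same route as the paper: both analyze the blob diagram of a valley permutation and exhibit, for each composition, an explicit valley permutation of that shape. The only notable difference is that you establish uniqueness by a direct sort-then-shift reconstruction of the head set from the shape, whereas the paper deduces it from the matching count $2^{n-1}$ of valley permutations and compositions; your injectivity argument is slightly more self-contained but the underlying combinatorics is the same.
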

\begin{proof}
Inverting permutations preserves the shape while switching valley and inverse valley, so it is enough to prove the valley case.

Suppose $\alpha = (\alpha_t, \dots, \alpha_n)$.
For $t\leq k \leq n$, define $\rho_{k} = (n+1) - \alpha_n - \alpha_{n-1} - \cdots - \alpha_k$. 
Let $R = \{ \rho_{t} < \rho_{t+1} < \cdots < \rho_n \}$.
Let $[n] \setminus R = \{ \lambda_1 > \lambda_2 > \cdots > \lambda_{t-1}  \}$. 
Then the corresponding valley permutation $f$ has $f(k) = \lambda_k$ for $1 \leq k \leq t-1$ and $f(k) = \rho_k$ for $t \leq k \leq n$. 

\begin{figure}[ht]
\[
\begin{tikzpicture}[x=2em,y=2em]
\draw[step=1,gray, thin] (0,0) grid (9,9);
\draw[color=black, thick](0,0)rectangle(9,9);
\node at (7.5,8.5) {$\bullet$};
\node at (6.5,7.5) {$\bullet$};
\node at (4.5,6.5) {$\bullet$};
\node at (1.5,5.5) {$\bullet$};
\node at (0.5,4.5) {$\bullet$};
\node at (2.5,3.5) {$\bullet$};
\node at (3.5,2.5) {$\bullet$};
\node at (5.5,1.5) {$\bullet$};
\node at (8.5,0.5) {$\bullet$};
\node at (-0.5,8.5) {$1$};
\node at (-0.5,7.5) {$2$};
\node at (-0.5,6.5) {$3$};
\node at (-0.5,5.5) {$4$};
\node at (-0.5,4.5) {$5$};
\node at (-0.5,3.5) {$6$};
\node at (-0.5,2.5) {$7$};
\node at (-0.5,1.5) {$8$};
\node at (-0.5,0.5) {$9$};
\node at (0.5,9.5) {$1$};
\node at (1.5,9.5) {$2$};
\node at (2.5,9.5) {$3$};
\node at (3.5,9.5) {$4$};
\node at (4.5,9.5) {$5$};
\node at (5.5,9.5) {$6$};
\node at (6.5,9.5) {$7$};
\node at (7.5,9.5) {$8$};
\node at (8.5,9.5) {$9$};
\node at (8.65,0.9) {$B_9$};
\node at (6.5,4.5) {$B_8$};
\node at (4.0,4.0) {$B_7$};
\node at (2.5,3.85) {$B_6$};
\node at (1.0,5.0) {$B_5$};
\draw[blue,thick, rotate around={-45:(1,5)}] (1,5) ellipse (.5 and .95);
\draw[blue,thick, rotate around={-40:(2.5,3.8)}] (2.5,3.8) ellipse (.5 and 0.7);
\draw[blue,thick, rotate around={-15:(4,4.4)}] (4,4.4) ellipse (.48 and 2.8);
\draw[blue,thick, rotate around={-15:(6.4,4.77)}] (6.4,4.7) ellipse (1.2 and 4.5);
\draw[blue,thick, rotate around={-30:(8.5,0.5)}] (8.5,0.5) ellipse (.5 and 0.9);
\end{tikzpicture}
\]	
\caption{For $\alpha = (2,1,2,3,1)$, the proof of Lemma~\ref{lem:valley_shape} gives $R = \{1 < 3 < 4 < 6 < 9 \}$ and $[9] \backslash R = \{ 8 > 7 > 5 > 2 \}$. So the corresponding valley permutation is $f = 875213469$. Here we have drawn the blob diagram of $f$, so the reader may observe that $f$ has shape $\alpha$, as desired.}
\label{fig:valley_figure}
\end{figure}
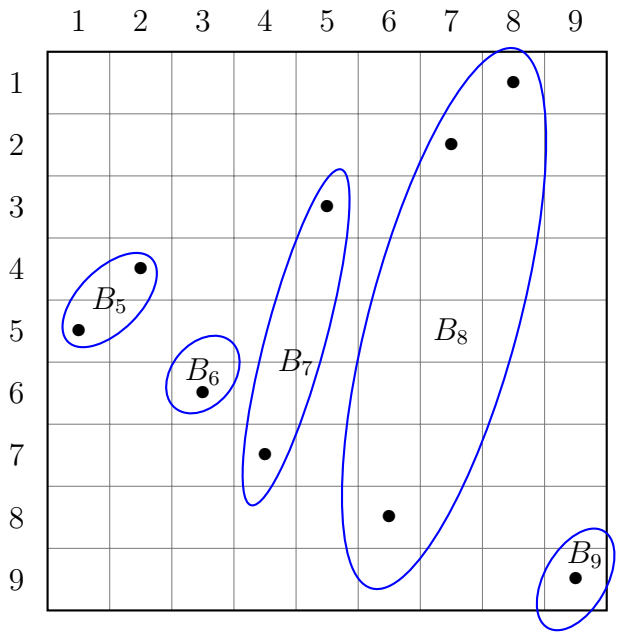

From the blob diagram for $f$ (for example, see Figure~\ref{fig:valley_figure}), it is easy to see that there are $n-t+1$ blobs, with $B_k$ (for $t \leq k \leq n$) containing the dots in columns $\{ \rho_k, \rho_k +1, \rho_k +2, \ldots, \rho_{k+1}-1 \}$. Thus, the shape of $f$ is $\alpha$. Uniqueness follows from the bijection between valley permutations and subsets of $\{ 2,3,\ldots, n \}$.
\end{proof}

\subsection{Going down edges in weak order}

Suppose that we have covers $w >_R w s_i$ or $w >_L s_i w$. In this section, we will discuss how the blobs, the Rajchgot codes, and the Rajchgot index change in this case.

\begin{lemma}\label{lem:cover_blob}
Let $w >_R w s_i$. Let $(i, w(i))$ and $(i+1, w(i+1))$ be in blobs $B_p$ and $B_q$ of $w$, respectively. If $p>q$, then $(i+1, w(i))$ and $(i, w(i+1))$ are, respectively, in blobs $B_p$ and $B_q$ of $w s_i$, and all other dots stay in the same blobs. In this case, $w$ and $w s_i$ have the same shape, and $\raj(w) = \raj(w s_i)$. If $p \leq q$, then $w$ and $w s_i$ have different shapes. Calling their shapes $\alpha$ and $\beta$ respectively, we have $\alpha \succ \beta$ and $\raj(w) > \raj(w s_i)$.
\end{lemma}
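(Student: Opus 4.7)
The plan is to reduce both parts of the lemma to a computation of how the longest-increasing-subsequence length from each dot changes under $w \mapsto ws_i$, and then invoke Lemma~\ref{lem:blob_LIS} to translate LIS lengths into blob indices and Corollary~\ref{cor:dominates_raj} to control $\raj$.

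First, I would record the two key identities
\[ \ell^{ws_i}_{B'} = \ell^w_A, \qquad \ell^{ws_i}_{A'} = \max(\ell^w_A + 1,\, \ell^w_B), \]
where $\ell^\sigma_X$ denotes the length of the longest increasing subsequence of $\sigma$ starting at the dot $X$, and $A'=(i,w(i+1))$, $B'=(i+1,w(i))$. For any dot $X$ at row $j > i+1$, the SE-successors of $X$ and their values are the same in $w$ and $ws_i$, so $\ell^{ws_i}_X = \ell^w_X$; the identities then follow by reading off the successors of $A'$ and $B'$ in $ws_i$: the successors of $B'$ are exactly the successors of $A$ in $w$ (rows $>i+1$ with value $>w(i)$), while the successors of $A'$ consist of $B'$ together with the successors of $B$ at rows $>i+1$ (value $>w(i+1)$).

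In Case 1 ($\ell^w_A < \ell^w_B$) the $\max$ collapses to $\ell^{ws_i}_{A'}=\ell^w_B$ and $\ell^{ws_i}_{B'}=\ell^w_A$, immediately placing $A' \in B_q$ and $B' \in B_p$. To show every other dot stays in its blob, I would case-split a dot $X$ at row $j<i$ by comparing $w(j)$ with $w(i)$ and $w(i+1)$. If $w(j)>w(i)$, none of $A,B,A',B'$ is a successor of $X$. If $w(i+1)<w(j)<w(i)$, then $A$ (contributing $\ell^w_A$) is replaced by $B'$ (contributing $\ell^{ws_i}_{B'}=\ell^w_A$). If $w(j)<w(i+1)$, then $\{A,B\}$ contributes the multiset $\{\ell^w_A,\ell^w_B\}$ while $\{A',B'\}$ contributes $\{\ell^w_B,\ell^w_A\}$. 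In each case the multiset of successor LIS lengths is unchanged, so $\ell^{ws_i}_X = \ell^w_X$, and thus blob indices, shape, and $\raj$ are preserved.

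In Case 2 ($\ell^w_A \geq \ell^w_B$) the same formula gives $\ell^{ws_i}_{A'} = \ell^w_A+1$ (strictly larger than $\ell^w_A$) and $\ell^{ws_i}_{B'} = \ell^w_A \geq \ell^w_B$. The parallel case split for dots at rows $j<i$ shows that successor LIS lengths can only weakly increase (the only potential strict increase occurs in the $w(j)<w(i+1)$ case, where $\{\ell^w_A,\ell^w_B\}$ is replaced by $\{\ell^w_A+1,\ell^w_A\}$), so every dot's LIS length in $ws_i$ is $\geq$ its value in $w$. Translating to blob indices, each dot moves weakly into a lower-indexed blob and $A$ moves strictly, which yields $\sum_{k \geq m} \alpha_k \geq \sum_{k \geq m} \beta_k$ for all $m$ with strict inequality at $m=p$, i.e., $\alpha \succ \beta$. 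Corollary~\ref{cor:dominates_raj} then gives $\raj(w) > \raj(ws_i)$. The main obstacle is the Case~1 multiset identity $\{\ell^w_A,\ell^w_B\} = \{\ell^{ws_i}_{A'}, \ell^{ws_i}_{B'}\}$, which is what forces every other dot's blob to be preserved \emph{exactly}, not merely weakly, and which fails in Case~2.
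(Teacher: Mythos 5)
Your proof is correct, and it takes a genuinely different route from the paper's. The paper argues directly about the iterative lassoing process: it tracks which dots get lassoed at each stage, showing the process for $w$ and $ws_i$ coincides until the critical blob ($B_p$ or $B_q$) and analyzing the divergence there. You instead work with the static characterization from Lemma~\ref{lem:blob_LIS} (blob index $k$ $\Leftrightarrow$ LIS length $n+1-k$), establish the local recurrence $\ell^{ws_i}_{B'}=\ell^w_A$ and $\ell^{ws_i}_{A'}=\max(\ell^w_A+1,\ell^w_B)$, and propagate the (non-)change upward by downward induction on the row, using the multiset identity $\{\ell^{ws_i}_{A'},\ell^{ws_i}_{B'}\}=\{\ell^w_A,\ell^w_B\}$ in the $p>q$ case. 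Your version buys an explicit quantitative formula for how the two affected LIS lengths transform, which makes the dichotomy between the two cases ($\ell^w_A<\ell^w_B$ versus $\ell^w_A\ge\ell^w_B$) and the source of strictness in the dominance inequality completely transparent; the paper's version is shorter because it reuses the process description wholesale. Two small points of care, neither of which is a gap: (i) the deduction ``LIS length $n+1-k$ implies membership in $B_k$'' is the converse of Lemma~\ref{lem:blob_LIS} as literally stated, but it follows since every dot lies in exactly one blob and the lemma pins down the LIS length of each; (ii) in Case~2 your parenthetical that the only potential strict increase for a dot in row $j<i$ comes from the $w(j)<w(i+1)$ subcase describes only the direct contribution of the swapped pair --- intermediate rows between $j$ and $i$ may also have strictly increased by induction --- but since you only need weak increase for all dots plus strictness at $A$, the conclusion stands.
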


\begin{example}\label{ex:going_down_edges}
	Let $w = 462357918$ as in Figure~\ref{fig:Davids_picture}. Let $w' = ws_2$. The blob diagram of $w'$ is
\[	
\begin{tikzpicture}[x=2em,y=2em]
\draw[step=1,gray, thin] (0,0) grid (9,9);
\draw[color=black, thick](0,0)rectangle(9,9);
\node at (3.5,8.5) {$\bullet$};
\node at (5.5,6.5) {${\textcolor{ForestGreen} \bullet}$};
\node at (1.5,7.5) {${\textcolor{ForestGreen} \bullet}$};
\node at (2.5,5.5) {$\bullet$};
\node at (4.5,4.5) {$\bullet$};
\node at (6.5,3.5) {$\bullet$};
\node at (8.5,2.5) {$\bullet$};
\node at (0.5,1.5) {$\bullet$};
\node at (7.5,0.5) {$\bullet$};
\node at (-0.5,8.5) {$1$};
\node at (-0.5,7.5) {$2$};
\node at (-0.5,6.5) {$3$};
\node at (-0.5,5.5) {$4$};
\node at (-0.5,4.5) {$5$};
\node at (-0.5,3.5) {$6$};
\node at (-0.5,2.5) {$7$};
\node at (-0.5,1.5) {$8$};
\node at (-0.5,0.5) {$9$};
\node at (0.5,9.5) {$1$};
\node at (1.5,9.5) {$2$};
\node at (2.5,9.5) {$3$};
\node at (3.5,9.5) {$4$};
\node at (4.5,9.5) {$5$};
\node at (5.5,9.5) {$6$};
\node at (6.5,9.5) {$7$};
\node at (7.5,9.5) {$8$};
\node at (8.5,9.5) {$9$};
\node at (8.0,1.5) {$B_9$};
\node at (3.5,2.5) {$B_8$};
\node at (5.0,5.6) {$B_7$};
\node at (3.0,7.0) {$B_6$};
\node at (1.0,7.5) {$B_5$};
\draw[blue,thick, rotate around={-30:(1,7.5)}] (1,7.5) ellipse (.75 and .75);
\draw[blue,thick, rotate around={-20:(3,7)}] (3,7) ellipse (.75 and 2);
\draw[blue,thick, rotate around={-20:(5,5.6)}] (5,5.6) ellipse (.75 and 2);
\draw[blue,thick, rotate around={-70:(3.5,2.5)}] (3.5,2.5) ellipse (.75 and 3.5);
\draw[blue,thick, rotate around={-30:(8,1.5)}] (8,1.5) ellipse (.7 and 1.5);
\end{tikzpicture},
\] where we have marked the dots that have moved with respect to $w$ in green. Observe that the shape of the permutation is unchanged from that of $w$ (see Figure~\ref{fig:Davids_picture}).
	
	Then let $w'' = ws_4$. Note that $w <_R w''$. Then the blob diagram for $w''$ is
	\[	
\begin{tikzpicture}[x=2em,y=2em]
\draw[step=1,gray, thin] (0,0) grid (9,9);
\draw[color=black, thick](0,0)rectangle(9,9);
\node at (3.5,8.5) {$\bullet$};
\node at (1.5,7.5) {$\bullet$};
\node at (5.5,6.5) {$\bullet$};
\node at (4.5,5.5) {${\textcolor{ForestGreen} \bullet}$};
\node at (2.5,4.5) {${\textcolor{ForestGreen} \bullet}$};
\node at (6.5,3.5) {$\bullet$};
\node at (8.5,2.5) {$\bullet$};
\node at (0.5,1.5) {$\bullet$};
\node at (7.5,0.5) {$\bullet$};
\node at (-0.5,8.5) {$1$};
\node at (-0.5,7.5) {$2$};
\node at (-0.5,6.5) {$3$};
\node at (-0.5,5.5) {$4$};
\node at (-0.5,4.5) {$5$};
\node at (-0.5,3.5) {$6$};
\node at (-0.5,2.5) {$7$};
\node at (-0.5,1.5) {$8$};
\node at (-0.5,0.5) {$9$};
\node at (0.5,9.5) {$1$};
\node at (1.5,9.5) {$2$};
\node at (2.5,9.5) {$3$};
\node at (3.5,9.5) {$4$};
\node at (4.5,9.5) {$5$};
\node at (5.5,9.5) {$6$};
\node at (6.5,9.5) {$7$};
\node at (7.5,9.5) {$8$};
\node at (8.5,9.5) {$9$};
\node at (8.0,1.5) {$B_9$};
\node at (3.5,2.5) {$B_8$};
\node at (3.8,5.8) {$B_7$};
\node at (2.5,8.0) {$B_6$};
\draw[blue,thick, rotate around={-59:(2.5,8)}] (2.5,8) ellipse (.75 and 1.6);
\draw[blue,thick, rotate around={-60:(3.9,5.62)}] (3.9,5.62) ellipse (.89 and 2.2);
\draw[blue,thick, rotate around={-70:(3.5,2.5)}] (3.5,2.5) ellipse (.75 and 3.5);
\draw[blue,thick, rotate around={-30:(8,1.5)}] (8,1.5) ellipse (.7 and 1.5);
\end{tikzpicture},
\] where again we have marked the dots that have moved in green. Note that here the shape is different from the shape of $w$, as depicted in Figure~\ref{fig:Davids_picture}. Letting $\alpha=(2,3,2,2)$ be the shape of $w''$ and $\beta = (1,2,2,2,2)$ be the shape of $w$, we observe that $\alpha \succ \beta$.
\end{example}

\begin{proof}[Proof of Lemma~\ref{lem:cover_blob}]
First, suppose that $p>q$. Throughout the lassoing process up to blob $B_{p+1}$, exactly the same dots will get lassoed in $w$ and in $w s_i$. When we get to blob $B_p$, because $j>k$, there is some dot to the southeast of $(i+1, w(i+1))$ and this dot will remain to the southeast of $(i, w(i+1))$, so $(i, w(i+1))$ will still not be lassoed; on the other hand, $(i+1, w(i))$ will still be lassoed. All other dots behave exactly the same way for $w$ and $w s_i$. After blob $B_p$ has been formed, the remaining unlassoed dots for $w$ and $w s_i$ are precisely the same except for whether they have an empty row in row $i$ or in row $i+1$, so everything is the same after that point. Since we have shown that the blobs of $w$ and $w s_i$ have the same number of dots, $w$ and $w s_i$ have the same shape and $\raj(w) = \raj(w s_i)$.

Now, suppose that $p\leq q$. Again, the lassoing process is the same up to blob $B_{q+1}$. In $w s_i$, the dot $(i+1, w(i))$ is southeast of $(i, w(i+1))$, so $(i, w(i+1))$ cannot be added to blob $B_q$. If $p=q$, then $(i+1, w(i))$ does get added to blob $B_q$; if $p<q$, then whatever dot was southeast of $(i, w(i))$ to prevent it from getting added to blob $B_q$ of $w$ also prevents $(i+1, w(i))$ from getting added to blob $B_q$ of $w$. So either way,  blob $B_q$ for $w s_i$ has one fewer element than for $w$. As we proceed through the lassoing process, each dot occurs in a blob of $w s_i$ whose number is less than or equal to the corresponding blob of $w$. This shows that $\alpha \succeq \beta$ and, since blob $B_q$ is smaller for $w s_i$ than for $w$, we have $\alpha \succ \beta$. By Corollary~\ref{cor:dominates_raj}, this implies that $\raj(w) > \raj(w s_i)$.
\end{proof}

We have a similar result for left covers, whose proof is analogous and thus omitted:
\begin{lemma}\label{lem:cover_blob_left}
Let $w >_L s_j w$. Let $(w^{-1}(j), j)$ and $(w^{-1}(j+1), j+1)$ be in blobs $B_p$ and $B_q$ of $w$, respectively. 
If $p>q$, then $(w^{-1}(j), j+1)$ and $(w^{-1}(j+1), j)$ are, respectively, in blobs $B_p$ and $B_q$ of $s_j w$, and all other dots stay in the same blobs. In this case, $w$ and $s_j w$ have the same shape, and $\raj(w) = \raj(s_j w)$.
 If $p \leq q$, then $w$ and $s_j w$ have different shapes. Calling their shapes $\alpha$ and $\beta$, respectively, we have $\alpha \succ \beta$ and $\raj(w) > \raj(s_j w)$. \qed 
\end{lemma}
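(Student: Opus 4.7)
The plan is to deduce Lemma~\ref{lem:cover_blob_left} from Lemma~\ref{lem:cover_blob} by applying the latter to $w^{-1}$. The key symmetry is that the graph of $w^{-1}$ is the transpose of the graph of $w$; since the lassoing procedure is invariant under this transposition, the dot at $(i, w(i))$ lies in blob $B_k$ of $w$ if and only if the dot at $(w(i), i)$ lies in blob $B_k$ of $w^{-1}$. Consequently, the shape of $w$ equals the shape of $w^{-1}$, and $\raj(w) = \raj(w^{-1})$, as already noted in the text.

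First, I would observe that $w >_L s_j w$ if and only if $w^{-1} >_R w^{-1} s_j$. This holds because $\inv(w) = \inv(w^{-1})$ and $(s_j w)^{-1} = w^{-1} s_j$, so the two covering relations record the same drop in Coxeter length. Under the transposition correspondence, the dots $(w^{-1}(j), j)$ and $(w^{-1}(j+1), j+1)$ in the graph of $w$ become the dots $(j, w^{-1}(j))$ and $(j+1, w^{-1}(j+1))$ in the graph of $w^{-1}$, lying in blobs $B_p$ and $B_q$, respectively.

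Next, I would apply Lemma~\ref{lem:cover_blob} to the permutation $w^{-1}$ with index $i = j$. In the case $p > q$, the lemma tells us that $(j+1, w^{-1}(j))$ and $(j, w^{-1}(j+1))$ lie, respectively, in blobs $B_p$ and $B_q$ of $w^{-1} s_j$, that all other dots stay in place, and that $w^{-1}$ and $w^{-1} s_j$ have the same shape. Transposing back via $(s_j w)^{-1} = w^{-1} s_j$, these two dots correspond to $(w^{-1}(j), j+1)$ and $(w^{-1}(j+1), j)$ in the graph of $s_j w$, lying in blobs $B_p$ and $B_q$ of $s_j w$. Because shape is invariant under inversion, $w$ and $s_j w$ also have the same shape, whence $\raj(w) = \raj(s_j w)$. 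In the case $p \leq q$, Lemma~\ref{lem:cover_blob} gives $\alpha(w^{-1}) \succ \alpha(w^{-1} s_j)$; since shapes are preserved under inversion, this yields $\alpha(w) \succ \alpha(s_j w)$, and Corollary~\ref{cor:dominates_raj} then gives $\raj(w) > \raj(s_j w)$.

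The only subtlety is bookkeeping: checking that the labels $p, q$ attached to the two dots in the hypothesis of Lemma~\ref{lem:cover_blob_left} match the labels attached to the corresponding dots in $w^{-1}$, and that the swap $(w^{-1}(j), j) \leftrightarrow (w^{-1}(j+1), j+1)$ for $w \rightsquigarrow s_j w$ transposes to the swap $(j, w^{-1}(j)) \leftrightarrow (j+1, w^{-1}(j+1))$ for $w^{-1} \rightsquigarrow w^{-1} s_j$ handled by Lemma~\ref{lem:cover_blob}. Both checks are immediate from the definitions of left/right multiplication and the transposition symmetry of blobs, so no substantive new argument is required.
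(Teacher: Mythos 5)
Your proof is correct and realizes exactly what the paper intends: the paper omits the proof of Lemma~\ref{lem:cover_blob_left} as ``analogous'' to that of Lemma~\ref{lem:cover_blob}, and your reduction via $w \mapsto w^{-1}$, using that the blob diagram of $w^{-1}$ is the transpose of that of $w$ (so shapes and Rajchgot index are inversion-invariant, as the paper establishes earlier), together with the equivalence $w >_L s_j w \iff w^{-1} >_R w^{-1}s_j$, is the clean formal version of that analogy. No gaps.
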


Lemmas~\ref{lem:cover_blob} and~\ref{lem:cover_blob_left} immediately give the following corollary.

\begin{cor} \label{cor:LR_raj_ineq}
Let $u$ and $v$ be permutations with $u \geq_{LR} v$. Then $\raj(u) \geq \raj(v)$. We have $\raj(u) = \raj(v)$ if and only if $u$ and $v$ have the same shape. \qed
\end{cor}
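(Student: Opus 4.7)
The plan is to reduce $u \geq_{LR} v$ to a chain of covers in one-sided weak order and then invoke Lemmas~\ref{lem:cover_blob} and~\ref{lem:cover_blob_left} at each step. Since $\leq_{LR}$ is by definition the transitive closure of $\leq_L \cup \leq_R$, I can fix a sequence $u = w_0, w_1, \ldots, w_k = v$ with each consecutive pair related by either $\geq_L$ or $\geq_R$. Refining each weak-order step to a product of covers, I may further assume every $w_{i-1} \to w_i$ is a cover in either left or right weak order.

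At each such cover, exactly one of the two lemmas applies. Writing $\alpha_i$ for the shape of $w_i$, the dichotomy stated in those lemmas is: either $\alpha_{i-1} = \alpha_i$ and $\raj(w_{i-1}) = \raj(w_i)$, or $\alpha_{i-1} \succ \alpha_i$ strictly and $\raj(w_{i-1}) > \raj(w_i)$. In either case $\raj(w_{i-1}) \geq \raj(w_i)$, and telescoping along the chain gives $\raj(u) \geq \raj(v)$.

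For the equality clause, one direction is immediate from Lemma~\ref{lem:set_partition_to_raj}: if $u$ and $v$ have the same shape, then $\raj$ depends only on shape, so $\raj(u) = \raj(v)$. For the converse, the chain of shapes satisfies $\alpha_0 \succeq \alpha_1 \succeq \cdots \succeq \alpha_k$, monotone in dominance order. Since dominance is a partial order, equal endpoints $\alpha_0 = \alpha_k$ (i.e., $u$ and $v$ have the same shape) force $\alpha_0 = \alpha_1 = \cdots = \alpha_k$, so every cover is shape-preserving and $\raj$ is constant along the chain. Contrapositively, if $u$ and $v$ have different shapes, then at least one cover in the chain strictly decreases the shape, and hence strictly decreases $\raj$.

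I do not anticipate any real obstacle: all the substantive work is already packaged in Lemmas~\ref{lem:cover_blob}, \ref{lem:cover_blob_left}, and~\ref{lem:set_partition_to_raj}. The only point meriting care is the monotonicity observation, which prevents a hypothetical ``detour'' in the shape chain and ensures that equality of endpoint shapes propagates throughout; without it, one might worry about the shape rising and falling along the chain, which the one-sided cover lemmas explicitly forbid.
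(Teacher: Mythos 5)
Your proof is correct and matches the paper's intended argument: the paper simply declares the corollary to follow ``immediately'' from Lemmas~\ref{lem:cover_blob} and~\ref{lem:cover_blob_left}, and what you have written out (decompose $u \geq_{LR} v$ into a chain of one-sided weak-order covers, apply the dichotomy at each cover, and use monotonicity of shapes under dominance to handle the equality case) is exactly the unpacking of that claim. The observation that dominance is a partial order, so the shape sequence cannot ``dip and recover,'' is the right thing to make the if-and-only-if clause airtight.
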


Finally, we focus on understanding how the Rajchgot code of $w$ is related to that of $w s_i$ or $s_i w$, in the case where these permutations have the same Rajchgot index.

\begin{lemma} \label{lem:cover_rajcode}
If $w >_L s_i w$ and $\raj(w) = \raj(s_i w)$, then $\rajc(w) = \rajc(s_i w)$. 
Suppose that $w >_R w s_i$ and $\raj(w) = \raj(w s_i)$. Let $\rajc(w) = (r_1, r_2, \ldots, r_n)$. Then $\rajc(w s_i) = (r_1, r_2, \ldots, r_{i+1}+1, r_i-1, \ldots, r_n)$. 
\end{lemma}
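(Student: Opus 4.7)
The plan is to reduce the statement to the blob-diagram description of the Rajchgot code provided by Corollary~\ref{cor:raj_code_from_blobs}: if $\epsilon_j(w)$ denotes the blob index of the dot in row $j$ of the blob diagram of $w$, then $r_j(w) = \epsilon_j(w) - j$. So it suffices to track how the row-indexed sequence $(\epsilon_j(\cdot))$ changes, which is exactly what Lemmas~\ref{lem:cover_blob} and~\ref{lem:cover_blob_left} record for us in the Rajchgot-preserving situation. The entire argument will be a routine translation of those two lemmas through the identity $r_j = \epsilon_j - j$; no new combinatorial input is needed.

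For the left case $w >_L s_i w$, multiplication by $s_i$ on the left swaps the values $i$ and $i+1$ in one-line notation, so in the blob diagram it moves the dot $(w^{-1}(i), i)$ horizontally to $(w^{-1}(i), i+1)$ and the dot $(w^{-1}(i+1), i+1)$ to $(w^{-1}(i+1), i)$, while fixing every other dot. Because $\raj(w) = \raj(s_i w)$, Lemma~\ref{lem:cover_blob_left} says we are in the case $p > q$ and that the blob indices of both moved dots are preserved. Since row labels are never touched, $\epsilon_j(s_i w) = \epsilon_j(w)$ for every $j$, and therefore $\rajc(s_i w) = \rajc(w)$.

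For the right case $w >_R w s_i$, the two affected dots are $(i, w(i))$ and $(i+1, w(i+1))$, which become $(i, w(i+1))$ and $(i+1, w(i))$ in the blob diagram of $w s_i$. Writing $p = \epsilon_i(w)$ and $q = \epsilon_{i+1}(w)$, equality of Rajchgot index together with Lemma~\ref{lem:cover_blob} forces $p > q$ and yields $\epsilon_i(w s_i) = q$ and $\epsilon_{i+1}(w s_i) = p$, with every other $\epsilon_j$ unchanged. Substituting into $r_j = \epsilon_j - j$ gives
\[
r_i(w s_i) = q - i = r_{i+1}(w) + 1, \qquad r_{i+1}(w s_i) = p - (i+1) = r_i(w) - 1,
\]
and $r_j(w s_i) = r_j(w)$ for $j \notin \{i, i+1\}$, which is precisely the claimed formula. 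The only mild obstacle is keeping the conventions straight: the left move is horizontal and therefore invisible to a row-indexed statistic, whereas the right move permutes the two rows, producing the shift by $\pm 1$ that comes from the row-based correction term $-j$ in $r_j = \epsilon_j - j$.
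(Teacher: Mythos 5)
Your proof is correct and takes essentially the same approach as the paper's: reduce to the identity $r_j = \epsilon_j - j$ from Corollary~\ref{cor:raj_code_from_blobs} and then track $\epsilon$ through the two kinds of covers using Lemmas~\ref{lem:cover_blob} and~\ref{lem:cover_blob_left}. The paper's own proof is just a more terse version of the same argument (it does not explicitly cite the two cover lemmas but uses their content), whereas you spell out the algebraic substitution at the end; the substance is identical.
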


\begin{proof}
We begin by analyzing the case that $w >_L w s_i$. In this case, when we compare the blob diagrams of $w$ and $s_i w$, dots which are in the same row are in the same blob. 
Thus, $\epsilon(w) = \epsilon(s_i w)$ and so $\rajc(w) = \rajc(s_i w)$.

We now consider the case that $w >_R w s_i$. In this case, the dots which are in rows other than $i$ and $i+1$ are in the same blob for $w$ and for $w s_i$, but the dots in those two rows switch blobs. So, if $\epsilon(w) = (\epsilon_1, \epsilon_2, \ldots, \epsilon_n)$, then $\epsilon(w s_i) = (\epsilon_1, \epsilon_2, \ldots, \epsilon_{i+1}, \epsilon_i, \ldots, \epsilon_n)$. We now apply the formula $\rajc(w) = \epsilon(w) - (1,2,\ldots, n)$ from Corollary~\ref{cor:raj_code_from_blobs}.
\end{proof}

\subsection{The fireworks map} We now describe how we can use the blob diagram to see whether a permutation is fireworks.
\begin{lemma}\label{lem:consecutiveblobs}
	The permutation $w\in S_n$ is fireworks if and only if the dots in each blob occupy consecutive rows of the graph of $w$.  Likewise, $w$ is inverse fireworks if and only if the dots in each blob occupy consecutive columns.
\end{lemma}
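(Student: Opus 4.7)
The plan is to prove the row version by induction on the number $m$ of decreasing runs of $w$, and then deduce the column version by applying the row version to $w^{-1}$. The reduction works because the graph of $w^{-1}$ is the transpose of the graph of $w$, and the southeast partial order is preserved under transposition, so blobs of $w^{-1}$ are exactly the transposes of blobs of $w$; hence columns of a blob of $w$ are consecutive iff the rows of the corresponding blob of $w^{-1}$ are consecutive, and $w$ is inverse fireworks iff $w^{-1}$ is fireworks.

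Let $w$ have decreasing runs $R_1, \ldots, R_m$ starting at positions $a_1 < \cdots < a_m$. The central step is to identify $B_n(w)$. Suppose first that $w$ is fireworks, so $w_{a_1} < \cdots < w_{a_m}$. Every dot in $R_m$ is maximally southeast since all subsequent positions lie in $R_m$ with smaller values. Conversely, any dot $(i, w_i)$ with $i \in R_k$ and $k < m$ is strictly southeast-dominated by $(a_m, w_{a_m})$, because $a_m > i$ and $w_{a_m} > w_{a_k} \geq w_i$. Hence $B_n = R_m$, which occupies the consecutive row set $\{a_m, \ldots, n\}$. Removing $R_m$ leaves the restriction $w' = w|_{[1, a_m-1]}$ with runs $R_1, \ldots, R_{m-1}$, still fireworks; its blobs are exactly the remaining blobs of $w$, and by induction they all have consecutive rows.

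For the converse, assume $w$ is not fireworks and induct on $m$, splitting on whether $B_n(w) = R_m$. If not, there is a right-to-left maximum $p$ in some run $R_{k'}$ with $k' < m$. The position $q := a_{k'+1} - 1$ satisfies $w_q < w_{q+1}$, so $q$ is not a right-to-left maximum, hence $q \notin B_n$. Since $p \in B_n$ but $q \notin B_n$ we have $p < q < a_m$, and $B_n$ contains $p$ together with every position of $R_m$ but omits the intermediate row $q$, giving non-consecutive rows. If instead $B_n = R_m$, then for each $k < m$, the failure of $a_k$ to be a right-to-left maximum produces $j > a_k$ with $w_j > w_{a_k}$; such $j$ must lie in some later run $R_s$, so $w_{a_s} \geq w_j > w_{a_k}$. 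Iterating this downward from $k = m-1$ forces $w_{a_m} > w_{a_k}$ for every $k < m$. In particular $w_{a_{m-1}} < w_{a_m}$, so the non-fireworks violation $w_{a_j} > w_{a_{j+1}}$ must occur at some $j < m-1$. Thus $w' = w|_{[1, a_m-1]}$ has runs $R_1, \ldots, R_{m-1}$ and is still not fireworks; by induction $w'$ has a blob with non-consecutive rows, and the same set of positions is a blob of $w$.

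The main obstacle is Case 2 of the converse: we must exploit the hypothesis $B_n = R_m$ to pin down that $w_{a_m}$ dominates every earlier run starter, thereby locating the non-fireworks violation inside the first $m-1$ runs so that the induction goes through. The rest is direct bookkeeping with the lassoing process.
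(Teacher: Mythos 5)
Your proof is correct. The forward direction is essentially the paper's argument: for a fireworks permutation the bottom blob $B_n$ is exactly the last descending run $R_m$ (since $w_{a_m}$ strictly dominates every dot of the earlier runs), and one peels off runs one at a time; the paper phrases this as a top-down identification of each $B_k$ with the run $D^k$, while you phrase it as an induction on the number of runs, which is the same computation. Your converse, however, is organized differently: the paper argues directly that consecutive-rowed blobs must be descending runs whose maxima increase (using the fact that each dot of $B_k$ lies northwest of some dot of $B_{k+1}$), whereas you prove the contrapositive by a case split on whether $B_n=R_m$ --- exhibiting an explicit non-consecutive blob via a right-to-left maximum in an earlier run in one case, and pushing the fireworks violation into the truncated permutation in the other. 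Both converses rest on the same elementary facts about the lassoing process (in particular that $B_n$ is the set of right-to-left maxima, which always contains $R_m$), so neither buys extra generality; yours is a bit longer but makes the failure of consecutivity completely explicit, and its Case B correctly bottoms out because $B_n=R_m$ with $m=2$ already forces the fireworks condition. The reduction of the column statement to the row statement by transposition matches the paper.
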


\begin{proof}
Suppose $w$ is fireworks. Let the maximal descending runs of $w$ be $D^t$, \dots, $D^{n-1}$, $D^n$ and write each such run $D^k$ as $D^k_1$, $D^k_2$, \dots, $D^k_j$. 

 We claim that dots of each $B_k$ are those coming from the run $D^k$. To see this, first note that dots from the last descending run $D^n$ are a subset of $B_n$. By the fireworks condition, each dot from $D^{n-1}$ is northwest of the dot for $D^n_1$. Therefore none of these dots can be in $B_n$; hence, they are all in $B_{n-1}$. Similarly each dot from $D^{n-2}$ is northwest of the dot for $D^{n-1}_1$, so they cannot be in $B_{n-1}$ and hence must be in $B_{n-2}$, etc.
 
 Since the descending runs of $w$ are by definition in consecutive positions, it follows that the dots of each $B_k$ occupy consecutive rows of the graph.

Conversely, suppose that the dots of each $B_k$ are in consecutive positions. Clearly, $w$ restricted to the rows of any $B_k$ is descending.
We will show that the blobs are exactly the descending runs.
Let blob $B_k$ be in rows $p$, $p+1$, \dots, $q-1$ and let blob $B_{k+1}$ be in rows $q$, $q+1$, \dots, $r-1$. We must show that $w_{q-1} < w_q$. 
Indeed, if not, then we have $w_p > w_{p+1} > \cdots > w_{q-1} > w_q > w_{q+1} > \cdots > w_{r-1}$. But then all the dots of $B_k$ are to the northeast of all the dots of $B_{k+1}$, contradicting that each dot of $B_k$ must be northwest of at least one dot in $B_{k+1}$. 
So, we now know that the blobs are precisely the descending runs of $w$, and we know that the rightmost elements of the blobs come in increasing order, so $w$ is fireworks.

 The statement about inverse fireworks permutations now follows by transposition.
\end{proof}

We now describe a map, the \newword{fireworks map}, which turns an arbitrary permutation $w$ into a fireworks permutation $\Phi(w)$. 
The fireworks permutation $\Phi(w)$ corresponds to the set partition $\pi(w)$ using the bijection of Proposition~\ref{prop:bell}.  
In other words, we take the dots in the graph of $w$ and shove the dots of each blob into consecutive rows. 

For example, let $w=462357918$. We computed before that $\pi(w) = \{ 2,\ 34,\ 56,\ 17,\ 89 \}$. The corresponding fireworks permutation is $243657198$. 
See Figure~\ref{fig:FireworksMap} for a graphical depiction of this process.

\begin{figure}[ht]
\[
\begin{tikzpicture}[x=2em,y=2em]
\draw[step=1,gray, thin] (0,0) grid (9,9);
\draw[color=black, thick](0,0)rectangle(9,9);
\node at (3.5,7.5) {$\bullet$};
\node at (5.5,5.5) {$\bullet$};
\node at (1.5,8.5) {$\bullet$};
\node at (2.5,6.5) {$\bullet$};
\node at (4.5,4.5) {$\bullet$};
\node at (6.5,3.5) {$\bullet$};
\node at (8.5,1.5) {$\bullet$};
\node at (0.5,2.5) {$\bullet$};
\node at (7.5,0.5) {$\bullet$};
\node at (-0.5,8.5) {$1$};
\node at (-0.5,7.5) {$2$};
\node at (-0.5,6.5) {$3$};
\node at (-0.5,5.5) {$4$};
\node at (-0.5,4.5) {$5$};
\node at (-0.5,3.5) {$6$};
\node at (-0.5,2.5) {$7$};
\node at (-0.5,1.5) {$8$};
\node at (-0.5,0.5) {$9$};
\node at (0.5,9.5) {$1$};
\node at (1.5,9.5) {$2$};
\node at (2.5,9.5) {$3$};
\node at (3.5,9.5) {$4$};
\node at (4.5,9.5) {$5$};
\node at (5.5,9.5) {$6$};
\node at (6.5,9.5) {$7$};
\node at (7.5,9.5) {$8$};
\node at (8.5,9.5) {$9$};
\draw[blue,thick, rotate around={-60:(1.5,8.5)}] (1.5,8.5) ellipse (.5 and 1);
\draw[blue,thick, rotate around={-45:(3,7)}] (3,7) ellipse (.5 and 1.7);
\draw[blue,thick, rotate around={-45:(5,5)}] (5,5) ellipse (.5 and 1.7);
\draw[blue,thick, rotate around={-80:(3.5,3)}] (3.5,3) ellipse (.5 and 4);
\draw[blue,thick, rotate around={-45:(8,1)}] (8,1) ellipse (.5 and 1.7);
\end{tikzpicture}
\]	
\caption{The fireworks permutation $\Phi(w)$ corresponding to the permutation $w$ from Figure~\ref{fig:Davids_picture}. Note that the the dots in blob $B_k$ are in the same columns in both blob diagrams.
}
\label{fig:FireworksMap}
\end{figure}
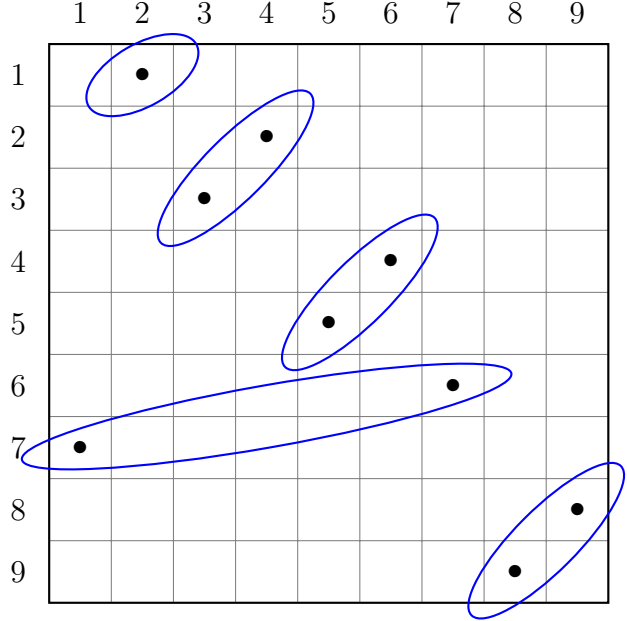

We define $\Phi_{\rm inv}(w) = \Phi(w^{-1})^{-1}$. Graphically, we take the blobs of $w$ and shove them into consecutive columns.

\begin{lemma}\label{lem:fireworks_down}
	For any permutation $w$, we have $\Phi(w) \leq_R w$ and $\Phi_{\rm inv}(w)\leq_L w$. In other words, we have length additive factorizations $w = \Phi(w) v = u \Phi_{\rm inv}(w)$ for some $u$ and $v$.
\end{lemma}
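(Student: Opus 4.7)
The plan is to prove $\Phi(w) \leq_R w$ by induction on the quantity $\inv(w) - \inv(\Phi(w))$; the statement $\Phi_{\rm inv}(w) \leq_L w$ will then follow immediately by applying this result to $w^{-1}$, since $\Phi_{\rm inv}(w) = \Phi(w^{-1})^{-1}$ and since $u \leq_R v$ is equivalent to $u^{-1} \leq_L v^{-1}$ (because $\inv$ is invariant under inversion and the two weak orders are swapped by inversion).

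The key combinatorial step, which I expect to be the main obstacle, is the following claim: if $w$ is not fireworks, then there exists an index $i$ such that $w_i > w_{i+1}$ and, writing $(i, w_i) \in B_p$ and $(i+1, w_{i+1}) \in B_q$, we have $p > q$. To prove this, let $\epsilon(w) = (\epsilon_1,\dots,\epsilon_n)$ record the blob label of each row. Because $w$ is not fireworks, Lemma~\ref{lem:consecutiveblobs} implies $\epsilon(w)$ is not weakly increasing, so we may choose $i$ with $\epsilon_i > \epsilon_{i+1}$, giving $p > q$. For the descent condition, observe directly from the recursive lassoing definition of the blobs that any dot strictly southeast of a dot in $B_p$ must lie in some $B_{p'}$ with $p' > p$ (the dots of $B_p$ are southeast-maximal after $B_{p+1},\dots,B_n$ are removed). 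Hence if $w_i < w_{i+1}$, then $(i+1, w_{i+1})$ would be southeast of $(i, w_i) \in B_{p}$, forcing $q > p$, a contradiction. So $w_i > w_{i+1}$.

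With this in hand the induction is routine. For the base case, if $w$ is fireworks then by Proposition~\ref{prop:bell} $w$ is the unique fireworks permutation with set partition $\pi(w)$, and $\Phi(w)$ is by construction that same fireworks permutation, so $w = \Phi(w)$ and there is nothing to show. For the inductive step, suppose $w$ is not fireworks and pick $i$ as above. Then $w s_i <_R w$ with $\inv(w s_i) = \inv(w) - 1$, and Lemma~\ref{lem:cover_blob} (in the case $p > q$) guarantees that the blobs of $w s_i$ contain the same dots as those of $w$ modulo swapping rows $i$ and $i+1$. In particular, the column labels of the dots in each blob are unchanged, so $\pi(w s_i) = \pi(w)$ and therefore $\Phi(w s_i) = \Phi(w)$. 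The inductive hypothesis applied to $w s_i$ then gives $\Phi(w) = \Phi(w s_i) \leq_R w s_i <_R w$, completing the proof.
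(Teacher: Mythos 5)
Your proof is correct, but it follows a genuinely different route from the paper's. The paper's proof is a direct two-line argument: it shows that if $i < j$ and $i$ appears to the left of $j$ in $w$ (a non-inversion), then the dot for $i$ is strictly northwest of the dot for $j$ and hence in a smaller-numbered blob, so $i$ also appears to the left of $j$ in $\Phi(w)$; this means the inversion set of $\Phi(w)$ is contained in that of $w$, and then the standard inversion-set characterization of right weak order (\cite[Lemma~4.1]{Hammett.Pittel}) gives $\Phi(w) \leq_R w$ immediately. You instead build an explicit saturated chain from $\Phi(w)$ up to $w$ in $\leq_R$: if $w$ is not fireworks, you locate a descent $i$ at which the row-$i$ dot lies in a strictly higher-numbered blob than the row-$(i{+}1)$ dot, invoke Lemma~\ref{lem:cover_blob} in the case $p>q$ to conclude $\pi(ws_i)=\pi(w)$ and hence $\Phi(ws_i)=\Phi(w)$, and iterate. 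Both arguments are sound; the paper's is more compact and appeals to a known characterization, while yours is more constructive and produces the actual chain of covers (and, as a by-product, re-derives the uniqueness of the fireworks permutation with a given set partition as the terminus of the chain). One small remark: your inference that ``not fireworks $\Rightarrow$ $\epsilon(w)$ not weakly increasing'' quietly uses that smaller-numbered blobs must occupy higher rows, which is an easy consequence of the lassoing construction (each dot in $B_k$ has a dot of $B_{k+1}$ strictly to its southeast), but is a hair more than what Lemma~\ref{lem:consecutiveblobs} literally states; it would be worth a clause making that step explicit.
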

\begin{proof}
	Suppose $i<j$ and $i$ appears left of $j$ in $w$. Then $i$ goes in a smaller numbered block of $\pi(w)$, so $i$ is left of $j$ in $\Phi(w)$. Thus $\Phi(w) \leq_R w$ by \cite[Lemma~4.1]{Hammett.Pittel} (see also, \cite{Berge}).

Now notice $\Phi(w^{-1})\leq_R w^{-1}$. Thus $\Phi_{\rm inv}(w)=\Phi(w^{-1})^{-1}\leq_L w$.
\end{proof}

\begin{remark}
	Although $\Phi(w) \leq_R w$, this does not mean that $\Phi$ is order-preserving! For example, $\Phi(4312) = 1432$ and $\Phi(3412) = 3142$. Now $4312 = 3412 \cdot s_1$, so $3412 <_R 4312$, but $3142 \not \leq_R 1432$ since $1$ and $3$ are noninverted in $1432$, but inverted in $3142$.
\end{remark}

\begin{lemma}
\label{lemma:stayfireworks}
The blobs $B_k$ of $\Phi(w)$ and of $w$ consist of dots in the same columns, the permutation $w$ is fireworks if and only if $\Phi(w) = w$, and the permutations $w$ and $\Phi(w)$ have the same shape. 
The corresponding statements (replacing ``columns'' with ``rows'') also hold for $\Phi_{\rm inv}$ and being inverse fireworks. 
\end{lemma}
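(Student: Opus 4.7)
The plan is to unpack the definition of $\Phi$ and invoke the earlier lemmas. By construction, $\Phi(w)$ is the fireworks permutation obtained from $\pi(w)$ via the bijection of Proposition~\ref{prop:bell}, so its one-line notation is the concatenation of the blocks $\pi_t(w), \pi_{t+1}(w), \ldots, \pi_n(w)$ each written in decreasing order. In particular, the $k$-th descending run of $\Phi(w)$ (counted from the left) has value, and hence column, set equal to $\pi_k(w)$. Since $\Phi(w)$ is fireworks, Lemma~\ref{lem:consecutiveblobs} identifies its blobs with its descending runs, and the proof of that lemma matches the leftmost descending run with blob $B_t$, the next with $B_{t+1}$, and so on through $B_n$. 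This gives $\pi_k(\Phi(w)) = \pi_k(w)$, proving the column-set claim.

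The ``if and only if'' is then essentially free. Since $\Phi(w)$ is always fireworks by construction, $\Phi(w) = w$ immediately forces $w$ to be fireworks. Conversely, if $w$ is fireworks, then Lemma~\ref{lem:consecutiveblobs} says $\pi(w)$ records exactly the descending runs of $w$, and the bijection of Proposition~\ref{prop:bell} reconstructs $w$ from this data, so $\Phi(w) = w$. The shape equality is then immediate from the first claim, since $\alpha_k(w) = |\pi_k(w)| = |\pi_k(\Phi(w))| = \alpha_k(\Phi(w))$.

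For the $\Phi_{\rm inv}$ version, I would apply the above to $w^{-1}$ and transpose. Under the identification $(i,w_i) \leftrightarrow (w_i,i)$, the blob diagram of $w^{-1}$ is the reflection of that of $w$ across the main diagonal, so column sets of blobs of $w^{-1}$ are exactly the row sets of the corresponding blobs of $w$. Since $\Phi_{\rm inv}(w) \coloneqq \Phi(w^{-1})^{-1}$, the column-set identity for $w^{-1}$ and $\Phi(w^{-1})$ transposes to the desired row-set identity for $w$ and $\Phi_{\rm inv}(w)$; the fireworks characterization and shape equality transfer in the same way, using that inversion preserves shape. As this argument is really just unwinding definitions, there is no significant obstacle; the only bookkeeping to be careful with is confirming that the geometric blob ordering $B_t < B_{t+1} < \cdots < B_n$ matches the left-to-right order of descending runs in $\Phi(w)$, which is exactly what the proof of Lemma~\ref{lem:consecutiveblobs} provides.
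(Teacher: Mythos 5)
Your proposal is correct and follows essentially the same route as the paper: it leans on Lemma~\ref{lem:consecutiveblobs} to identify blobs with descending runs and handles $\Phi_{\rm inv}$ by transposition, just as the paper does. The only difference is that you unpack the paper's terse ``the first claim is true by construction'' step more explicitly, via the bijection of Proposition~\ref{prop:bell} and the indexing worked out in the proof of Lemma~\ref{lem:consecutiveblobs}, which is a reasonable elaboration of the same idea.
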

\begin{proof}
The first claim is true by construction.
The second claim is straightforward from Lemma~\ref{lem:consecutiveblobs}.
As we checked in the proof of Lemma~\ref{lem:consecutiveblobs}, the blobs of a fireworks permutation are its descending runs. 
The cardinalities of these blobs are the shape, proving the last claim for $\Phi$. The claims for $\Phi_{\rm inv}$ are then immediate by transposition.
\end{proof}

\begin{corollary}\label{cor:fireworksraj}
For any permutation $w$, we have $\raj(w) = \raj(w^{-1})= \raj(\Phi(w)) = \raj(\Phi_{\rm inv}(w))$.
\end{corollary}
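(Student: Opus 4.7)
The plan is to assemble this corollary directly from the infrastructure already in place, with no new combinatorial work required. The two key ingredients are the shape-invariance of $\raj$ (Lemma~\ref{lem:set_partition_to_raj}, whose immediate corollary states $\raj(w) = \raj(w^{-1})$) and the shape-preservation of the fireworks maps (Lemma~\ref{lemma:stayfireworks}).

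First I would dispatch $\raj(w) = \raj(w^{-1})$ by citing the corollary following Lemma~\ref{lem:set_partition_to_raj}; this is already explicitly recorded in the paper. Next, to handle $\raj(w) = \raj(\Phi(w))$, I would invoke Lemma~\ref{lemma:stayfireworks}, which asserts that $w$ and $\Phi(w)$ share the same shape, and then apply Lemma~\ref{lem:set_partition_to_raj} (or equivalently Corollary~\ref{cor:dominates_raj} with $\alpha = \beta$) to conclude that shape-equal permutations have equal Rajchgot index.

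For the final equality $\raj(w) = \raj(\Phi_{\rm inv}(w))$, I would use the definition $\Phi_{\rm inv}(w) = \Phi(w^{-1})^{-1}$ together with the first two equalities:
\[
\raj(\Phi_{\rm inv}(w)) = \raj\bigl(\Phi(w^{-1})^{-1}\bigr) = \raj\bigl(\Phi(w^{-1})\bigr) = \raj(w^{-1}) = \raj(w),
\]
where the first equality is the definition of $\Phi_{\rm inv}$, the second is the inversion-invariance of $\raj$, the third is shape-preservation of $\Phi$ applied to $w^{-1}$, and the fourth is again the inversion-invariance of $\raj$.

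There is no substantive obstacle here; everything reduces to stringing together three facts (inversion-invariance, shape-preservation under $\Phi$, and the reduction of $\Phi_{\rm inv}$ to $\Phi$ via inversion). The only thing to be careful about is citing the correct previously-established results rather than giving a circular argument, but since every lemma used has already been proved in the preceding subsections, the corollary is essentially a one-line bookkeeping consequence.
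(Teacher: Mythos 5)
Your proposal is correct and matches the paper's argument: both reduce everything to the facts that $w$, $w^{-1}$, $\Phi(w)$, and $\Phi_{\rm inv}(w)$ all share the same shape (Lemma~\ref{lemma:stayfireworks}) and that $\raj$ depends only on the shape (Lemma~\ref{lem:set_partition_to_raj}). Your routing of the $\Phi_{\rm inv}$ case through the definition $\Phi_{\rm inv}(w)=\Phi(w^{-1})^{-1}$ is only a cosmetic variation on the paper's direct citation of the same lemma.
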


\begin{proof}
Since $w$, $w^{-1}$, $\Phi(w)$ and $\Phi_{\rm inv}(w)$ all have the same shape by Lemma~\ref{lemma:stayfireworks}, this corollary follows immediately from Lemma~\ref{lem:set_partition_to_raj}.
\end{proof}

In the case of the inverse fireworks map, we can state a stronger result.

\begin{cor}\label{cor::leftfireworks_perserves_code}
	For any permutation $w$, we have $\rajc(w) = \rajc(\Phi_{\rm inv}(w))$.
\end{cor}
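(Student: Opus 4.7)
The plan is to exploit the fact that $\Phi_{\rm inv}(w)$ lies below $w$ in left weak order while sharing the same Rajchgot index, and then to use Lemma~\ref{lem:cover_rajcode} to propagate the equality of Rajchgot codes along a saturated chain.

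More precisely, by Lemma~\ref{lem:fireworks_down}, we have $\Phi_{\rm inv}(w) \leq_L w$, so we can choose a saturated chain in left weak order
\[
w = w_0 \gtrdot_L w_1 \gtrdot_L \cdots \gtrdot_L w_k = \Phi_{\rm inv}(w),
\]
where each step is of the form $w_{j-1} \gtrdot_L s_{i_j} w_{j-1} = w_j$ for some simple reflection $s_{i_j}$.

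Next, by Corollary~\ref{cor:fireworksraj}, the endpoints satisfy $\raj(w_0) = \raj(w_k)$. On the other hand, Corollary~\ref{cor:LR_raj_ineq} tells us that the Rajchgot index is weakly decreasing as we move down in $\leq_{LR}$ (and in particular in $\leq_L$), so $\raj(w_0) \geq \raj(w_1) \geq \cdots \geq \raj(w_k)$. Since the extremes agree, we have $\raj(w_{j-1}) = \raj(w_j)$ at every step of the chain.

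With equal Rajchgot indices at each cover, the hypotheses of Lemma~\ref{lem:cover_rajcode} (the left-cover case) are satisfied, yielding $\rajc(w_{j-1}) = \rajc(w_j)$ for every $j$. Concatenating these equalities along the chain gives $\rajc(w) = \rajc(\Phi_{\rm inv}(w))$, as desired. The argument is essentially formal; no step looks like a real obstacle, since all the ingredients have already been assembled. The main subtlety to double-check is merely that Lemma~\ref{lem:cover_rajcode} is stated for left covers preserving $\raj$ (which it is), so that we are entitled to apply it at every step.
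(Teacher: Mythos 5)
Your proof is correct, but it takes a noticeably longer route than the paper's. The paper argues directly: by Lemma~\ref{lemma:stayfireworks}, the blobs $B_k$ of $w$ and $\Phi_{\rm inv}(w)$ occupy the same rows, so the row-wise blob-number word $\epsilon$ is literally unchanged, and $\rajc = \epsilon - (1,2,\dots,n)$ finishes the argument in one line. Your approach instead descends a saturated chain in left weak order from $w$ to $\Phi_{\rm inv}(w)$, combines Corollary~\ref{cor:fireworksraj} (equality of $\raj$ at the endpoints) with Corollary~\ref{cor:LR_raj_ineq} (monotonicity of $\raj$) to force $\raj$ to be constant at every cover, and then invokes the left-cover case of Lemma~\ref{lem:cover_rajcode} step by step. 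Every ingredient you cite is indeed available and applicable as you use it, so the chain argument is sound. What your version buys is a more structural, order-theoretic derivation that would generalize to any other statistic provably constant along such a chain; what it loses is directness, since Lemma~\ref{lem:cover_rajcode}'s proof is itself a blob-diagram computation, so you are in effect repackaging the same underlying observation one cover at a time rather than applying it once to $\Phi_{\rm inv}$ globally.
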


\begin{proof}
The blobs $B_k$ of the blob diagrams for $w$ to $\Phi_{\rm inv}(w)$ consist of dots from the same rows. Therefore, $\epsilon(w) = \epsilon(\Phi_{\rm inv}(w))$. We obtain the Rajchgot code by subtracting $(1,2,3,\ldots,n)$ from the word $\epsilon$.
\end{proof}

For a composition $\alpha = (\alpha_1, \dots, \alpha_r)$ of $n$, let $S_{\alpha}$ be the Young subgroup $S_{\alpha_1} \times  S_{\alpha_2} \times \cdots \times S_{\alpha_r}$ (in the standard way) and let $e_\alpha$ be its longest element. 
A permutation $w$ is called \newword{layered} if $w = e_\alpha$ for some $\alpha$.
(These are also called ``$231$- and $312$-avoiding" permutations.)

\begin{lemma} \label{lem:phi_phiL_layered}
Let $w$ be a permutation of shape $\alpha$. Then $\Phi(\Phi_{\rm inv}(w)) = \Phi_{\rm inv}(\Phi(w)) = e_{\alpha}$.
\end{lemma}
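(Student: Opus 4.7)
The plan is to show that $u := \Phi_{\rm inv}(\Phi(w))$ has blobs occupying both consecutive rows and consecutive columns, and then to argue that the only permutation of shape $\alpha$ with this property is the layered permutation $e_\alpha$; the identity $\Phi(\Phi_{\rm inv}(w)) = e_\alpha$ will then follow by the symmetric argument (swapping the roles of rows and columns throughout).

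First I would note that, by Lemma~\ref{lemma:stayfireworks}, $\Phi(w)$ is fireworks with shape $\alpha$, and so its blobs occupy consecutive rows by Lemma~\ref{lem:consecutiveblobs}. Passing to $u$, the row-analogue of Lemma~\ref{lemma:stayfireworks} tells us that the blobs of $u$ consist of dots in the same rows as those of $\Phi(w)$, so they still occupy consecutive rows. Meanwhile, $u$ is inverse fireworks by definition of $\Phi_{\rm inv}$, so its blobs also occupy consecutive columns by Lemma~\ref{lem:consecutiveblobs}, and Lemma~\ref{lemma:stayfireworks} ensures that $u$ has shape $\alpha$.

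Now set $r_k = \alpha_t + \cdots + \alpha_k$ for $t \leq k \leq n$, with $r_{t-1} = 0$. The proof of Lemma~\ref{lem:consecutiveblobs} identifies blob $B_k$ of $u$ with the $k$-th descending run in the one-line notation of $u$, placing it in rows $[r_{k-1}+1, r_k]$. Since transposition preserves the southeast partial order on dots, the blobs of $u^{-1}$ are obtained from those of $u$ by swapping coordinates; applying the analogous identification to $u^{-1}$ thus puts blob $B_k$ of $u$ in columns $[r_{k-1}+1, r_k]$. The $\alpha_k$ dots of $B_k$ now lie in an $\alpha_k \times \alpha_k$ square and form an antichain in the southeast order (by the very construction of blobs), forcing them onto the antidiagonal of the square. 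This antidiagonal is precisely the $k$-th decreasing block of $e_\alpha$, so $u = e_\alpha$.

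The main subtle point I foresee is verifying that the lassoing-based indices of the blobs of $u$ and of $u^{-1}$ correspond to each other under transposition, so that the row range and column range attached to the label $B_k$ really share the same $k$ rather than being mismatched. This reduces to the observation that transposing a set of dots commutes with the iterated operation of extracting the antichain of maximal southeast elements, but it is worth stating explicitly so that the row/column ranges above can be combined without ambiguity.
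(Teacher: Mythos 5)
Your proof is correct and is essentially a fleshed-out version of the paper's argument; the paper simply asserts the result is ``clear from the blob diagram description and Lemma~\ref{lemma:stayfireworks},'' and you supply exactly the details that make it so. The ``subtle point'' you flag at the end is genuine but already implicitly settled in the paper's discussion of the set partition of a permutation, where it is observed that the dots of blob $B_k(w)$ have columns $\pi_k(w)$ and rows $\pi_k(w^{-1})$ — i.e., transposition matches blob $B_k$ of $w$ to blob $B_k$ of $w^{-1}$ — because, as you note, transposition preserves the southeast order and hence the entire lassoing process.
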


\begin{proof}
This is clear from the blob diagram description and Lemma~\ref{lemma:stayfireworks}.
\end{proof}

We are now ready to prove:
\begin{theorem} \label{thm:raj=mm}
For all $w \in S_n$, we have 
 	\[ 	\raj(w) = \max \{\maj(v) : v \leq_R w \} = \max \{\maj(u^{-1}) : u \leq_L w \} \]
\end{theorem}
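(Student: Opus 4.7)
The plan is to establish the two equalities by matching upper bounds with explicit witnesses. For any $v \leq_R w$ (resp.\ $u \leq_L w$), we have $v \leq_{LR} w$ (resp.\ $u \leq_{LR} w$), so by Corollary~\ref{cor:LR_raj_ineq} together with $\raj(u) = \raj(u^{-1})$ from Corollary~\ref{cor:fireworksraj},
\[
\raj(w) \;\geq\; \raj(v) \;\geq\; \maj(v) \quad \text{and} \quad \raj(w) \;\geq\; \raj(u) \;=\; \raj(u^{-1}) \;\geq\; \maj(u^{-1}),
\]
using Lemma~\ref{lem:rajmaj_inequality} for the final inequality in each chain. Taking the maximum over all such $v$ and $u$ yields $\raj(w) \geq \max\{\maj(v) : v \leq_R w\}$ and $\raj(w) \geq \max\{\maj(u^{-1}) : u \leq_L w\}$.

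To see that both bounds are attained, I would use the fireworks maps. For the right weak order side, take $v = \Phi(w)$: by Lemma~\ref{lem:fireworks_down} we have $\Phi(w) \leq_R w$, while Lemma~\ref{lemma:stayfireworks} guarantees that $\Phi(w)$ is a fireworks permutation with the same shape as $w$. Corollary~\ref{cor:fireworksraj} then gives $\raj(\Phi(w)) = \raj(w)$, and Proposition~\ref{prop:raj=maj} gives $\maj(\Phi(w)) = \raj(\Phi(w))$. Chaining these equalities yields $\maj(\Phi(w)) = \raj(w)$, so the maximum in the first display is achieved at $v = \Phi(w)$.

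For the left weak order side, take $u = \Phi_{\rm inv}(w)$. By Lemma~\ref{lem:fireworks_down} we have $\Phi_{\rm inv}(w) \leq_L w$. By definition $\Phi_{\rm inv}(w)^{-1} = \Phi(w^{-1})$, which is fireworks, so Proposition~\ref{prop:raj=maj} and Corollary~\ref{cor:fireworksraj} combine to give
\[
\maj\bigl(\Phi_{\rm inv}(w)^{-1}\bigr) \;=\; \maj\bigl(\Phi(w^{-1})\bigr) \;=\; \raj\bigl(\Phi(w^{-1})\bigr) \;=\; \raj(w^{-1}) \;=\; \raj(w).
\]
Hence the second maximum is also achieved, completing the proof. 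I do not anticipate any genuine obstacle, since every ingredient (monotonicity of $\raj$ under two-sided weak order, invariance of $\raj$ under $\Phi$ and $\Phi_{\rm inv}$, and the identification $\raj = \maj$ on fireworks permutations) is already available from the preceding sections; the only substantive step is to recognize that the two natural fireworks retracts $\Phi(w)$ and $\Phi_{\rm inv}(w)$ are precisely the right witnesses.
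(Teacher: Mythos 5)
Your proof is correct and follows the same strategy as the paper's: bound $\raj$ from above via Corollary~\ref{cor:LR_raj_ineq} and Lemma~\ref{lem:rajmaj_inequality}, then exhibit $\Phi(w)$ and $\Phi_{\rm inv}(w)$ as witnesses achieving the maximum, using Lemma~\ref{lem:fireworks_down}, Corollary~\ref{cor:fireworksraj}, and Proposition~\ref{prop:raj=maj}. The only difference is that the paper dismisses the left weak order case as ``similar'' while you spell out the chain of equalities for $\Phi_{\rm inv}(w)$.
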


\begin{proof}
Fix $w \in S_n$.
From Corollary~\ref{cor:LR_raj_ineq}, for any $v$ with $v \leq_R w$, we have $\raj(w) \geq \raj(v)$. And, from Lemma~\ref{lem:rajmaj_inequality}, we have $\raj(v) \geq \maj(v)$. So $\raj(w) \geq  \max \{\maj(v) : v \leq_R w \}$. We now must show that there is some $v$ with $w \geq_R v$ and $\raj(w) = \maj(v)$.

Indeed, we have $w \geq_R \Phi(w)$ (Lemma~\ref{lem:fireworks_down}), we have $\raj(w) = \raj(\Phi(w))$ (Corollary~\ref{cor:fireworksraj}) and we have $\raj(\Phi(w)) = \maj(\Phi(w))$ (Proposition~\ref{prop:raj=maj}), so $v=\Phi(w)$ is the desired permutation. 
The proof that $\raj(w) = \max \{\maj(u^{-1}) : u \leq_L w \}$ is similar.
\end{proof}

\subsection{Weak order and factorizations}

We now discuss interactions between the fireworks maps and the left and right weak orders.

\begin{lemma}\label{lem:fireworks_factorizations}
	Suppose $w \in S_n$ has shape $\alpha$. Then $w$ has a unique length-additive factorization
	\[
	w = ue_\alpha v,
	\]
	for some permutations $u$, $v \in S_n$. Moreover, we have $\Phi(w) = ue_\alpha$ and $\Phi_{\rm inv}(w) = e_\alpha v$. 
\end{lemma}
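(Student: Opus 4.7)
The plan is to produce the factorization from the results already established about the fireworks maps, and then to prove uniqueness by a blob-diagram argument that simultaneously identifies the left and right factors with $\Phi(w)$ and $\Phi_{\rm inv}(w)$. For existence, $\Phi(w)$ is a fireworks permutation of shape $\alpha$ by Lemma~\ref{lemma:stayfireworks}, so Lemma~\ref{lem:phi_phiL_layered} gives $\Phi_{\rm inv}(\Phi(w)) = e_\alpha$. Applying Lemma~\ref{lem:fireworks_down} to $\Phi(w)$ shows $e_\alpha \leq_L \Phi(w)$, i.e., $\Phi(w) = u\, e_\alpha$ length-additively, and applying Lemma~\ref{lem:fireworks_down} to $w$ itself gives $w = \Phi(w)\, v_R$ length-additively; composing yields $w = u e_\alpha v_R$ length-additively, with $u e_\alpha = \Phi(w)$. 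The parallel construction starting from $\Phi_{\rm inv}(w) \leq_L w$ and $e_\alpha \leq_R \Phi_{\rm inv}(w)$ (the latter again by Lemmas~\ref{lem:fireworks_down} and~\ref{lem:phi_phiL_layered}) produces a second length-additive factorization $w = u_L e_\alpha v$ with $e_\alpha v = \Phi_{\rm inv}(w)$. Both identifications $\Phi(w) = u e_\alpha$ and $\Phi_{\rm inv}(w) = e_\alpha v$ will then follow once we prove that any length-additive factorization $w = u e_\alpha v$ is unique, since uniqueness forces $u_L = u$ and $v_R = v$.

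For uniqueness, suppose $w = u\, e_\alpha\, v$ is length-additive, and let $I_t, I_{t+1}, \dots, I_n$ denote the consecutive intervals supporting the blocks of $e_\alpha$. The standard length-additivity criterion (no pair is inverted by both factors) applied to $u \cdot e_\alpha$ and $e_\alpha \cdot v$ forces $u$ to be increasing on each $I_k$ and $v^{-1}$ to be increasing on each $I_k$, using the fact that $e_\alpha$ reverses each $I_k$ and preserves the relative order between distinct blocks. Setting $A_k := v^{-1}(I_k)$ and $C_k := u(I_k)$, one checks that $w(A_k) = C_k$, and that listing $A_k$ in increasing order of position gives images in $C_k$ in decreasing order. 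Hence $\{(a,w(a)) : a \in A_k\}$ is an antichain in the southeast partial order on the graph of $w$, and the pairs $(A_k, C_k)$ give an antichain decomposition of the dots of $w$ with $|A_k|=|C_k|=\alpha_k$.

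The main obstacle is showing that this antichain decomposition coincides with the blob decomposition, i.e., that $A_k = \pi_k(w^{-1})$ and $C_k = \pi_k(w)$. I plan a double-counting argument. For any antichain decomposition with block sizes $(\alpha_k)$, every intra-block dot pair is an inversion, accounting for $\sum_k \binom{\alpha_k}{2} = \inv(e_\alpha)$ of them, so the number of inter-block NE--SW (inversion) pairs is exactly $\inv(w) - \inv(e_\alpha)$, independent of the choice of decomposition. Meanwhile, the ``increasing on blocks'' structure of $u$ and $v^{-1}$ yields
\[
\inv(u) = \sum_{p<q} \#\{(c_p, c_q) \in C_p \times C_q : c_p > c_q\}, \quad \inv(v) = \sum_{p<q} \#\{(a_p, a_q) \in A_p \times A_q : a_p > a_q\}.
\]
A case analysis over the four row-column configurations of a cross-block dot pair shows
\[
\inv(u) + \inv(v) = (\text{inter-block NE--SW count}) + 2\cdot(\text{inter-block SE count}),
\]
where the SE count tallies pairs consisting of a dot in block $p$ that is southeast of a dot in block $q$ for $p<q$. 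Length-additivity equates $\inv(u)+\inv(v)$ with $\inv(w)-\inv(e_\alpha)$, forcing the inter-block SE count to vanish. A reverse induction on $k$ then identifies block $k$ with the set of southeast-maximal dots among those not lying in blocks $k+1, \dots, n$---precisely the recursive definition of the blob decomposition---so $A_k = \pi_k(w^{-1})$ and $C_k = \pi_k(w)$. Combined with the order-preserving constraints on $u$ and $v^{-1}$, this determines $u$ and $v$ uniquely, completing the proof.
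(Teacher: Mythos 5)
Your proof is correct, and while the existence half is the same as the paper's (both build the factorization from Lemmas~\ref{lem:fireworks_down} and~\ref{lem:phi_phiL_layered} and then invoke uniqueness to match up the two constructions), your uniqueness argument is genuinely different. The paper takes a dot $(i,j)$ in blob $A_k$ of $w$, follows a maximal northwest--southeast chain through it, and uses the inversion-set characterization of weak order to conclude that the corresponding dot of $e_\beta$ lies in $B_1\cup\cdots\cup B_k$; this yields a dominance inequality between the shape of $w$ and an arbitrary $\beta$ admitting a length-additive factorization, after which the case $\beta=\alpha$ forces the blocks to line up. You instead fix $\beta=\alpha$ from the start and run an inversion double-count: intra-block pairs account for exactly $\inv(e_\alpha)$, the four-case analysis gives $\inv(u)+\inv(v)=(\text{inter-block inversions})+2\cdot(\text{inter-block SE pairs})$, and length-additivity kills the SE count. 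I checked the case analysis and it is right. Your approach is more self-contained (no appeal to maximal increasing chains) but proves less along the way --- the paper's chain argument also rules out factorizations through $e_\beta$ for $\beta$ not dominated appropriately, which is a nice byproduct even though the lemma does not need it. One step you should make explicit: vanishing of the SE count only gives the containment ``block $k$ $\subseteq$ \{SE-maximal dots among those outside blocks $k+1,\dots,n$\}'', not equality, since nothing forbids a dot of a lower-indexed block from also being SE-maximal at stage $k$; you must then invoke $|A_k|=|C_k|=\alpha_k=|B_k(w)|$ (which holds precisely because $w$ has shape $\alpha$) to upgrade the containment to equality and run the reverse induction. You have all the ingredients for this on the page, so it is a presentational gap rather than a mathematical one.
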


The reader may wish to consult Example~\ref{eg:posetExample} now.

\begin{proof}
We first show that such a factorization exists. By Lemma~\ref{lem:fireworks_down}, we have length additive factorizations $w = \Phi(w) v = (u \Phi_{\rm inv}(\Phi(w))) v = u e_{\alpha} v$, where we have used Lemma~\ref{lem:phi_phiL_layered} in the last equality. For this $u$ and $v$, we have $\Phi(w) = w v^{-1} = u e_{\alpha}$. Similarly, there is some {\it a priori} different $u'$ and $v'$ for which we have length-additive factorizations $w = u' \Phi_{\rm inv}(w) = u' (\Phi(\Phi_{\rm inv}(w)) v') = u' e_{\alpha} v'$ and, for this $(u', v')$, we have $\Phi_{\rm inv}(w) = e_{\alpha} v'$. 

We will now show that a permutation $w$ of shape $\alpha$ can have only one length-additive factorization of the  form $u e_{\alpha} v$.
This will prove the uniqueness claim in the lemma, and will also establish that $(u,v) = (u', v')$, so that we have both $\Phi(w) = ue_\alpha$ and $\Phi_{\rm inv}(w) = e_\alpha v$.

First, suppose that we have a length-additive factorization $w = u e_{\beta} v$ but do not yet assume that $\beta$ is the shape of $w$ (which we still denote $\alpha$.)
Let $A_1, A_2, \dots, A_n$ be the blobs of $w$ and let $B_1, B_2, \dots, B_n$ be the blobs of $e_\beta$ (some of these blobs with small indices may be empty). Let $(i,j)$ be in blob $A_k$. 

\begin{claim}
	We have $(v(i), u^{-1}(j)) \in B_1 \cup B_2 \cup \dots \cup B_k$. 
\end{claim}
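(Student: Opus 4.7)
The claim reduces, via Lemma~\ref{lem:blob_LIS}, to a statement about longest increasing subsequences in graphs of permutations: the hypothesis $(i,j)\in A_k$ guarantees an increasing subsequence of length $n+1-k$ in the graph of $w$ starting at $(i,j)$, and the conclusion $(v(i),u^{-1}(j)) \in B_1\cup\cdots\cup B_k$ (note that $u^{-1}(j)=e_\beta(v(i))$, so this is genuinely a dot of $e_\beta$) amounts to exhibiting such a subsequence of length $n+1-k$ in the graph of $e_\beta$ starting at $(v(i), u^{-1}(j))$. My plan is therefore to transport the increasing subsequence from $w$ to $e_\beta$ through the length-additive factorization $w = u e_\beta v$.

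The transport will rest on the following elementary observation about length-additive factorizations, which I would state and prove as a standalone lemma: if $w=ab$ with $\inv(w)=\inv(a)+\inv(b)$, and if $p<q$ satisfy $w(p)<w(q)$, then $b(p)<b(q)$. I will obtain this by the standard double-count showing that length-additivity is equivalent to the disjointness $\inv(a)\cap\inv(b^{-1})=\emptyset$: if one had $b(p)>b(q)$ with $p<q$, then $(b(q),b(p))\in\inv(b^{-1})$ would force $(b(q),b(p))\notin\inv(a)$, hence $a(b(p))>a(b(q))$, i.e., $w(p)>w(q)$, contradicting the hypothesis.

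I will then apply this lemma twice. First, to the factorization $w=(ue_\beta)\cdot v$: if $i_s<i_t$ and $w(i_s)<w(i_t)$, the lemma yields $v(i_s)<v(i_t)$, and of course $(ue_\beta)(v(i_s))=w(i_s)<w(i_t)=(ue_\beta)(v(i_t))$. Second, to the factorization $ue_\beta=u\cdot e_\beta$ (which inherits length-additivity from $w = ue_\beta v$ by the triangle inequality on Coxeter length) evaluated at the points $v(i_s),v(i_t)$: this yields $e_\beta(v(i_s))<e_\beta(v(i_t))$, i.e., $u^{-1}(w(i_s))<u^{-1}(w(i_t))$. Consequently, any increasing subsequence in the graph of $w$ starting at $(i,j)$ pushes forward to an equally long strictly increasing sequence of dots in the graph of $e_\beta$ starting at $(v(i),u^{-1}(j))$, and Lemma~\ref{lem:blob_LIS} applied to $e_\beta$ delivers the conclusion.

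The only genuinely substantive step is the length-additivity observation; once it is in hand, the rest is bookkeeping via Lemma~\ref{lem:blob_LIS} and a careful unwinding of the identities $w(i)=u(e_\beta(v(i)))$ and $u^{-1}(w(i))=e_\beta(v(i))$.
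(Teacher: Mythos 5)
Your proof is correct and follows essentially the same route as the paper: transporting a northwest--southeast chain of length $n+1-k$ from the graph of $w$ to the graph of $e_\beta$ via the map $(i,j) \mapsto (v(i),u^{-1}(j))$, using the fact that length-additive factorizations preserve non-inversions, and then invoking Lemma~\ref{lem:blob_LIS}. The only difference is cosmetic: the paper cites \cite[Lemma~4.1]{Hammett.Pittel} for the non-inversion-preservation step, whereas you prove it from scratch via the disjointness $\inv(a)\cap\inv(b^{-1})=\emptyset$, which is a perfectly valid and self-contained replacement.
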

\begin{proof}[Proof of Claim]
Let $(i,j)=(i_1, j_1)$ and consider a maximal chain \[
(i_1, j_1), (i_2, j_2), \dots, (i_{n-k+1}, j_{n-k+1})
\]
 of dots arranged northwest to southeast in the graph of $w$ (i.e., such that $i_h < i_{h+1}$ and $j_h < j_{h+1}$ for all $h$). By \cite[Lemma~4.1]{Hammett.Pittel} and the fact that $u e_\beta v$ is a length-additive factorization, the dots \[
 (v(i_1), u^{-1}(j_1)), (v(i_2), u^{-1}(j_2)), \dots, (v(i_{n-k+1}), u^{-1}(j_{n-k+1}))
 \]
  are also arranged northwest to southeast in the graph of $e_\beta$ (although no longer necessarily a maximal such chain). Therefore, $(v(i), u^{-1}(j))$ is in $B_1 \cup B_2 \cup \cdots \cup B_k$.
\end{proof}

From the claim, we deduce that $|A_1 \cup A_2 \cup \dots \cup A_k| \leq |B_1 \cup B_2 \cup \dots \cup B_k|$ or, in other words, that $\alpha_1+\alpha_2+\cdots+\alpha_k \leq \beta_1 + \beta_2 + \dots + \beta_k$. 

Now, suppose $\alpha = \beta$. Then we have equalities in the previous paragraph, implying that we must have $(v(i), u^{-1}(j))$ in $B_k$ for every $(i,j)$ in $A_k$. So $u$ takes the columns of blob $B_k$ to the columns of blob $A_k$, and $v$ takes the rows of blob $B_k$ to the rows of blob $A_k$.
Moreover, since any two rows within a blob are inverted in $e_\alpha$, and $u e_\alpha$ and $e_\alpha v$ are length-additive factorizations, the permutations $u$ and $v$ must preserve the order within blobs (again by \cite[Lemma~4.1]{Hammett.Pittel}). Thus, $u$ and $v$ are uniquely determined by $e_\alpha$. 
\end{proof}

We can use this factorization to understand the $\leq_{LR}$ interval $[e_{\alpha}, w]_{LR}$.

\begin{lem} \label{lem:interval_structure}
Let $w$ be a permutation of shape $\alpha$, and let $w = u e_{\alpha} v$ be the unique length-additive factorization of $w$. Then the map $\mu: (u', v') \mapsto u' e_{\alpha} v'$ is a poset isomorphism from the product $[e_\alpha, u]_L \times [e_\alpha,v]_R$ to the two-sided weak interval $[e_{\alpha}, w]_{LR}$.
\end{lem}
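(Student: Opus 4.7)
The plan is to verify, in turn, that $\mu$ is well-defined into $[e_\alpha, w]_{LR}$, bijective, and order-preserving in both directions; the uniqueness of the $e_\alpha$-factorization established in Lemma~\ref{lem:fireworks_factorizations} provides the backbone for all four properties.

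First, to check that $\mu$ is well-defined, I would use the length-additive factorizations encoded by the interval conditions $u' \in [e_\alpha, u]_L$ and $v' \in [e_\alpha, v]_R$, together with the length-additive factorization $w = u e_\alpha v$, to conclude that the product $u' e_\alpha v'$ is itself length-additive. By the argument used in the proof of Lemma~\ref{lem:fireworks_factorizations}, such a factorization forces the shape of $u' e_\alpha v'$ to dominate $\alpha$. On the other hand, the length-additive embedding inside $w$ yields $u' e_\alpha v' \leq_{LR} w$, so Corollary~\ref{cor:LR_raj_ineq} gives $\raj(u' e_\alpha v') \leq \raj(w) = \raj(e_\alpha)$. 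By Corollary~\ref{cor:dominates_raj}, these opposing inequalities force the shape to equal $\alpha$ exactly, placing $\mu(u', v')$ in $[e_\alpha, w]_{LR}$.

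For bijectivity, injectivity is immediate from the uniqueness of the $e_\alpha$-factorization in Lemma~\ref{lem:fireworks_factorizations}. For surjectivity, take $x \in [e_\alpha, w]_{LR}$; a squeeze $\raj(e_\alpha) \leq \raj(x) \leq \raj(w)$ forces $\raj(x) = \raj(w)$, hence by Corollary~\ref{cor:LR_raj_ineq} the permutation $x$ has shape $\alpha$, and Lemma~\ref{lem:fireworks_factorizations} supplies a unique length-additive factorization $x = u_x e_\alpha v_x$. To see $u_x$ and $v_x$ lie in the required intervals, I would take a saturated $\leq_{LR}$-chain from $x$ to $w$; the same $\raj$-squeeze shows every permutation on the chain has shape $\alpha$, so Lemmas~\ref{lem:cover_blob} and~\ref{lem:cover_blob_left} tell us that each $\lessdot_L$ cover left-multiplies the $u$-factor of the factorization by a simple transposition (length-additively) while keeping the $v$-factor fixed, and each $\lessdot_R$ cover does the symmetric thing to the $v$-factor. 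Iterating these moves produces explicit length-additive factorizations exhibiting $u_x$ and $v_x$ as appropriate factors of $u$ and $v$.

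The poset-isomorphism claim then follows from the same cover analysis, applied in both directions: elementary moves in the product poset $[e_\alpha, u]_L \times [e_\alpha, v]_R$ correspond one-for-one with $\lessdot_L$ and $\lessdot_R$ covers inside $[e_\alpha, w]_{LR}$. The main technical obstacle is tracking how elementary modifications of the triple factorization interact: specifically, verifying that whenever $u' e_\alpha v'$ is length-additive and $s_i u'$ is also length-additive, the product $(s_i u') e_\alpha v'$ remains length-additive (and symmetrically on the right). This follows by unpacking inversion counts and repeating the subadditivity argument used throughout the proof of Lemma~\ref{lem:fireworks_factorizations}. Once this cover-level compatibility is in hand, the bijective correspondence between single-step moves immediately yields the full poset isomorphism.
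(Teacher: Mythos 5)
Your overall plan is correct and follows essentially the same route as the paper: use the uniqueness of the $e_\alpha$-factorization (Lemma~\ref{lem:fireworks_factorizations}) plus the $\raj$-squeeze from Corollary~\ref{cor:LR_raj_ineq} to show every element of $[e_\alpha,w]_{LR}$ has shape $\alpha$, and then track how covers interact with the triple factorization via Lemmas~\ref{lem:cover_blob} and~\ref{lem:cover_blob_left}, as in the paper's single downward cover-lifting claim.

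One caution, though. The fact you flag as the ``main technical obstacle'' --- ``whenever $u' e_\alpha v'$ is length-additive and $s_i u'$ is also length-additive, the product $(s_i u') e_\alpha v'$ remains length-additive'' --- is false as literally stated. For instance with $n=3$, $\alpha=(1,2)$, $e_\alpha=132$, $u'=v'=\id$, $i=2$: $u'e_\alpha v'=e_\alpha$ is length-additive and $s_2u'=s_2>_L \id$, yet $(s_2u')e_\alpha v'=s_2s_2=\id$, which is not length-additive. What makes the statement true in your setting is the additional hypothesis $s_iu'\leq_L u$ (i.e.\ that $(s_iu',v')$ actually lies in the domain), and in that case it is immediate from well-definedness rather than requiring a separate inversion-count argument: $w=a(s_iu')e_\alpha v'b$ is a length-additive factorization, so its middle segment $(s_iu')e_\alpha v'$ is automatically length-additive. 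In other words, $\mu$ being order-preserving is the easy direction; the substantive work (and where the blob analysis is genuinely needed) is the converse --- that covers inside $[e_\alpha,w]_{LR}$ lift to covers in the product --- which is what the paper isolates in its Claim and what your surjectivity paragraph is actually doing. Keeping that asymmetry explicit would tighten the write-up.
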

\begin{proof}
The map $\mu$ lands in $[e_{\alpha}, w]_{LR}$ and is injective by the uniqueness part of Lemma~\ref{lem:fireworks_factorizations}, combined with Lemmas~\ref{lem:cover_blob} and~\ref{lem:cover_blob_left}. It is also a morphism of posets. 
We will prove that $\mu$ is surjective and that is is a poset isomorphism at the same time. Specifically, we will show the following claim.
\begin{claim}
	 If $w_1 \lessdot_{LR} w_2$ is a cover, with $w_1 \in [e_{\alpha}, u e_{\alpha} v]_{LR}$ and $w_2 \in \im \mu$, then $w_1 \in \im \mu$, and $\mu^{-1}(w_2)$ covers $\mu^{-1}(w_1)$ in $[e, u]_L \times [e,v]_R$.
\end{claim}
\begin{proof}[Proof of Claim]
Let $w_1 \lessdot_{LR} w_2$. Then either $w_1 s_i = w_2$ or $s_i w_1 = w_2$; without loss of generality, we consider only the former case $w_1 s_i = w_2$. 
Since we assume $w_2 \in \im \mu$, we have a length-additive factorization $w_2 = u_2 e_{\alpha} v_2$. We claim that $v_2 >_R v_2 s_i$. Indeed, from Lemma~\ref{lem:cover_blob}, $(i, w_2(i))$ is in a higher-numbered blob of $w_2$ than $(i+1, w_2(i+1))$ is. From the description in the proof of Lemma~\ref{lem:fireworks_factorizations} of how to recover $v_2$ from $w_2$, this means that $v_2(i) > v_2(i+1)$. Thus, $v_2 >_R v_2 s_i$. So $w_1 = u_2 e_{\alpha} (v_2 s_i)$ is also a length-additive factorization, and the cover $(w_1, w_2)$ lifts to the cover $((u_2, v_2 s_i),\ (u_2, v_2))$. This proves the claim.
\end{proof}

 Let us see why the claim establishes the surjectivity and isomorphism. For surjectivity, let $x \in [e_{\alpha}, w]_{LR}$ and choose a saturated $\leq_{LR}$-chain from $x$ up to $w$. Working down that chain from the top, the claim inductively shows that every element of that chain is in the image of $\mu$ (the base case $x=w$ is trivial), so in particular $x \in \im \mu$; since $x$ was arbitrary, this shows that $\mu$ is surjective. Also, since the claim shows that every edge of the Hasse diagram of  $[e_{\alpha}, w]_{LR}$ lifts to an edge of the Hasse diagram of $[e, u]_L \times [e,v]_R$, and we have already noted that $\mu$ is a map of posets, the claim also shows that $\mu$ is an isomorphism of posets.
\end{proof}

We can use weak order to get a new perspective on the set of fireworks permutations of a given shape.
\begin{lem} \label{lem:maximal_is_valley}
Let $f_{\alpha}$ be the unique valley permutation of shape $\alpha$ (introduced in Lemma~\ref{lem:valley_shape}). Then the set of fireworks permutations of shape $\alpha$ is the left interval $[e_{\alpha}, f_{\alpha}^{-1}]_L$, and the set of inverse fireworks permutations of shape $\alpha$ is the right interval $[e_{\alpha}, f_{\alpha}]_R$. 
\end{lem}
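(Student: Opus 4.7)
By applying the involution $w \mapsto w^{-1}$ (which preserves shape, exchanges fireworks with inverse fireworks, and converts $[e_\alpha, f_\alpha]_R$ to $[e_\alpha^{-1}, f_\alpha^{-1}]_L = [e_\alpha, f_\alpha^{-1}]_L$ since $e_\alpha$ is an involution and $u \le_R v \iff u^{-1} \le_L v^{-1}$), it suffices to prove the single statement that the inverse fireworks permutations of shape $\alpha$ are precisely the elements of $[e_\alpha, f_\alpha]_R$. First, $f_\alpha$ itself is inverse fireworks of shape $\alpha$: it has shape $\alpha$ by Lemma~\ref{lem:valley_shape}, and is inverse fireworks since valley permutations are dominant plus inverse fireworks by Lemma~\ref{lem:dominant_and_fireworks_is_inverse_valley}. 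For the inclusion $[e_\alpha, f_\alpha]_R \subseteq \{w : w \text{ is inverse fireworks of shape } \alpha\}$, take $w$ in the interval. Lemma~\ref{lem:cover_blob} squeezes the shape of $w$ to $\alpha$ (going up from $e_\alpha$, shape can only stay equal or strictly dominate $\alpha$; going down from $f_\alpha$, it can only stay equal or be strictly dominated). Then $w = e \cdot e_\alpha \cdot v$ is a length-additive factorization, and by the uniqueness part of Lemma~\ref{lem:fireworks_factorizations} the left factor of $w$'s canonical factorization must equal $e$, so $\Phi_{\rm inv}(w) = w$ and $w$ is inverse fireworks.

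Conversely, fix $w$ inverse fireworks of shape $\alpha$. Lemma~\ref{lem:fireworks_factorizations} writes $w = e_\alpha v$ length-additively, so $w \ge_R e_\alpha$; the remaining task is to show $w \le_R f_\alpha$. I induct on the non-negative integer $\inv(f_\alpha) - \inv(w)$. For the base case $\inv(w) = \inv(f_\alpha)$, Lemma~\ref{lem:set_partition_to_raj} gives $\raj(w) = \raj(f_\alpha)$ since $w$ and $f_\alpha$ share the shape $\alpha$; Proposition~\ref{prop:raj=inv} gives $\raj(f_\alpha) = \inv(f_\alpha)$ as $f_\alpha$ is dominant; and Lemma~\ref{lem:rajinv_inequality} yields $\inv(w) \le \raj(w) = \inv(f_\alpha)$, with equality only when $w$ is dominant. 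A dominant inverse fireworks permutation is valley by Lemma~\ref{lem:dominant_and_fireworks_is_inverse_valley}, and Lemma~\ref{lem:valley_shape} identifies the unique valley of shape $\alpha$ as $f_\alpha$, so $w = f_\alpha$.

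For the inductive step, with $w \neq f_\alpha$, I aim to produce an ascent $i$ of $w$ such that $ws_i$ is again inverse fireworks of shape $\alpha$; the inductive hypothesis then gives $ws_i \le_R f_\alpha$, whence $w <_R ws_i \le_R f_\alpha$. The main obstacle is exhibiting such an $i$. Because $w$ is inverse fireworks of shape $\alpha$, its blob column ranges $C_t, C_{t+1}, \ldots, C_n$ are determined by $\alpha$ alone, so the data distinguishing $w$ from $f_\alpha$ is entirely the assignment of rows to blobs, and these row partitions differ. An ascent $i$ of $w$ corresponds to consecutive rows $i, i+1$ landing in different blobs, say $i \in B_p$ and $i+1 \in B_q$ with $p < q$; the swap $w \mapsto ws_i$ exchanges the block memberships of rows $i$ and $i+1$ in the row partition. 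Via the SE-chain analysis underlying Lemma~\ref{lem:cover_blob}, one verifies that $ws_i$ remains inverse fireworks of shape $\alpha$ precisely when some longest NW-to-SE chain in $w$ beginning at $(i, w(i))$ avoids the point $(i+1, w(i+1))$. Exploiting the specific structure of $f_\alpha$'s row partition (in which the smallest rows accumulate in the highest-indexed blob), one checks that whenever $w \neq f_\alpha$ such an $i$ can be chosen, completing the induction.
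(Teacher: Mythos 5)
Your structural strategy is sound for most of the lemma, but the converse direction has a genuine gap. First, the reduction via $w \mapsto w^{-1}$ and the inclusion $[e_\alpha, f_\alpha]_R \subseteq \{\text{inverse fireworks of shape }\alpha\}$ (via Lemma~\ref{lem:cover_blob} to pin down the shape, then uniqueness in Lemma~\ref{lem:fireworks_factorizations} to force $\Phi_{\rm inv}(w)=w$) are both correct and close in spirit to the paper's argument. Your base case for the other inclusion (that $\inv(w)=\inv(f_\alpha)$ forces $w=f_\alpha$, via $\raj=\inv$ only for dominants) is also correct and pleasant.

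The gap is in the inductive step: you need to exhibit an ascent $i$ of $w$ such that $ws_i$ is again inverse fireworks of shape $\alpha$, but you never actually produce one. You state the right criterion (an ascent $i$ works iff $ws_i$ still has shape $\alpha$, i.e.\ iff the row swap does not change the blob structure of the diagram away from rows $i,i+1$, which in the language of Lemma~\ref{lem:cover_blob} applied to $ws_i >_R w$ is the $p'>q'$ case), and you correctly note that, when this holds, $ws_i$ automatically remains inverse fireworks because the dots' columns and hence the blob column ranges are unchanged. But the final claim that such an $i$ exists whenever $w\neq f_\alpha$ is asserted with ``one checks'' and no argument is offered. This is not a routine verification: it requires locating, among all ascents of $w$, a specific one for which the longest increasing chains through $(i,w(i))$ survive the swap, and doing so by exploiting the structure of $f_\alpha$'s row partition relative to $w$'s. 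As written, this is a hole, not a sketch.

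For comparison, the paper avoids the induction entirely and proves the inclusion $\{\text{inverse fireworks of shape }\alpha\}\subseteq [e_\alpha,f_\alpha]_R$ by a direct inversion-set computation: using Lemma~\ref{lem:consecutiveblobs}, the columns of each blob are forced, so one can describe exactly which pairs are inverted in any such $w$, and one checks directly that the inversion set of $w$ sandwiches between those of $e_\alpha$ and $f_\alpha$, which gives membership in $[e_\alpha,f_\alpha]_R$ via the inversion-set characterization of right weak order (the Hammett--Pittel lemma cited in the paper). If you want to repair your proof along its current lines, you would need to replace the ``one checks'' with an explicit choice of ascent and a verification that it preserves shape; alternatively, replacing the inductive step with the paper's inversion-set comparison is cleaner and closes the gap.
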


\begin{proof}
We will show that the set of inverse fireworks permutations of shape $\alpha$ is  $[e_{\alpha}, f_{\alpha}]_R$; the other claim follows by inversion.
Let $\alpha = (\alpha_t, \alpha_{t+1},\ldots, \alpha_n)$. Set $\rho_{n+1} = n+1$ and $\rho_k = n+1-\alpha_k-\alpha_{k+1} - \dots - \alpha_n$ for $t \leq k \leq n$. 

In any inverse fireworks permutation $w$ of shape $\alpha$, Lemma~\ref{lem:consecutiveblobs} shows that blob $B_k$ is in columns $\{ \rho_k, \rho_k +1 ,\ldots, \rho_n \}$, and the rows of that blob are $\pi_k(w)$ by definition. 
Let us consider whether or not $(i,j)$ will be an inversion of $w$, for some $1 \leq i< j \leq n$. 
Recall that $(i,j)$ is an inversion if and only if the dot in column $i$ in the blob diagram is east of the dot in column $j$.
This configuration occurs if and only if $\rho_k \leq i < j < \rho_{k+1}$. 

Suppose, then, that $\rho_k \leq i < \rho_{k+1} \leq \rho_{\ell} \leq j < \rho_{\ell+1}$.
If $w$ is $e_{\alpha}$, $(i,j)$ will not be an inversion. 
If $w$ is $f_{\alpha}$, $(i,j)$ will not be an inversion if $j=\rho_{\ell}$, but will be if $j > \rho_{\ell}$. For an arbitrary inverse fireworks permutation $w$, it is also true that $(i,j)$ is not an inversion if $j = \rho_{\ell}$, as the largest element of $\pi_{\ell}$ must be larger than all elements of $\pi_k$. Hence, the inversion set of $w$ contains that of $e_\alpha$ and is contained in that of $f_\alpha$, implying $w \in [e_{\alpha}, f_{\alpha}]_R$ by \cite[Lemma~4.1]{Hammett.Pittel}. 

We have now shown that the set of all inverse fireworks permutations of shape $\alpha$ is contained in $[e_{\alpha}, f_{\alpha}]_R$. Conversely, let $u \in [e_{\alpha}, f_{\alpha}]_R$, and apply Lemma~\ref{lem:interval_structure} with $w = f_{\alpha}$. Then we see that $u$ has a length-additive factorization $e_{\alpha} v$, and that $u$ is of shape $\alpha$. These conditions force $u$ to be inverse fireworks as well.
\end{proof}

\begin{rem}
It is well known that the interval $[e_{\alpha}, w_0]_R$ has \[\frac{n!}{(\alpha_t)! (\alpha_{t+1})! \cdots (\alpha_n)!}\] elements. 
One can show that the subinterval $[e_{\alpha}, f_{\alpha}]_R$ has \[
\frac{n!}{(\alpha_t)! (\alpha_{t+1})! \cdots (\alpha_n)! } \frac{ \prod_{k=t}^n \alpha_k}{\prod_{k=t}^n (\alpha_k+\alpha_{k+1} + \cdots + \alpha_n)}
\] elements. 
\end{rem}

We are now able to prove the following lemma, to be used in Section~\ref{sec:rajchgotpolynomials}.

\begin{lemma} \label{lem:ealpha_lemma}
Let $x$ be any permutation in $S_n$, let $\alpha = (\alpha_k, \alpha_{k+1}, \ldots, \alpha_n)$ be a composition of $n$, and let $y = e_{\alpha} \ast x \ast e_{\alpha}$, where $\ast$ denotes the Demazure product. Then $\raj(y) \geq \raj(e_{\alpha})$ and we have equality if and only if $y = e_{\alpha}$.
\end{lemma}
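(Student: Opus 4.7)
My plan is to reduce the lemma to Corollary~\ref{cor:LR_raj_ineq} together with the uniqueness part of Lemma~\ref{lem:fireworks_factorizations}. The first step is to record a basic fact about the Demazure product that is not explicitly stated in the excerpt: for any permutations $p, q \in S_n$, we have $q \leq_L p \ast q$ and $p \leq_R p \ast q$. The left-weak inequality is immediate from the definition, since $p \ast q = \bar p \ast q$ is obtained from $q$ by successively left-multiplying by a subset of the generators in a reduced word for $p$, each such step strictly increasing the length; hence the resulting expression $p \ast q = z \cdot q$ is length-additive. The right-weak inequality follows from the identity $(p \ast q)^{-1} = q^{-1} \ast p^{-1}$ (itself an easy consequence of the $0$-Hecke structure under reversal of reduced words) together with the left case applied to $q^{-1} \ast p^{-1}$, using that $a \leq_L b$ if and only if $a^{-1} \leq_R b^{-1}$.

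Given this, I apply the two associations of the Demazure product to $y$. Reading $y = e_\alpha \ast (x \ast e_\alpha)$ yields $e_\alpha \leq_R y$, while reading $y = (e_\alpha \ast x) \ast e_\alpha$ yields $e_\alpha \leq_L y$. Thus $y \geq_{LR} e_\alpha$, and Corollary~\ref{cor:LR_raj_ineq} gives $\raj(y) \geq \raj(e_\alpha)$, with equality if and only if $y$ has the same shape as $e_\alpha$, namely $\alpha$.

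It remains to upgrade ``$y$ has shape $\alpha$'' to ``$y = e_\alpha$''. Assuming the former, Lemma~\ref{lem:fireworks_factorizations} gives a unique length-additive factorization $y = u e_\alpha v$. The relation $e_\alpha \leq_L y$ provides a length-additive expression $y = z \cdot e_\alpha$, which I view as the length-additive factorization $y = z \cdot e_\alpha \cdot \id$; uniqueness then forces $v = \id$. Symmetrically, $e_\alpha \leq_R y$ gives a length-additive factorization $y = \id \cdot e_\alpha \cdot w$, forcing $u = \id$. Hence $y = e_\alpha$, completing the equality clause.

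I expect the only step requiring genuine care is the preliminary Demazure-product inequalities $q \leq_L p \ast q$ and $p \leq_R p \ast q$; once these are in hand, the argument is essentially a three-line chain of citations to Corollary~\ref{cor:LR_raj_ineq} and Lemma~\ref{lem:fireworks_factorizations}, with the crucial leverage coming from applying those two inequalities to the two different parenthesizations of $e_\alpha \ast x \ast e_\alpha$.
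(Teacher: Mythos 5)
Your proof is correct, but it takes a genuinely different route than the paper. The paper's argument is direct and combinatorial: it introduces the set partition $\rho$ underlying $e_\alpha$, observes that $e_\alpha \ast e_\alpha = e_\alpha$ gives $y = y \ast e_\alpha = e_\alpha \ast y$, and then reads off from the blob diagram that the first identity forces the dots in each horizontal strip of sizes $\alpha_k,\ldots,\alpha_n$ onto an antidiagonal (hence $\pi_n \cup \cdots \cup \pi_m \supseteq \rho_n \cup \cdots \cup \rho_m$), while the second does the same vertically; combining these via Lemma~\ref{lem:set_partition_to_raj} yields the inequality and identifies the equality case as exactly $y = e_\alpha$. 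You instead derive the Demazure-product inequalities $q \leq_L p \ast q$ and $p \leq_R p \ast q$ from scratch (these are used without proof later in the paper, inside the proof of Theorem~\ref{thm:factorization}, but are nowhere available at this point in the logical order, so it is correct of you to supply the argument), apply them to the two bracketings of $e_\alpha \ast x \ast e_\alpha$ to get $y \geq_{LR} e_\alpha$, and then invoke Corollary~\ref{cor:LR_raj_ineq} plus the uniqueness clause of Lemma~\ref{lem:fireworks_factorizations} to close out both the inequality and the equality case. Your route avoids any direct manipulation of blob diagrams here, buys the result formally from the preceding structural lemmas, and is arguably tidier; the paper's version is more self-contained and makes the geometric mechanism visible. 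The one place to be careful is the derivation of $(p \ast q)^{-1} = q^{-1} \ast p^{-1}$, which you correctly attribute to the word-reversal anti-automorphism of the $0$-Hecke monoid; this is standard and sound.
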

\begin{proof}
Set $\beta_m = \alpha_k + \alpha_{k+1} + \cdots + \alpha_m$ and let $\rho_m = \{ \beta_{m-1}+1, \beta_{m-1}+2, \cdots, \beta_{m-1}+\alpha_m \}$, so $\rho$ is a set partition of $[n]$ with $|\rho_m| = \alpha_m$.
Write $\pi$ for the set partition $\pi(y)$ associated to $y$.

We note that $e_{\alpha} \ast e_{\alpha} = e_{\alpha}$. Therefore, we have $e_{\alpha} \ast y = y = y \ast e_{\alpha}$. 
The equality $y = y \ast e_{\alpha}$ is equivalent to imposing that $y$ is descending when restricted to any of the blocks of $\rho$. In other words, if we split the blob diagram of $y$ into horizontal strips of sizes $\alpha_k$, $\alpha_{k+1}$, \dots, $\alpha_n$, the dots in each strip are arranged on a northeast-southwest antidiagonal. We thus see that $\pi_n \cup \pi_{n-1} \cup \cdots \cup \pi_m \supseteq \rho_n \cup \rho_{n-1} \cup \cdots \cup \rho_m$. 
Together with Lemma~\ref{lem:set_partition_to_raj}, this implies that $\raj(y) \geq \raj(e_{\alpha})$, with equality if and only if $\pi_m = \rho_m$ for all $m$.

However, we also have $y = e_{\alpha} \ast y$, which means that if we split the blob diagram of $y$ into vertical strips of the same sizes, the dots in each strip are also arranged on a northeast-southwest antidiagonal. We thereby deduce that we have equality if and only if $y^{-1}(\pi_m) = \rho_m$. Thus, we have equality if and only if $y$ maps each $\rho_m$ to itself, and does so in an order reversing way. The only permutation which does this is $e_{\alpha}$.
\end{proof}

\begin{example} \label{eg:posetExample} 
We list all permutations of shape $\alpha = (1,1,2)$, their factorizations in the form $u e_{\alpha} v$, and the corresponding double Castelnuovo--Mumford polynomials:
\[ 
\begin{array}{|ccc|}
\hline
2341 &          & \\
1342 &          & \\
1243 & 1423 & 4123 \\
\hline
\end{array} \qquad
\begin{array}{|rrr|}
\hline
s_1 s_2 e_{112} && \\
s_2 e_{112} && \\
e_{112} & e_{112} s_2 & e_{112} s_2 s_1 \\
\hline
\end{array}\]
\[ \begin{array}{|lll|}
\hline
(x_1 x_2 x_3) (y_1^3) && \\
(x_1 x_2 x_3) (y_1^2 y_2 + y_1 y_2^2) && \\
(x_1 x_2 x_3) (y_1 y_2 y_3) & (x_1^2 x_2 + x_1 x_2^2) (y_1 y_2 y_3) & (x_1^3) (y_1 y_2 y_3) \\
\hline
\end{array}\]
The layered permutation $e_{112}=1243$ is in the lower left, the valley permutation $f_{112} = 4123$ is in the lower right and the inverse valley permutation $f_{112}^{-1} = 2341$ is in the upper left. 
The permutations in the left column are fireworks; the permutations in the bottom row are inverse fireworks, and the permutations which are maximally northeast are dominant.
The maps $\Phi$ and $\Phi_{\rm inv}$ are the orthogonal projections onto the left column and bottom row, respectively.
\end{example}

\section{Degrees of Grothendieck polynomials}
\label{sec:degrees}

In this section, we prove part of Theorem~\ref{thm:degree}. Namely, we establish that the degree of the Castelnuovo--Mumford polynomial $\fCM_w(\bx)$ is the Rajchgot index $\raj(w)$, and hence that $\raj(w) - \inv(w)$ gives the Castelnuovo--Mumford regularity of the matrix Schubert variety $X_w$. This also  proves the final remaining equality of Theorem~\ref{thm:main_mm} (the other equalities of Theorem~\ref{thm:main_mm} were established in Theorem~\ref{thm:raj=mm}). The remainder of Theorem~\ref{thm:degree}, the identification of leading monomials of Castelnuovo--Mumford polynomials, will be proved later after we establish the factorization statement of Theorem~\ref{thm:main}.

\begin{lemma}\label{lem:divided_diffs}
	If $u \leq_{LR} w$, then $\deg \fCM_u(\bx) \leq \deg \fCM_w(\bx)$.
\end{lemma}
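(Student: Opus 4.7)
Since $\leq_{LR}$ is by definition the transitive closure of $\leq_L \cup \leq_R$, it suffices to prove the stated inequality when $u$ is covered by $w$ in either $\leq_L$ or $\leq_R$. Moreover, $\deg \fCM_w(\bx) = \deg \fG_w(\bx)$ by construction, so the lemma is equivalent to the analogous inequality $\deg \fG_u(\bx) \leq \deg \fG_w(\bx)$ for single Grothendieck polynomials.

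For a right weak cover $u = w s_i \lessdot_R w$, the recursion recalled in Section~\ref{GrothBackground} (and its compatibility with the specialization $\by = 0$, since $\overline{\partial_i}$ acts only on $\bx$-variables) gives $\fG_u(\bx) = \overline{\partial_i}\, \fG_w(\bx)$. The operator $\overline{\partial_i}$ is the composition of multiplication by $(1 - x_{i+1})$, which raises the degree by at most $1$, followed by $\partial_i$, which lowers the degree by at least $1$ on any nonzero image (since the numerator $f - s_i \cdot f$ vanishes on $x_i = x_{i+1}$ and is therefore divisible by $x_i - x_{i+1}$). Hence $\overline{\partial_i}$ does not raise the degree, and $\deg \fG_u(\bx) \leq \deg \fG_w(\bx)$.

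For a left weak cover $u \lessdot_L w$, inversion gives $u^{-1} \lessdot_R w^{-1}$, so the previous step yields $\deg \fG_{u^{-1}}(\bx) \leq \deg \fG_{w^{-1}}(\bx)$. It remains to observe that $\deg \fG_w(\bx) = \deg \fG_{w^{-1}}(\bx)$ for every permutation $w$. This follows from the symmetry $\fG_w(\bx;\by) = \fG_{w^{-1}}(\by;\bx)$ combined with Corollary~\ref{cor:high_degree_xy}, which identifies both the $\bx$-degree and the $\by$-degree of $\fG_w(\bx;\by)$ with $\deg \fG_w(\bx)$; alternatively, one can invoke the pipe dream formula $\deg \fG_w(\bx) = \max_{P \in \pipes(w)} |P|$ together with the transposition bijection $\pipes(w) \leftrightarrow \pipes(w^{-1})$, which preserves the number of crosses.

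The argument is largely mechanical once one records the key observation that $\overline{\partial_i}$ preserves or lowers the $\bx$-degree; given this, the right weak case is immediate from the defining recursion and the left weak case is handled by inversion symmetry. This is thus expected to be the only substantive step, and it is not really an obstacle so much as a bookkeeping check.
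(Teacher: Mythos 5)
Your proof is correct and follows essentially the same route as the paper's: reduce to weak-order covers, observe that $\overline{\partial_i}$ cannot raise the degree (multiplication by $1-x_{i+1}$ raises it by one, then $\partial_i$ lowers it by one), and handle left covers by inversion using $\deg \fG_w(\bx) = \deg \fG_{w^{-1}}(\bx)$, which the paper likewise justifies via the transposition bijection on pipe dreams.
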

\begin{proof}
Suppose $w > ws_i$. Then $\fG_{ws_i}(\bx) = \overline{\partial}_i \fG_{w}(\bx) = \partial_i(\fG_w(\bx) - x_{i+1} \fG_w(\bx))$. Suppose $\deg \fG_w(\bx) = d$. Then $\deg( \fG_w(\bx) - x_{i+1} \fG_w(\bx)) = d+1$. Note that $\partial_i$ is a linear operator that maps any $k$-form to a $(k-1)$-form (or to $0$). Hence, $\deg \fG_{ws_i}(\bx) \leq d = \deg \fG_w(\bx)$.

Suppose $v > s_iv$. Let $w = v^{-1}$, so $(s_iv)^{-1} = ws_i$. Then $w > ws_i$, so we have $\deg \fG_{ws_i}(\bx) \leq \deg \fG_w(\bx)$. But by the pipe dream formula (Theorem~\ref{thm:pipedreams}), it is clear that $\deg \fG_w(\bx) = \deg \fG_v(\bx)$ and $\deg \fG_{ws_i}(\bx) = \deg \fG_{s_iv}(\bx)$, as pipe dreams for inverse permutations are related by transposition. Hence, we also have $\deg \fG_{s_iv}(\bx) \leq \deg \fG_v(\bx)$.

The lemma follows by recursion down two-sided weak order.
\end{proof}

\begin{example}
Note that the analogous result does not hold for the strong order.  For instance, $1432\leq 3412$ but $\deg \fG_{1432}(\bx)=5$ and $\deg \fG_{3412}(\bx)=4$.
\end{example}

\begin{lemma}\label{lem:deg_dominant}
	If $w$ is a dominant permutation, then 
	\[
	\deg \fCM_w(\bx) = \deg \fS_w(\bx)=\inv(w) = \raj(w).
	\]
\end{lemma}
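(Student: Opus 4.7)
The plan is to observe that this lemma essentially combines two previously established facts, so the proof should be a short citation-chain rather than a new argument.

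First, I will invoke Proposition~\ref{prop:dominant_background}: for dominant $w$, we have the explicit product formula
\[ \fG_w(\bx) = \fS_w(\bx) = \prod_{i=1}^n x_i^{\ell_i(w)}. \]
In particular, the Grothendieck polynomial is already homogeneous and coincides with the Schubert polynomial, of degree $\sum_i \ell_i(w) = \inv(w)$. Since $\deg \fG_w(\bx) = \inv(w)$, the sign $(-1)^{\deg \fG_w(\bx) - \inv(w)}$ appearing in the definition of $\fCM_w(\bx)$ equals $+1$, and the highest-degree part of $\fG_w(\bx)$ is $\fG_w(\bx)$ itself. Hence $\fCM_w(\bx) = \fS_w(\bx)$, giving the first two equalities:
\[ \deg \fCM_w(\bx) = \deg \fS_w(\bx) = \inv(w). \]

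For the last equality, I will cite Proposition~\ref{prop:raj=inv}, which states that $\raj(w) = \inv(w)$ precisely when $w$ avoids $132$, i.e.\ precisely when $w$ is dominant. Since $w$ is assumed dominant, $\raj(w) = \inv(w)$, and the chain of equalities is complete.

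There is no real obstacle here; the lemma is essentially a bookkeeping assembly of Proposition~\ref{prop:dominant_background} (which computes $\fG_w$ explicitly for dominant $w$) and Proposition~\ref{prop:raj=inv} (which identifies the permutations where $\raj$ and $\inv$ coincide). The only subtle point worth verifying is that the sign convention in the definition of $\fCM_w$ behaves correctly when $\fG_w$ is already homogeneous, which it does trivially.
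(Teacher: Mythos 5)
Your proof is correct and follows essentially the same route as the paper: invoke Proposition~\ref{prop:dominant_background} to get $\fG_w = \fS_w$ for dominant $w$ (hence $\fCM_w = \fS_w$ and $\deg \fCM_w = \inv(w)$), then invoke Proposition~\ref{prop:raj=inv} for $\raj(w) = \inv(w)$. Your version is a bit more careful about the sign factor in the definition of $\fCM_w$, which is a reasonable point to make explicit but does not change the substance.
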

\begin{proof}
From Proposition~\ref{prop:dominant_background}, we have $\fG_w(\bx) = \fS_w(\bx)$ when $w$ is dominant, establishing the first equality. The second equality is a standard fact about Schubert polynomials.
	The final equality was established in Proposition~\ref{prop:raj=inv}.
\end{proof}

Now  we turn to determining $\deg \fCM_w(\bx)$ for layered permutations $w$.

 \begin{lemma}\label{lem:deg_ealpha}
 	If $w= e_\alpha$, then $\deg \fCM_{w}(\bx) = \raj(w)$.
 \end{lemma}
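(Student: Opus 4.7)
The plan is to sandwich $\deg \fCM_{e_\alpha}(\bx)$ between $\raj(e_\alpha)$ and $\raj(e_\alpha)$.

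First I would establish the upper bound $\deg \fCM_{e_\alpha}(\bx) \leq \raj(e_\alpha)$ by passing to a dominant permutation. Let $f_\alpha$ be the valley permutation of shape $\alpha$ from Lemma~\ref{lem:valley_shape}. By Lemma~\ref{lem:maximal_is_valley}, $e_\alpha \leq_R f_\alpha$, and by Lemma~\ref{lem:dominant_and_fireworks_is_inverse_valley}, $f_\alpha$ is dominant. Chaining Lemma~\ref{lem:divided_diffs}, Lemma~\ref{lem:deg_dominant}, and the fact that Rajchgot index depends only on shape (Lemma~\ref{lem:set_partition_to_raj}) then gives
\[
\deg \fCM_{e_\alpha}(\bx) \leq \deg \fCM_{f_\alpha}(\bx) = \raj(f_\alpha) = \raj(e_\alpha).
\]

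For the lower bound I would construct an explicit pipe dream $P_\alpha \in \pipes(e_\alpha)$ with $|P_\alpha| = \raj(e_\alpha)$. Writing $\alpha = (\alpha_t, \ldots, \alpha_n)$ and setting the block boundaries $c_k = \alpha_t + \alpha_{t+1} + \cdots + \alpha_k$ for $t \leq k \leq n-1$---so that $\{s_{c_t}, \ldots, s_{c_{n-1}}\}$ is precisely the set of simple reflections outside the Young subgroup $S_\alpha$---I define
\[
P_\alpha = \{(i,j) : i + j \leq n,\ s_{i+j-1} \in S_\alpha\},
\]
i.e., the upper-left triangle with the antidiagonals $i+j-1 \in \{c_t, \ldots, c_{n-1}\}$ removed. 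Counting cells directly yields $|P_\alpha| = \binom{n}{2} - \sum_{k=t}^{n-1} c_k$, and a short rearrangement of the formula in Lemma~\ref{lem:set_partition_to_raj} applied to the shape $\alpha$ of $e_\alpha$ produces the same value for $\raj(e_\alpha)$.

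The main obstacle is to verify that $P_\alpha$ is actually a pipe dream for $e_\alpha$. Every letter in $\word(P_\alpha)$ is a simple reflection in the Young subgroup $S_\alpha$, so the Demazure product of $\word(P_\alpha)$ lies in $S_\alpha$ and is therefore at most $e_\alpha$ in Bruhat order. For the matching inequality I would exhibit a reduced subword of $\word(P_\alpha)$ spelling out $e_\alpha$. Since simple reflections from distinct blocks of $\alpha$ commute, it suffices to produce, within each block of size $\alpha_k$ occupying positions $p_k+1, \ldots, p_k+\alpha_k$ (where $p_k = c_{k-1}$ for $k > t$ and $p_t = 0$), a reduced expression for $w_0^{(\alpha_k)}$ using only the internal reflections $s_{p_k+1}, \ldots, s_{p_k+\alpha_k-1}$. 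From row $1$ of $P_\alpha$ one extracts the full decreasing sweep $s_{p_k+\alpha_k - 1} s_{p_k+\alpha_k - 2} \cdots s_{p_k+1}$; from row $p_k+2$ the next shorter sweep $s_{p_k+\alpha_k - 1} \cdots s_{p_k+2}$; and so on down to the singleton sweep $s_{p_k+\alpha_k-1}$ contributed by row $p_k+\alpha_k-1$. Concatenating these sweeps yields the standard staircase reduced expression for $w_0^{(\alpha_k)}$ of length $\binom{\alpha_k}{2}$, embedded as a subword of $\word(P_\alpha)$. By the subword characterization of the Demazure product, this forces the Demazure product of $\word(P_\alpha)$ to dominate $e_\alpha$ in Bruhat order, so equality holds; hence $P_\alpha \in \pipes(e_\alpha)$ and $\deg \fCM_{e_\alpha}(\bx) \geq |P_\alpha| = \raj(e_\alpha)$, completing the argument.
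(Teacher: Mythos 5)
Your proposal is correct, and it constructs the very same pipe dream as the paper (their $Q_\alpha$ is your $P_\alpha$), but the surrounding argument is organized differently. The paper never invokes Lemma~\ref{lem:divided_diffs} or the valley permutation $f_\alpha$ for the upper bound; instead it proves that \emph{any} pipe dream $P$ for $e_\alpha$ has no crossing on a boundary antidiagonal (since $e_\alpha \not\geq s_{\beta_m}$ in Bruhat order would be violated), so every pipe dream's crosses are a subset of those of $Q_\alpha$. This gives the upper bound as a byproduct and simultaneously establishes that $Q_\alpha$ is the \emph{unique} top-degree pipe dream for $e_\alpha$, a fact the authors record in a remark and reuse in the proof of Theorem~\ref{thm:MaxReg}. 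Your route---sandwiching via $e_\alpha \leq_R f_\alpha$ with $f_\alpha$ dominant, and then exhibiting the staircase reduced subword to certify $P_\alpha \in \pipes(e_\alpha)$---buys a more self-contained and explicit verification of the step the paper dismisses as ``easy to check,'' at the cost of not directly yielding uniqueness of the maximal pipe dream. Both arguments are sound; if you want your version to fully substitute for the paper's, you'd want to add the one-line Bruhat observation that all crosses of any pipe dream for $e_\alpha$ lie in non-boundary antidiagonals, so that the later uniqueness claim still follows.
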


\begin{proof}
Let $\alpha = (\alpha_1, \dots, \alpha_k)$ and let $w = e_\alpha$. It is convenient to abbreviate $\beta_m = \alpha_1+\alpha_2+ \cdots + \alpha_m$.
We will compute $\fCM_w(\bx)$ using the pipe dream formula (Theorem~\ref{thm:pipedreams}); note that only pipe dreams with the maximal number of crossings contribute to the highest degree part of $\fG_w(\bx)$. 
We will show that, in fact, there is a unique pipe dream for $e_{\alpha}$ with a maximal number of crossings. 

Specifically, let $Q_{\alpha}$ be the pipe dream which has bumping tiles on the antidiagonals $i+j-1 = \beta_m$, for $1 \leq m \leq k$, and crossing tiles on all antidiagonals $i+j-1=\gamma$ for $\gamma \not\in \{ \beta_1, \beta_2, \ldots, \beta_k \}$.
See  Example~\ref{ex:e_alpha} for an example. It is easy to check that $Q_{\alpha}$ is a pipe dream for $e_{\alpha}$.

Now, let $P$ be any pipe dream for $e_{\alpha}$. We will show that the set of crossing tiles in $P$ is a subset of the crossing tiles of $Q_{\alpha}$. Indeed, suppose for the sake of contradiction that $P$ has a crossing tile on the antidiagonal $i+j-1 = \beta_m$ for some $\beta_m$. Then the pipe dream $P$ corresponds to a permutation which is greater than the simple transposition $s_{\beta_m}$ in Bruhat order. But, in fact, $e_{\alpha} \not\geq s_{\beta_m}$, a contradiction.

We have thus shown that every pipe dream for $w$ is a subset of $Q_{\alpha}$, so the only maximal degree term in $\fCM_{e_{\alpha}}(\bx)$ is the term corresponding to $Q_{\alpha}$.
The antidiagonal $i+j-1 = \gamma$ has $\gamma$ crossing tiles on it, for each $\gamma \not\in \{ \beta_1, \ldots, \beta_k \}$, and these $\gamma$'s are precisely the descents of $e_{\alpha}$. 
So $\deg Q_{\alpha} = \maj(e_{\alpha})$. Since $e_{\alpha}$ is fireworks, we have $\raj(e_{\alpha}) = \maj(e_{\alpha})$.
\end{proof} 
 
\begin{example}\label{ex:e_alpha}
If $\alpha = (2,2,3,2)$, then $e_\alpha$ has one-line notation $214376598$. Below, we draw the Rothe diagram (on the left) and the pipe dream $Q_{\alpha}$ (on the right).
 \[
  \begin{tikzpicture}[x=1.5em,y=1.5em]
      \draw[color=black, thick](0,1)rectangle(9,10);
     \filldraw[color=black, fill=gray!30, thick](0,9)rectangle(1,10);
     \filldraw[color=black, fill=gray!30, thick](2,7)rectangle(3,8);
     \filldraw[color=black, fill=gray!30, thick](4,5)rectangle(5,4);
     \filldraw[color=black, fill=gray!30, thick](4,6)rectangle(5,5);
     \filldraw[color=black, fill=gray!30, thick](5,6)rectangle(6,5);
     \filldraw[color=black, fill=gray!30, thick](7,3)rectangle(8,2);
     \draw[thick,darkblue] (9,9.5)--(1.5,9.5)--(1.5,1);
     \draw[thick,darkblue] (9,8.5)--(0.5,8.5)--(0.5,1);
     \draw[thick,darkblue] (9,7.5)--(3.5,7.5)--(3.5,1);
     \draw[thick,darkblue] (9,6.5)--(2.5,6.5)--(2.5,1);
     \draw[thick,darkblue] (9,5.5)--(6.5,5.5)--(6.5,1);
	 \draw[thick,darkblue] (9,4.5)--(5.5,4.5)--(5.5,1);
	 \draw[thick,darkblue] (9,3.5)--(4.5,3.5)--(4.5,1);
	 \draw[thick,darkblue] (9,2.5)--(8.5,2.5)--(8.5,1);
	 \draw[thick,darkblue] (9,1.5)--(7.5,1.5)--(7.5,1);
     \filldraw [black](1.5,9.5)circle(.1);
     \filldraw [black](0.5,8.5)circle(.1);
     \filldraw [black](3.5,7.5)circle(.1);
     \filldraw [black](2.5,6.5)circle(.1);
     \filldraw [black](6.5,5.5)circle(.1);
     \filldraw [black](5.5,4.5)circle(.1);
     \filldraw [black](4.5,3.5)circle(.1);
     \filldraw [black](8.5,2.5)circle(.1);
     \filldraw [black](7.5,1.5)circle(.1);
     \end{tikzpicture} \qquad \qquad
\begin{tikzpicture}[x=1.5em,y=1.5em]
\draw[step=1,gray, thin] (0,0) grid (9,9);
\draw[color=black, thick](0,0)rectangle(9,9);
\draw[thick,rounded corners,color=blue](0,8.5)--(1.5,8.5)--(1.5,9);
\draw[thick,rounded corners,color=blue] (0,7.5)--(0.5,7.5)--(0.5,9);
\draw[thick,rounded corners,color=blue] (0,6.5)--(1.5,6.5)--(1.5,8.5)--(3.5,8.5)--(3.5,9);
\draw[thick,rounded corners,color=blue] (0,5.5)--(0.5,5.5)--(0.5,7.5)--(2.5,7.5)--(2.5,9);
\draw[thick,rounded corners,color=blue] (0,4.5)--(2.5,4.5)--(2.5,7.5)--(5.5,7.5)--(5.5,9);
\draw[thick,rounded corners,color=blue] (0,3.5)--(1.5,3.5)--(1.5,6.5)--(4.5,6.5)--(4.5,9);
\draw[thick,rounded corners,color=blue] (0,2.5)--(0.5,2.5)--(0.5,5.5)--(3.5,5.5)--(3.5,8.5)--(6.5,8.5)--(6.5,9);
\draw[thick,rounded corners,color=blue] (0,1.5)--(1.5,1.5)--(1.5,3.5)--(3.5,3.5)--(3.5,5.5)--(5.5,5.5)--(5.5,7.5)--(7.5,7.5)--(7.5,9);
\draw[thick,rounded corners,color=blue] (0,0.5)--(0.5,0.5)--(0.5,2.5)--(2.5,2.5)--(2.5,4.5)--(4.5,4.5)--(4.5,6.5)--(6.5,6.5)--(6.5,8.5)--(8.5,8.5)--(8.5,9);
\end{tikzpicture}, \qedhere
\]
 \end{example}

 \begin{remark}
 	In fact, the proof of Lemma~\ref{lem:deg_ealpha} shows that there is a unique pipe dream for $e_{\alpha}$ of top degree, so $\fCM_{e_\alpha}(\bx)$ is a single monomial.
 \end{remark}

\begin{proposition}\label{prop:dominantMaxShape}
	Let $w \in S_n$ have shape $\alpha$. The permutation $w$ is $\leq_{LR}$-maximal among permutations of shape $\alpha$ if and only if $w$ is dominant.
\end{proposition}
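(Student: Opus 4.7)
The plan is to prove the two directions of the proposition separately.

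For the direction \emph{dominant implies $\leq_{LR}$-maximal}, I would use an inversion-counting argument. Suppose $w$ is dominant of shape $\alpha$. By Proposition~\ref{prop:raj=inv}, $\inv(w)=\raj(w)$, and by Lemma~\ref{lem:set_partition_to_raj}, $\raj$ depends only on shape, so $\raj(w)=\raj(\alpha)$. Any $w'>_{LR}w$ satisfies $\inv(w')>\inv(w)$, since each $\leq_{LR}$-cover strictly increases $\inv$. If furthermore $w'$ has shape $\alpha$, then Lemma~\ref{lem:rajinv_inequality} yields $\inv(w')\le\raj(w')=\raj(\alpha)=\inv(w)$, a contradiction.

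For the reverse direction, I would prove the contrapositive: if $w$ of shape $\alpha$ is not dominant, then $w$ is not $\leq_{LR}$-maximal. The plan is to exhibit a shape-preserving cover of $w$ going up in $\leq_L$ or $\leq_R$. Using Lemmas~\ref{lem:cover_blob} and~\ref{lem:blob_LIS}, a right cover $ws_p\gtrdot_R w$ preserves shape exactly when there exists $r>p+1$ with $w(p)<w(r)<w(p+1)$ and $L_w(r,w(r))\ge L_w(p+1,w(p+1))$, where $L_w(j,w(j))$ denotes the length of the longest increasing subsequence of $w$ beginning at position $j$. The analogous condition on $w^{-1}$ controls shape-preserving left covers of $w$, via the equality $\mathrm{shape}(w)=\mathrm{shape}(w^{-1})$.

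A useful special case handles itself: when $w$ contains a $132$ pattern at three consecutive positions $(p,p+1,p+2)$, a direct set-inclusion argument comparing the index sets defining $L_w(p+2,w(p+2))$ and $L_w(p+1,w(p+1))$ yields the inequality automatically, producing a shape-preserving right cover at $s_p$. More generally, starting from any $132$ pattern $(i,j,k)$ in $w$, taking the smallest $m\in[i,j]$ with $w(m)>w(k)$ produces a $132$ pattern $(m-1,m,k)$ whose first two positions are adjacent, with $p=m-1$ an ascent of $w$; the case $k=m+1$ reduces to the previous one.

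The main obstacle is the general case where $k>m+1$ and where the LIS inequality $L_w(k,w(k))\ge L_w(m,w(m))$ might fail in $w$ itself. My plan for this is to show that whenever the direct $w$-side argument fails, the inverse permutation $w^{-1}$ contains a $132$ pattern for which the analogous LIS inequality \emph{does} hold, yielding a shape-preserving right cover of $w^{-1}$ and hence a shape-preserving left cover of $w$. Certifying this symmetric dichotomy---that at least one of $w$, $w^{-1}$ always admits a valid configuration when $w$ is not dominant---is where I expect the technical heart of the argument to lie, likely requiring careful casework on the blob diagram near the dots $(m,w(m))$ and $(k,w(k))$, or an inductive reduction on $\raj(\alpha)-\inv(w)$ that chains shape-preserving covers until reaching a dominant representative of $X_\alpha$.
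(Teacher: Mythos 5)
Your forward direction (dominant $\Rightarrow$ $\leq_{LR}$-maximal) is correct and is a genuinely different, and in fact slicker, argument than the one in the paper. The paper directly examines a right cover $ws_i >_R w$ of a dominant $w$ and shows by a blob-diagram analysis that the shape must change, then transposes to handle left covers. You instead count inversions: dominance gives $\inv(w) = \raj(w)$, the Rajchgot index depends only on shape, and every $\leq_{LR}$-cover increases $\inv$ by $1$, so $w' >_{LR} w$ of the same shape would have $\inv(w') > \inv(w) = \raj(\alpha) \geq \inv(w')$, a contradiction. This is correct and shorter; it replaces a local combinatorial check with a numerical one.

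The reverse direction is where your proposal has a genuine gap, and you acknowledge as much. Your characterization of shape-preserving upward covers in terms of longest-increasing-subsequence lengths is correct, and the idea of a symmetric dichotomy between $w$ and $w^{-1}$ is exactly the right shape. But you never pin down a concrete condition that decides, from a given $132$-pattern, whether to move up in $\leq_R$ or in $\leq_L$, and you say the ``technical heart'' of certifying this dichotomy is still outstanding. That heart is precisely what the paper supplies: it chooses a $132$-pattern $((i_1,i_2,i_3),(j_1,j_2,j_3))$ of \emph{minimal magnitude} $(i_2-i_1)+(j_2-j_1)$, looks at the blob indices $d_2$ and $d_3$ of the values $j_2$ and $j_3$, and moves up in right weak order when $d_2 \leq d_3$ (and in left weak order, by symmetry, when $d_2 \geq d_3$). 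Minimality of magnitude is what forces the shifted pattern to stay a $132$-pattern and, after a short case split on whether $d = d_3 - 1$, guarantees the shape is preserved. Your idea of taking the smallest $m \in [i,j]$ with $w(m) > w(k)$ is a step toward this but is not enough; without the minimal-magnitude device and the explicit $d_2$ versus $d_3$ comparison you do not have a complete argument, only a plausible plan.
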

\begin{proof}
($\Rightarrow$) Suppose $w \in S_n$ has shape $\alpha$ and is not dominant. We show it is not $\leq_{LR}$-maximal among permutations of that shape.

	Recall that a permutation $u$ is dominant if and only if there do \textbf{not} exist $i_1 < i_2 < i_3$ and $j_1 < j_2 < j_3$ with $u(i_1) = j_1$, $u(i_2) = j_3$, $u(i_3) = j_2$. We will call such $((i_1, i_2, i_3), (j_1, j_2, j_3))$ a \newword{$132$-pattern}.  We define the \newword{magnitude} of a $132$-pattern $((i_1, i_2, i_3), (j_1, j_2, j_3))$ to be the positive integer $(i_2-i_1) + (j_2-j_1)$. 

Consider a $132$-pattern $((i_1, i_2, i_3), (j_1, j_2, j_3))$ of minimal magnitude among all $132$-patterns of $w$.
Let $j_1 \in \pi_{d_1}(w)$, $j_2 \in \pi_{d_2}(w)$ and $j_3 \in \pi_{d_3}(w)$. Since $(i_1, j_1)$ is strictly northwest of both $(i_2, j_3)$ and $(i_3, j_2)$ in the diagram, we have $d_1 < d_2$ and $d_1 < d_3$. 

Without loss of generality, we will assume that $d_2 \leq d_3$ and show that $w$ is not $\leq_R$-maximal. (If we had made the opposite assumption that $d_2 \geq d_3$, the same argument {\it mutatis mutandis} would show that $w$ is not $\leq_L$-maximal.)

Set $i = i_2 - 1$ and $j = w(i)$. If $j>j_2$, then $((i_1, i, i_3), (j_1, j_2, j))$ is a $132$-pattern in $w$ with magnitude strictly less than $((i_1, i_2, i_3), (j_1, j_2, j_3))$, a contradiction. Also, clearly $i \neq i_3$, so $j \neq j_2$. Hence $j<j_2$. Thus, $w s_i >_R w$. We will therefore be done if we show that $w s_i$ has the same shape as $w$. 

Let $j \in \pi_d(w)$. Since $(i,j)$ is strictly northwest of $(i_3, j_2)$ in the diagram for $w$, we have $d < d_2 \leq d_3$.
If $d \leq d_3 -2$, then $w$ and $w s_i$ automatically have the same shape, since in this case the longest increasing run of $w$ starting at $j$ does not involve $j_3$. So it remains to analyze the case where $d = d_3-1$.

Assume therefore that $d=d_3-1$. Then we must have $d_2 = d_3$. Then observe that $(i,j)$ is strictly northwest of both $(i_2, j_3)$ and $(i_3, j_2)$ in the diagram for $w$, and that $j_2$ and $j_3$ are both in $\pi_{d_3}(w)$. Therefore, $ws_i$ has an increasing substring starting with $j$ of the same length as the maximal such run of $w$, since we may merely prepend $j$ onto a maximal increasing substring of either permutation starting with $j_2$.

($\Leftarrow$) Suppose $w$ is dominant and that $ws_i >_R w$. We will show that $\alpha(w) \neq \alpha(ws_i)$. Since $\alpha(w) = \alpha(w^{-1})$ and inverses of dominant permutations are also dominant, an equivalent argument shows that left covers of $w$ also have different shapes from $\alpha(w)$.  

Let $j = w(i)$ and $j' = w(i+1)$. By assumption $j < j'$. Since $w$ is dominant, $j'$ is the least integer greater than $j$ that is not $w(k)$ for some $k < i$. Therefore, any maximal increasing subsequence of $w$ starting with $j$ must include the letter $j'$, and indeed must start $jj'$. Let $j \in \pi_d(w)$. Then $j' \in \pi_{d+1}(w)$. 

If we have $k \in \pi_{d+1}(w)$, then $k \in \pi_{d+1}(ws_i)$. This is because the maximal increasing subsequences of $w$ starting with $k$ are the same as the maximal increasing subsequences of $ws_i$ starting with $k$.
On the other hand, $j \notin \pi_d(ws_i)$ since every maximal increasing subsequence of $w$ starting with $j$ must start $jj'$, but $j'$ does not follow $j$ in $ws_i$. We can however find increasing runs of $ws_i$ starting with $j$ that are of length one smaller, so $j \in \pi_{d+1}(ws_i)$. Therefore, $\alpha_{d+1}(w) < \alpha_{d+1}(ws_i)$ and so $\alpha(w) \neq \alpha(ws_i)$.
\end{proof}

\begin{theorem} \label{thm:raj=deg}
	Let $w \in S_n$. Then $\deg \fCM_w(\bx) = \raj(w)$.
\end{theorem}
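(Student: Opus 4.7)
The plan is to sandwich $\deg \fCM_w(\bx)$ between two copies of $\raj(w)$, using the layered permutation $e_\alpha$ on the bottom and a dominant permutation $w^{\mathrm{dom}}$ on the top, where $\alpha$ is the shape of $w$. Everything will then follow from Lemma~\ref{lem:divided_diffs} together with the two already-computed extreme cases.

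For the lower bound, I would invoke Lemma~\ref{lem:fireworks_factorizations}, which gives a length-additive factorization $w = u\, e_\alpha\, v$ and thus shows $e_\alpha \leq_{LR} w$. Lemma~\ref{lem:divided_diffs} then yields $\deg \fCM_{e_\alpha}(\bx) \leq \deg \fCM_w(\bx)$, and Lemma~\ref{lem:deg_ealpha} evaluates the left-hand side as $\raj(e_\alpha)$. Since $e_\alpha$ and $w$ share the shape $\alpha$, Corollary~\ref{cor:LR_raj_ineq} (or directly Lemma~\ref{lem:set_partition_to_raj}) gives $\raj(e_\alpha) = \raj(w)$, establishing $\raj(w) \leq \deg \fCM_w(\bx)$.

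For the upper bound, I would use Proposition~\ref{prop:dominantMaxShape}: iterating its proof that any non-dominant permutation of shape $\alpha$ has a $\leq_{LR}$-cover of the same shape, one obtains a dominant permutation $w^{\mathrm{dom}}$ of shape $\alpha$ with $w \leq_{LR} w^{\mathrm{dom}}$. Lemma~\ref{lem:divided_diffs} then gives $\deg \fCM_w(\bx) \leq \deg \fCM_{w^{\mathrm{dom}}}(\bx)$, and Lemma~\ref{lem:deg_dominant} (together with Proposition~\ref{prop:raj=inv}) identifies the right-hand side as $\inv(w^{\mathrm{dom}}) = \raj(w^{\mathrm{dom}}) = \raj(w)$, where again the last equality is because $w$ and $w^{\mathrm{dom}}$ share the shape $\alpha$. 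Combining the two inequalities yields $\deg \fCM_w(\bx) = \raj(w)$.

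The substantive work has all been done in advance: the blob-diagram/shape formalism of Section~\ref{sec:fireworks}, the monotonicity of degrees along two-sided weak order (Lemma~\ref{lem:divided_diffs}, itself powered by the unique top-degree pipe dream argument), and the direct pipe-dream computations for the layered and dominant extremes. Consequently there is no genuine obstacle remaining for this theorem; the only point to check carefully is that Proposition~\ref{prop:dominantMaxShape} actually produces a dominant $\leq_{LR}$-upper bound of the same shape for every $w$, which is immediate by induction on the $\leq_{LR}$-height within the finite set of permutations of shape $\alpha$.
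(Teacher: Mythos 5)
Your proposal is correct and follows essentially the same route as the paper's proof: sandwich $w$ in two-sided weak order between the layered permutation $e_\alpha$ (below, via Lemma~\ref{lem:fireworks_factorizations}) and a dominant permutation of the same shape (above, via Proposition~\ref{prop:dominantMaxShape}), then combine the monotonicity of $\deg\fCM_\bullet(\bx)$ and $\raj$ along $\leq_{LR}$ with the explicit computations at the two extremes. Your remark that Proposition~\ref{prop:dominantMaxShape} must be iterated to actually produce the dominant upper bound is a detail the paper leaves implicit, and you handle it correctly.
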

\begin{proof}
	Suppose the shape of $w$ is $\alpha$. By Proposition~\ref{prop:dominantMaxShape}, there is a dominant permutation $u$ of shape $\alpha$ with $w \leq_{LR} u$. 
By Lemma~\ref{lem:fireworks_factorizations}, the layered permutation $e_\alpha$ satisfies $e_{\alpha} \leq_{LR} w$.

	Let $d = \raj(e_\alpha)$. By Lemma~\ref{lem:deg_ealpha}, we have $d = \deg \fCM_{e_\alpha}(\bx)$. Since $\alpha(u) = \alpha$, we have $\raj(u) = d$. 
	By Lemma~\ref{lem:deg_dominant}, we have $\fCM_u(\bx) = \raj(u) = d$. 
		
	Since $\raj(\bullet)$ and $\deg \fCM_{\bullet}(\bx)$ are weakly increasing with respect by $\leq_{LR}$ (by Corollary~\ref{cor:LR_raj_ineq} and Lemma~\ref{lem:divided_diffs}), we must also have $\raj(w) = d$ and $\deg \fCM_w(\bx) = d$.
	\end{proof}

\subsection{Matrix Schubert varieties of maximal Castelnuovo--Mumford regularity}

As an application of the preceding ideas, we will determine the permutations $w \in S_n$ which maximize $\raj(w) - \inv(w)$. By Theorem~\ref{thm:raj=deg}, these are the permutations whose matrix Schubert varieties are of maximal Castelnuovo--Mumford regularity. 

\begin{theorem} \label{thm:MaxReg}
Let $n$ be a positive integer and define $k$ by $\binom{k}{2} \leq n \leq \binom{k+1}{2}$; if $n$ is a triangular number, then we may choose either value for $k$. Then the permutations in $S_n$ which achieve the highest value of $\raj(w) - \inv(w)$ are the layered permutations $e_{\alpha_1 \alpha_2 \cdots \alpha_k}$ where $j-1 \leq \alpha_j \leq j$ and $\sum \alpha_j = n$; subject to the convention that we permit $\alpha_1 = 0$ and treat $e_{0 \alpha_2 \alpha_3 \cdots \alpha_k}$ as meaning $e_{\alpha_2 \alpha_3 \cdots \alpha_k}$. All of these permutations achieve $\raj(w) - \inv(w) = \binom{n+1}{2} - kn + \binom{k+1}{3}$. 
\end{theorem}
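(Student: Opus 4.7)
\emph{Reduction.} The plan is to reduce to layered permutations, rewrite $\raj-\inv$ in a clean closed form, and then optimize. By Lemma~\ref{lem:fireworks_factorizations}, every $w \in S_n$ has a length-additive factorization $w = u\,e_{\alpha(w)}\,v$, so $\inv(w) \geq \inv(e_{\alpha(w)})$ with equality iff $w = e_{\alpha(w)}$. Combined with $\raj(w) = \raj(e_{\alpha(w)})$ (Corollary~\ref{cor:LR_raj_ineq}, since the two permutations share the same shape), this gives $\raj(w) - \inv(w) \leq \raj(e_{\alpha(w)}) - \inv(e_{\alpha(w)})$, reducing the problem to maximizing over compositions $\alpha = (\alpha_1, \ldots, \alpha_b)$ of $n$ with all $\alpha_j \geq 1$.

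\emph{Clean formula.} Set $s_j := j - \alpha_j$. Combining $\inv(e_\alpha) = \sum \binom{\alpha_j}{2}$, the formula $\raj(e_\alpha) = \binom{n}{2} - bn + \sum j\alpha_j$ (from Lemma~\ref{lem:set_partition_to_raj} after re-indexing the shape as $(\alpha_1, \ldots, \alpha_b)$), and the identity $j\alpha_j - \binom{\alpha_j}{2} = \binom{j+1}{2} - \binom{s_j+1}{2}$, a short manipulation yields
\[
\raj(e_\alpha) - \inv(e_\alpha) \;=\; f(b) \;-\; \sum_{j=1}^b \binom{s_j}{2}, \qquad f(b) \;:=\; \binom{n+1}{2} - bn + \binom{b+1}{3},
\]
subject to $\sum s_j = \binom{b+1}{2} - n$ and $s_j \leq j - 1$. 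The function $f$ is discretely convex in $b$ (since $f(b+1) - f(b) = \binom{b+1}{2} - n$ is nondecreasing) and attains its minimum $M(n,k) := \binom{n+1}{2} - kn + \binom{k+1}{3}$ at $b = k$ (with a tie at $b = k \pm 1$ when $n$ is triangular).

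\emph{Achievability.} Since $\binom{s_j}{2} \geq 0$ with equality iff $s_j \in \{0, 1\}$, the compositions with all $s_j \in \{0, 1\}$ --- equivalently $\alpha_j \in \{j-1, j\}$, with the convention that $\alpha_1 = 0$ encodes a shorter composition --- give $\sum \binom{s_j}{2} = 0$ and thus attain $\raj - \inv = f(b) = M(n, k)$. Such compositions summing to $n$ exist precisely when $\binom{b}{2} \leq n \leq \binom{b+1}{2}$, recovering the family listed in the theorem.

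\emph{Upper bound and main obstacle.} It remains to show $\raj(e_\alpha) - \inv(e_\alpha) < M(n, k)$ for any $\alpha$ not of this form, i.e., the strict inequality $\sum_j \binom{s_j}{2} > f(b) - f(k)$. This is the main obstacle: although $\binom{\cdot}{2}$ is convex, the asymmetric constraint $s_j \leq j - 1$ (notably $s_1 \leq 0$) prevents the naive balancing optimum when $b$ is far from $k$. My plan is a swap-and-split analysis. A unit swap moving mass from $\alpha_i$ to $\alpha_j$ changes $\raj - \inv$ by $s_j - s_i - 1$, while splitting $\alpha_j$ into $(1, \alpha_j - 1)$ changes it by $\alpha_j - (\alpha_1 + \cdots + \alpha_{j-1}) - 2$. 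I would show that any non-extremal composition admits either a strictly beneficial single move or a strictly beneficial composite move --- for instance, a neutral split of $\alpha_1 = 2$ into $(1, 1)$ followed by a beneficial swap, as is required for $\alpha = (2, 3, 5)$ when $n = 10$ --- and that iterating these drives $\alpha$ toward a Case A extremal form. The technical crux is verifying that such composite beneficial moves always exist in the non-extremal case.
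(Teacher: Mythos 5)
Your reduction to layered permutations, your change of variables $s_j = j - \alpha_j$, and your identity
\[
\raj(e_\alpha) - \inv(e_\alpha) = f(b) - \sum_{j=1}^{b}\binom{s_j}{2}, \qquad f(b) = \binom{n+1}{2} - bn + \binom{b+1}{3},
\]
are all correct and essentially coincide with what the paper does (the paper writes the subtracted term as $\binom{\alpha_j - j + 1}{2}$, which equals $\binom{s_j}{2}$). The achievability half is also fine. The genuine gap is exactly where you flagged it: you have not shown that a composition outside the family satisfies the strict inequality $\sum_j \binom{s_j}{2} > f(b) - f(k)$. Your plan is to exhibit a strictly beneficial (possibly composite) move for every non-extremal $\alpha$, but that existence claim is the entire content of the optimization step, and your example $\alpha = (2,3,5)$, $n = 10$ already shows that single moves do not suffice. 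Observing that $f$ is discretely convex in $b$ does not close the gap either, precisely because the constraint $s_j \le j-1$ (notably $s_1 \le 0$) prevents a clean separation of the $b$-optimization from the $s$-optimization.

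The paper avoids this difficulty by working in the opposite direction: it takes an arbitrary \emph{maximizer} $\alpha$ and derives constraints from the fact that no single elementary move can strictly increase the objective, rather than trying to show that every non-maximizer admits a beneficial move. Concretely, comparing $\alpha$ against $(\alpha_1-1,\ldots,\alpha_i+1,\ldots)$ gives $\alpha_i \ge i - 1$; comparing against $(1,\alpha_1,\ldots,\alpha_i-1,\ldots)$ (a prepend-and-decrement, valid when $\alpha_i > 1$) gives $\alpha_i \le i+1$; and a single swap rules out having both $\alpha_i = i-1$ and $\alpha_j = j+1$ simultaneously. This forces either $\alpha_i \in \{i-1,i\}$ for all $i$, or $\alpha_i \in \{i,i+1\}$ for all $i$; after prepending a $0$ in the latter case, one is always in the former regime, and then $\binom{\ell}{2} \le n \le \binom{\ell+1}{2}$ falls out automatically, identifying $\ell = k$ without any separate comparison of $f(b)$ to $f(k)$. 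Note the paper's argument never needs a ``composite'' move and never needs to characterize all improving moves; it only needs that a handful of specific moves fail to improve a maximizer. If you wish to salvage your version, you would need to prove the existence claim at the end of your proposal; otherwise you should restructure along the paper's lines, which is substantially less work.
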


\begin{remark}
If $n$ is a triangular number, $n = \binom{m}{2}$, then there is a unique permutation which maximizes $\raj(w) - \inv(w)$, namely $e_{12 \cdots (m-1)} = e_{012 \cdots (m-1)}$. In general, if $n = \binom{k}{2} + j$ for $0\leq  j \leq k$, there are $\binom{k}{j}$ permutations which achieve the bound.
\end{remark}

\begin{remark}
The sequence of values $\binom{n+1}{2} - kn + \binom{k+1}{3}$ starts 0, 0, 1, 2, 4, 7, 10, 14, 19, 25, 31, 38, 46, 55, 65, 75 \dots. 
The differences between consecutive terms of this sequence are 0, 1, 1, 2, 3, 3, 4, 5, 6, 6, 7, 8, 9, 10, 10 \dots; in other words, the increasing sequence where each triangular number occurs twice and all other positive integers occur once.
The sequence $\binom{n+1}{2} - kn + \binom{k+1}{3}$ (with the initial zeroes deleted) is \cite[A023536]{OEIS}.
\end{remark}

\begin{proof}[Proof of Theorem~\ref{thm:MaxReg}] 
Let $w \in S_n$ and say it has shape $\alpha$. By Lemma~\ref{lem:fireworks_factorizations}, take a length-additive factorization $w = u e_{\alpha} v$ with $\raj(w) = \raj(e_{\alpha})$. Then $\raj(w) - \inv(w) = \raj(e_{\alpha}) - \inv(u) - \inv(e_{\alpha}) - \inv(v) \geq \raj(e_{\alpha}) - \inv(e_{\alpha})$, with equality if and only if $u=v=1$. So the maximum regularity will occur only for layered permutations.

Now, let $\alpha = (\alpha_1, \alpha_2, \ldots, \alpha_{\ell})$ be a composition of $n$.  We compute $\raj(e_{\alpha}) - \inv(e_{\alpha})$. The unique maximal pipe dream for $e_{\alpha}$ is described in the proof of Lemma~\ref{lem:deg_ealpha} and depicted in Example~\ref{ex:e_alpha}. The number of crosses in it is $\binom{n+1}{2} - \sum_{i=1}^{\ell} (\alpha_1 + \alpha_2 + \cdots + \alpha_i)$; the term $(\alpha_1 + \alpha_2 + \cdots + \alpha_i)$ being the number of elbows on the antidiagonal between the $i$th and the $(i+1)$st block.  Meanwhile, the length of $e_{\alpha}$ is $\inv(e_\alpha)=\sum_{j=1}^{\ell} \binom{\alpha_j}{2}$. So
\[ \raj(e_{\alpha}) - \inv(e_{\alpha}) = \binom{n+1}{2} - \sum_{i=1}^{\ell}  (\alpha_1 + \alpha_2 + \cdots + \alpha_i) - \sum_{j=1}^\ell \binom{\alpha_j}{2} . \]
We note for future reference that this formula would still be correct if we prefixed zeroes to our composition vector $\alpha$. 

We now rearrange the formula. We have $ \sum_{i=1}^{\ell}  (\alpha_1 + \alpha_2 + \cdots + \alpha_i) = \sum_{j=1}^{\ell+1} (\ell-j) \alpha_j$. 
So
\begin{align*}
	\raj(e_{\alpha}) - \inv(e_{\alpha}) &= \binom{n+1}{2} - \sum_{j=1}^{\ell} \left( (\ell+1-j) \alpha_j + \binom{\alpha_j}{2} \right) \\
&= \binom{n+1}{2} - \sum_{j=1}^{\ell} \left( \ell \alpha_j + \binom{\alpha_j-j+1}{2} - \binom{j}{2} \right) \\
&=  \binom{n+1}{2} - \ell \sum_{j=1}^{\ell} \alpha_j  + \sum_{j=1}^{\ell} \binom{j}{2} - \sum_{j=1}^{\ell} \binom{\alpha_j-j+1}{2}  \\
&= \binom{n}{2} - \ell n + \binom{\ell+1}{3} - \sum_{j=1}^{\ell} \binom{\alpha_j-j+1}{2}  \qquad (\ast)
\end{align*}

It is now easy to see that the claimed compositions achieve $\raj(e_{\alpha}) - \inv(e_{\alpha}) = \binom{n+1}{2} - kn + \binom{k+1}{3}$: For these compositions, we have $\alpha_j-j+1 \in \{ 0,1 \}$ for all $j$, so the final sum is $0$. (In the cases where $\alpha_1=0$, we have used that our formula is valid with an initial $0$ prepended to $\alpha$.)

We now need to show that no other composition can achieve as high a value.
It is tempting to think that we can simply say that we are done because $\binom{\alpha_j-j+1}{2} \geq 0$. However, we do not know that $\ell$ (the number of parts of our composition) is equal to $k$ (the index of the largest triangular number below $n$). We take a different route. 

Let $(\alpha_1, \alpha_2, \ldots, \alpha_{\ell})$ be a composition which maximizes $(\ast)$, and we emphasize that we have not prefixed any $0$'s, so all the $\alpha_i$ are strictly positive. 
Replacing $\alpha$ by $(\alpha_1-1, \alpha_2, \ldots, \alpha_i+1, \ldots, \alpha_{\ell})$ and subtracting the resulting values of $(\ast)$, we deduce that $\alpha_i - i \geq \alpha_1-2$ so $\alpha_i \geq i+\alpha_1-2 \geq i-1$. On the other hand, if $\alpha_i>1$ then we may replace $\alpha$ by $(1, \alpha_1, \alpha_2, \ldots, \alpha_i-1, \ldots, \alpha_n)$; subtracting the resulting values for $(\ast)$ gives $\alpha_i-(i+1) \leq 0$ so $\alpha_i \leq i+1$ and, of course, if $\alpha_i=1$ then we also have $\alpha_i \leq i+1$. In short, we see that $i-1 \leq \alpha_i \leq i+1$. 

We now claim that it is impossible that we both have $\alpha_i = i-1$ and $\alpha_j = j+1$ for some indices $i$ and $j$. Indeed, if this occurred, then replacing $\alpha$ with $(\alpha_1, \alpha_2, \ldots, \alpha_i+1, \ldots, \alpha_j-1, \ldots, \alpha_{\ell})$ would keep the $i$th summand in $(\ast)$ the same and decrease the $j$th summand from $1$ to $0$, making $(\ast)$ larger. So it either holds that $i-1 \leq \alpha_i \leq i$ or that $i \leq \alpha_i \leq i+1$ for all $i$. If we are in the latter case, prepend a $0$ to $\alpha$. The modified $\alpha$ will then have $i-1 \leq \alpha_i \leq i$ for all $i$. We will now use $\ell$ to refer to the number of parts in this modified $\alpha$.

We have shown that the optimal $\alpha$ has $i-1 \leq \alpha_i \leq i$ for all $i$, and possibly an initial $0$, and we have excluded the case $(0,1,2,\ldots,\ell-1)$ by fiat. Then \[
n = \sum_{i=1}^{\ell} \alpha_i \leq \sum_{i=1}^{\ell} i = \binom{\ell+1}{2}
\]  and 
\[
n = \sum_{i=1}^{\ell} \alpha_i \geq \sum_{i=1}^{\ell} (i-1) = \binom{\ell}{2},
\] so $\binom{\ell}{2} \leq n \leq \binom{\ell+1}{2}$, and $\ell$ is the $k$ in the statement of the theorem (or one of the two values for $k$, if $n$ is a triangular number).
\end{proof}

\section{Rajchgot polynomials}
\label{sec:rajchgotpolynomials}

We identify an important family of polynomials, indexed by set partitions.

\begin{definition}
Let $\pi$ be a set partition of $[n]$ and let $w$ be the unique fireworks permutation with $\pi(w) = \pi$. 
We define the  \newword{Rajchgot polynomial} $\fR_{\pi}(\bx)$ to be $\fCM_w(\bx)$.
\end{definition}

The goal of this section is to prove the following factorization theorem for double Castelnuovo--Mumford polynomials, establishing the first part of Theorem~\ref{thm:main}.

\begin{theorem}\label{thm:factorization}
For any $w \in S_n$,
\[ \fCM_w(\bx;\by) = \fR_{\pi(w)}(\bx) \fR_{\pi(w^{-1})}(\by). \]
\end{theorem}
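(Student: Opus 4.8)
The plan is to establish the factorization by descending through two-sided weak order from dominant permutations, using the isobaric divided-difference recursion for double Grothendieck polynomials together with the blob-diagram combinatorics of Sections~\ref{sec:rajchgot}--\ref{sec:fireworks}. Recall that $\fCM_w(\bx;\by)$ is, up to the sign $(-1)^{\inv(w)}$, the bidegree-$(d,d)$ part of $\fG_w(\bx;\by)$, where $d = \raj(w)$ by Corollary~\ref{cor:high_degree_xy} and Theorem~\ref{thm:raj=deg}; and that, by Proposition~\ref{prop:dominantMaxShape} and Corollary~\ref{cor:LR_raj_ineq}, every $w$ of a given shape $\alpha$ sits weakly below a dominant permutation of the same shape in $\leq_{LR}$, with $\raj$ (hence the bidegree) constant along the connecting saturated chains.

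First I would dispose of the two ``ends''. For a dominant $u$, Proposition~\ref{prop:dominant_background} gives $\fG_u(\bx;\by) = \prod_{(i,j)\in\Rothe(u)}(x_i + y_j - x_i y_j)$, whose top bidegree part is $(-1)^{\inv(u)}\prod_{(i,j)\in\Rothe(u)} x_i y_j$; hence $\fCM_u(\bx;\by) = \prod_{(i,j)\in\Rothe(u)} x_i y_j$, which already factors as a monomial in $\bx$ times a monomial in $\by$. Likewise, for a layered $e_\alpha$ the proof of Lemma~\ref{lem:deg_ealpha} exhibits the unique top pipe dream $Q_\alpha$, and substituting it into the double pipe-dream formula of Theorem~\ref{thm:pipedreams} and taking the top bidegree part gives $\fCM_{e_\alpha}(\bx;\by) = \prod_{(i,j)\in Q_\alpha} x_i y_j$, again a product of an $\bx$-monomial and a $\by$-monomial, which one identifies with $\fR_{\pi(e_\alpha)}(\bx)\,\fR_{\pi(e_\alpha^{-1})}(\by)$ since $e_\alpha$ is a fireworks involution. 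For the inductive step, if $w$ is not dominant, Proposition~\ref{prop:dominantMaxShape} furnishes a cover $ws_i \gtrdot_R w$ (or $s_i w \gtrdot_L w$) with $ws_i$ of the same shape as $w$, so $\fG_w(\bx;\by) = \overline{\partial}_i(\fG_{ws_i}(\bx;\by))$ acting in the $\bx$-variables. Since $\raj$ is preserved the $\bx$-degree does not drop, so on top bidegree parts $\overline{\partial}_i$ becomes the operator $D_i\colon g \mapsto x_i\partial_i g - g$ acting on the $\bx$-variables alone, giving $\fCM_w(\bx;\by) = D_i\,\fCM_{ws_i}(\bx;\by)$, and dually in $\by$ for a left cover.

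What remains is to verify that $D_i$ sends the conjectured $\bx$-factor of $ws_i$ to that of $w$ while leaving the $\by$-factor fixed, and symmetrically for left covers; this is where I expect the bulk of the effort. By Lemma~\ref{lem:cover_blob}, such a right cover leaves $\pi(w)$ unchanged and alters $\pi(w^{-1})$ only through the matching left cover on $w^{-1}$ described by Lemma~\ref{lem:cover_blob_left}, so the task becomes to show that $D_i$ fixes $\fR_{\pi(w)}(\bx)$ and that the two relevant values of $\fR_{\pi(w^{-1})}(\by)$ coincide. Using Lemma~\ref{lem:cover_rajcode} and the description of leading terms of Rajchgot polynomials as $\bx^{\rajc}$ with coefficient $1$ (Theorems~\ref{thm:exponentbound}, \ref{thm:exponentachieved}, \ref{thm:coeff1}), this should reduce to a combinatorial identity among Rajchgot polynomials of comparable shape. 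An alternative that avoids fighting the non-invertibility of $D_i$ is to run the whole induction downward from the dominant end, exploiting $\fR_{\pi(w)}(\bx) = \fCM_{\Phi(w)}(\bx)$ (together with $\raj(\Phi(w)) = \raj(w)$ from Corollary~\ref{cor:fireworksraj} and $\Phi\Phi_{\rm inv}(w) = \Phi_{\rm inv}\Phi(w) = e_\alpha$ from Lemma~\ref{lem:phi_phiL_layered}) to reduce the general case first to fireworks permutations, then to the layered base case already handled. Either way, the crux is pinning down the interaction of a single divided difference with the blob-diagram bookkeeping for $\pi(w)$ and $\pi(w^{-1})$.
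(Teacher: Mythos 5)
Your overall architecture is genuinely different from the paper's: the paper proves this by taking the top bidegree part of the Cauchy identity $\fG_w(\bx;\by) = \sum_{q\ast p = w}\pm\,\fG_p(\bx)\fG_{q^{-1}}(\by)$ and using Lemma~\ref{lem:ealpha_lemma} to show that exactly one term, $(p,q)=(\Phi_{\rm inv}(w),\Phi(w))$, survives in top degree, whence the factorization is immediate. Your divided-difference induction is a legitimate alternative skeleton --- indeed the observation that $\overline{\partial}_i$ acts on top-degree parts as $\rN_i = x_i\partial_i - \id$ in the $\bx$-variables alone is exactly Proposition~\ref{prop:Raj_dd}, and Proposition~\ref{prop:dominantMaxShape} does supply the shape-preserving covers you need. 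The induction correctly propagates the *existence* of a factorization $F(\bx)G(\by)$, since each cover applies an operator to only one set of variables.

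However, the crux --- identifying $\rN_i$ applied to the $\bx$-factor of $\fCM_{ws_i}(\bx;\by)$ with the correct Rajchgot polynomial for $w$ --- is exactly where your proposal stops, and the tools you name for closing it do not work. First, Theorem~\ref{thm:coeff1} is deduced *from* Theorem~\ref{thm:factorization}, so invoking it here is circular; and even granting leading-term information, two polynomials with equal leading terms need not be equal, so a leading-monomial comparison cannot finish the inductive step. Second, your base case at dominant $u$ is not actually closed: you get $\fCM_u(\bx;\by)=\bx^{\invc(u)}\by^{\invc(u^{-1})}$, but identifying $\bx^{\invc(u)}$ with the Rajchgot polynomial attached to the relevant set partition of $u$ amounts to showing $\fCM_{\Phi_{\rm inv}(u)}(\bx)$ is that monomial, which is an instance of the theorem rather than an input to it (note $\Phi_{\rm inv}(u)$ need not be dominant). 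The gap is repairable within your framework: by Lemmas~\ref{lem:fireworks_factorizations} and~\ref{lem:interval_structure}, a shape-preserving right cover satisfies $\Phi_{\rm inv}(ws_i) = \Phi_{\rm inv}(w)\,s_i$ length-additively, so Proposition~\ref{prop:Raj_dd} applied to the inverse fireworks permutations gives precisely $\rN_i\bigl(\fCM_{\Phi_{\rm inv}(ws_i)}(\bx)\bigr) = \fCM_{\Phi_{\rm inv}(w)}(\bx)$, which is the identity your inductive step needs; running the induction down to $e_{\alpha}$ (where Lemma~\ref{lem:deg_ealpha} makes everything a single monomial) then pins down both factors and the normalization. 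As written, though, that argument is absent, so the proof is incomplete.
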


See Example~\ref{eg:posetExample} for examples of this Theorem.

\begin{lemma}\label{lem:demazure_fireworks}
Let $p \in [\Phi_{\rm inv}(w), w]_L$ and $q \in [\Phi(w), w]_R$. Then $q \ast p = w$ if and only if $p = \Phi_{\rm inv}(w)$ and $q = \Phi(w)$.
\end{lemma}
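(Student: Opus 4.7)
The plan is to use the unique length-additive factorization $w = u e_\alpha v$ provided by Lemma~\ref{lem:fireworks_factorizations}, where $\alpha$ is the shape of $w$, $\Phi(w) = u e_\alpha$, and $\Phi_{\rm inv}(w) = e_\alpha v$. The ``if'' direction is then immediate: by associativity of the Demazure product, length-additivity, and the idempotency $e_\alpha \ast e_\alpha = e_\alpha$, we have
\[
\Phi(w) \ast \Phi_{\rm inv}(w) = (u e_\alpha) \ast (e_\alpha v) = u \ast e_\alpha \ast e_\alpha \ast v = u \ast e_\alpha \ast v = u e_\alpha v = w.
\]

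For the converse, I would parametrize the relevant intervals: any $p \in [\Phi_{\rm inv}(w), w]_L$ is uniquely of the form $p = d \cdot e_\alpha v$ with $d \leq_L u$, and any $q \in [\Phi(w), w]_R$ is of the form $q = u e_\alpha \cdot f$ with $f \leq_R v$, both factorizations length-additive by concatenation with the given one for $w$. Associativity of $\ast$ then yields $q \ast p = u \ast y \ast v$, where $y := e_\alpha \ast f \ast d \ast e_\alpha$. The task reduces to proving $f = d = \mathrm{id}$.

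The next step is to show $y = e_\alpha$. From the definition $\tau_i \ast w \in \{w, s_i w\}$ (and the symmetric form $w \ast \tau_i$), one reads off the basic monotonicity $a \ast b \geq_L b$ and $a \ast b \geq_R a$ of the Demazure product. Hence $u \ast y \ast v \geq_R u \ast y \geq_L y$, so $w \geq_{LR} y$; Corollary~\ref{cor:LR_raj_ineq} then gives $\raj(w) \geq \raj(y)$. Applying Lemma~\ref{lem:ealpha_lemma} with $x = f \ast d$ yields $\raj(y) \geq \raj(e_\alpha)$. Since $w$ has shape $\alpha$, $\raj(w) = \raj(e_\alpha)$, so all three quantities are equal, and Lemma~\ref{lem:ealpha_lemma} forces $y = e_\alpha$.

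Finally, I would deduce $f = d = \mathrm{id}$ from $e_\alpha \ast f \ast d \ast e_\alpha = e_\alpha$. Since $e_\alpha \geq_R e_\alpha \ast (f \ast d) \geq_L f \ast d$, we get $f \ast d \leq e_\alpha$ in Bruhat order, hence $f \ast d \in S_\alpha$; since $f \leq_R (f \ast d)$, we also have $f \in S_\alpha$. I expect the main obstacle to be the final combinatorial step. The key point is that length-additivity of $e_\alpha v$ is equivalent to $v$ being the minimum-length representative of the left coset $S_\alpha v$: equivalently, for no simple generator $s_i$ with $i$ in a block of $\alpha$ is $i$ a left descent of $v$, and equivalently (under the characterization $s_i \leq_R v \Leftrightarrow v = s_i \cdot g$ length-additive $\Leftrightarrow i$ is a left descent of $v$) no such $s_i$ lies below $v$ in right weak order. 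Thus any nontrivial $f \in S_\alpha$, which must have some $s_i \in S_\alpha$ as the first letter of one of its reduced words and so satisfies $s_i \leq_R f$, cannot satisfy $f \leq_R v$; therefore $f = \mathrm{id}$. The symmetric argument using $u \in S^\alpha$ and $d \leq_L u$ gives $d = \mathrm{id}$, completing the proof.
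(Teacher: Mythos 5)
Your proof is correct and follows the same scaffolding as the paper's: both use the unique length-additive factorization $w = u e_\alpha v$ from Lemma~\ref{lem:fireworks_factorizations}, parametrize the two intervals identically, dispatch the ``if'' direction via idempotency of $e_\alpha$, and rewrite $q \ast p = u \ast y \ast v$ with $y = e_\alpha \ast f \ast d \ast e_\alpha$ so that Lemma~\ref{lem:ealpha_lemma} can be invoked. Where you diverge is the endgame of the converse. The paper argues the contrapositive directly: if at least one of $f$, $d$ is not the identity, then $e_\alpha \ast f$ or $d \ast e_\alpha$ is strictly longer than $e_\alpha$ (these are honest length-additive products since they sit inside the length-additive factorizations of $q$ and $p$), hence $y \neq e_\alpha$, hence $\raj(y) > \raj(e_\alpha)$ by Lemma~\ref{lem:ealpha_lemma}, hence $\raj(q \ast p) > \raj(w)$ by Corollary~\ref{cor:LR_raj_ineq}, so $q \ast p \neq w$. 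You instead assume $q \ast p = w$, squeeze $\raj(e_\alpha) \leq \raj(y) \leq \raj(w) = \raj(e_\alpha)$ (the upper bound using the monotonicity $a \ast b \geq_L b$, $a \ast b \geq_R a$ of the Demazure product together with Corollary~\ref{cor:LR_raj_ineq}), conclude $y = e_\alpha$ via the equality case of Lemma~\ref{lem:ealpha_lemma}, and then extract $f = d = \id$ by a parabolic-subgroup argument: $e_\alpha \geq_{LR} f \ast d$ puts $f, d$ in $S_\alpha$, and the length-additivity of $e_\alpha v$ (resp.\ $u e_\alpha$) means $v$ (resp.\ $u$) is a minimal coset representative, so a nontrivial $f \in S_\alpha$ with $f \leq_R v$ is impossible. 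The two final steps are logically equivalent contrapositives; the paper's length count is more elementary and self-contained, while your coset-representative argument is somewhat more structural and makes visible the role of $e_\alpha$ as longest element of $S_\alpha$, at the cost of invoking a bit of Coxeter-group background not otherwise needed in the paper.
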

By Lemma~\ref{lem:fireworks_factorizations}, every permutation $w \in S_n$ with $\alpha(w) = \alpha$ has a unique length-additive factorization $w = ue_\alpha v$, where $\Phi(w) = ue_\alpha$ and $\Phi_{\rm inv}(w) = e_\alpha v$. Therefore, the interval $[\Phi_{\rm inv}(w), w]_L$ is $\{ u' e_{\alpha} v : \id \leq_L u' \leq_L u \}$ and $[\Phi(w), w]_R$ is $\{ u e_{\alpha} v' : \id \leq_R v' \leq_R v \}$.

\begin{proof}
Let $w$ have shape $\alpha$ and
uniquely write $w = u e_{\alpha} v$ as above, so $p = u' e_{\alpha} v$ and $q = u e_{\alpha} v'$ for some $u' \in [\id,u]_L$ and some $v' \in [\id,v]_R$. 

If $p = \Phi_{\rm inv}(w)$ and $q = \Phi(w)$, then $q \ast p = u e_{\alpha} \ast e_{\alpha} v = u e_{\alpha} v = w$ as desired, since the layered permutation $e_\alpha$ is idempotent for the Demazure product $\ast$.

Now, suppose that at least one of $u'$ and $v'$ is not the identity; we must show that $(u e_{\alpha} v') \ast (u' e_{\alpha} v)$ is not equal to $u e_{\alpha} v$. 
Since the products in parentheses are length additive, we have $(u e_{\alpha} v') \ast (u' e_{\alpha} v) = u \ast e_{\alpha} \ast v' \ast u' \ast e_{\alpha} \ast v$.

Since at least one of $u'$ and $v'$ is not the identity, at least one of $e_{\alpha} \ast v'$ and $u' \ast e_{\alpha}$ is longer than $e_{\alpha}$ and thus $(e_{\alpha} \ast v') \ast (u' \ast e_{\alpha})$ is not $e_{\alpha}$. 
However, $e_{\alpha} \ast v' \ast u' \ast e_{\alpha}$ is of the form $e_{\alpha} \ast x \ast e_{\alpha}$, so Lemma~\ref{lem:ealpha_lemma} applies and we see that $\raj(e_{\alpha} \ast v' \ast u' \ast e_{\alpha}) > \raj(e_{\alpha})$.
Using Corollary~\ref{cor:LR_raj_ineq}, we then have $\raj(u \ast e_{\alpha} \ast v' \ast u' \ast e_{\alpha} \ast v) > \raj(e_{\alpha})$. 
However, we also have $\raj(e_{\alpha}) = \raj(u e_{\alpha} v)$, so we deduce that $u \ast e_{\alpha} \ast v' \ast u' \ast e_{\alpha} \ast v \neq u \ast e_{\alpha} \ast v$, as required.
\end{proof}

\begin{proof}[Proof of Theorem~\ref{thm:factorization}]
Recall the Cauchy identity for double Grothendieck polynomials (see \cite[Proof of Theorem~6.7]{Lenart.Robinson.Sottile}) is
\begin{equation}\label{eq:Cauchy}
\fG_w(\bx;\by) = \sum_{q \ast p = w} (-1)^{\inv(w) - \inv(p) - \inv(q)} \fG_p(\bx) \fG_{q^{-1}}(\by).
\end{equation}
We note that the condition $q \ast p = w$ implies that $p \leq_L w$ and $q \leq_R w$. 

Now, let's strip off the terms of degree $(\raj(w), \raj(w))$. These can only come from $(p,q)$ with $\raj(p) = \raj(q) = \raj(w)$. By combining Corollary~\ref{cor:dominates_raj} with Lemmas~\ref{lem:cover_blob} and~\ref{lem:cover_blob_left}, the conditions $p \leq_L w$ and $\raj(p) = \raj(w)$ are collectively equivalent to $p \in [\Phi_{\rm inv}(w), w]_L$. Similarly, we need $q \in [\Phi(w), w]_R$. So the highest degree parts of Equation~\eqref{eq:Cauchy} are also the highest degree parts of
\[ \sum_{\substack{q \ast p = w\\ p \in [\Phi_{\rm inv}(w), w]_L\\ q \in [\Phi(w), w]}} \fG_{p}(\bx) \fG_{q^{-1}}(\by) . \]
But now Lemma~\ref{lem:demazure_fireworks} says that the single term satisfying the summation conditions is $(p,q) = (\Phi_{\rm inv}(w), \Phi(w))$. 
So the highest degree part of $\fG_w(\bx;\by)$ is also the highest degree part of $\fG_{\Phi_{\rm inv}(w)}(\bx) \fG_{\Phi(w)^{-1}}(\by) = \fR_{\pi(w)}(\bx) \fR_{\pi(w^{-1})}(\by)$, as desired.
\end{proof}

We are now ready to prove another portion of Theorems~\ref{thm:degree} and~\ref{thm:main}, namely that the leading monomial of the double Grothendieck polynomial is at most $\bx^{\rajc(w)} \by^{\rajc(w^{-1})}$. The verification that there is in fact a monomial with this degree will occur in Section~\ref{sec:pushingPluses}.

For the following argument, it will be useful to consider pipe dreams as a special case of more general planar histories, confined to a rectangular region (cf.\ \cite{Fomin.Kirillov,Weigandt}).  A \newword{southwest planar history of size $n$} is a configuration of $n$ paths, each of which starts at the top edge of a rectangle and ends at the left edge.  Within the rectangle, paths move weakly southwest at all times.  Paths may cross, but they do not travel concurrently at any point, and there are no triple crossings.

To each crossing, we associate a simple reflection as follows.  Let $k$ be the number of paths which pass strictly southeast of the crossing.  Then we label the crossing with $s_{n-k-1}$.  To produce a word, order crossings from top to bottom, breaking ties within rows by reading from right to left.  The reader may verify that in the case of pipe dreams, this procedure produces the same reading word as the one we described in Section~\ref{GrothBackground}.
We associate a permutation to a planar history by taking the Demazure product of its reading word.

\begin{Theorem} \label{thm:exponentbound}
Let $w$ be a permutation, let $\rajc(w) = (r_1, \ldots, r_n)$ and let $\rajc(w^{-1}) = (s_1, \ldots, s_n)$. For any term order satisfying $x_1 < x_2 < \cdots < x_n$ and $y_1 < y_2 < \cdots < y_n$, every monomial of $\fCM_w(\bx; \by)$ is at most $x_1^{r_1} \cdots x_n^{r_n} y_1^{s_1} \cdots y_n^{s_n}$. 
\end{Theorem}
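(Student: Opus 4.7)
My first move is to invoke the factorization $\fCM_w(\bx;\by)=\fR_{\pi(w)}(\bx)\,\fR_{\pi(w^{-1})}(\by)$ from Theorem~\ref{thm:factorization}. Since the $\bx$ and $\by$ variables are disjoint, the leading monomial of the product in any term order refining both $x_1<\cdots<x_n$ and $y_1<\cdots<y_n$ is just the product of the two leading monomials in the respective term orders. Moreover, by the $\bx\leftrightarrow\by$ symmetry of double Grothendieck polynomials ($\fG_w(\bx;\by)=\fG_{w^{-1}}(\by;\bx)$), a bound on the leading monomial of $\fR_{\pi(w)}(\bx)$ applied to $w^{-1}$ gives the analogous bound on $\fR_{\pi(w^{-1})}(\by)$, so it is enough to show that the leading monomial of $\fR_{\pi(w)}(\bx)$ is at most $\bx^{\rajc(w)}$.

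To make this concrete, I would use the identification coming out of the proof of Theorem~\ref{thm:factorization}: the polynomial $\fR_{\pi(w)}(\bx)$ arises as the top-degree part of $\fG_{\Phi_{\rm inv}(w)}(\bx)$, namely $\fCM_{\Phi_{\rm inv}(w)}(\bx)$. By Corollary~\ref{cor::leftfireworks_perserves_code}, $\rajc(\Phi_{\rm inv}(w))=\rajc(w)$, so the task reduces to the following statement about inverse fireworks permutations: for any inverse fireworks $v$, every monomial $\bx^a$ of $\fCM_v(\bx)$ is at most $\bx^{\rajc(v)}$ in every term order with $x_1<\cdots<x_n$. Because both monomials have the same total degree $\raj(v)$, this term-order inequality is equivalent to the dominance condition on reverse partial sums: $\sum_{k\geq i}a_k\leq\sum_{k\geq i}r_k(v)$ for every $i$.

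I would prove this dominance bound by induction on $\inv(v')$ in the unique length-additive factorization $v=e_\alpha v'$ for inverse fireworks $v$ guaranteed by Lemma~\ref{lem:fireworks_factorizations} (with $u=\id$). The base case $v'=\id$ reduces to $v=e_\alpha$, which is handled by Lemma~\ref{lem:deg_ealpha}: the unique maximum pipe dream $Q_\alpha$ has row-count vector exactly $\rajc(e_\alpha)$, so the dominance bound holds with equality and with only one monomial to verify. For the inductive step, write $v=\tilde v s_i$ with $\tilde v$ inverse fireworks, $\tilde v<_R v$, and $\raj(\tilde v)=\raj(v)$; Lemma~\ref{lem:cover_rajcode} shows that $\rajc(\tilde v)$ differs from $\rajc(v)$ only in positions $i$ and $i+1$ by the swap-and-shift formula, so the relevant reverse partial sums are preserved for all $j\neq i+1$ and shift in a controlled way at $j=i+1$.

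The main obstacle is completing this inductive step at the pipe-dream level: the divided difference recursion $\overline{\partial_i}\fG_v=\fG_{\tilde v}$ runs in the wrong direction to express $\fCM_v$ directly from $\fCM_{\tilde v}$, and $\overline{\partial_i}$ is not invertible. I would therefore work directly with pipe dreams, describing a local ``push'' of a crossing tile across the boundary between rows $i$ and $i+1$ that transports each maximum pipe dream of $\tilde v$ into a maximum pipe dream of $v$, and then verifying cell-by-cell that this move alters the row counts in precisely the swap-and-shift manner matching the change in $\rajc$ from Lemma~\ref{lem:cover_rajcode}. This is the same family of local moves that Section~\ref{sec:pushingPluses} will use to construct a pipe dream achieving the bound, so the upper-bound argument here and the exact-achievability argument there should share their combinatorial core.
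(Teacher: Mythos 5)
Your opening reductions are fine and non-circular: Theorem~\ref{thm:factorization} is proved before Theorem~\ref{thm:exponentbound} and does not depend on it, so passing to $\fR_{\pi(w)}(\bx)=\fCM_{\Phi_{\rm inv}(w)}(\bx)$ and invoking Corollary~\ref{cor::leftfireworks_perserves_code} legitimately reduces the claim to: for an inverse fireworks permutation $v$, the reverse partial sums of the row-exponent vector of any maximal pipe dream for $v$ are dominated by those of $\rajc(v)$. The gap is in the inductive step you propose for this reduced claim. A ``push'' that transports each maximal pipe dream of $\tilde v$ to a maximal pipe dream of $v=\tilde v s_i$ proves the wrong inequality: it shows that certain exponent vectors \emph{occur} for $v$, which is the content of Theorem~\ref{thm:exponentachieved}, not that \emph{every} maximal pipe dream of $v$ has a dominated exponent vector. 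To get an upper bound this way you would need the push to be surjective onto the set of maximal pipe dreams of $v$ (or a transport in the opposite direction), and you give no argument for that. Worse, the paper's own execution of exactly this push (Lemma~\ref{lem:pushpluses}) uses Theorem~\ref{thm:exponentbound} as an input to control the structure of the simplified array, so extracting the bound from the push risks circularity.

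The paper's actual proof is shorter and avoids both the factorization and any induction on weak order: given any pipe dream $P$ contributing to the top bidegree, delete the pipes exiting in the first $k$ rows to obtain a planar history $Q^k$ whose associated permutation is the flattening $w^k$ of $w_{k+1}\cdots w_n$, so that $\raj(w^k)=r_{k+1}+\cdots+r_n$; the restriction $P^k$ of $P$ to the bottom $n-k$ rows sits inside $Q^k$ and is a pipe dream for some $u^k\leq_L w^k$, whence $a_{k+1}+\cdots+a_n\leq\raj(u^k)\leq\raj(w^k)$ by Theorem~\ref{thm:raj=deg} and Corollary~\ref{cor:LR_raj_ineq}. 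If you want to salvage your route, you should replace the push by an argument of this truncation type; proving surjectivity of the push onto maximal pipe dreams is essentially the uniqueness statement of Proposition~\ref{prop:coeff1Raj}, which again leans on the bound you are trying to prove.
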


\begin{proof}
Let $x_1^{a_1} \cdots x_n^{a_n} y_1^{b_1} \cdots y_n^{b_n}$ be a monomial occurring in $\fCM_w(\bx;\by)$, and let $P$ be a corresponding pipe dream, so that $P$ has $a_i$ crosses in row $i$ and $b_j$ crosses in column $j$. We must show, for all $k$, that $a_{k+1} + \cdots + a_n \leq r_{k+1} + \cdots + r_n$ and $b_{k+1} + \cdots + b_n \leq s_{k+1} + \cdots + s_n$. We will prove that $a_{k+1} + \cdots + a_n \leq r_{k+1} + \cdots + r_n$; the other inequality is analogous.
When $k=0$, this says that the number of crosses in any pipe dream for $w$ is bounded by $\raj(w)$, which we showed in Theorem~\ref{thm:raj=deg}. We will now provide the proof for general $k$.

Take the pipe that exits in the first row of $P$ and delete it to create a new diagram $Q^{1}$.  This is now a southwest planar history of size $n-1$.
Now, take the pipe that exits in the second row of $Q^1$ and delete it to create $Q^{2}$, a southwest planar history of size $n-2$.  Continue in this way to define planar histories $Q^{k}$ for each $1 \leq k \leq n-1$.

First, observe that the permutation corresponding to $Q^{k}$ is the unique $w^k\in S_{n-k}$ such that $w^k_1\cdots w^k_{n-k}$ have the same relative order as $w_{k+1}\cdots w_n$.  In particular, this implies $\raj(w^k)=r_{k+1}+\dots +r_n$.   
We want to show that $a_{k+1} + \cdots + a_n \leq \raj(w^k)$. 

Let $P^{k}$ be the result of restricting $P$ to rows $k+1$ through $n$ (numbering from the top) and let $u^{k}$ be the permutation for which $P^{k}$ is a pipe dream.  By Theorem~\ref{thm:raj=deg}, $a_{k+1}+\dots +a_{n}\leq \raj(u^k)$. 
Observe that $P^{k}$ is contained within $Q^{k}$, each has $n-k$ paths, and the crossings have the same associated simple reflections in $P^k$ as they do in $Q^k$.  In particular, it follows that $u^k \leq_L w^k$. Then, by Corollary~\ref{cor:LR_raj_ineq}, we have 
\[a_{k+1}+\dots +a_{n}\leq \raj(u^k) \leq \raj(w^k),\]
as desired.
\end{proof}

\subsection{A divided difference recurrence for Rajchgot polynomials}

The material in this section is not used in the rest of the paper, but gives an efficient way to compute Rajchgot polynomials.

For each set partition $\pi$, there is a unique inverse fireworks permutation $w$ with $\pi(w)$ equal to $\pi$, and we have $\fR_{\pi}(\bx) = \fCM_w(\bx)$. 
Thus, if we want to compute all the Rajchgot polynomials without redundancy, we should compute $\fCM_w(\bx)$ for $w$ ranging over inverse fireworks permutations.
By Lemma~\ref{lem:maximal_is_valley}, the set of inverse fireworks permutations of shape $\alpha$ is the right interval $[e_{\alpha}, f_{\alpha}]_R$, where $e_\alpha$ is the layered permutation of shape $\alpha$ and $f_\alpha$ is the valley permutation of shape $\alpha$. 
We describe a method for computing the Rajchgot polynomials of all permutations in this interval by starting at the valley permutation $f_{\alpha}$ and walking down in right weak order.
Our base case, the Rajchgot polynomial for $f_{\alpha}$ is simple.
\begin{prop} \label{prop:raj_base_case}
Let $\alpha = (\alpha_t, \alpha_{t+1}, \ldots, \alpha_n)$ be a composition and $f_{\alpha}$ the corresponding valley permutation. Let $\sigma_k = \alpha_t+\alpha_{t+1} + \cdots + \alpha_k$ (so $\sigma_n=n$) and let \[
[n] \setminus \{ \sigma_t, \sigma_{t+1},\ldots, \sigma_n \} = \{ \rho_1 < \rho_2 < \cdots < \rho_t \}.
\]
 Then
\[ \fR_{\pi(f_{\alpha})} = \prod x_j^{\rho_j} . \]
\end{prop}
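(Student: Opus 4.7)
The plan is to recognize that $\fR_{\pi(f_\alpha)}$ equals $\fCM_{e_\alpha}$ for the layered permutation $e_\alpha$, and then read off the explicit monomial from the top-degree pipe dream constructed in Lemma~\ref{lem:deg_ealpha}.

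First, I would compute $\pi(f_\alpha)$ using the explicit blob description in the proof of Lemma~\ref{lem:valley_shape}. That proof shows that the blob $B_k$ of $f_\alpha$ consists of the dots occupying columns $\{\sigma_{k-1}+1, \sigma_{k-1}+2, \ldots, \sigma_k\}$ (setting $\sigma_{t-1} := 0$). Consequently, $\pi(f_\alpha)$ is the set partition of $[n]$ into the consecutive-integer blocks of sizes $\alpha_t, \alpha_{t+1}, \ldots, \alpha_n$.

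Next, I would identify the unique fireworks permutation $g$ with $\pi(g) = \pi(f_\alpha)$ via the bijection of Proposition~\ref{prop:bell}: order the blocks by their largest element and write each block backwards. Applied to the consecutive-integer blocks above, this procedure produces precisely the longest element of the Young subgroup $S_{\alpha_t} \times S_{\alpha_{t+1}} \times \cdots \times S_{\alpha_n}$, namely the layered permutation $e_\alpha$. Hence, by the definition of the Rajchgot polynomial, $\fR_{\pi(f_\alpha)}(\bx) = \fCM_{e_\alpha}(\bx)$.

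Finally, I would appeal to Lemma~\ref{lem:deg_ealpha} and its proof: $\fCM_{e_\alpha}$ is the single monomial associated to the unique maximal-crossing pipe dream $Q_\alpha$, whose crossing tiles occupy exactly the antidiagonals $\{i+j-1 = \gamma\}$ for $\gamma \in [n-1] \setminus \{\sigma_t, \ldots, \sigma_{n-1}\}$. Since $\sigma_n = n$, this set coincides with $\{\rho_1, \ldots, \rho_{t-1}\}$ from the proposition. Reading off the monomial by summing the $x_i$-contributions over all cells of $Q_\alpha$ (one factor $x_i$ per crossing in row $i$) yields the claimed formula, after the routine regrouping of factors across antidiagonals.

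The main obstacle is essentially bookkeeping: one must carefully translate between the column indices of the blobs of $f_\alpha$ and the blocks of the consecutive-integer set partition, and then verify that the Proposition~\ref{prop:bell} bijection recovers exactly $e_\alpha$. Both steps are mechanical restatements of prior constructions, so no new combinatorial input is needed beyond Lemmas~\ref{lem:valley_shape} and~\ref{lem:deg_ealpha}.
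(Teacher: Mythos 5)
There is a real gap, and it is decisive rather than cosmetic: your reduction sends $\fR_{\pi(f_\alpha)}$ to $\fCM_{e_\alpha}(\bx)$, but the proposition is asserting that $\fR_{\pi(f_\alpha)}$ equals $\fCM_{f_\alpha}(\bx)$, and these two Castelnuovo--Mumford polynomials are different monomials in general. Take $\alpha = (1,1,2)$, $n=4$, so $e_\alpha = 1243$ and $f_\alpha = 4123$. Then $\fCM_{e_\alpha}(\bx) = x_1x_2x_3$ (the unique maximal pipe dream $Q_\alpha$ has all three boxes of the antidiagonal $i+j-1=3$), whereas $\sigma_2 = 1, \sigma_3 = 2, \sigma_4 = 4$ gives $\{\rho\} = \{3\}$, so the proposition asserts the answer $x_1^3$. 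Indeed $\fCM_{f_\alpha}(\bx) = \fCM_{4123}(\bx) = x_1^3$, consistent with Example~\ref{eg:posetExample}. Your final step, presenting the passage from $\fCM_{e_\alpha}$ to $\prod x_j^{\rho_j}$ as a ``routine regrouping of factors across antidiagonals,'' is where the argument fails: reading $Q_\alpha$ across antidiagonals yields $\prod_{\gamma \in \{\rho_j\}} x_1 x_2 \cdots x_\gamma$, in which the exponent of $x_i$ is the number of $\rho_j \ge i$, not $\rho_i$; these are different exponent vectors, so no regrouping gives the claimed monomial.

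The underlying source of the error is a slip in the paper's definition of $\fR_\pi$, which you took at face value. The Definition at the start of Section~\ref{sec:rajchgotpolynomials} says to use the fireworks permutation $g$ with $\pi(g) = \pi$, whereas the rest of the paper (the passage immediately before Proposition~\ref{prop:raj_base_case}, the proof of Proposition~\ref{prop:coeff1Raj}, the factorization in Theorem~\ref{thm:factorization} in terms of $\Phi_{\rm inv}(w)$, and the explicit table in Example~\ref{eg:posetExample}) consistently uses the \emph{inverse} fireworks representative. Under the intended reading, $f_\alpha$ is itself the relevant representative, because Lemma~\ref{lem:dominant_and_fireworks_is_inverse_valley} tells us valley permutations are inverse fireworks, and therefore $\fR_{\pi(f_\alpha)} = \fCM_{f_\alpha}(\bx)$ directly. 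That makes the proof almost immediate: $f_\alpha$ is also dominant, so by Propositions~\ref{prop:raj=inv} and~\ref{prop:dominant_background} one has $\fCM_{f_\alpha}(\bx) = \bx^{\invc(f_\alpha)} = \bx^{\rajc(f_\alpha)}$, and the only remaining work (which the paper leaves as an exercise) is computing the Rajchgot (equivalently inversion) code of $f_\alpha$ and matching it against $\{\rho_j\}$ --- this involves no pipe dreams and no appeal to Lemma~\ref{lem:deg_ealpha} at all.

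You are right that $\pi(f_\alpha)$ under the column convention is the consecutive-block partition and that the Proposition~\ref{prop:bell} bijection sends it to $e_\alpha$; the issue is that this observation, combined with the mis-stated definition, leads you to the wrong permutation. If you rework the argument with the corrected definition (or equivalently, reduce to $\fCM_{f_\alpha}$ rather than $\fCM_{e_\alpha}$), the route through the Rothe diagram of the dominant permutation $f_\alpha$ is much shorter than anything involving pipe dreams.
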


\begin{proof}[Proof sketch]
The permutation $f_{\alpha}$ is dominant, so $\fR_{\pi(f_{\alpha})} = \bx^{\rajc(f_{\alpha})}$. We leave computing the Rajchgot code of $f_{\alpha}$ as an exercise for the reader.
\end{proof}

We now discuss how Rajchgot polynomials transform when we go down by a cover in right weak order. 
Recall from Section~\ref{GrothBackground} the divided difference operator
\[  \overline{\partial}_i(f) = \partial_i((1-x_{i+1})f) = \frac{(1-x_{i+1}) f - (1-x_i) s_i\cdot f}{x_i - x_{i+1}} . \]
for $f \in \ZZ[x_1, x_2, \ldots, x_n]$.
Similarly define
 \[ \rN_i(f) = \frac{x_{i+1} f - x_i s_i\cdot f}{x_i - x_{i+1}} . \]
 Then by the recursive definition of Grothendieck polynomials, we have the following.
 \begin{prop} \label{prop:Raj_dd}
Let $w \in S_n$ and let $1 \leq i \leq n-1$. Then
\[ \rN_i(\fCM_w(\bx)) = 
\begin{cases}
\fCM_{w s_i}(\bx), & w s_i <_R w,\ \raj(w s_i) = \raj(w); \\
0, & w s_i <_R w,\ \raj(w s_i) < \raj(w); \\
- \fCM_w(\bx), & w s_i >_R w. \\
\end{cases}\]
 \end{prop}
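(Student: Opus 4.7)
The plan is to write $\rN_i$ in two complementary ways. Unpacking the definition gives $\rN_i(f) = \partial_i(x_{i+1} f)$, and a direct comparison of definitions gives $\overline{\partial}_i = \partial_i - \rN_i$, equivalently $\rN_i = \partial_i - \overline{\partial}_i$. The first formulation shows that $\rN_i$ is degree-preserving on homogeneous polynomials, since $x_{i+1}$ raises degree by one and $\partial_i$ lowers degree by one. The second formulation is what lets us feed the recursion for $\overline{\partial}_i \fG_w$ (recorded in the definition of the double Grothendieck polynomial) into the computation.

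Set $d = \deg \fG_w(\bx) = \raj(w)$, available from Theorem~\ref{thm:raj=deg}, and denote by $\fG_w^d$ the degree-$d$ component of $\fG_w(\bx)$, so that $\fCM_w(\bx) = (-1)^{d - \inv(w)} \fG_w^d$ by definition. Because $\rN_i$ preserves degree, $\rN_i(\fG_w^d) = (\rN_i \fG_w)^d$, and hence the entire proposition reduces to extracting the degree-$d$ component of $\rN_i \fG_w = \partial_i \fG_w - \overline{\partial}_i \fG_w$. Since $\partial_i$ drops degree by one, the term $\partial_i \fG_w$ has degree at most $d - 1$ and contributes nothing in degree $d$, so it suffices to compute $-(\overline{\partial}_i \fG_w)^d$ in each of the three cases.

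The three cases of the proposition now align with the trichotomy for $\overline{\partial}_i \fG_w$. When $ws_i >_R w$, we have $\overline{\partial}_i \fG_w = \fG_w$; the degree-$d$ component is $-\fG_w^d$, which converts to $-\fCM_w$ after sign cancellation. When $ws_i <_R w$ with $\raj(ws_i) = \raj(w) = d$, we have $\overline{\partial}_i \fG_w = \fG_{ws_i}$ with $\deg \fG_{ws_i} = d$, yielding degree-$d$ component $-\fG_{ws_i}^d = -(-1)^{d - \inv(ws_i)} \fCM_{ws_i}$; using $\inv(ws_i) = \inv(w) - 1$, the signs collapse to give $\fCM_{ws_i}$. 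Finally, when $ws_i <_R w$ with $\raj(ws_i) < d$, the polynomial $\fG_{ws_i}$ has degree strictly less than $d$, so its degree-$d$ part is zero and $\rN_i(\fCM_w) = 0$.

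The bulk of the argument is mechanical once the identity $\rN_i = \partial_i - \overline{\partial}_i$ is isolated, and the degree-preservation of $\rN_i$ is used to pass freely between $\fG_w$ and its top-degree piece. The only subtlety worth flagging is the sign bookkeeping in the middle case, where $\inv(ws_i) = \inv(w) - 1$ is what converts $-\fG_{ws_i}^d$ into $+\fCM_{ws_i}$ rather than $-\fCM_{ws_i}$.
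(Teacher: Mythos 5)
Your proposal is correct and takes essentially the route the paper intends: the paper offers no written proof beyond invoking the recursion for $\overline{\partial}_i$ and the sign remark about $\inv(w)=\inv(ws_i)\pm 1$, and your identity $\rN_i=\partial_i-\overline{\partial}_i$ together with the observation that $\rN_i$ preserves degree while $\partial_i$ strictly lowers it is exactly the right way to fill in those details. The sign computation in the middle case is also carried out correctly.
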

 The reader who is confused about signs should recall that $\fCM_w(\bx)$ is the top degree part of $(-1)^{\deg \fG_w(\bx) - \inv(w)} \fG_w(\bx)$, and $\inv(w) = \inv(w s_i) \pm 1$.

Propositions~\ref{prop:raj_base_case} and~\ref{prop:Raj_dd} give a recursion for computing $\fCM_w(\bx)$ for every $w \in [e_{\alpha}, f_{\alpha}]$, and therefore for computing all Rajchgot polynomials of shape $\alpha$.

When carrying out this computation practically, it is most useful to encode set partitions by words as follows.
Given a set partition $(\pi_t, \pi_{t+1}, \ldots, \pi_n )$ of $[n]$ with $|\pi_k| = \alpha_k$, we define the corresponding word $p=p_1 p_2 \cdots p_n$ in $[n]^n$ by $p_j = k$ if and only if $j \in \pi_k$, and we write $\fR_p$ in place of $\fR_{\pi}$.
Not every word corresponds to a set partition, because we always order set partitions so that $\max(\pi_t) < \max(\pi_{t+1}) < \cdots < \max(\pi_{n-1}) < \max(\pi_n)=n$.

We note that $\pi(e_{\alpha})$ corresponds to the word $1^{\alpha_1} \cdots n^{\alpha_n}$, and that $\pi(f_{\alpha})$ corresponds to $n^{\alpha_n-1} (n-1)^{\alpha_{n-1}-1} \cdots (t+1)^{\alpha_{t+1}-1} t^{\alpha_t} (t+1) \cdots (n-1) n$. 
So our recursive procedure starts with $n^{\alpha_n-1} (n-1)^{\alpha_{n-1}-1} \cdots (t+1)^{\alpha_{t+1}-1} t^{\alpha_t} (t+1) \cdots (n-1) n$ and walks towards $1^{\alpha_1} \cdots n^{\alpha_n}$.
We let $S_n$ act on words of length $n$ by reordering the letters. 
In this notation, Proposition~\ref{prop:Raj_dd} translates as follows.
\begin{prop}
Let $p_1 p_2 \cdots p_n$ be a word corresponding to a set partition and let $1 \leq i \leq n-1$. Then 
\[ \rN(\fR_p(\bx)) = \begin{cases}
\fR_{p s_i}(\bx), & p_i > p_{i+1}; \\
0, & p_i < p_{i+1}; \\
- \fR_{p}(\bx), & p_i = p_{i+1}.
\end{cases} \]
\end{prop}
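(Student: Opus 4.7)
The plan is to derive this proposition as a direct translation of Proposition~\ref{prop:Raj_dd} via the bijection between words $p$ corresponding to set partitions and inverse fireworks permutations. The key identification will be $\fR_p(\bx) = \fCM_w(\bx)$, where $w$ is the unique inverse fireworks permutation whose row-to-blob word $\epsilon(w)$ equals $p$; such $w$ exists and is unique because, by Lemma~\ref{lem:consecutiveblobs}, the blobs $B_k$ of an inverse fireworks permutation occupy consecutive column intervals fixed by the shape, and within each blob the SE-antichain condition forces smaller rows to take larger columns, so the row-to-blob assignment $p$ recovers $w$ uniquely.

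Next I will translate each condition on $(p_i, p_{i+1})$ to a condition in Proposition~\ref{prop:Raj_dd} via Lemma~\ref{lem:cover_blob}. Since higher-indexed blobs of $w$ lie in rightmost columns, the order of $w_i$ versus $w_{i+1}$ can be read off from $p_i, p_{i+1}$ together with the within-blob antidiagonal arrangement; this tells us whether $ws_i$ sits below or above $w$ in right weak order, and Lemma~\ref{lem:cover_blob} then tells us whether $\raj$ is preserved or strictly decreases in the descent case. Applying Proposition~\ref{prop:Raj_dd} in each resulting case produces one of $\fCM_{ws_i}$, $0$, or $-\fCM_w$ on the right-hand side of the proposition, and matching these three outputs with the three cases of the statement reduces to the explicit case analysis.

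When the right-hand side is $\fCM_{ws_i}$, I will finally verify that $ws_i$ remains inverse fireworks and that $\epsilon(ws_i) = p s_i$, so that $\fCM_{ws_i} = \fR_{p s_i}$. The row-swap does not alter which columns each blob occupies, so the consecutive-column-interval property carries over; and the explicit blob-update in Lemma~\ref{lem:cover_blob} (equivalently Lemma~\ref{lem:cover_rajcode}) says exactly that $\epsilon(w)$ acquires a transposition at positions $i, i+1$. The main obstacle is this bookkeeping around the dictionary $p \leftrightarrow w$ and its behavior under right covers; once it is set up cleanly, the three cases of the proposition drop out of the corresponding cases of Proposition~\ref{prop:Raj_dd} and Lemma~\ref{lem:cover_blob} without further input.
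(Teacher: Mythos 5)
Your overall plan is exactly the intended one: the paper gives no separate argument here, saying only that Proposition~\ref{prop:Raj_dd} ``translates as follows'' under the dictionary between words $p$ and inverse fireworks permutations. Your identification $\fR_p(\bx) = \fCM_w(\bx)$ via $p = \epsilon(w)$, your argument that $p$ determines $w$ uniquely, and your check that $ws_i$ stays inverse fireworks with $\epsilon(ws_i) = p\, s_i$ (via Lemmas~\ref{lem:cover_blob} and \ref{lem:cover_rajcode}) are all correct and match what the paper is implicitly doing.

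The gap is your closing claim that ``the three cases of the proposition drop out of the corresponding cases of Proposition~\ref{prop:Raj_dd} and Lemma~\ref{lem:cover_blob} without further input.'' Carrying out the matching does \emph{not} reproduce the proposition as printed. For inverse fireworks $w$ with $\epsilon(w) = p$: if $p_i > p_{i+1}$, the two dots lie in different blobs with $w_i > w_{i+1}$, so $ws_i <_R w$, and by Lemma~\ref{lem:cover_blob} (the $p>q$ case) $\raj(ws_i) = \raj(w)$, giving $\rN_i(\fR_p) = \fR_{p s_i}$; if $p_i = p_{i+1}$, the two dots lie in the \emph{same} blob on its SE-antichain, so again $w_i > w_{i+1}$ and $ws_i <_R w$, but now the $p \le q$ case applies and $\raj(ws_i) < \raj(w)$, giving $\rN_i(\fR_p) = 0$; and if $p_i < p_{i+1}$ then $w_i < w_{i+1}$, so $ws_i >_R w$ and $\rN_i(\fR_p) = -\fR_p$. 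In other words the middle and last cases in the printed statement are swapped. The paper's own example confirms this: $\fR_{2344}(\bx) = x_1 x_2 x_3$ has $\rN_3(\fR_{2344}) = 0$ even though $p_3 = p_4 = 4$, while $\fR_{2434}(\bx) = x_1^2 x_2 + x_1 x_2^2$ is symmetric in $x_1,x_2$, so $\rN_1(\fR_{2434}) = -\fR_{2434}$, even though $p_1 = 2 < p_2 = 4$; the remark immediately after the proposition is likewise consistent only with this corrected assignment. So you should make the case matching explicit rather than asserting it works out; doing so shows your plan proves a corrected version of the statement, not the one as printed.
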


\begin{rem}
The condition that $\rN(f)=0$ is equivalent to ``$x_i$ divides $f$ and $f/x_i$ is symmetric in $x_i$ and $x_{i+1}$''; the condition $\rN(f)= -f$ is equivalent to ``$f$ is symmetric in $x_i$ and $x_{i+1}$." 
\end{rem}

\begin{example}
Let $\alpha$ be $1+1+2$, so we are studying anagrams of $2344$. There are $12$ anagrams of this string, but only three of these correspond to set partitions: $2344$, $2434$, and $4234$. These correspond to the permutations $1243$, $1423$, and $4123$. The first of these is the layered permutation $e_{\alpha}$ and the last is the valley permutation $f_{\alpha}$. 

In the diagram below, we have drawn the Hasse diagram of $[e_{\alpha}, f_{\alpha}]_R$, labeling each edge $(u,v)$ with the index $i$ such that $u = v s_i$, and labeling each node with the inverse fireworks permutation $w$, with the word $p$, and with its Rajchgot polynomial $\fR_w(\bx) = \fR_p(\bx)$. We have underlined the leading term, whose exponent is $\rajc(w) = p-(1,2,3,\ldots,n)$.
\[ 
\xymatrix{
\framebox[0.5 \textwidth]{$w = 4123$\quad $p = 4234$\quad $\fR_w(\bx) = \underline{x_1}^3$ } \ar@{-}[d]^1 \\
\framebox[0.5 \textwidth]{$w = 1423$\quad $p = 2434$\quad $\fR_w(\bx) = x_1^2 x_2+\underline{x_1 x_2^2}$ } \ar@{-}[d]^2 \\
\framebox[0.5 \textwidth]{$w = 1243$\quad $p = 2344$\quad $\fR_w(\bx) = \underline{x_1 x_2 x_3}$ } \\
}
\]
Compare this example to the bottom row in Example~\ref{eg:posetExample}.
\end{example}

\section{Maximal pipe dreams} \label{sec:pushingPluses}

The primary statement now outstanding from Theorems~\ref{thm:degree} and~\ref{thm:main} is that there is a pipe dream for $w$ with
$\rajc_i(w)$ crossing tiles in row $i$ and $\rajc_j(w^{-1})$ crossing tiles in column $j$, thereby contributing the 
 monomial $\bx^{\rajc(w)} \by^{\rajc(w^{-1})}$ to $\fCM_w(\bx;\by)$. We call such a pipe dream a \newword{maximal pipe dream} for $w$.
We will verify the existence of this pipe dream in this section. We also need to check that this monomial has coefficient $1$ in $\fCM_w(\bx;\by)$, which we will also verify in this section.

In other words, our goal is to prove:
\begin{Theorem} \label{thm:exponentachieved}
Let $w$ be a permutation, let $\rajc(w) = (r_1, \ldots, r_n)$ and let $\rajc(w^{-1}) = (s_1, \ldots, s_n)$.  
There is a pipe dream for $w$ with $r_i$ crosses in row $i$ and $s_j$ crosses in column $j$. 
\end{Theorem}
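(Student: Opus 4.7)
The plan is to construct a maximal pipe dream for $w$ by starting from the unique maximal pipe dream $Q_\alpha$ for the layered permutation $e_\alpha$ (where $\alpha$ is the shape of $w$) and modifying it cover by cover along a chain in the two-sided weak order from $e_\alpha$ up to $w$. By Lemma~\ref{lem:fireworks_factorizations}, $w$ has a unique length-additive factorization $w = u e_\alpha v$, which suggests a natural two-stage chain: first go up from $e_\alpha$ to $\Phi_{\rm inv}(w) = e_\alpha v$ via right covers (corresponding to a reduced expression of $v$), then from $\Phi_{\rm inv}(w)$ up to $w$ via left covers (from a reduced expression of $u$). By Lemma~\ref{lem:interval_structure}, every permutation appearing along this chain has shape $\alpha$.

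The base case is by direct inspection: the pipe dream $Q_\alpha$ from the proof of Lemma~\ref{lem:deg_ealpha} has $r_i(e_\alpha)$ crossings in row $i$ (obtained by enumerating the cells on each antidiagonal of $Q_\alpha$), and the same count of crossings in column $i$, because $e_\alpha$ is an involution so that $\rajc(e_\alpha^{-1}) = \rajc(e_\alpha)$. The inductive step reduces to the following claim: given a pipe dream $P$ for $y$ whose row sums are $\rajc(y)$ and whose column sums are $\rajc(y^{-1})$, and a right cover $y \lessdot_R y s_i$ with both permutations of shape $\alpha$, there is a pipe dream $P'$ for $y s_i$ with row sums $\rajc(y s_i)$ and the same column sums as $P$. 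Indeed, Lemma~\ref{lem:cover_rajcode} applied to the right cover $y s_i \gtrdot_R y$ tells us that the row sums transform by a swap-and-adjust at positions $i$ and $i+1$, keeping the total $r_i + r_{i+1}$ constant; and Lemma~\ref{lem:cover_rajcode} applied to the induced left cover $s_i y^{-1} \gtrdot_L y^{-1}$ (equivalent to $y s_i \gtrdot_R y$ under inversion) tells us that the column sums are unchanged. The corresponding claim for left covers then follows by the transposition symmetry sending pipe dreams for a permutation to pipe dreams for its inverse.

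The main obstacle is to construct, and to verify, the local move on pipe dreams that produces $P'$ from $P$ in the inductive step. This move is supported in rows $i$ and $i+1$ only, redistributing the crossings in those two rows in a column-preserving way. The triangular asymmetry, in that row $i$ contains the extra cell $(i, n-i+1)$ with no counterpart in row $i+1$, accounts precisely for the $+1$ adjustment in the update rule of Lemma~\ref{lem:cover_rajcode}. The verification is a case analysis organized by the blob structure of $y$ near rows $i$ and $i+1$, as controlled by Lemma~\ref{lem:cover_blob}; this analysis must confirm both that the new configuration is a genuine pipe dream (no double crossings, and all crossings stay in the strict upper triangle) and that the Demazure product of its reading word is $y s_i$.
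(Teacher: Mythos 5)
Your high-level skeleton matches the paper's: start from the unique maximal pipe dream $Q_\alpha$ for the layered permutation $e_\alpha$ (Lemma~\ref{lem:deg_ealpha}), walk up a chain in weak order from $e_\alpha$ to $w = u e_\alpha v$ through permutations all of shape $\alpha$ (Lemmas~\ref{lem:fireworks_factorizations} and~\ref{lem:interval_structure}), and at each cover modify the pipe dream only in two consecutive rows, using Lemma~\ref{lem:cover_rajcode} to predict how the row and column counts must change. The base case and that bookkeeping are fine.

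The gap is exactly where you say it is: you never construct the local move, and that construction together with its verification is the entire mathematical content of the theorem. Worse, the verification route you sketch --- ``a case analysis organized by the blob structure of $y$ near rows $i$ and $i+1$, as controlled by Lemma~\ref{lem:cover_blob}'' --- is not what makes the induction work, and I doubt it can be made to work in that form: the blob diagram of $y$ does not control the local structure of an arbitrary maximal pipe dream for $y$, and the paper's own remark about $w=14523$ shows that naive local redistributions get stuck at non-maximal configurations. What actually carries the inductive step is a \emph{maximality} argument: one first proves that in any maximal pipe dream $P'$ for the smaller permutation, the two relevant rows, after deleting all $\dompp$ columns, must take the rigid form $\domzp^{a_1}\dompz\domzz^{b_1}\domzp^{a_2}\dompz\cdots$ (Lemmas~\ref{lem:leftcolumn} and~\ref{lem:forbiddenQpatterns}); both facts are proved by contradiction against the maximality of $P'$ and against the upper bound of Theorem~\ref{thm:exponentbound}, not by inspecting blobs. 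The move is then to replace $\domzp^{a_1}\dompz$ by $\dompz^{a_1+1}$, and the check that the new row counts equal $\rajc$ of the new permutation (and that $a_2=\cdots=a_k=1$) again invokes Theorem~\ref{thm:exponentbound} (Lemma~\ref{lem:pushpluses}). So the already-established upper bound is an essential input to the existence proof, and your inductive step never uses it. As written, the proposal is a plan with the decisive step explicitly labeled ``the main obstacle,'' not a proof.
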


\begin{remark}
We find it frustrating that we do not have a direct recipe for the maximal pipe dream in terms of $w$. 
The reader who believes they know one should test it on the example $w=14523$. The unique maximal pipe dream for this permutation is shown below, but straightforward greedy procedures get stuck at local maxima with only five crosses:
\[ \begin{tikzpicture}[x=1.5em,y=1.5em]
\draw[step=1,gray, thin] (0,0) grid (5,5);
\draw[color=black, thick](0,0)rectangle(5,5);
\draw[thick,rounded corners,color=blue] (0,4.5)--(0.5,4.5)--(0.5,5);
\draw[thick,rounded corners,color=blue] (0,3.5)--(1.5,3.5)--(1.5,5);
\draw[thick,rounded corners,color=blue] (0,2.5)--(2.5,2.5)--(2.5,5);
\draw[thick,rounded corners,color=blue] (0,1.5)--(0.5,1.5)--(0.5,4.5)--(3.5,4.5)--(3.5,5);
\draw[thick,rounded corners,color=blue] (0,0.5)--(0.5,0.5)--(0.5,1.5)--(1.5,1.5)--(1.5,3.5)--(3.5,3.5)--(3.5,4.5)--(4.5,4.5)--(4.5,5);
\end{tikzpicture} \]
\end{remark}

\subsection{Proof of Theorem~\ref{thm:exponentachieved}}

Let $\alpha$ be the shape of $w$, so that we have a length-additive factorization $w = u e_{\alpha} v$, as in Lemma~\ref{lem:fireworks_factorizations}. 
We already constructed a maximal pipe dream for $e_{\alpha}$ in Lemma~\ref{lem:deg_ealpha}. 
We will now show how to modify this maximal pipe dream for $e_\alpha$ to obtain a maximal pipe dream for $w$.

If $w \neq e_{\alpha}$, then at least one of $u$ and $v$ is not the identity. Without loss of generality, suppose that $u$ is not the identity. Let $i$ be such that $s_iu <_L u$ (i.e., $i$ is a \emph{left descent} of $u$). 
Put $u' = s_iu$ and $w' = u' e_{\alpha} v$. Since $w' \geq_{LR} e_{\alpha}$, the permutations $w$ and $w'$ both have shape $\alpha$ by Lemma~\ref{lem:interval_structure}. 
Assume inductively that we have a maximal pipe dream $P'$ for $w'$; we will show that there is a maximal pipe dream $P$ for $w$. 
We first need to investigate the structure of $P'$ in rows $i$ and $i+1$. 

In any given column of $P'$, rows $i$ and $i+1$ look like one of $\domzz$, $\domzp$, $\dompz$, or $\dompp$.
We define the \newword{simplified array} to be the $2$-row pipe dream obtained by deleting all $\dompp$ columns from rows $i$ and $i+1$, and denote the simplified array of $P'$ by $Q'$.
We consider $Q'$ to extend infinitely to the right, with all columns sufficiently to the right equal to $\domzz$.

\begin{lem} \label{lem:leftcolumn}
The left column of $Q'$ is $\domzp$.
\end{lem}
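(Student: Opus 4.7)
I would proceed by contradiction: suppose the leftmost column of $Q'$ is $\domzz$ or $\dompz$. Let $c_0$ denote the column index in $P'$ of the leftmost non-$\dompp$ column in rows $i, i+1$, so every column $j$ with $1 \leq j < c_0$ is $\dompp$ in these two rows.

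My first step is to trace the two pipes $\pi_i, \pi_{i+1}$ of $P'$ that exit on the left at rows $i$ and $i+1$. Because columns $1, \ldots, c_0-1$ in these rows are all crossings, both $\pi_i$ and $\pi_{i+1}$ travel horizontally through those crossings via the horizontal (left--right) arcs. At column $c_0$, each hypothetical configuration forces a specific local arrangement of the arcs used by $\pi_i, \pi_{i+1}$ in $(i, c_0)$ and $(i+1, c_0)$; I would determine these arrangements explicitly. For example, in the $\dompz$ case, $\pi_i$ passes horizontally through the crossing $(i, c_0)$, while $\pi_{i+1}$ descends vertically through $(i, c_0)$ and turns west via the top-left arc of the bumping at $(i+1, c_0)$. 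In the $\domzz$ case, $\pi_i$ uses the top-left arc of $(i, c_0)$ and $\pi_{i+1}$ zig-zags through the bottom-right arc of $(i, c_0)$ and the top-left arc of $(i+1, c_0)$.

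For the $\domzz$ case, I would change both bumping tiles at column $c_0$ to crossings, producing a pipe dream $P''$ with $|P''| = |P'|+2$. Pipe-tracing identifies the permutation $w''$ represented by $P''$, which differs from $w'$ by a local rerouting of a small number of pipes (at most $\pi_i$, $\pi_{i+1}$, and one additional pipe travelling south out of $(i+1, c_0)$). Using Theorem~\ref{thm:raj=deg} to bound $|P''| \leq \raj(w'')$ together with $|P'| = \raj(w')$ (maximality of $P'$) yields $\raj(w'') \geq \raj(w') + 2$. On the other hand, the controlled nature of the rerouting should constrain $w''$ tightly relative to $w'$; an analysis of the blob diagrams (Section~\ref{sec:fireworks}) and Corollary~\ref{cor:LR_raj_ineq} should show that $\raj(w'')$ cannot exceed $\raj(w')$, producing the contradiction.

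For the $\dompz$ case, I would invoke the hypothesis that $i$ is a left descent of $u$, equivalently $(u')^{-1}(i) < (u')^{-1}(i+1)$. In this configuration the pipe entering $(i, c_0)$ from the top ultimately exits at $L_{i+1}$ rather than $L_i$, and $\pi_i$ originates instead from east of column $c_0$. Tracing these origins back to the top of $P'$ and propagating through the factorization $w' = u'e_\alpha v$, I would derive a forced ordering of $w'(i)$ and $w'(i+1)$ (or more precisely, a constraint on their preimages under $u'$) that contradicts the ascent hypothesis on $u'$. I expect this to be the main obstacle: the pipe origins depend on the global structure of $P'$, not merely on rows $i, i+1$, so the argument requires careful bookkeeping through the interaction of $u', e_\alpha,$ and $v$ via the blob-diagram description. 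As an alternative, one could attempt to show that any $\dompz$ leftmost column can be transformed by a local move to a $\domzp$ leftmost column in another maximal pipe dream for $w'$, thereby reducing this case to the target case.
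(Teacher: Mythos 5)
Your proposal diverges from the paper's argument in both cases and leaves real gaps.

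For the $\domzz$ case, you convert both tiles to crossings (to $\dompp$), yielding a pipe dream for some new permutation $w''$ with two extra crosses, and you want to bound $\raj(w'')$ to get a contradiction. But you never actually control $w''$; you only say the ``controlled nature of the rerouting should constrain $w''$,'' and this is precisely the hard step. Inserting two new letters into the reading word changes the Demazure product in a way that is not obviously tight enough for your argument. The paper instead converts only the row-$i$ bump to a crossing (that is, $\domzz \to \dompz$). That single change crosses the two pipes exiting at $L_i$ and $L_{i+1}$ while leaving every other pair's crossing relation intact, so the resulting pipe dream is precisely for $w = s_i w'$. Since $w$ and $w'$ have the same shape, $\raj(w) = \raj(w')$, so the new pipe dream for $w$ would have $\raj(w)+1$ crosses --- impossible by the degree bound. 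The key advantage is that the paper's move has a controlled target permutation, whereas yours does not.

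For the $\dompz$ case, you correctly pin down the hypothesis to contradict ($s_i w' >_L w'$) and you correctly observe that the pipe entering $(i,c_0)$ from above now exits at $L_{i+1}$ while the pipe exiting at $L_i$ arrives from the east. But you then propose a long detour tracing these pipes to the top boundary and pushing through the factorization $u' e_\alpha v$, which you yourself flag as the ``main obstacle.'' You are missing the short conclusion: those two pipes (the ones exiting at $L_i$ and $L_{i+1}$) \emph{cross} at the crossing tile $(i,c_0)$, and in a planar history two pipes crossing forces the corresponding pair to be inverted in the Demazure product, so $s_i w' <_L w'$ --- an immediate contradiction. No global bookkeeping through $u'$, $e_\alpha$, $v$ is needed. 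Your suggested fallback (``transform to another maximal pipe dream with $\domzp$ leftmost column'') also does not suffice: the lemma asserts a property of the particular $P'$ supplied by the induction, not merely the existence of some maximal pipe dream with the desired feature.
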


\begin{proof}
The simplified array, by construction, has no $\dompp$ columns. 
If the left column were $\dompz$, then the pipes starting in rows $i$ and $i+1$ would cross, contradicting that $s_i w' >_L w'$.

If the left column were $\domzz$, then replacing it with $\dompz$ would give a pipe dream for $w$ with one more cross than in $P'$. 
But, by assumption, $P'$ has $\raj(w')$ crosses, and $\raj(w) = \raj(w')$ since $w$ and $w'$ have the same shape, so no pipe dream for $w$ can have more than $\raj(w')$ crosses, a contradiction.
\end{proof}

\begin{lem} \label{lem:forbiddenQpatterns}
The following patterns cannot occur in consecutive columns of $Q'$: $\domzz\!\dompz$, $\domzp\!\domzz$, $\dompz\!\dompz$.
\end{lem}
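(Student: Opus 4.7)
The plan is to prove the lemma by contradiction: for each of the three forbidden patterns, I would assume it appears in $Q'$ and exhibit a local modification of $P'$ that yields a new pipe dream $\tilde P'$ with strictly more crosses than $P'$, yet still representing the permutation $w'$. Since Theorem~\ref{thm:raj=deg} gives $|\tilde P'| \leq \raj(w')$ for any pipe dream for $w'$, while the modification produces $|\tilde P'| = |P'| + 1 = \raj(w') + 1$, this would contradict the maximality of $P'$.

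For each pattern, the modification is local to rows $i$ and $i+1$ and adds a single cross. For $\domzz\,\dompz$ at $Q'$-columns $j_1 < j_2$, I would add a cross at $(i+1, j_1)$, converting the $\domzz$ column to $\domzp$. For $\domzp\,\domzz$, I would add a cross at $(i, j_2)$, converting the $\domzz$ column to $\dompz$. For $\dompz\,\dompz$, I would add a cross at $(i+1, j_1)$, converting the first $\dompz$ column in $P'$ to $\dompp$. In each case, the added cross ``fills in a missing corner'' of the two-row strip suggested by the pattern's shape. To confirm that $\tilde P'$ still represents $w'$, I would trace horizontal pipes through the affected region (columns $j_1$ through $j_2$ in rows $i, i+1$, including any $\dompp$ columns between) and verify that the endpoint of each pipe is preserved. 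Equivalently, invoking associativity of the Demazure product on the reading word, I would verify that the simple reflection introduced by the added cross is a right descent of the Demazure product of the portion of the reading word processed before that cross, so appending it does not change the product.

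The main obstacle is precisely this verification. A naive local analysis of the two-row block suggests that the added cross swaps two pipes, which would multiply the resulting permutation by a transposition and change it. However, the setup of the proof gives us the strong structural input that $w = s_i w'$ and $w'$ have the same shape, via Lemma~\ref{lem:interval_structure} combined with Lemma~\ref{lem:cover_blob_left}. I expect this shape-preservation to force the ``swapped'' pipes in the local modification to have identical ultimate destinations in $P'$, neutralizing the transposition and leaving $w'$ unchanged. Making this precise is the technical heart of the proof: it requires combining the pipe geometry in rows $i, i+1$ with the same-shape condition on $w$ and $w'$, possibly through a pipe trace that extends beyond the immediate two-row strip into the adjacent columns. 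I would carry out this analysis case by case for each of the three forbidden patterns.
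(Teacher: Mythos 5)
Your treatments of $\domzz\,\dompz$ and $\domzp\,\domzz$ coincide with the paper's. The genuine gap is your third case: the move $\dompz\,\dompz \to \dompp\,\dompz$ does not in general yield a pipe dream for $w'$. The new cross at $(i+1,j_1)$ contributes the letter $s_{i+j_1}$ to the reading word; the nearest existing copy of that letter, coming from the cross at $(i,j_1+1)$, is separated from it in the reading word by the letter $s_{i+j_1-1}$ contributed by the cross at $(i,j_1)$ --- a cross that is present precisely because the column is $\dompz$ rather than $\domzz$. Since $s_{i+j_1-1}$ does not commute with $s_{i+j_1}$, the extra letter cannot be absorbed by idempotency: locally the $0$-Hecke product changes from $\tau_{k+1}\tau_k$ to $\tau_{k+1}\tau_k\tau_{k+1}$, a strictly longer element. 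A minimal illustration (outside the lemma's hypotheses, but showing the move itself is not permutation-preserving): in $S_4$ the pipe dream with crosses $\{(1,1),(1,2)\}$ represents $3124$, while adding the cross at $(2,1)$ gives a pipe dream for $3214$. This also shows your proposed mechanism for the verification is off target: what makes the first two moves work is not the same-shape condition ``neutralizing'' a swap of pipe destinations, but a purely local commutation-plus-idempotency argument in the $0$-Hecke monoid --- in those two cases the added letter is, modulo commuting letters, a duplicate of one already in the reading word.

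The paper's route for the third pattern avoids adding a cross there at all. It travels left from the $\dompz\,\dompz$ until reaching a non-$\dompz$ column; by Lemma~\ref{lem:leftcolumn} and the already-established exclusion of $\domzz\,\dompz$, the configuration must be $\domzp\,\dompz\,\dompz$. It then \emph{slides} the middle cross down one row, $\domzp\,\dompz\,\dompz \to \domzp\,\domzp\,\dompz$ (here idempotency does apply), keeping the number of crosses and the Demazure product unchanged but replacing a factor $x_i$ of the monomial by $x_{i+1}$. This produces a pipe dream for $w'$ with a strictly larger monomial, contradicting Theorem~\ref{thm:exponentbound} rather than the cross-count bound. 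You would need to adopt this (or some other valid) modification for the third pattern; as proposed, the argument does not close.
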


\begin{proof}
If we had $\domzz\!\dompz$ or  $\domzp\!\domzz$, then replacing these columns with $\domzp\!\dompz$ would give a pipe dream for $w'$ with more crosses than $P'$. 

Now, suppose for the sake of contradiction that   $\dompz\!\dompz$ occurred. By Lemma~\ref{lem:leftcolumn}, it cannot occur in the left two columns, so we can travel to the left from this pattern until we first see a column which is not $\dompz$. At this point, we will have either $\domzz\!\dompz\!\dompz$ or else $\domzp\!\dompz\!\dompz$. The first we have ruled out in the first paragraph of this proof. If  $\domzp\!\dompz\!\dompz$ occurs, let $P''$ be the pipe dream obtained by replacing it with  $\domzp\!\domzp\!\dompz$. Then $P''$ is also a pipe dream for $w'$, and its monomial differs from that of $P'$ by a factor of $x_i x_{i+1}^{-1}$. So the monomial of $P''$ dominates the monomial of $P'$, contradicting that $P'$ is maximal.
\end{proof}

Combining Lemmas~\ref{lem:leftcolumn} and~\ref{lem:forbiddenQpatterns}, we see that the simplified array $Q'$ is of the form
\[ \domzp^{a_1} \dompz \domzz^{b_1} \domzp^{a_2} \dompz \domzz^{b_2} \cdots \domzp^{a_{k-1}} \dompz \domzz^{b_{k-1}}  \domzp^{a_k} \dompz \domzz^{\infty} \]
for some constants $a_1$, $a_2$, \dots, $a_k$, $b_1$, $b_2$, \dots, $b_{k-1}$, where the $a_j$ are positive integers and the $b_j$ are nonnegative integers.

The following lemma concludes the proof of Theorem~\ref{thm:exponentachieved}.

\begin{lem} \label{lem:pushpluses}
Define a pipe dream $P$ by replacing the columns $\domzp^{a_1} \dompz$ of the simplified array by $\dompz^{a_1+1}$, while leaving all other columns (including the $\dompp$ columns) as in $P'$. Then $P$ is a maximal pipe dream for $w$. Moreover, in the above notation, we have $a_2=a_3 = \cdots=a_k=1$ in $Q'$.
\end{lem}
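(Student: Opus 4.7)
My plan is to verify in four successive stages: that $P$ is a valid pipe dream; that $|P| = \raj(w)$; that the Demazure product of the reading word of $P$ equals $w$; and, finally, that $a_2 = \cdots = a_k = 1$.

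The first two stages are quick. Since both $\domzp^{a_1}\dompz$ and $\dompz^{a_1+1}$ occupy $a_1+1$ columns and contain exactly $a_1+1$ crossings, the replacement preserves validity and the crossing count. Thus $|P| = |P'| = \raj(w') = \raj(w)$, the last equality because $w$ and $w'$ share the shape $\alpha$ (cf.\ Corollary~\ref{cor:LR_raj_ineq} and Lemma~\ref{lem:set_partition_to_raj}).

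The main technical step is the Demazure-product calculation. I would compare the reading words of $P'$ and $P$: they agree outside rows $i,i+1$, and they agree in the intervening $\dompp$ columns, which contribute the same reflections in both. In the modified columns, the row-$i$ contribution to $P'$ (read right-to-left) is the single $s_{i+a_1}$ from column $a_1+1$, and the row-$(i+1)$ contribution from cols $a_1, a_1-1, \ldots, 1$ is $s_{i+a_1}, s_{i+a_1-1}, \ldots, s_{i+1}$. In $P$ the row-$i$ contribution becomes $s_{i+a_1}, s_{i+a_1-1}, \ldots, s_i$ and the row-$(i+1)$ contribution from these cols vanishes. Using Hecke idempotence to absorb the repeated $s_{i+a_1}$, the commutation of $s_i$ past reflections $s_j$ with $|i-j|\geq 2$ arising from the later $\domzp$ blocks, and careful bookkeeping of reading-word positions, I would show that the net change amounts to inserting a single extra factor of $s_i$ into the Demazure product. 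This converts the Demazure product from $w'$ to $s_i\cdot w' = w$, so that $P$ represents $w$; combined with $|P|=\raj(w)$, it is maximal.

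For the final claim $a_2 = \cdots = a_k = 1$, I would argue by contradiction using the maximality of $P$. If some $a_j \geq 2$ for $j \geq 2$, then the simplified strip of $P$ contains a local pattern of the shape $\dompz\,\domzz^{b_{j-1}}\,\domzp^{a_j}\,\dompz$ which (following the style of Lemma~\ref{lem:forbiddenQpatterns}) admits a further local rewrite producing a pipe dream for $w$ with strictly higher degree or strictly larger leading monomial than $P$, contradicting $|P| = \raj(w)$. The chief obstacle throughout is the Demazure-product verification: showing that the effect of the move on the reading word, despite the large separation between the row-$i$ and row-$(i+1)$ contributions (which are interleaved with the entire rest of rows $i$ and $i+1$), reduces via Hecke relations to a single left-insertion of $s_i$, cleanly translating the structural change of the strip into the algebraic change from $w'$ to $w$.
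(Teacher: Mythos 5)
There is a genuine gap, and it lies in what you take ``maximal'' to mean. In this paper a \emph{maximal pipe dream} for $w$ is not one with $\raj(w)$ crosses in total; it is one with exactly $\rajc_i(w)$ crosses in row $i$ and $\rajc_j(w^{-1})$ crosses in column $j$ for every $i,j$. Your stages (1)--(3) only establish that $P$ is a pipe dream for $w$ with $|P|=\raj(w)$; they say nothing about how those crosses are distributed among the rows. The replacement $\domzp^{a_1}\dompz \rightsquigarrow \dompz^{a_1+1}$ moves $a_1$ crosses from row $i+1$ up to row $i$, so the row counts of $P$ differ from those of $P'$, and you must check that the new counts are precisely $\rajc_i(w)$ and $\rajc_{i+1}(w)$. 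This is where the real content of the lemma sits, and it is also what makes the induction go through (the inductive hypothesis you invoke on $P'$ is the strong, row-and-column-exact statement). The paper's proof does exactly this bookkeeping: writing $A=\sum a_j$ and letting $\ell$ be the number of $\dompp$ columns, $P$ has $a_1+k+\ell$ crosses in row $i$ and $A+\ell-a_1$ in row $i+1$; Lemma~\ref{lem:cover_rajcode} gives $\rajc_i(w)=A+\ell+1$ and $\rajc_{i+1}(w)=k+\ell-1$; and the partial-sum bound of Theorem~\ref{thm:exponentbound}, applied to rows $i+1,\dots,n$ of $P$ (which agree with $P'$ below row $i+1$), forces $A+\ell-a_1\le k+\ell-1$, i.e.\ $\sum_{j=2}^k a_j\le k-1$. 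Since each $a_j\ge 1$, this yields $a_2=\cdots=a_k=1$ \emph{and} equality in the row counts simultaneously; the column counts are untouched because crosses only move vertically.

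Your separate argument for $a_2=\cdots=a_k=1$ does not work as stated: exhibiting another pipe dream for $w$ with a larger monomial than $P$'s would not contradict $|P|=\raj(w)$ (total degree is insensitive to which rows the crosses sit in), and Lemma~\ref{lem:forbiddenQpatterns} was proved for the strip of $P'$ relative to $w'$, not for the modified strip of $P$ relative to $w$, so it cannot be cited for a ``further local rewrite'' without reproving it in the new situation. The claim in fact falls out of the counting above rather than from a pattern argument. Your Demazure-product stage is a reasonable (if heavier) route to the paper's ``easy to check that $P$ is a pipe dream for $w$,'' but note that the columns of the simplified array are interleaved with $\dompp$ columns in $P'$, so the reflections contributed by the modified columns are not consecutive in the reading word; the bookkeeping you defer is nontrivial and is more easily replaced by tracing pipes.
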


\begin{proof}
It is easy to check that $P$ is a pipe dream for $w$. 

Let $A = \sum_{j=1}^k a_j$. Let $\ell$ be the number of $\dompp$ columns in rows $i$ and $i+1$ of $P'$. Then $P'$ has $k+\ell$ crosses in row $i$ and $A+\ell$ crosses in row $i+1$. 
The pipe dream $P$ has $a_1+k+\ell$ crosses in row $i$ and $A+\ell-a_1$ crosses in row $i+1$.

By our assumption that $P'$ is maximal, we have $\rajc_i(w') = k+\ell$ and $\rajc_{i+1}(w') = A+\ell$. 
By Lemma~\ref{lem:cover_rajcode}, we have $\rajc_i(w) = A+\ell+1$ and $\rajc_{i+1}(w) = k+\ell-1$, while $\rajc_j(w) = \rajc_j(w')$ for $j \not\in \{ i,i+1 \}$. 

By Theorem~\ref{thm:exponentbound}, the pipe dream $P$ must have at most \[
\rajc_{i+1}(w) + \rajc_{i+2}(w) + \cdots +\rajc_n(w)
\]
 crosses in the bottom $n-i$ rows. Since  $\rajc_j(w) = \rajc_j(w')$ for $j>i+1$, and $P$ and $P'$ agree in rows below $i+1$, this tells us that  $P$ must have at most  $\rajc_{i+1}(w) = k+\ell-1$ crosses in row $i+1$. So $A+\ell-a_1 \leq k+\ell-1$ or, in other words, $\sum_{j=2}^k a_j \leq k-1$. 

But each $a_j$ is at least $1$. So we must have $a_2 = a_3 = \cdots = a_k= 1$, and $P$ has exactly $\rajc_{i+1}(w)$ crosses in row $i+1$. Since $P$ and $P'$ match outside rows $i$ and $i+1$, we see that $P$ has $\rajc_j(w) = \rajc_j(w')$ crosses in every such row outside of $i$ and $i+1$. Finally, since we only moved crosses vertically, $P$ and $P'$ the same number of crosses in each column, namely $\rajc(w^{-1}) = \rajc((w')^{-1})$. This completes the proof that $P$ is maximal for $w$ and, along the way, we noted that $a_2 = a_3 = \cdots = a_k=1$. 
\end{proof}

\subsection{The leading monomial has coefficient one}

Finally, we will show that maximal pipe dreams are unique. 

\begin{prop} \label{prop:coeff1Raj}
Let $\pi$ be a set partition. Then the leading monomial of the Rajchgot polynomial $\fR_{\pi}(\bx)$ has coefficient $1$.
\end{prop}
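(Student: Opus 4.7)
The plan is to reduce the claim to a uniqueness statement about maximal pipe dreams, via the factorization of Theorem~\ref{thm:factorization}. By definition $\fR_\pi(\bx) = \fCM_w(\bx)$ for the fireworks permutation $w$ associated to $\pi$, and by Theorems~\ref{thm:exponentbound} and~\ref{thm:exponentachieved}, the leading monomial of $\fR_\pi(\bx)$ is $\bx^{\rajc(w)}$, whose coefficient equals the number of maximal pipe dreams for $w$ whose row-count vector equals $\rajc(w)$.

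My first step would be to invoke Theorem~\ref{thm:factorization}. Since $w$ is fireworks of some shape $\alpha$, Lemma~\ref{lem:consecutiveblobs} gives that the blobs of $w$ occupy consecutive rows, so the ``transposed'' set partition $\pi(w^{-1})$ agrees with $\pi(e_\alpha)$ for the layered permutation $e_\alpha$. Lemma~\ref{lem:deg_ealpha} (and the uniqueness of $Q_\alpha$ in its proof) then identifies $\fR_{\pi(w^{-1})}(\by)$ with the single monomial $\fCM_{e_\alpha}(\by)$, so Theorem~\ref{thm:factorization} degenerates to
\[ \fCM_w(\bx;\by) \;=\; \fR_\pi(\bx) \cdot \by^{\rajc(e_\alpha)}. \]
Consequently, the coefficient of $\bx^{\rajc(w)}$ in $\fR_\pi(\bx)$ equals the coefficient of the leading bidegree monomial $\bx^{\rajc(w)}\by^{\rajc(w^{-1})}$ of $\fCM_w(\bx;\by)$, and by Theorem~\ref{thm:pipedreams} this is the number of maximal pipe dreams $P$ for $w$ whose row counts equal $\rajc(w)$ \emph{and} whose column counts equal $\rajc(w^{-1})$.

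Theorem~\ref{thm:exponentachieved} already produces at least one such pipe dream, so it remains to establish uniqueness. I would proceed by induction on $\inv(w) - \inv(e_\alpha)$, mirroring the construction in the proof of Theorem~\ref{thm:exponentachieved}. The base case $w = e_\alpha$ is the uniqueness of the pipe dream $Q_\alpha$ from the proof of Lemma~\ref{lem:deg_ealpha}. For the inductive step, starting from the unique maximal pipe dream $P'$ for a predecessor permutation $w'$, Lemma~\ref{lem:pushpluses} yields a maximal pipe dream $P$ for $w$; crucially, the rigidity $a_2 = a_3 = \cdots = a_k = 1$ forced by that lemma pins down the local structure of $P$ in the two affected rows. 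The main obstacle I expect is establishing the \emph{reversibility} of this construction: every maximal pipe dream for $w$ with the prescribed row and column counts must arise from the Lemma~\ref{lem:pushpluses} recipe applied to the (inductively unique) maximal $P'$ for $w'$. I would handle this by showing that the prescribed row and column counts pin down the multiset of $\dompp$, $\domzp$, and $\dompz$ columns in the simplified two-row array, leaving no freedom in the choice of either $P$ or its predecessor $P'$.
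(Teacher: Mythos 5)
Your reduction of the proposition to the uniqueness of the maximal pipe dream is sound and matches the paper's strategy, but the uniqueness itself---which is the entire content of the proposition---is left unproved. You correctly identify the obstacle (reversibility of the Lemma~\ref{lem:pushpluses} construction), but the proposed fix, that ``the prescribed row and column counts pin down the multiset of $\dompp$, $\domzp$, $\dompz$ columns in the simplified two-row array,'' does not close it. Knowing that multiset determines neither the left-to-right arrangement of those columns nor the pipe dream outside rows $i,i+1$; and to invert the move you would first have to prove that rows $i,i+1$ of an \emph{arbitrary} maximal pipe dream $P$ for $w$ have exactly the form $\dompz^{a_1+1}\domzz^{b_1}\domzp\dompz\cdots$ produced by the forward construction. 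The structural Lemmas~\ref{lem:leftcolumn} and~\ref{lem:forbiddenQpatterns} were proved under the hypothesis $s_i w' >_L w'$, which fails for $w$ itself, so they cannot be reapplied verbatim; you would need a fresh local analysis, a verification that undoing the move yields a \emph{maximal} pipe dream for $w'$, and injectivity of the forward map. None of this is routine, and it is precisely the part of the argument that is missing.

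The paper sidesteps reversibility entirely. It takes $w$ to be the \emph{inverse fireworks} representative of $\pi$ (rather than the fireworks one, as in your setup) and reuses the row-slicing from the proof of Theorem~\ref{thm:exponentbound}: if $P$ is maximal, then for each $k$ the restriction $P^k$ to rows $k+1,\dots,n$ is a pipe dream for some $u^k \leq_L w^k$ with $\raj(u^k)=\raj(w^k)$; by Lemma~\ref{lem:fireworks_delete_position} each $w^k$ is again inverse fireworks, and for an inverse fireworks permutation the only $v \leq_L w^k$ with $\raj(v)=\raj(w^k)$ is $w^k$ itself (since $[\Phi_{\rm inv}(w^k),w^k]_L$ is then a single point). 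Hence $u^k=w^k$ for every $k$, which fixes where the pipes cross each horizontal boundary, and each row then admits a unique maximal filling, so $P$ is determined. If you wish to keep your inductive scheme you must supply the reverse analysis described above; otherwise I would recommend this global argument, which requires no induction on weak order at all.
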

\begin{proof}
Let $w$ be the inverse fireworks permutation with $\pi(w) = w$. 
So $\fR_{\pi}(\bx) = \fCM_w(\bx)$. Let $\rajc(w)= (r_1, \ldots, r_n)$. 
In the proof of Lemma~\ref{lem:pushpluses}, we constructed a pipe dream for $w$ with monomial $x_1^{r_1} x_2^{r_2} \cdots x_n^{r_n}$; we now will show that pipe dream is unique.

Let $P$ be a maximal pipe dream for $w$.
Fix an index $k$ and define permutations $u^k$ and $w^k$ in $S_{n-k}$ as in the proof of Theorem~\ref{thm:exponentbound}.
By the proof of Theorem~\ref{thm:exponentbound}, we must have $\raj(u^k) = \raj(w^k)$. 

The permutation $w^k$ is obtained from $w$ by applying $k$-fold application of the operation in Lemma~\ref{lem:fireworks_delete_position}.
By Lemma~\ref{lem:fireworks_delete_position}, $w^k$ is also inverse fireworks. 
But we showed in the proof of Theorem~\ref{thm:exponentbound} that $u^k \leq_L w^k$. So the only $v \in S_{n-k}$ with  $v \leq_L w^k$ and $\raj(v) =\raj(w^k)$ is $v=w^k$ itself.
So we have proved, for every $k$, that $u^k = w^k$.

Recall that $w^k$ is the permutation for which $P^k$ is a pipe dream.
So, for each $k$, we know where the pipes end at the bottom of row $k$ and where they end at the top of row $k$.
There is a unique maximal way to place crosses in row $k$ to connect up those pipes. 
So the permutation $w$ determines the list of permutations $w^1$, $w^2$, \dots, $w^n$, and this determines the positions of the crosses in each row of $P$. Thus $P$ is unique.
\end{proof}

\begin{Theorem} \label{thm:coeff1}
Let $w$ be a permutation in $S_n$. The monomial $\bx^{\rajc(w)} \by^{\rajc(w^{-1})}$ appears with coefficient $1$ in $\fCM_w(\bx; \by)$.
\end{Theorem}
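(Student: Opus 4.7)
The plan is to deduce Theorem~\ref{thm:coeff1} immediately from the factorization of the double Castelnuovo--Mumford polynomial together with the coefficient-one statement for Rajchgot polynomials. Specifically, Theorem~\ref{thm:factorization} gives $\fCM_w(\bx;\by) = \fR_{\pi(w)}(\bx)\,\fR_{\pi(w^{-1})}(\by)$. Since this is a product of a polynomial purely in the $\bx$ variables with one purely in the $\by$ variables, multiplication of monomials between the factors never causes collisions, and thus the coefficient of $\bx^{\rajc(w)}\by^{\rajc(w^{-1})}$ in $\fCM_w(\bx;\by)$ equals the product of the coefficient of $\bx^{\rajc(w)}$ in $\fR_{\pi(w)}(\bx)$ with the coefficient of $\by^{\rajc(w^{-1})}$ in $\fR_{\pi(w^{-1})}(\by)$. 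It therefore suffices to show each of these two coefficients is $1$.

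Next I would identify $\bx^{\rajc(w)}$ as the leading monomial of $\fR_{\pi(w)}(\bx)$ (and symmetrically for $\by$). For the chosen term order with $x_1 < \cdots < x_n$ and $y_1 < \cdots < y_n$, the leading monomial of $\fR_{\pi(w)}(\bx)\,\fR_{\pi(w^{-1})}(\by)$ is the product of the two factors' leading monomials, again because the variable sets are disjoint. Combining Theorem~\ref{thm:exponentbound} (which bounds every monomial of $\fCM_w(\bx;\by)$ by $\bx^{\rajc(w)}\by^{\rajc(w^{-1})}$ in the term order) with Theorem~\ref{thm:exponentachieved} (which produces a pipe dream contributing this monomial, noting that each maximal pipe dream contributes with sign $+1$ in $\fCM_w(\bx;\by)$), we conclude that $\bx^{\rajc(w)}\by^{\rajc(w^{-1})}$ is the leading monomial of $\fCM_w(\bx;\by)$. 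Matching $\bx$-degrees and $\by$-degrees in the factorization then forces $\bx^{\rajc(w)}$ to be the leading monomial of $\fR_{\pi(w)}(\bx)$ and $\by^{\rajc(w^{-1})}$ to be the leading monomial of $\fR_{\pi(w^{-1})}(\by)$. Applying Proposition~\ref{prop:coeff1Raj} to each factor, both leading-monomial coefficients equal $1$, so their product is $1$, completing the argument.

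The main point to watch is the disjointness of the $\bx$ and $\by$ variable sets, which is what prevents any cancellation or collision in the product of Rajchgot polynomials and makes both the coefficient identification of the first paragraph and the leading-monomial identification of the second paragraph go through cleanly. Beyond this, there is no real obstacle: all of the substantive combinatorial work has been done in Theorems~\ref{thm:exponentbound}, \ref{thm:exponentachieved}, and~\ref{thm:factorization} together with the pipe-dream uniqueness of Proposition~\ref{prop:coeff1Raj}, and Theorem~\ref{thm:coeff1} is their formal consequence.
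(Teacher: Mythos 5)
Your proposal is correct and follows the paper's own route: the paper also deduces Theorem~\ref{thm:coeff1} by combining the factorization of Theorem~\ref{thm:factorization} with Proposition~\ref{prop:coeff1Raj}, and your additional step of identifying $\bx^{\rajc(w)}$ and $\by^{\rajc(w^{-1})}$ as the leading monomials of the two factors (via Theorems~\ref{thm:exponentbound} and~\ref{thm:exponentachieved} and the disjointness of the variable sets) is exactly the detail the paper leaves implicit in its one-line proof.
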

\begin{proof}
	Immediate by combining Theorem~\ref{thm:factorization} with Proposition~\ref{prop:coeff1Raj}.
\end{proof}

\section*{Acknowledgements}
We are grateful to Jenna Rajchgot for bringing this problem to our attention and explaining how to think about Castelnuovo--Mumford regularity. AW is grateful for many helpful conversations with Colleen Robichaux. OP was partially supported by a Mathematical Sciences Postdoctoral Research Fellowship (\#1703696) from the National Science Foundation, as well as by a Discovery Grant (RGPIN-2021-02391) and Launch Supplement (DGECR-2021-00010) from the Natural Sciences and Engineering Research Council of Canada. DES was supported in part by National Science Foundation grants DMS-1600223, DMS-1855135 and DMS-1854225. AW was partially supported by Bill Fulton's Oscar Zariski Distinguished Professor Chair funds.

\appendix

\section{Partial derivatives of Grothendieck polynomials}\label{sec:derivative} The authors originally discovered many of this results in this paper by a different route, using a differential recurrence relation for Grothendieck polynomials that seems both new and interesting to us. In the end, this technique proved less powerful than the methods that we have used in this paper, but we record it here, as we hope it will have other uses.

We will use the following two differential operators on $\ZZ[x_1, x_2, \ldots, x_n]$: 
\[ E = \sum_{i=1}^n x_i \frac{\partial}{\partial x_i} \ \mbox{and} \ \nabla = \sum_{i=1}^n \frac{\partial}{\partial x_i} . \]

\begin{theorem} \label{thm:DerivRecursion}
For any permutation $w \in S_n$, we have 
\begin{equation}\label{eq:david}
(\maj(w^{-1}) + \nabla-E) \fG_w(\bx) = \sum_{s_k w <_L w} k \fG_{s_k w}(\bx).
\end{equation}
\end{theorem}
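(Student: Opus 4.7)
The plan is to compute $\partial_s \fG_w(\bx;s\mathbf{1})\big|_{s=0}$, where $s\mathbf{1} = (s,s,\ldots,s)$, in two different ways: first directly from the pipe dream formula (Theorem~\ref{thm:pipedreams}) for the double Grothendieck polynomial, and second via the Cauchy identity~\eqref{eq:Cauchy}. Equating the two expressions will yield the theorem.

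For the first computation, I substitute $y_j = s$ for all $j$ in the pipe dream formula, so each factor $(x_i+y_j-x_iy_j)$ becomes $x_i + s(1-x_i)$. Differentiating and evaluating at $s=0$, the product rule replaces one chosen factor by $1-x_{i_0}$ while leaving the others as $x_i$. Splitting $1-x_{i_0}$ into $1$ and $-x_{i_0}$ and summing over the chosen cell,
\[
\partial_s\fG_w(\bx;s\mathbf{1})\big|_{s=0} = \sum_{P\in\pipes(w)} (-1)^{|P|-\inv(w)}\Bigl(\sum_{(i_0,j_0)\in P}\prod_{(i,j)\in P\setminus\{(i_0,j_0)\}} x_i \;-\; |P|\prod_{(i,j)\in P}x_i\Bigr).
\]
The second inner sum gives $E\fG_w(\bx)$ upon summing over $P$. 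For the first, differentiating $\prod_i x_i^{r_i(P)}$, where $r_i(P)$ counts crosses of $P$ in row $i$, shows that $\nabla \prod_{(i,j)\in P} x_i = \sum_{(i_0,j_0)\in P}\prod_{(i,j)\in P\setminus\{(i_0,j_0)\}} x_i$; summing over $P$ gives $\nabla\fG_w(\bx)$. Hence $\partial_s\fG_w(\bx;s\mathbf{1})\big|_{s=0} = (\nabla-E)\fG_w(\bx)$.

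For the second computation, I apply~\eqref{eq:Cauchy} with $\by = s\mathbf{1}$. Since the lowest-degree homogeneous part of $\fG_v(\bx)$ is the Schubert polynomial $\fS_v$ of degree $\inv(v)$, the polynomial $\fG_{q^{-1}}(s\mathbf{1})$ begins in degree $\inv(q)$ in $s$. Therefore only $q$ with $\inv(q)\leq 1$ contribute to the coefficient of $s^1$: the case $q = e$ gives only the constant term $\fG_e(s\mathbf{1}) = 1$, while $q = s_k$ contributes the linear term $\fS_{s_k}(\mathbf{1})\cdot s = ks$, because $\fS_{s_k}(\bx) = x_1+\cdots+x_k$. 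For each such $k$, the Demazure product equation $s_k*p = w$ admits solutions only when $s_k w <_L w$, in which case exactly two: $p = w$ (with sign $(-1)^{\inv(w)-\inv(w)-1} = -1$) and $p = s_k w$ (with sign $(-1)^{\inv(w)-\inv(s_kw)-1} = +1$). Since $\{k : s_k w<_L w\}$ is the descent set of $w^{-1}$, whose sum is $\maj(w^{-1})$, assembling these contributions yields
\[
\partial_s\fG_w(\bx;s\mathbf{1})\big|_{s=0} = -\maj(w^{-1})\fG_w(\bx) + \sum_{s_k w<_L w} k\,\fG_{s_k w}(\bx).
\]

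Equating the two expressions and rearranging gives the desired identity. I expect no serious obstacle; the only delicate point is the Demazure product case analysis, which follows directly from the definition $s_k * p = s_k p$ if $s_k p >_L p$ and $s_k * p = p$ otherwise.
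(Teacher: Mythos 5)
Your proof is correct, and it takes a genuinely different route from the paper's. The paper proves the identity by downward induction on $\inv(w)$: the base case $w=w_0$ is an explicit computation with the staircase monomial, and the inductive step rests on Lemma~\ref{lem:commutation}, which shows that $\nabla-E$ commutes with the isobaric divided difference operators $\overline{N_i}$. You instead compute $\partial_s \fG_w(\bx;s\mathbf{1})\big|_{s=0}$ twice. The pipe dream computation is sound: differentiating each factor $x_i+s(1-x_i)$ produces $1-x_{i_0}$, and the two pieces reassemble, cell by cell, into $\nabla$ and $-E$ applied to the monomial of $P$. The Cauchy computation is also sound: only $\inv(q)\leq 1$ contributes to the coefficient of $s$, the value $\fS_{s_k}(\mathbf{1})=k$ is correct, the Demazure-product equation $s_k\ast p=w$ has solutions exactly when $s_kw<_Lw$ (namely $p=w$ with sign $-1$ and $p=s_kw$ with sign $+1$), and $\sum_{s_kw<_Lw}k=\maj(w^{-1})$ since that index set is the descent set of $w^{-1}$. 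What each approach buys: the paper's induction is self-contained modulo the commutation lemma, which is itself the structural fact driving the recursion; your argument is non-inductive and explains conceptually where the coefficient $\maj(w^{-1})$ and the weights $k$ come from (they are the linear-in-$\by$ terms of the Cauchy expansion), at the cost of importing the Cauchy identity~\eqref{eq:Cauchy} as an external input --- one the paper already uses in proving Theorem~\ref{thm:factorization}, so no circularity arises.
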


\begin{remark}
We restate Equation~\eqref{eq:david} using the notion of \newword{$\beta$-Grothendieck polynomials}~\cite{Fomin.Kirillov}.
The $\beta$-Grothendieck polynomial is the polynomial
\[
\fG_w^{(\beta)}(x_1,\dots, x_n) \coloneqq \frac{\fG_w(-\beta x_1, \dots, -\beta x_n)}{(-\beta)^{\inv(w)}}.
\] 
in $\ZZ[x_1, x_2, \ldots, x_n, \beta]$.
Define the operator
\[
\nabla^\beta = \nabla + \beta^2 \frac{\partial}{\partial \beta}.
\]
Then Equation~\eqref{eq:david} is equivalent to
\begin{equation}\label{eq:anna}
\nabla^\beta \fG_w^{(\beta)}(\bx) = \beta(\maj(w^{-1}) - \inv(w)) \fG^{(\beta)}_w(\bx) + \sum_{s_k w <_L w} k \fG^{(\beta)}_{s_k w}(\bx).
\end{equation}
By setting $\beta=0$ in Equation~\eqref{eq:anna}, we recover \cite[Proposition~1.1]{Hamaker.Pechenik.Speyer.Weigandt}, which describes the action of $\nabla$ on Schubert polynomials.
\end{remark}

In order to prove Theorem~\ref{thm:DerivRecursion}, we recall the divided difference operator $\partial_i(f):=\frac{f-s_{i}\cdot f}{x_i-x_{i+1}}$, acting on $\ZZ[x_1, \ldots, x_n]$, and its $K$-analogue $\overline{\partial_i}(f)=\partial_i((1-x_{i+1})f)$. To avoid confusion with the partial derivatives in this appendix, we will write these divided difference operators instead here as $N_i$ and $\overline{N_i}$, respectively.

\begin{lemma}\label{lem:commutation}
The differential operator $\nabla-E$ commutes with  $\overline{N_i}$. 
\end{lemma}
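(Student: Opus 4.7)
The plan is to show that $\nabla$ and $E$ have the \emph{same} commutator with $\overline{N_i}$ (namely $-N_i$), so that the difference $\nabla - E$ commutes with $\overline{N_i}$.

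First I would establish that $\nabla$ commutes with the classical divided difference $N_i$. The key point is that $s_i$ commutes with $\tfrac{\partial}{\partial x_i}+\tfrac{\partial}{\partial x_{i+1}}$ (since it swaps the two summands) and commutes trivially with $\tfrac{\partial}{\partial x_j}$ for $j\neq i,i+1$; hence $s_i \nabla=\nabla s_i$. Since $\nabla$ also annihilates $x_i-x_{i+1}$, applying $\nabla$ to the identity $(x_i-x_{i+1})N_i(f)=f-s_i f$ and solving gives $\nabla N_i(f)=N_i(\nabla f)$, i.e.\ $[\nabla,N_i]=0$.

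Next I would compute $[\nabla,\overline{N_i}]$. By definition $\overline{N_i}(f)=N_i((1-x_{i+1})f)$, so using the commutation from the previous step,
\[
\nabla\overline{N_i}(f)=N_i\bigl(\nabla((1-x_{i+1})f)\bigr).
\]
The Leibniz rule gives $\nabla((1-x_{i+1})f)=(1-x_{i+1})\nabla f-f$ (only $\tfrac{\partial}{\partial x_{i+1}}$ hits the factor $-x_{i+1}$). Therefore
\[
\nabla\overline{N_i}(f)=\overline{N_i}(\nabla f)-N_i(f),
\]
so $[\nabla,\overline{N_i}]=-N_i$.

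For $E$, I would first note the standard commutators $[E,N_i]=-N_i$ (since $N_i$ lowers degree by one on homogeneous polynomials) and $[E,x_{i+1}\cdot]=x_{i+1}\cdot$ (since multiplication by $x_{i+1}$ raises degree by one). Writing $\overline{N_i}(f)=N_i(f)-N_i(x_{i+1}f)$ and computing $E\overline{N_i}(f)$ using these two commutators (being careful with signs), the bookkeeping yields $E\overline{N_i}(f)=\overline{N_i}(Ef)-N_i(f)$, so $[E,\overline{N_i}]=-N_i$. Subtracting the two commutator identities,
\[
[\nabla-E,\,\overline{N_i}]=(-N_i)-(-N_i)=0,
\]
which is the claim. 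The only subtlety is the sign bookkeeping in the Leibniz/degree calculations; the conceptual reason the identity holds is that $\overline{N_i}$ differs from $N_i$ by an application of $1-x_{i+1}$, and the discrepancies produced by $\nabla$ acting on this factor (via differentiating $-x_{i+1}$) and by $E$ acting on it (via the degree shift from the $x_{i+1}$) contribute the same extra term $-N_i$.
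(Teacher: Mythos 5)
Your proposal is correct and follows essentially the same route as the paper: both arguments show that $\nabla$ and $E$ each have commutator $-N_i$ with $\overline{N_i}$ (using $[\nabla,N_i]=0$ and the Leibniz rule for the first, and degree considerations for the second) and then subtract. The only cosmetic differences are that you sketch the proof of $[\nabla,N_i]=0$ where the paper cites it, and you phrase the $E$ computation via commutator identities rather than evaluating directly on homogeneous polynomials.
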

\begin{proof}
It is enough to check that $(\nabla - E) \overline{N_i}(f) = \overline{N_i} (\nabla - E) (f)$ when $f$ is homogeneous. So let $f \in \ZZ[x_1,\ldots, x_n]$ be a  homogenous  polynomial of degree $d$.

By \cite[Lemma~2.1]{Hamaker.Pechenik.Speyer.Weigandt}, $\nabla$ and $N_i$ commute. 
Therefore,  
\begin{align*}
\nabla \overline{N_i}(f)  = \nabla N_i {\Big (}(1-x_{i+1}) f {\Big )} &= N_i  \nabla {\Big (} (1-x_{i+1}) f) {\Big )} \\ &= N_i {\Big (}\! -\!\!f + (1-x_{i+1}) \nabla(f) {\Big )} = - N_i(f) + \overline{N_i}(\nabla f) .
\end{align*}
Note that, if $g$ is homogeneous of degree $e$, then $Eg = eg$, and that $N_i$ lowers degrees by $1$. So we have
\begin{multline*}
E \overline{N_i} (f) = E N_i  {\Big (}(1-x_{i+1}) f {\Big )}  = E N_i f - E N_i x_{i+1} f  = (d -1) N_i(f) -d N_i(x_{i+1} f) \\ = - N_i(f) + d N_i  {\Big (} (1-x_{i+1}) f {\Big )}  = - N_i(f) +d \overline{N_i}(f) = - N_i(f) + \overline{N_i}(Ef). 
\end{multline*}
Subtracting the two equations from each other, 
\[ (\nabla - E)(\overline{N_i} f) = \overline{N_i}(\nabla f)  - \overline{N_i}(Ef) = \overline{N_i}(\nabla - E)f, \]
as required.
\end{proof}

For $w \in S_n$ and for $\{\tau_i\}_{1 \leq i \leq n-1}$ the generators of the $0$-Hecke monoid from Section~\ref{sec:background}, we define
\begin{equation}\label{eq:box+product}
\tau_i \square w = \begin{cases} s_i  w & s_i w <_L w \\ w & s_i w >_L w \\ \end{cases} \qquad 
w \square \tau_i = \begin{cases} w s_i & w s_i <_R w \\ w & w s_i >_R w \\ \end{cases} . 
\end{equation}
The following lemma is easily checked.
\begin{lemma}
The formulas of Equation~\eqref{eq:box+product} define commuting left and right actions of the $0$-Hecke monoid on $S_n$. \qed
\end{lemma}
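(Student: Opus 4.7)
The plan is to verify three properties: (i) the right $\square$ defines an action of the 0-Hecke monoid on $S_n$, (ii) the left $\square$ does as well, and (iii) the two actions commute.

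For (i), the idempotent relation $\tau_i \square \tau_i = \tau_i$ holds because $w \square \tau_i$ always produces a permutation with no right descent at $i$, so a second application is the identity. The commutation relation $\tau_i \tau_j = \tau_j \tau_i$ for $|i-j| \geq 2$ holds because both the descent conditions and the position swaps involve the disjoint pairs $(i,i+1)$ and $(j,j+1)$, so the operations commute trivially. The braid relation requires checking that $w \square \tau_i \square \tau_{i+1} \square \tau_i = w \square \tau_{i+1} \square \tau_i \square \tau_{i+1}$; since values of $w$ outside positions $i, i+1, i+2$ are unchanged throughout, this reduces to a calculation on the six possible relative orderings of $w(i), w(i+1), w(i+2)$. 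In every case, both sides produce the permutation obtained from $w$ by sorting these three entries into increasing order.

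For (ii), the key observation is that the inversion map $w \mapsto w^{-1}$ satisfies $(\tau_i \square w)^{-1} = w^{-1} \square \tau_i$, because the conditions $s_i w <_L w$ and $w^{-1} s_i <_R w^{-1}$ are both equivalent to $\inv(w) > \inv(s_i w) = \inv(w^{-1} s_i)$, and the permutations $(s_i w)^{-1}$ and $w^{-1} s_i$ coincide. Thus the left $\square$ pulls back via inversion to the right $\square$, and since all three defining relations of the 0-Hecke monoid are palindromic, (i) immediately yields (ii).

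For (iii), the underlying matrix operations of left multiplication by $s_i$ (a row swap) and right multiplication by $s_j$ (a column swap) commute, so $s_i w s_j$ is unambiguously defined. The commutation $(\tau_i \square w) \square \tau_j = \tau_i \square (w \square \tau_j)$ therefore reduces to checking that the descent triggers agree. The only potentially interacting case is when $\{w(j), w(j+1)\} = \{i, i+1\}$; if the values occur in order $(i,i+1)$ then neither action triggers and both sides equal $w$, whereas if they occur as $(i+1, i)$ then both triggers fire, and direct computation shows that $s_i w = w s_j$ and both parenthesizations produce this common permutation. In every other case, the left descent condition at $i$ is unaffected by right multiplication by $s_j$ and vice versa, so both parenthesizations yield $s_i^{\epsilon_L} w s_j^{\epsilon_R}$ with the same exponents $\epsilon_L, \epsilon_R \in \{0,1\}$. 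The main obstacle, if one can call it that, is simply the bookkeeping of cases; none is individually difficult, which is why the authors deem the lemma ``easily checked'' rather than supplying a detailed argument.
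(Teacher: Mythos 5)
Your proof is correct. The paper supplies no argument for this lemma (it is stated as ``easily checked''), and your case analysis --- the sorting-network verification of the braid relation for the right action, the transfer to the left action via $w \mapsto w^{-1}$ using the palindromic relations, and the observation that the two descent triggers interact only when $\{w(j),w(j+1)\}=\{i,i+1\}$ --- is exactly the routine verification the authors left to the reader.
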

In this notation, we have $\overline{N_i}(\fG_w) = \fG_{w \square \tau_i}$ by definition.

\begin{proof}[Proof of Theorem~\ref{thm:DerivRecursion}]
By definition, \[
\maj(w^{-1}) = \sum_{s_k w <_L w} k,
\]
so we can rewrite Equation~\eqref{eq:david} as
\begin{equation}\label{eq:oliver}
  (\nabla-E) \fG_w(\bx) = \sum_{s_k w <_L w} k \left( \fG_{s_k w}(\bx)    - \fG_w(\bx) \right)  = \sum_{k=1}^{n-1} k \left( \fG_{\tau_k \square w}(\bx) - \fG_w(\bx) \right). 
  \end{equation}
  
We will prove Equation~\eqref{eq:oliver} by reverse induction on $\inv(w)$. 
We check the base case $w = w_0$. We have $\fG_{w_0}(\bx) = \prod_{i=1}^n x_i^{n-i}$, so
\[ (\nabla - E) \fG_{w_0}(\bx) = \sum_{j=1}^n (n-j) \frac{\prod_{i=1}^n x_i^{n-i}}{x_j}  - \binom{n}{2} \prod_{i=1}^n x_i^{n-i} . \]
On the other hand, $\tau_k \square w_0 = s_k w_0$ and $\fG_{s_k w_0}(\bx) = \frac{\prod_{i=1}^n x_i^{n-i}}{x_{n-k}}$. So the right side of Equation~\eqref{eq:oliver} is
\[  \sum_{k=1}^{n-1} k \left( \frac{\prod_{i=1}^n x_i^{n-i}}{x_{n-k}} - \prod_{i=1}^n x_i^{n-i} \right) = 
\sum_{k=1}^{n-1} k   \frac{\prod_{i=1}^n x_i^{n-i}}{x_{n-k}} - \binom{n}{2} \prod_{i=1}^n x_i^{n-i},\]
and the formulas match.

Now, suppose that we want to prove Equation~\eqref{eq:david} for some $w \neq w_0$, and that we already know Equation~\eqref{eq:david} holds for all longer permutations.
Since $w \neq w_0$, we can find some index $i$ with $w s_i >_R w$ and, by induction, we know that
\[  (\nabla-E) \fG_{w s_i}(\bx)  =  \sum_{k=1}^{n-1} k \left( \fG_{\tau_k \square w s_i }(\bx) - \fG_{w s_i}(\bx) \right). \]
We apply the operator $\overline{N_i}$ to both sides of this equation. On the left, we have by Lemma~\ref{lem:commutation} that
\[ 
\overline{N_i} (\nabla-E) \fG_{w s_i}(\bx) = (\nabla-E) \overline{N_i} \fG_{w s_i}(\bx) = (\nabla - E) \fG_{(w s_i) \square \tau_i}(\bx) = (\nabla - E) \fG_{w}(\bx). 
\]
On the right, we have
\[ 
\sum_{k=1}^{n-1} k \left( \overline{N_i} \fG_{\tau_k \square w s_i }(\bx) -  \overline{N_i} \fG_{w s_i}(\bx) \right) = 
\sum_{k=1}^{n-1} k \left( \fG_{(\tau_k \square w s_i) \square \tau_i }(\bx) -  \fG_{w s_i \square \tau_i}(\bx) \right) .
\]
Since the left and right actions of the $\tau_i$ elements commute, we have \[
(\tau_k \square w s_i) \square \tau_i  = \tau_k \square (w s_i \square \tau_i) = \tau_k \square w,
\]
 so this last formula simplifies to 
\[ 
\sum_{k=1}^{n-1} k \left( \fG_{\tau_k \square w }(\bx) -  \fG_{w}(\bx) \right) ,
\]
which is the required form to complete the induction.
\end{proof}

We now explain the relevance of Theorem~\ref{thm:DerivRecursion} to the results in this paper. 
Let $f$ be a polynomial and let $m$ be a nonnegative integer. If $\deg(f) \neq m$, then $\deg {\big(} (m + \nabla-E)(f) {\big)} = \deg(f)$ 
 whereas, if $\deg(f)=m$, then $\deg {\big(} (m + \nabla-E)(f) {\big)} < \deg(f)$. Thus, equating the degrees of both sides of 
Theorem~\ref{thm:DerivRecursion} implies the following result.
\begin{prop} \label{prop:DerivConsequence}
For all permutations $w \in S_n$,  either
$\deg \fG_w(\bx) = \max_{s_k w <_L w} \deg \fG_{s_k w}(\bx)$ or else
 $\deg \fG_w(\bx) = \maj(w^{-1}) >  \max_{s_k w <_L w} \deg \fG_{s_k w}(\bx)$. \qed
\end{prop}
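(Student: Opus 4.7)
The plan is to apply Theorem~\ref{thm:DerivRecursion} and compare degrees of the two sides. Set $d = \deg \fG_w(\bx)$, $m = \maj(w^{-1})$, and $D = \max_{s_k w <_L w} \deg \fG_{s_k w}(\bx)$ (with the convention $D = -\infty$ when $w = \id$). The proposition claims exactly one of the dichotomies: $d = D \neq m$, or $d = m > D$.

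\textbf{Step 1: the operator $m + \nabla - E$.} On a homogeneous polynomial $f$ of degree $e$ we have $Ef = ef$, so
\[
(m + \nabla - E) f = (m - e) f + \nabla f.
\]
Since $\deg(\nabla f) \leq e-1$, the output has degree $e$ if $e \neq m$ and degree strictly less than $e$ if $e = m$. Writing an arbitrary polynomial as a sum of its homogeneous components and examining the top component, I conclude: if $\deg f = d \neq m$, then $\deg((m+\nabla-E) f) = d$; if $\deg f = d = m$, then $\deg((m + \nabla - E) f) < d$. Applied to $f = \fG_w(\bx)$, this controls the degree of the left-hand side of Theorem~\ref{thm:DerivRecursion}.

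\textbf{Step 2: no cancellation in top degree on the right-hand side.} I would argue that the sum $\sum_{s_k w <_L w} k \fG_{s_k w}(\bx)$ has degree exactly $D$, by ruling out cancellation among the top-degree monomials. From the pipe-dream formula (Theorem~\ref{thm:pipedreams}) and the discussion in the proof of Corollary~\ref{cor:high_degree_xy}, every top-degree monomial of $\fG_v(\bx)$ carries the common sign $(-1)^{\deg \fG_v - \inv(v)}$. For each $k$ with $s_k w <_L w$ we have $\inv(s_k w) = \inv(w) - 1$, so among those summands achieving $\deg \fG_{s_k w}(\bx) = D$, the top-degree monomials all share the sign $(-1)^{D - \inv(w) + 1}$. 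Multiplying each by the positive integer $k > 0$ cannot produce cancellation in degree $D$, so the RHS has degree precisely $D$.

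\textbf{Step 3: conclude.} Equating the degrees of the two sides of Theorem~\ref{thm:DerivRecursion} and using Steps 1--2: if $d \neq m$, then the LHS has degree $d$, forcing $D = d$; if $d = m$, then the LHS has degree $<d$, so $D < d = m$. These are the two alternatives in the proposition.

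The only delicate point is Step 2 — making certain that uniform signs of top-degree terms preclude cancellation across the different summands. This is the main (and essentially the only) obstacle, but it follows cleanly from the sign analysis used in the pipe-dream formula; the rest is bookkeeping on degrees.
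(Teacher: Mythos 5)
Your proof is correct and follows the same route as the paper, which likewise deduces the proposition by equating degrees on both sides of Theorem~\ref{thm:DerivRecursion} using exactly your Step~1 analysis of $(m+\nabla-E)$. The one point you elaborate beyond the paper --- the sign argument in Step~2 ruling out top-degree cancellation on the right-hand side --- is a genuine detail the paper leaves implicit, and your handling of it via the uniform sign $(-1)^{D-\inv(w)+1}$ of top-degree terms is correct.
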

Recalling Rajchgot's question, we wondered whether Proposition~\ref{prop:DerivConsequence} could be used to inductively compute $\deg \fG_w(\bx)$.
Proposition~\ref{prop:DerivConsequence} determines $\deg \fG_w(\bx)$ when $\maj(w^{-1}) \leq  \max_{s_k w <_L w} \deg \fG_{s_k w}(\bx)$, as then the second case is impossible.
The remaining cases, where $\maj(w^{-1}) >  \max_{s_k w <_L w} \deg \fG_{s_k w}(\bx)$, are the inverse fireworks permutations, and this is how we discovered the fireworks condition.
Proposition~\ref{prop:DerivConsequence} likewise implied $\deg \fG_w(\bx)$ equaled $\maj(u^{-1})$ for some $u \leq_L w$ and, by attempting to describe $u$ in terms of $w$, we found the inverse fireworks map. 
These investigations lead to this paper, although Proposition~\ref{prop:DerivConsequence} does not play a role in our final proofs.

We suspect that the best applications of Theorem~\ref{thm:DerivRecursion} are yet to be found.

\bibliographystyle{amsalpha} 
\bibliography{MM.bib}
\end{document}